\documentclass[11pt, reqno]{amsart}

\usepackage{amssymb}
\usepackage{amsmath}
\usepackage{bm}
\usepackage{array}
\usepackage{diagbox}
\usepackage{graphicx}
\usepackage{pinlabel}
\usepackage{graphics}
 \usepackage[mathscr]{euscript} 
\usepackage{amscd}
\usepackage{enumerate}
\usepackage[all]{xy}
\usepackage{mathtools} 
\usepackage{color}
\usepackage[table]{xcolor}
\usepackage{xspace} 
\usepackage[margin=3cm, marginparwidth=2.5cm]{geometry}
\usepackage{hyperref}
\usepackage[normalem]{ulem}
\usepackage{cite}
\usepackage{tikz-cd}

\usepackage{tikz}
\usepackage{ifthen}
\usetikzlibrary{shapes.misc, arrows, arrows.meta, decorations.markings}
\tikzset{cross/.style={cross out, draw=black, minimum size=2*(#1-\pgflinewidth), inner sep=0pt, outer sep=0pt},
cross/.default={1pt}}

\usepackage{import}
\usepackage{xifthen}
\usepackage{pdfpages}
\usepackage{transparent}

\definecolor{InkscapePurple}{RGB}{128,0,128}

\newtheorem{thm}{Theorem}[section]
\newtheorem{theorem}[thm]{Theorem}

\newtheorem{corollary}[thm]{Corollary}

\newtheorem{lemma}[thm]{Lemma}
\newtheorem{prop}[thm]{Proposition}
\newtheorem{proposition}[thm]{Proposition}

\newtheorem{question}[thm]{Question}
\theoremstyle{definition}

\newtheorem{definition}[thm]{Definition}

\theoremstyle{remark}

\newtheorem{remark}[thm]{Remark}

\newtheorem{example}[thm]{Example}



\renewcommand{\theta}{\vartheta}


\newcommand{\set}[1]{\left\{#1\right\}}



\DeclareMathOperator{\lk}{lk}

\newcommand{\Z}{\mathbb{Z}}
\newcommand{\Q}{\mathbb{Q}}
\newcommand{\R}{\mathbb{R}}


\DeclareMathOperator{\CF}{CF}

\DeclareMathOperator{\HF}{HF}

\DeclareMathOperator{\Kh}{Kh}

\renewcommand{\v}{\mathbf{v}}

\newcommand{\x}{\mathbf{x}}
\newcommand{\y}{\mathbf{y}}

\renewcommand{\S}{\Sigma}

\newcommand{\gr}{\mathbf{gr}}

\DeclareMathOperator{\Spin}{Spin}
\newcommand{\spinc}{spin$^c$\xspace} 
\newcommand{\Spinc}{\Spin^c} 
\newcommand{\s}{\mathfrak{s}}

\DeclareMathOperator{\Wu}{Wu}



\newcommand{\Ainfty}{\mathcal{A}_\infty}
\newcommand{\Alg}{\mathcal{A}}
\newcommand{\HFhat}{\widehat{\mathit{HF}}}
\newcommand{\CFD}{\widehat{\mathit{CFD}}}
\newcommand{\CFA}{\widehat{\mathit{CFA}}}

\newcommand{\Deltadb}{\Delta_{sym}}

\DeclareMathOperator{\rot}{rot}

\newcommand{\twist}{\mathcal{T}}
\newcommand{\extend}{\mathcal{E}}
\newcommand{\merge}{\mathcal{M}}
\newcommand{\twistloop}{\textsc{t}} 
\newcommand{\extendloop}{\textsc{e}} 
\newcommand{\mergeloop}{\textsc{m}}


\newcommand{\bpt}{\bullet}
\newcommand{\drawimmersedcurve}{
\draw[red] (-2*\U+7*\s, -2*\U+0*\s) .. controls (-2*\U+8*\s, -2*\U+0*\s) and (-8*\s, -2*\U+0*\s) .. (-7*\s, -2*\U+0*\s);
\foreach \x in {1,-1} {
   \begin{scope}[xscale=\x, yscale=\x]
      \draw[red] (-7*\s, -2*\U+0*\s) .. controls (-7*\s+3*\s, -2*\U+0*\s) and (-1*\s, -2*\U+6*\s-3*\s) ..
      (-1*\s, -2*\U+6*\s) .. controls (-1*\s, -2*\U+7*\s) and (-1*\s, -7*\s) ..
      (-1*\s, -6*\s) .. controls (-1*\s, -6*\s+4*\s) and (7*\s-4*\s, 2*\s) ..
      (7*\s, 2*\s) .. controls (8*\s, 2*\s) and (2*\U-8*\s, 2*\s) ..
      (2*\U-7*\s, 2*\s) .. controls (2*\U-7*\s+2*\s, 2*\s) and (2*\U-3*\s, 6*\s-2*\s) ..
      (2*\U-3*\s, 6*\s) .. controls (2*\U-3*\s, 7*\s) and (2*\U-3*\s, 2*\U-7*\s) ..
      (2*\U-3*\s, 2*\U-6*\s) .. controls (2*\U-3*\s, 2*\U-6*\s+\s) and (2*\U-7*\s+2*\s, 2*\U-4*\s) ..
      (2*\U-7*\s, 2*\U-4*\s) .. controls (2*\U-8*\s, 2*\U-4*\s) and (8*\s, 2*\U-4*\s) ..
      (7*\s, 2*\U-4*\s) .. controls (6*\s, 2*\U-4*\s) and (5*\s, 2*\U-5*\s) ..
      (5*\s, 2*\U-6*\s) .. controls (5*\s, 2*\U-7*\s) and (5*\s, 7*\s) ..
      (5*\s, 6*\s) .. controls (5*\s, 4*\s) and (\s, \s) .. 
      (0,0);
   \end{scope}
   }
}

\newcommand{\drawimmersedcurvevar}{
\draw[red] (-\U+2*\S, -\U) -- (-2*\S, -\U);
\foreach \x in {1,-1} {
   \begin{scope}[xscale=\x, yscale=\x]
      \draw[red] (-2*\S, -\U) .. controls (0, -\U) and (\S, -\U) ..
      (\S, -\U+\S) .. controls (\S, -\U+2*\S) and (\S, -2*\S) ..
      (\S, -\S) .. controls (\S, 0) and (\S, 0) ..
      (2*\S, 0) .. controls (3*\S,0) and (\U-2*\S,0) ..
      (\U-\S, 0) .. controls (\U,0) and (\U,0) ..
      (\U, \S) .. controls (\U, 2*\S) and (\U, \U-2*\S) ..
      (\U, \U-2*\S) .. controls (\U, \U-\S) and (\U,\U-\S) ..
      (\U-\S, \U-\S) .. controls (\U-2*\S, \U-\S) and (2*\S, \U-\S) ..
      (\S, \U-\S) .. controls (0, \U-\S) and (0, \U-\S) ..
      (0, \U-2*\S) .. controls (0, \U-3*\S) and (0, \S) ..
      (0,0);
   \end{scope}
   }
}

\newcommand{\othercolor}{InkscapePurple}
\newcommand{\drawimmersedcurvesym}{
\foreach \x in {1,-1} {
   \begin{scope}[xscale=\x, yscale=\x]
      \draw[\othercolor, rounded corners=0.5*\u] (-0.5*\U, -\U) -- (-0.5*\U, -\U+\u) -- (\u,-\U+\u) -- (\u,-\U+2*\u);
      \draw[red] (\u,-\U+2*\u) -- (\u,-2*\u);
      \draw[\othercolor, rounded corners=0.5*\u] (\u, -2*\u) -- (\u, -1*\u) -- (0.5*\U-0.5*\u, -1*\u) -- (0.5*\U-0.5*\u, 2*\u) -- (\U-2*\u, 2*\u) -- (\U-2*\u, 3*\u);
      \draw[red] (\U-2*\u, 3*\u) -- (\U-2*\u, \U-3*\u);
      \draw[\othercolor, rounded corners=0.5*\u] (\U-2*\u, \U-3*\u) -- (\U-2*\u, \U-2*\u) -- (0, \U-2*\u) -- (0, \U-3*\u);
      \draw[red] (0, \U-3*\u) -- (0,\u);
      \draw[\othercolor] (0, \u) -- (0,0);
   \end{scope}
   }
}

\newcommand{%
    \def\svgwidth{\columnwidth}
    \import{./figures/}{.pdf_tex}
}[1]{%
    \def\svgwidth{\columnwidth}
    \import{./figures/}{#1.pdf_tex}
}


\begin{document}

\title[Correction terms and symmetries of immersed curves]{Correction terms of double branched covers\\ and symmetries of immersed curves}

\author{Jonathan Hanselman}%
\address{Department of Mathematics, Princeton University \\ Princeton, NJ 08544}%
\email{\href{mailto:jh66@math.princeton.edu}{jh66@math.princeton.edu}}%

\author{Marco Marengon}%
\address{Alfr\'ed R\'enyi Institute for Mathematics \\ Budapest, Hungary 1053}%
\email{\href{mailto:marengon@renyi.hu}{marengon@renyi.hu}}%

\author{Biji Wong}%
\address{Department of Mathematics, Duke University \\ Durham, NC 27708}%
\email{\href{mailto:biji.wong@duke.edu}{biji.wong@duke.edu}}%


\begin{abstract}

We use the immersed curves description of bordered Floer homology to study $d$-invariants of double branched covers $\Sigma_2(L)$ of arborescent links $L \subset S^3$. We define a new invariant $\Delta_{sym}$ of bordered $\mathbb{Z}_2$-homology solid tori from an involution of the associated immersed curves and relate it to both the $d$-invariants and the Neumann-Siebenmann $\bar\mu$-invariants of certain fillings. We deduce that if $L$ is a 2-component arborescent link and $\Sigma_2(L)$ is an L-space, then the spin $d$-invariants of $\Sigma_2(L)$ are determined by the signatures of $L$. By a separate argument, we show that the same relationship holds when  $L$ is a 2-component link that admits a certain symmetry. 

\end{abstract}

\maketitle

\tableofcontents



\section{Introduction}

In the last two decades new invariants of knots and links have been defined that share some similarities with the classical signature $\sigma$, such as $\tau$ \cite{OS:tau} and $s$ \cite{Rasmussen:s}. While these invariants agree (up to multiplication by a universal constant) with the signature for quasi-alternating knots \cite{ManoOzsvath}, they are in general different -- for example, $\tau$ and $s$ can be used to prove the local Thom conjecture, while $\sigma$ cannot.

In this paper we focus on $d$-invariants $\delta$ \cite{OSd} of double branched covers $\Sigma_2(L)$ of links $L \subset S^3$.
Like the invariants mentioned above, $\delta$ is most properly defined when $L$ is equipped with a \emph{quasi-orientation} $\omega$, i.e.\ an orientation of each component of $L$ up to an overall reversal. We have a correspondence, due to Turaev \cite{Turaev}, between the set $QO(L)$ of quasi-orientations $\omega$ on $L$ and the set $\textrm{Spin}(\Sigma_2(L))$ of spin structures $\mathfrak{s}_{\omega}$ on $\Sigma_2(L)$. With it one defines
\[
\delta(L, \omega) := d(\Sigma_2(L), \mathfrak{s}_{\omega}).
\]
When compared to the invariants $\tau$ and $s$, the invariant $\delta$ shows some peculiar behavior: $\delta$ cannot be used to prove the local Thom conjecture (since it differs from $\tau$ -- and also from $\sigma$ -- on a family of torus knots \cite[Section 4.2]{ManoOwens}) unlike $\tau$ and $s$, but does agree with $\sigma$ (up to multiplication by $-1/4$) for quasi-alternating links \cite{ManoOwens, DO, LiscaOwensQuasi} like $\tau$ and $s$. 

When $\Sigma_2(L)$ is an $L$-space, it is natural to expect to have greater control over the $d$-invariant $\delta$, because then $d(\Sigma_2(L), \mathfrak{s})$ is simply the Maslov grading of the unique non-trivial element in the corresponding Heegaard Floer homology of $\Sigma_2(L)$. In fact, Lin-Ruberman-Saveliev \cite[Theorem A]{LRS} showed that if $\Sigma_2(L)$ is an L-space, then
\begin{equation}\label{eq:LRS}
 \sum_{\mathfrak{s} \in \textrm{Spin}(\Sigma_2(L))} d(\Sigma_2(L), \mathfrak{s}) = - \frac{1}{4} \sum_{\omega \in QO(L)} \sigma(L, \omega).
\end{equation}

Our first main theorem is a refinement of Equation \eqref{eq:LRS} for 2-component arborescent links.
\begin{theorem}\label{thm:main}
If $L$ is a 2-component arborescent link with $\Sigma_2(L)$ an L-space, then
\begin{equation*}
\set{d(\Sigma_2(L), \mathfrak{s_1}), d(\Sigma_2(L), \mathfrak{s_2})} = \set{-\frac{1}{4}\sigma(L, \omega_1), -\frac{1}{4}\sigma(L, \omega_2)},
\end{equation*}
where $\mathfrak{s_1}$, $\mathfrak{s_2}$ are the two spin structures of $\Sigma_2(L)$ and $\omega_1$, $\omega_2$ are the two quasi-orientations of $L$.
\end{theorem}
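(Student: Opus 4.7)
The plan is to exploit the fact that a 2-component arborescent link $L$ admits a Conway sphere separating its two components, so that $\Sigma_2(L)$ decomposes as a union $Y_1 \cup_{T^2} Y_2$ along a torus, with each $Y_i$ a bordered $\mathbb{Z}_2$-homology solid torus. Via the immersed multicurve description of bordered Heegaard Floer homology, $\widehat{HF}(\Sigma_2(L))$ is computed by the intersection pairing of the curves associated to $Y_1$ and $Y_2$, and the two spin structures $\mathfrak{s}_1, \mathfrak{s}_2$ correspond to the two $\mathbb{Z}_2$-summands of this pairing. The L-space hypothesis ensures that $d(\Sigma_2(L),\mathfrak{s}_i)$ is simply the Maslov grading of the unique generator in that summand.

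From the Lin-Ruberman-Saveliev formula \eqref{eq:LRS} we already know
\[
d(\Sigma_2(L),\mathfrak{s}_1) + d(\Sigma_2(L),\mathfrak{s}_2) = -\tfrac{1}{4}\bigl(\sigma(L,\omega_1) + \sigma(L,\omega_2)\bigr),
\]
so the remaining task is to match the unordered pair of values by determining the \emph{difference}. For this I would invoke the invariant $\Delta_{sym}$ promised in the abstract: each bordered piece $Y_i$ carries a natural $\mathbb{Z}_2$-symmetry coming from the hyperelliptic involution of the branching sphere, which lifts to an involution of its immersed multicurve. The quantity $\Delta_{sym}$ measures the Maslov grading shift between a spin-structure intersection point and its image under this involution, and the bordered pairing formula should translate $\Delta_{sym}(Y_1,Y_2)$ directly into $\pm\bigl(d(\Sigma_2(L),\mathfrak{s}_1) - d(\Sigma_2(L),\mathfrak{s}_2)\bigr)$.

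The second ingredient is the identification of $\Delta_{sym}$ with the Neumann-Siebenmann $\bar{\mu}$-invariants of the spin fillings of the $Y_i$. Since an arborescent link is packaged by a plumbing graph presenting $\Sigma_2(L)$ as a plumbed 3-manifold, and since the L-space hypothesis is expected to force an almost-rational (or sufficiently tame) structure, one gets $d = -2\bar{\mu}$ on the relevant spin structures. Standard plumbing calculus then identifies $\bar{\mu}$ of the appropriate spin filling with $-\tfrac{1}{8}\sigma(L,\omega_i)$, and combining this with the LRS sum forces the unordered set equality claimed in the theorem.

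The principal obstacle I expect is the middle step: constructing and verifying the equivariance of the involution on the immersed multicurve, and proving that the grading invariant $\Delta_{sym}$ really computes the difference of $d$-invariants through the pairing formula. This requires a careful analysis of how the spin-structure labels on the torus transform under the lifted hyperelliptic action, together with a nontriviality check on the components of the multicurve that detect each spin structure, both of which are precisely where the two-component arborescent hypothesis enters in a way that the weaker sum formula \eqref{eq:LRS} does not.
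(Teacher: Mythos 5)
Your proposal captures the correct high-level strategy — use the Lin--Ruberman--Saveliev formula \eqref{eq:LRS} for the \emph{sum}, then control the \emph{difference} via an invariant $\Delta_{sym}$ coming from a symmetry of the immersed curves — but there are two places where it deviates from what can actually be made to work, and one of these is a genuine gap.

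\textbf{The decomposition.} You propose cutting $\Sigma_2(L)$ along the lift of a Conway sphere into two bordered pieces $Y_1 \cup_{T^2} Y_2$. The paper instead removes a neighborhood of a regular fiber of an $S^1$-bundle over a leaf $v$ of the plumbing tree, yielding $\Sigma_2(L) = M_{\Gamma,v} \cup (\text{solid torus})$. The solid-torus piece is essential: the pairing of $\HFhat(M_{\Gamma,v})$ with the horizontal curve $\beta_{sym}$ is what makes the grading computation tractable (Corollary \ref{cor:grading-difference-for-fixed-points}) and what allows $\Deltadb$ to be an invariant of a \emph{single} bordered manifold. With two complicated pieces the pairing analysis would be substantially harder, and your $\Delta_{sym}(Y_1, Y_2)$ would not admit the inductive twist/extend/merge bookkeeping that drives Proposition \ref{prop:main-thm-rooted-trees}. (Also, the relevant symmetry of the immersed curve is the elliptic involution $r : T_M \to T_M$ coming from \spinc conjugation on the distinguished self-conjugate component, not the hyperelliptic involution of a branching sphere; these may be related in spirit, but the conjugation symmetry is what actually fixes the curve $\gamma$ and yields the fixed points $x_0, x_1$.)

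\textbf{The genuine gap.} You invoke a pointwise identity ``$d = -2\bar\mu$ on the relevant spin structures'' (up to normalization conventions) for L-space plumbed 3-manifolds, asserting that the L-space hypothesis should force enough tameness for this to hold. This is not a theorem, and the paper explicitly avoids needing it: Stipsicz's result establishes such an identity only for \emph{definite} plumbings, and the paper notes pointedly that its argument ``does not use the definiteness of the plumbing.'' The whole content of the paper's Proposition \ref{prop:main-thm-rooted-trees} is to prove, by induction over the twist/extend/merge moves, the \emph{difference} identity $\Deltadb(\Gamma, v) = -\frac14 \Delta\bar\mu(\Gamma, v)$ without assuming a pointwise $d = c\,\bar\mu$, and Lemma \ref{lem:Delta-sym-is-Delta-d} then has to argue somewhat delicately (using Corollary \ref{cor:symmetry-same-grading} and the pairing of $d$-invariants in conjugate \spinc structures) that $\Deltadb$ really equals the difference of the two spin $d$-invariants even though one does not directly identify which generators $x_0, x_1$ lie in the spin \spinc structures. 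Your proposal substitutes a stronger, unproved statement for this entire chain; if that statement were known, Theorem \ref{thm:main} would follow from Saveliev's theorem alone without any immersed-curve work. As written, the proposal does not contain the mechanism (the inductive comparison of $\Deltadb$ and $\Delta\bar\mu$ under elementary operations on rooted plumbing trees) that actually makes the difference computation go through.
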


Recall that a link is called \emph{arborescent} if it can be constructed from a weighted tree (or disjoint union of trees, called a forest) using the procedure explained in \cite[\textsection 2]{Siebenmann} and illustrated in Figure \ref{fig:tree+link}.
A key property of arborescent links is that their double branched covers are graph manifolds. The Heegaard Floer homology of graph manifolds is better understood than that of general 3-manifolds, and this enables us to relate the $d$-invariants of the double branched covers of arborescent links to the signatures of the links. We also make extensive use of a re-interpretation of the bordered Heegaard Floer homology of manifolds with torus boundary as immersed curves by the first author, Rasmussen, and Watson \cite{HRW, HRW-companion}. See Section \ref{sec:proof_immersedcurves} for an outline of the proof of Theorem \ref{thm:main}.

Along the way, we define a new invariant $\Delta_{sym}$ of bordered $\Z_2$-homology solid tori $M$ from the symmetries of the immersed multicurve for $M$. When a certain filling $Y$ of $M$ is an L-space, $\Delta_{sym}$ agrees with the difference of the $d$-invariants of $Y$ in the two spin structures on $Y$. For a precise statement, see Lemma \ref{lem:Delta-sym-is-Delta-d}. 
We remark that the definition of $\Delta_{sym}$ is geometric and uses the immersed curves description of $\CFD(M)$; it is less transparent how to define $\Delta_{sym}$ in the algebraic formulation of $\CFD(M)$.

While Theorem \ref{thm:main} is an improvement on Equation \eqref{eq:LRS} for 2-component arborescent links, it does not show the matching of quasi-orientation $\omega$ on $L$ with spin structure $\mathfrak{s}$ on $\Sigma_2(L)$ that gives $d(\Sigma_2(L), \mathfrak{s}) = -\frac{1}{4} \sigma(L, \omega)$. Thus, the following question is still open, even for arborescent links.

\begin{question}\label{Q:dsig}
If $\Sigma_2(L)$ is an L-space, is $d(\Sigma_2(L), \mathfrak{s}_\omega) = -\frac{1}{4} \sigma(L, \omega)$ for every quasi-orientation $\omega$ on $L$?
\end{question}

Our second theorem gives a positive answer to Question \ref{Q:dsig} when $L$ admits an orientation-preserving involution.

\begin{theorem}
\label{thm:d-invariantinvol}
Let $L \subset S^3$ be a 2-component link with $\det L \neq 0$, and let $\iota:S^3 \rightarrow S^3$ be an orientation-preserving involution such that
\begin{enumerate}
\item $\iota$ fixes $L$ set-wise and
\item the fixed point set of $\iota$ is a circle that intersects $L$ in two points (necessarily on the same component of $L$).
\end{enumerate}
Suppose $\Sigma_2(L)$ is an L-space, and let $\mathfrak{s}_1$ and $\mathfrak{s}_2$ denote the two spin structures on $\Sigma_2(L)$.
Then for $i \in \{1, 2\}$,
\[
d(\Sigma_2(L), \mathfrak{s}_i) = - \frac{1}{4} \sigma(L, \omega_{\mathfrak{s}_i}).
\]
\end{theorem}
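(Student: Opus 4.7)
The plan is to use the involution $\iota$ to prove directly that the two signatures of $L$ coincide and that the two $d$-invariants of $\Sigma_2(L)$ coincide; combined with the Lin--Ruberman--Saveliev formula \eqref{eq:LRS} (which applies since $\Sigma_2(L)$ is an L-space), these two equalities immediately force the desired pairwise matching $d(\Sigma_2(L), \mathfrak{s}_i) = -\tfrac{1}{4}\sigma(L, \omega_i)$.

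First I would analyze the action of $\iota$ on $L = K_1 \sqcup K_2$, where $K_1$ denotes the component meeting the fixed circle in two points. Since $\iota$ has fixed points on $K_1$, it preserves $K_1$ setwise, and then $K_2$ must be preserved setwise as well (a swap would place the fixed points on $K_2$ too). The restriction $\iota|_{K_1}$ is an involution of $S^1$ with two fixed points, hence orientation-reversing on $K_1$, whereas $\iota|_{K_2}$ is a free involution of $S^1$, hence orientation-preserving on $K_2$. Consequently $\iota$ maps an orientation $(o_1, o_2)$ of $L$ to $(-o_1, o_2)$, and this exchanges the two quasi-orientations $\omega_1$ and $\omega_2$. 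Because $\iota$ is an orientation-preserving self-diffeomorphism of $S^3$, it realizes an ambient isotopy from $(L, \omega_1)$ to $(L, \omega_2)$, so $\sigma(L, \omega_1) = \sigma(L, \omega_2)$.

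Next I would lift $\iota$ to an orientation-preserving self-diffeomorphism $\tilde\iota$ of $\Sigma_2(L)$; such a lift exists because $\iota$ preserves the pair $(S^3, L)$. By the naturality of Turaev's correspondence $\omega \mapsto \mathfrak{s}_\omega$ under symmetries of $(S^3, L)$, together with the previous paragraph, $\tilde\iota$ exchanges the two spin structures $\mathfrak{s}_1$ and $\mathfrak{s}_2$. Since $\tilde\iota$ preserves orientation, it induces a grading-preserving isomorphism on $\HFp$, so $d(\Sigma_2(L), \mathfrak{s}_1) = d(\Sigma_2(L), \mathfrak{s}_2)$. Combining these two equalities with \eqref{eq:LRS} yields $2\,d(\Sigma_2(L), \mathfrak{s}_i) = -\tfrac{1}{2}\sigma(L, \omega_i)$ for each $i$, as required.

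The one point that needs care is the compatibility of $\tilde\iota$ with Turaev's bijection: the lift $\tilde\iota$ is defined only up to composition with the covering involution $\tau$, so I must verify that either choice of lift acts on $\Spin(\Sigma_2(L))$ by the permutation predicted by $\iota$'s action on $QO(L)$, or equivalently that $\tau$ itself acts trivially on $\Spin(\Sigma_2(L))$. I expect this to be the main technical step; it can be confirmed either by a direct inspection of Turaev's construction from a Seifert surface, or by the general principle that the covering involution of a double branched cover fixes each spin structure on the cover.
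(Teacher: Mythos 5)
Your proposal is correct and follows essentially the same strategy as the paper: show $\sigma(L, \omega_1)=\sigma(L, \omega_2)$ using that $\iota$ exchanges quasi-orientations, show $d(\Sigma_2(L),\mathfrak{s}_1)=d(\Sigma_2(L),\mathfrak{s}_2)$ using naturality of Turaev's correspondence to see that the lifted involution exchanges spin structures, and combine with Equation \eqref{eq:LRS}. The lift ambiguity you flag is handled implicitly in the paper since Proposition \ref{prop:naturality_of_s_omega} is proved for any choice of the induced map $\Phi$; taking $\varphi=\mathrm{id}$ there shows the covering involution fixes each $\mathfrak{s}_\omega$, confirming your expectation.
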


We remark that Theorem \ref{thm:d-invariantinvol} is not specific to arborescent links, but works for general links. The core of the proof of Theorem \ref{thm:d-invariantinvol} is to show that Turaev's correspondence between the set of quasi-orientations on $L$ and the set of spin structures on $\Sigma_2(L)$ is natural, and that the involution swaps $\mathfrak{s}_1$ and $\mathfrak{s}_2$, which implies that the two values $d(\Sigma_2(L), \mathfrak{s}_1)$ and $d(\Sigma_2(L), \mathfrak{s}_2)$ agree. Coupled with Equation \eqref{eq:LRS}, we then get that this value agrees with the signature of $L$.

For the reader's convenience, we now give a detailed overview of the proof of Theorem \ref{thm:main}, which takes the largest part of the paper.

\subsection{Immersed curves, symmetries, and an outline of the proof of Theorem \ref{thm:main}}
\label{sec:proof_immersedcurves}
If $L$ is a 2-component link, then there are two spin structures $\mathfrak{s_1}$ and $\mathfrak{s_2}$ on $\Sigma_2(L)$. Work of Lin-Ruberman-Saveliev (Equation \eqref{eq:LRS}) shows that the sum of the $d$-invariants of  $\Sigma_2(L)$ at $\mathfrak{s_1}$ and $\mathfrak{s_2}$ agrees with the sum of the signatures of $L$ at the two quasi-orientations on $L$. If we prove that the difference is also the same, then the statement of Theorem \ref{thm:main} follows.

When $L$ is arborescent, $\Sigma_2(L)$ is a graph manifold and can be represented by a plumbing forest $\Gamma$. 
We use the notation $Y_\Gamma$ for the plumbed 3-manifold associated with $\Gamma$, so that $\Sigma_2(L) = Y_\Gamma$. 
Choosing a distinguished vertex $v$ of $\Gamma$ gives a rooted plumbing tree $(\Gamma, v)$, which determines a manifold with torus boundary $M_{\Gamma, v}$ by removing a neighborhood of a regular fiber of the $S^1$-bundle associated with the vertex $v$ in the construction of $Y_\Gamma$. This construction of $M_{\Gamma, v}$ determines a preferred parametrization of the boundary, making $M_{\Gamma, v}$ a bordered manifold. We will see that we can always choose $v$ so that $M_{\Gamma, v}$ is a $\Z_2$-homology solid torus. Since $Y_\Gamma$ is the Dehn filling of $M_{\Gamma, v}$ along a particular slope, we can compute the difference of the two spin $d$-invariants of $Y_\Gamma$ and the difference of the two signatures of $L$ using two invariants for the rooted plumbing tree $(\Gamma, v)$: $\Delta_{sym}$ which was introduced above, and $\Delta \bar\mu$ which is described below. Then an inductive argument on the size of $(\Gamma, v)$ shows that these differences always agree up to a factor of $-\frac 1 4$.

To compute the difference in the signatures of $L$, we relate the signatures to the $\bar\mu$ invariants of $\Sigma_2(L)$. The $\bar\mu$ invariant, introduced by Neumann \cite{Neumann-invt} and Siebenmann \cite{Siebenmann}, is an invariant of a closed graph manifold along with a spin structure. For arborescent links, the $\bar \mu$ invariants for $\Sigma_2(L)$ agree with the signatures for $L$ \cite[Theorem 5]{Saveliev}. Given a rooted plumbing tree $(\Gamma, v)$ for which $M_{\Gamma, v}$ is a $\Z_2$-homology solid torus, we define a related invariant $\Delta \bar\mu(\Gamma, v)$, which has the property that when the filling $Y_\Gamma$ has two spin structures $\Delta \bar\mu(\Gamma, v)$ agrees with the difference in the two $\bar \mu$ invariants of $Y_\Gamma$.

To compute the difference between the $d$-invariants in the spin structures on $\Sigma_2(L) = Y_\Gamma$ we use the fact that the relative $\mathbb{Q}$-grading on $\HFhat(Y_\Gamma)$ can be computed from the bordered Floer invariant for $M_{\Gamma, v}$, as well as the fact that when $Y_\Gamma$ is an L-space the $d$-invariants are simply the Maslov gradings of the unique generator in each of the two self-conjugate \spinc structures of $Y_\Gamma$. We give a geometric computation of this grading difference using the immersed curve representation of the bordered Floer invariant for $M_{\Gamma, v}$; this generalizes earlier work in  \cite[Theorem 5]{HRW-companion}. Recall that the bordered invariant for $M_{\Gamma, v}$ takes the form of an immersed multicurve in $\partial M_{\Gamma, v} \setminus \set{\star}$. When $M_{\Gamma, v}$ is a $\Z_2$-homology solid torus this invariant has a distinguished curve $\gamma$, and this curve is fixed by the elliptic involution on $\partial M_{\Gamma, v}$. Here we mean that the curve is fixed up to homotopy, but if the curve is placed in a standard position it will be fixed setwise by the involution. In this standard position, we can choose two qualitatively different fixed points of the symmetry, which we call $x_0$ and $x_1$. If we consider the lift $\tilde\gamma$ of $\gamma$ to the cover $\R^2 \setminus \Z^2$, then invariance under the elliptic involution takes the form of a rotational symmetry of $\pi$, and rotation about either a lift $\tilde x_0$ of $x_0$ or a lift $\tilde x_1$ of $x_1$ fixes the curve and takes lifts of $x_0$ to lifts of $x_0$ and lifts of $x_1$ to lifts of $x_1$.  We define $\Deltadb(\Gamma, v)$ to be twice the (signed) area bounded by the portion of $\tilde\gamma$ from $\tilde x_0$ and $\tilde x_1$ and the straight line joining these two points; see Figure \ref{fig:intro-example} for an example. In the case that $Y_\Gamma$ is an L-space with two spin structures, we will show that $\Deltadb(\Gamma, v)$ agrees with the difference between the $d$-invariants of $Y_\Gamma$ in the spin structures.

The core of our argument is an inductive proof that $\Deltadb(\Gamma, v) = -\frac 1 4 \Delta\bar\mu(\Gamma, v)$ for all suitable rooted plumbing trees that define $\Z_2$-homology solid tori. This relies on the fact that a rooted plumbing tree can be constructed from smaller rooted trees using three simple operations called twist, extend, and merge; we compute the effect of each of these operations on $\Deltadb$ and $\Delta\bar\mu$ and show that the effects are the same. Since $\Deltadb(\Gamma, v)$ is the difference between the  spin $d$-invariants when $Y_\Gamma$ is an L-space with two spin structures, Theorem \ref{thm:main} follows. We conclude by noting that the equivalence between $d$-invariants and $\bar\mu$ invariants was already known for definite plumbing graphs \cite{Stipsicz}; our argument does not use the definiteness of the plumbing.

\begin{remark}
It is worth noting that although we are most interested in rooted trees $(\Gamma, v)$ for which $Y_\Gamma$ has two spin structures and $Y_\Gamma$ is an L-space, neither property is necessary to show $\Deltadb(\Gamma, v) = -\frac 1 4 \Delta\bar\mu(\Gamma, v)$ and these properties may not hold at intermediate steps of the induction. The first property is needed to relate $\Deltadb(\Gamma, v)$ or $\Delta\bar\mu(\Gamma, v)$ to differences in Maslov gradings or $\bar\mu$ invariants of the filling $Y_\Gamma$, which does not make sense if the filling has only one spin structure. The second property is required to guarantee that the grading difference between generators associated with $x_0$ and $x_1$ agrees with the difference $\Delta d$ between the spin $d$-invariants for $Y_\Gamma$. This is not true in general (see Example \ref{ex:running-example-Delta-sym-not-grading-diff}), and understanding the relationship between $\Deltadb$ and $\Delta d$ may shed light on the extent to which Theorem \ref{thm:main} fails when $\Sigma(L)$ is not an L-space.
\end{remark}

\begin{figure}
\labellist

\pinlabel {$-1$} at 60 400
\pinlabel {$-3$} at 180 400

\pinlabel {$x_0$} at -15 150
\pinlabel {$x_1$} at 105 130
\pinlabel {$x_0$} at 270 150

\pinlabel {$\tilde x_0$} at 470 70
\pinlabel {$\tilde x_1$} at 660 230
\pinlabel {$\tilde x_0$} at 830 310

\endlabellist
\includegraphics[scale = .4]{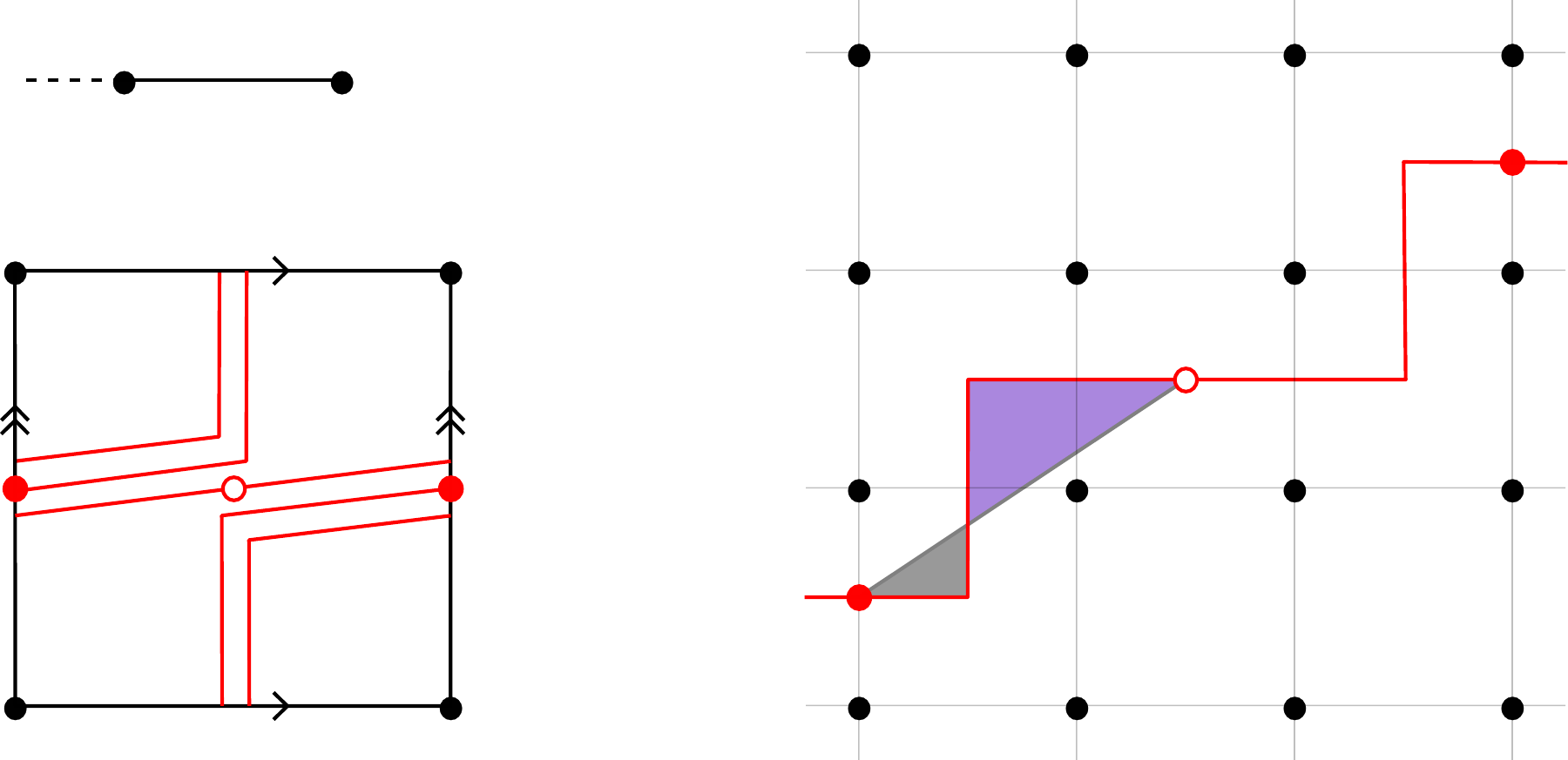}
\caption{The immersed curve bordered Floer invariant associated with the rooted plumbing tree in the top left is shown in the punctured torus (bottom left) and lifted to $\R^2\setminus \Z^2$ (right). Note that we assume the curve is rectilinear lying on the horizontal lines with half-integer $y$-coordinate and vertical lines with half-integer $x$-coordinate, as on the right, but the curve shown in the torus is perturbed slightly from this position for clarity.  The fixed points $x_0$ and $x_1$ of the elliptic involution, and their lifts, are indicated. Twice the area of the shaded region, with the purple region counting negatively, gives $\Deltadb = -\frac 1 2$.}
\label{fig:intro-example}
\end{figure}

\subsection{Organization}
In Section \ref{sec:bordered} we review the necessary background in bordered Floer homology, and also prove a new grading lemma generalizing an immersed curve computation of grading differences to generators in different \spinc structures in Section \ref{sec:gradings}. In Section \ref{sec:delta-sym} we define $\Deltadb$ and study its behavior under the three operations on plumbing trees, while the same is done for $\Delta \bar\mu$ in Section \ref{mubarsection}. Section \ref{sec:main-proof} puts the previous results together to prove Theorem \ref{thm:main}. Finally, in Section \ref{sec:involutions} we use a separate argument argument to prove Theorem \ref{thm:d-invariantinvol} and demonstrate it with an example.

\subsection{Acknowledgments}

The authors would like to thank Adam Levine, Gordana Mati\'c, Andr\'as N\'emethi, Arunima Ray, Nikolai Saveliev, Chris Scaduto, Andr\'{a}s Stipsicz, and Matt Stoffregen for helpful conversations and correspondence. This research was conducted at the following institutions: the Max Planck Institute for Mathematics, ICERM, the Alfr\'{e}d R\'{e}nyi Institute of Mathematics, CIRGET, Princeton University, Duke University, and NC State University. The authors wish to thank all of these places for their hospitality.
JH was supported by NSF grant DMS-2105501.
MM was partially supported by NKFIH grant OTKA K146401.
BW was supported by NSF grant DMS-2213027.


\section{Bordered Floer homology}\label{sec:bordered}

In order to prove Theorem \ref{thm:main}, we will need to compute the relative $d$-invariants associated with the spin structures for rational homology sphere graph manifolds that have two spin structures. This will be accomplished using bordered Floer homology, and we now review the relevant parts of this machinery.

\subsection{Bordered invariants for manifolds with torus boundary}
Bordered Floer homology is an extension of Heegaard Floer invariants to manifolds with boundary defined by Lipshitz, Ozsv{\'a}th, and Thurston in \cite{LOT}. For manifolds with parametrized torus boundary, the invariant traditionally takes the form of a homotopy equivalence class of $\Ainfty$-modules or type D structures over a particular algebra $\Alg_{T^2}$. It was shown in \cite{HRW} that this data is equivalent to a decorated collection of (homotopy classes of) immersed curves in the punctured torus. We will refer to a collection of immersed curves as a multicurve. The multicurves can carry two types of decorations---a local system on each curve and a grading decoration recording relative grading differences between curves---but neither decoration will be relevant in the present paper since for the manifolds we consider we will only be interested in a single component of the multicurve and this component is always equipped with the trivial local system. 

The immersed curves can be viewed as living in the boundary of the manifold (minus a fixed basepoint in the boundary). More precisely, given a manifold $M$ with torus boundary we define $T_M$ to be the complement of $\{0\}$ in $H_1(\partial M;\R)/H_1(\partial M;\Z)$ and note that $T_M \cong \partial M \setminus z$ where $z$ is a fixed basepoint in $\partial M$. The bordered Floer invariants are represented by a decorated collection of immersed closed curves in $T_M$, defined up to homotopy of curves, which we denote by $\HFhat(M)$. We remark that this invariant, unlike the $\Ainfty$-modules or type D structures defined in \cite{LOT}, depends only on $M$ and not on a choice of parametrization of $\partial M$. That said, if we fix a set of parametrizing curves ${\alpha, \beta}$ for $\partial M$, then we can represent the immersed curves more conveniently. In particular, a parametrization of $\partial M$ allows us to identify $T_M$ with $\left( \R^2 / \Z^2 \right) \setminus \{0\}$ and thus to draw pictures of the immersed curves in a consistent way; our convention is that this identification takes $\alpha$ to the positive vertical direction and $\beta$ to the positive horizontal direction.

With a choice of parametrization, we can also record an immersed curve as a word in the letters $\{\alpha^{\pm 1}, \beta^{\pm 1} \}$, defined up to cyclic permutation. For example, the oriented immersed curve in the punctured torus $T_M$ shown on the left of Figure \ref{fig:immersed-curve-example} can be represented by the word $\beta \alpha \beta \alpha \beta^{-1} \alpha^{-2} \beta^{-1} \alpha \beta \alpha$ by starting at the indicated dot and following the curve in the direction of its orientation. We can think of an immersed curve as (a smoothing of) a concatenation of length one horizontal and vertical segments; the curve in the figure has been drawn in a way to make this correspondence clear. Then we read off an $\alpha$ (resp. $\alpha^{-1}$) for each vertical segment traversed moving upwards (resp.\ downwards) and a $\beta$ (resp. $\beta^{-1}$) for each horizontal segment traversed moving rightwards (resp.\ leftwards); equivalently, the word comes from recording the sequence of signed intersections of the curve with the edges of the square, where intersections with the top/bottom of the square contribute $\alpha$ or $\alpha^{-1}$ depending on the sign of the intersection, and intersections with the left/right sides of the square contribute $\beta$ or $\beta^{-1}$. Because the starting point on the immersed curve was arbitrary, this curve could just as well be encoded by the word  $\alpha \beta \alpha \beta^{-1} \alpha^{-2} \beta^{-1} \alpha \beta \alpha \beta$, or any other cyclic permutation of this word. We will refer to an equivalence class of words up to cyclic permutation as a \emph{cyclic word} and denote it by enclosing the word in parentheses, so this oriented immersed curve is represented by the cyclic word $(\beta \alpha \beta \alpha \beta^{-1} \alpha^{-2} \beta^{-1} \alpha \beta \alpha)$.  We let $\HFhat(M, \alpha, \beta)$ denote the collection of cyclic words representing the collection of immersed curves $\HFhat(M)$ in this way; note that $\HFhat(M, \alpha, \beta)$ does not record the local system or grading decorations on $\HFhat(M)$ and thus may lose some information, but we will be ignoring these decorations. By abuse of notation we will sometimes refer to the immersed curves $\HFhat(M)$ and the cyclic words $\HFhat(M, \alpha, \beta)$ interchangeably when the parametrization is clear.

\begin{remark}
In this paper we will ignore the orientation on the immersed curves $\HFhat(M)$ (the orientations encode certain relative grading information we will not need). Note that reversing the orientation on an immersed curve has the effect of replacing the corresponding (cyclic) word by its inverse; for example, for the curve in Figure \ref{fig:immersed-curve-example} starting at the dot but following the curve in the opposite direction yields the word $\alpha^{-1} \beta^{-1} \alpha^{-1} \beta \alpha^2 \beta \alpha^{-1} \beta^{-1} \alpha^{-1} \beta^{-1}$. Thus for our purposes the relevant invariants are collections of cyclic words up to taking inverses.
\end{remark}


\begin{figure}[]
\centering
\begin{tikzpicture}[on top/.style={preaction={draw=white,-,line width=#1}},
on top/.default=4pt]
\def\U{1.5cm}
\def\adj{0.3cm}
\def\s{0.05cm}
\def\u{0.1cm}
\def\S{0.15cm}

\begin{scope}[xshift=-4.5cm]

\begin{scope}
\clip (-\U, -\U) rectangle (\U, \U);
\foreach \x in {-2*\U, 0, 2*\U}
\foreach \y in {-2*\U, 0, 2*\U} {
\begin{scope}[xshift=\x, yshift=\y]
\drawimmersedcurve
\end{scope}
}
\end{scope}

\draw[red, fill=red] (7*\s,0) circle (0.7*\s);
\draw[red] (0.5*\U-\s, -\s) -- (0.5*\U, 0) -- (0.5*\U-\s, \s);

\draw (-\U, -\U) rectangle (\U, \U);
\draw (\U,\U) node {$\bpt$};

\draw[->] (-\U, 0) -- (-\U, 0.5*\U);
\draw[->] (\U, 0) -- (\U, 0.5*\U);
\draw[->>] (0, \U) -- (0.5*\U, \U);
\draw[->>] (0, -\U) -- (0.5*\U, -\U);

\end{scope}

\begin{scope}[yshift=0]
	\begin{scope}
	\clip (-\U+3*\u, -2.25*\U) rectangle (3*\u, 2.25*\U);
		\drawimmersedcurvevar
		\begin{scope}[xshift=-\U, yshift=-2*\U]
		\drawimmersedcurvevar
		\end{scope}
		\begin{scope}[xshift=\U, yshift=2*\U]
		\drawimmersedcurvevar
		\end{scope}
	\end{scope}
	\draw (-\U+2*\S, -2.25*\U) -- (-\U+2*\S, 2.25*\U);
	\draw (2*\S, -2.25*\U) -- (2*\S, 2.25*\U);
	\draw (-0.5*\U,0.5*\U) node {$\bpt$};
	\draw (-0.5*\U,1.5*\U) node {$\bpt$};
	\draw (-0.5*\U,-0.5*\U) node {$\bpt$};
	\draw (-0.5*\U,-1.5*\U) node {$\bpt$};
	\draw[red, fill=red] (-\U+2*\S, -\U) circle (1.4*\s);
	\draw[red, fill=red] (2*\S, \U) circle (1.4*\s);
	\draw[red, ->] (-\U+2*\S, -\U) -- (-0.5*\U, -\U);
	\draw[->] (-\U+2*\S, -0.6*\U) -- (-\U+2*\S, -0.5*\U);
	\draw[->] (2*\S, 2*\U-0.6*\U) -- (2*\S, 2*\U-0.5*\U);
	\draw[->>] (-\U+2*\S, -1.6*\U) -- (-\U+2*\S, -1.5*\U);
	\draw[->>] (2*\S, 2*\U-1.6*\U) -- (2*\S, 2*\U-1.5*\U);
\end{scope}

\begin{scope}[xshift=4cm, xscale=0.5, yscale=0.5]
\draw[step=\U, lightgray, densely dotted, thin, xshift=0.5*\U, yshift=0.5*\U] (-3.75*\U,-3.75*\U) grid (2.75*\U,2.75*\U);
\draw[step=\U, lightgray, thin, xshift=0.5*\U, yshift=0.5*\U] (-3.5*\U,-3.5*\U) grid (2.5*\U,2.5*\U);
	\foreach \x in {-1, 0, 1} {
		\begin{scope}[xshift=\x*\U, yshift=2*\x*\U]
		\drawimmersedcurvevar
		\draw[red, fill=red] (-\U+2*\S, -\U) circle (1.4*\s);
		\end{scope}
	}
	\foreach \x in {-2, -1, ..., 3}
	\foreach \y in {-3, -2, ..., 2} {
		\draw (\x*\U-0.5*\U,\y*\U+0.5*\U) node {\tiny $\bpt$};
	}
	\draw[red, fill=red] (\U+2*\S, 3*\U) circle (1.4*\s);
	\draw[red, densely dotted] (\U+2*\S, 3*\U) -- (\U+6*\S, 3*\U);
	\draw[red, densely dotted] (-2*\U+2*\S, -3*\U) -- (-2*\U-2*\S, -3*\U);
\end{scope}

\end{tikzpicture}
\caption{An oriented immersed curve shown in $T_M$ (left), $\overline T_M$ (middle), and $\widetilde T_M$ (right) for $M$ the exterior of the right-hand trefoil $K$. By drawing these pictures in the plane, we have implicitly made a choice of parametrizing curves $\alpha, \beta$ for $\partial M$: $\alpha = -\mu - 2\lambda$ and $\beta = 2\mu + 3\lambda$, where $\mu$ and $\lambda$ are the meridian and Seifert longitude of $K$. Note that in the middle figure for $\overline T_M$ the opposite sides of the strip are identified with a vertical translation by two units so that the endpoints of the curves in the figure are identified. From the left figure, we get that the immersed curve can be represented by the cyclic word $(\beta \alpha \beta \alpha \beta^{-1} \alpha^{-2} \beta^{-1} \alpha \beta \alpha)$.}
\label{fig:immersed-curve-example}
\end{figure}

While $\HFhat(M)$ is defined in the punctured torus $T_M$, it is often convenient to work in certain covering spaces of $T_M$. Let $\widetilde T_M$ denote the covering space $H_1(\partial M;\R) \setminus H_1(\partial M; \Z)$, which given a parametrization $\{\alpha, \beta\}$ of $\partial M$ we identify with $\R^2 \setminus \Z^2$ using the usual convention that $\alpha$ is in the positive vertical direction and $\beta$ is in the positive horizontal direction. We then consider the intermediate covering space $\overline T_M = \widetilde T_M / \langle \lambda \rangle$ where $\lambda$ is the homological longitude (that is, a generator of the kernel of the inclusion $i_*: H_1(\partial M; \Z) \to H_1(M; \Z)$). We remark that if $b_1(M) = 1$ then the multicurve $\HFhat(M)$ restricted to any \spinc structure of $M$ represents the homology class $\lambda$ \cite[Corollary 6.6]{HRW}, so $\lambda$ can be easily determined from the immersed curve invariant $\HFhat(M)$.

Let $p$ denote the covering map $\overline{T}_M \to T_M$. The covering space $\overline{T}_M$ allows us to encode further information about \spinc structures; in particular, given a \spinc structure $\s$ in $\Spinc(M)$ bordered Floer homology gives a collection of decorated immersed closed curves $\HFhat(M;\s)$ in $\overline T_M$ such that
$$\HFhat(M) = \bigcup_{\s \in \Spinc(M)} p(\HFhat(M; \s)).$$
The multicurve $\HFhat(M; \s)$ is defined only up to an overall shift by a deck transformation of the covering map $p$; it follows that $\HFhat(M; \s)$ is determined by $ p (\HFhat(M; \s))$ when the latter consists of a single immersed curve 
, though in general the choice of lift from $T_M$ to $\overline T_M$ contains additional information. Figure \ref{fig:immersed-curve-example} shows an immersed curve in $T_M$ along with lifts to both $\overline T_M$ and $\widetilde T_M$\footnote{Strictly speaking, the multicurve we consider in $\widetilde T_M$ is not a lift of the multicurve $\HFhat(M)$ in $T_M$ in the case that $\HFhat(M)$ contains homologically trivial components, though we will call it a lift by abuse of terminology. More precisely, we mean a preimage of the lift of $\HFhat(M)$ to $\overline T_M$ under the covering map $\widetilde T_M \to \overline T_M$. The preimage of a homologically essential curve in $\overline T_M$ is a single non-compact periodic curve in $\widetilde T_M$, but the preimage of a homologically inessential curve in $\overline T_M$ is infinitely many copies of a closed curve. The resulting multicurve in $\widetilde T_M$ is periodic, invariant under translation by $\lambda$.}.

Bordered Floer invariants satisfy a symmetry under conjugation of spin$^c$ structures. Let $r: T_M \to T_M$ be the elliptic involution, which takes $\alpha$ to $-\alpha$ and $\beta$ to $-\beta$, and let $\bar r$ be a lift of this map to $\overline T_M$. It was shown in \cite{HRW-companion} that conjugating a \spinc structure has the effect of reparameterizing the boundary by the elliptic involution.

\begin{proposition}[Theorem 37 of \cite{HRW-companion}]
For any manifold with torus boundary $M$ and any \spinc structure $\s$ we have
$$\HFhat(M; \bar \s) \simeq \bar r \left( \HFhat(M; \s) \right)$$
where $\bar \s$ denotes the \spinc structure conjugate to $\s$.
\end{proposition}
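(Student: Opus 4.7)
The plan is to lift the algebraic conjugation symmetry of bordered type D invariants to a geometric statement about immersed curves. The starting point is that if $\mathcal H$ is a bordered Heegaard diagram for $(M, \{\alpha, \beta\})$, then the orientation-reversed diagram $\overline{\mathcal H}$ is a bordered Heegaard diagram for $(M, \{-\alpha, -\beta\})$, and the standard complex-conjugation argument pairs the Heegaard Floer generators of $\mathcal H$ in the \spinc structure $\s$ with those of $\overline{\mathcal H}$ in the conjugate structure $\bar\s$, and matches holomorphic disks under complex conjugation of the almost-complex structure. This yields a type D equivalence
\[
\CFD(M, \{-\alpha, -\beta\}; \s) \;\simeq\; \CFD(M, \{\alpha, \beta\}; \bar\s),
\]
which is the bordered analogue of the closed symmetry $\HFhat(Y; \s) \cong \HFhat(Y; \bar\s)$.

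Next I would transport this identity through the immersed curve correspondence of \cite{HRW}. Because the multicurve $\HFhat(M)$ in $T_M$ is intrinsic to $M$ and independent of the boundary parameterization, the reparameterization $\{\alpha,\beta\} \mapsto \{-\alpha,-\beta\}$ acts on the pictures drawn in our standard conventions by the elliptic involution $r : T_M \to T_M$, which is precisely the map in the statement. Combining this with the algebraic identity above produces the unlifted equivalence $\HFhat(M; \bar\s) \simeq r\bigl(\HFhat(M; \s)\bigr)$ in $T_M$, where the \spinc refinement on each side is read off from the same algebraic datum via the two parameterizations.

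Finally I would promote the identity to the cover $\overline T_M$, where $\HFhat(M; \s)$ actually lives. Any lift $\bar r$ of $r$ covers the boundary reparameterization, and applying such a lift to $\HFhat(M; \s) \subset \overline T_M$ realizes the \spinc-refined version of the $T_M$-level equivalence, giving $\HFhat(M; \bar\s) \simeq \bar r\bigl(\HFhat(M; \s)\bigr)$.

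The main obstacle, and the place requiring the most care, is reconciling the ambiguities of lift. The \spinc-refined invariant $\HFhat(M; \s)$ is only well-defined in $\overline T_M$ up to the deck group of $\overline T_M \to T_M$, and the lift $\bar r$ is itself only well-defined up to the same deck group; one must verify that these two ambiguities cancel compatibly, and further that the induced map on $\Spinc(M)$ coming from the chosen lift genuinely corresponds to \spinc-conjugation rather than conjugation composed with some nontrivial element of the affine $H^1(M; \Z)$-action on $\Spinc(M)$. This reduces to a bookkeeping exercise matching the correspondence $\Spinc(M) \leftrightarrow \pi_0\bigl(\HFhat(M; -)\bigr)$ with the deck group of the covering $\overline T_M \to T_M$, at which point the proposition follows.
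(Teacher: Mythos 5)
This statement is cited from the external reference \cite{HRW-companion} (Theorem 37); the present paper does not prove it, so there is no in-paper proof to compare against. Your overall structure --- establish a conjugation symmetry at the level of $\CFD$, transport it through the immersed curve dictionary, and lift to $\overline{T}_M$ while reconciling the deck-group ambiguities --- is plausibly the shape of the argument in \cite{HRW-companion}, and your discussion of the lift bookkeeping is the right picture. However, your first step has a genuine gap.

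You assert that the orientation-reversed diagram $\overline{\mathcal{H}}$ is a bordered Heegaard diagram for $(M, \{-\alpha, -\beta\})$. This is not correct: reversing the orientation of the Heegaard surface of a bordered Heegaard diagram produces a diagram for $-M$, not for $M$ with a reparametrized boundary. In the closed-manifold proof of $\HFhat(Y;\s) \cong \HFhat(Y;\bar\s)$, one reverses the orientation of $\Sigma$ \emph{and} exchanges the roles of the $\alpha$- and $\beta$-curves; the two operations together return the original manifold, and the complex-conjugation map on disks is what produces the conjugate \spinc structure. In the bordered setting the $\alpha/\beta$ exchange is unavailable --- the $\alpha$-arcs carry the boundary parametrization and must stay $\alpha$-arcs --- so the ``standard complex-conjugation argument'' does not directly hand you the type D equivalence $\CFD(M, \{-\alpha,-\beta\}; \s) \simeq \CFD(M, \{\alpha,\beta\}; \bar\s)$. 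Note also that as \emph{unoriented} parametrizing curves $\{-\alpha,-\beta\}$ and $\{\alpha,\beta\}$ coincide; the elliptic involution $r$ is an orientation-preserving homeomorphism of $T_M$ that fixes $\alpha$ and $\beta$ setwise, so the relevant reparametrization is composition of $\phi: T^2 \to \partial M$ with $r$, which is a different operation from orientation reversal of $\mathcal{H}$. The algebraic content you actually need is visible in Table \ref{table:curves-to-CFD}: the elliptic involution corresponds to the arrow relabeling $\rho_1 \leftrightarrow \rho_3$, $\rho_{12}\leftrightarrow\rho_{23}$ (the anti-automorphism of the torus algebra), and producing this as a symmetry of $\CFD$ requires either a duality statement relating $\CFD(M)$ and $\CFA(-M)$ combined with the type A/type D reflection, or a direct citation of the bordered conjugation symmetry from Lipshitz--Ozsv\'ath--Thurston. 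Your proposal implicitly assumes this input rather than establishing it.
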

\noindent In particular, if $\s$ is a self-conjugate \spinc structure on $M$ then $\HFhat(M; \s)$ is fixed by $\bar r$. 

\subsection{Alternative notations}

In the course of some computations, we will also use a shorthand notation for cyclic words that appears in \cite{HW:loop-calculus}; we will refer to this as \emph{loop calculus notation}. Roughly speaking, loop calculus notation comes from cutting a reduced cyclic word into subwords along instances of $\beta$ or $\beta^{-1}$, including each $\beta$ with the subword following it and each $\beta^{-1}$ with the subword preceding it. The resulting subwords take the form of either $\beta \alpha^k \beta^{-1}$, $\alpha^k$, $\beta \alpha^k$, or $ \alpha^k \beta^{-1}$, where $k$ is any nonzero integer in the first two cases and any integer in the second two cases. These subwords are recorded by the letters $a_k, b_k, c_k,$ and $\overline{c_{-k}}$, respectively, which we will call \emph{loop calculus letters} to avoid confusion with the letters ($\alpha$ or $\beta$) in the original cyclic word. When the subscript is not relevant, we refer to the loop calculus letters as being of type $a$, $b$, $c$, or $\bar c$. Note that the subwords for letters of type $c$ and $\bar c$ are related; in general we use bars to indicate reading the corresponding subword backwards and inverting each letter. Note that $\overline{a_k} = a_{-k}$ and $\overline{b_k} = b_{-k}$, so we do not need to consider $\bar a$ and $\bar b$ as separate letter types, but there is no $\ell$ for which $\overline{c_k} = c_\ell$.  We can now write a cyclic word in the letters $\alpha$ and $\beta$ in a more compact form by writing it as a cyclic word of loop calculus letters; note that a subword that ends (resp. does not end) with $\beta^{-1}$ must be followed by a subword that does not start (resp. starts) with $\beta$, which imposes constraints on the sequence of loop calculus letters---for example, a type $\bar c$ letter can not immediately follow a type $c$ letter. As an example, the cyclic word $(\beta \alpha \beta \alpha \beta^{-1} \alpha^{-2} \beta^{-1} \alpha \beta \alpha)$ representing the immersed curve in Figure \ref{fig:immersed-curve-example} can be written in loop calculus notation as the cyclic word $(c_{1} a_1 \overline{c_2} b_1 c_1)$. 

In terms of immersed curves, loop calculus notation corresponds to cutting an immersed curve (viewed as a smoothing of a sequence of horizontal and vertical segments) near the left end of each horizontal segment. Each of the resulting curve segments is represented by a loop calculus letter, where the types $a$, $b$, $c$ or $\bar{c}$ encode the direction the curve travels at the two ends of a segment and the subscript encodes the vertical movement of the curve along that segment; this correspondence is summarized in Figure \ref{fig:loop-calculus-curve-segments}. As an example, the reader is invited to try reading off the loop calculus cyclic word $(c_{1} a_1 \overline{c_2} b_1 c_1)$ directly from the immersed curve in Figure \ref{fig:immersed-curve-example}. Loop calculus notation was a precursor to the immersed curve formulation of bordered Floer invariants, but it is still convenient in some situations; in particular, a simple algorithm for computing bordered Floer invariants of many graph manifold rational homology spheres is described using this notation in \cite[Section 6]{HW:loop-calculus}. 

\begin{figure}[]
\centering
\begin{tikzpicture}[on top/.style={preaction={draw=white,-,line width=#1}},
on top/.default=4pt]
\def\U{1cm}
\def\adj{0.2cm}
\def\s{0.03cm}
\def\u{0.07cm}
\def\S{0.13cm}

\begin{scope}[xshift=-3*\U]
\draw[<->] (0,-1.5*\U) -- (0, 1.5*\U) node[midway, anchor=east] {$k$};
\end{scope}

\begin{scope}[xshift=-2*\U]
	
	\begin{scope}
	\clip (-0.75*\U, -2*\U) rectangle (0.75*\U, 2*\U);
	\draw[step=\U, lightgray, thin] (-0.5*\U, -1.75*\U) grid (0.5*\U, 1.75*\U);
	
	\draw (0,+\U) node {$\bpt$};
	\draw (0,0.1) node {\Huge $\vdots$};
	\draw (0,-\U) node {$\bpt$};
	
	\draw[red, densely dotted] (-2*\S, -1.5*\U) -- (0, -1.5*\U);
	\draw[red, fill=red] (0, -1.5*\U) circle (\s);
	\draw[red] (0, -1.5*\U) -- (2*\S, -1.5*\U);
	\draw[red] (2*\S, -1.5*\U) .. controls (3*\S, -1.5*\U) and (3*\S, -1.5*\U) .. 		(3*\S, -1.5*\U+\S);
	\draw[red, ->] (3*\S, -1.5*\U+\S) -- (3*\S, 0);
	\begin{scope}[yscale=-1]
	\draw[red, densely dotted] (-2*\S, -1.5*\U) -- (0, -1.5*\U);
	\draw[red, fill=red] (0, -1.5*\U) circle (\s);
	\draw[red] (0, -1.5*\U) -- (2*\S, -1.5*\U);
	\draw[red] (2*\S, -1.5*\U) .. controls (3*\S, -1.5*\U) and (3*\S, -1.5*\U) .. 		(3*\S, -1.5*\U+\S);
	\draw[red] (3*\S, -1.5*\U+\S) -- (3*\S, 0);
	\end{scope}
	\end{scope}
	
	\draw (0,-2*\U) node[] {\textcolor{red}{$a_k$}};
\end{scope}

\begin{scope}[xscale=-1]
	
	\begin{scope}
	\clip (-0.75*\U, -2*\U) rectangle (0.75*\U, 2*\U);
	\draw[step=\U, lightgray, thin] (-0.5*\U, -1.75*\U) grid (0.5*\U, 1.75*\U);
	
	\draw (0,+\U) node {$\bpt$};
	\draw (0,0.1) node {\Huge $\vdots$};
	\draw (0,-\U) node {$\bpt$};
	
	\draw[red, densely dotted] (-2*\S, -1.5*\U) -- (0, -1.5*\U);
	\draw[red, fill=red] (0, -1.5*\U) circle (\s);
	\draw[red] (0, -1.5*\U) -- (2*\S, -1.5*\U);
	\draw[red] (2*\S, -1.5*\U) .. controls (3*\S, -1.5*\U) and (3*\S, -1.5*\U) .. 		(3*\S, -1.5*\U+\S);
	\draw[red, ->] (3*\S, -1.5*\U+\S) -- (3*\S, 0);
	\begin{scope}[yscale=-1]
	\draw[red, densely dotted] (-2*\S, -1.5*\U) -- (0, -1.5*\U);
	\draw[red, fill=red] (0, -1.5*\U) circle (\s);
	\draw[red] (0, -1.5*\U) -- (2*\S, -1.5*\U);
	\draw[red] (2*\S, -1.5*\U) .. controls (3*\S, -1.5*\U) and (3*\S, -1.5*\U) .. 		(3*\S, -1.5*\U+\S);
	\draw[red] (3*\S, -1.5*\U+\S) -- (3*\S, 0);
	\end{scope}
	\end{scope}
	
	\draw (0,-2*\U) node[] {\textcolor{red}{$b_k$}};
\end{scope}

\begin{scope}[xshift=2*\U]
	
	\begin{scope}
	\clip (-0.75*\U, -2*\U) rectangle (1.75*\U, 2*\U);
	\draw[step=\U, lightgray, thin] (-0.5*\U, -1.75*\U) grid (1.5*\U, 1.75*\U);
	
	\draw (0,+\U) node {$\bpt$};
	\draw (0,0.1) node {\Huge $\vdots$};
	\draw (0,-\U) node {$\bpt$};
	\draw (\U,+\U) node {$\bpt$};
	\draw (\U,0.1) node {\Huge $\vdots$};
	\draw (\U,-\U) node {$\bpt$};
	
	\draw[red, densely dotted] (-2*\S, -1.5*\U) -- (0, -1.5*\U);
	\draw[red, fill=red] (0, -1.5*\U) circle (\s);
	\draw[red] (0, -1.5*\U) -- (0.5*\U-\S, -1.5*\U);
	\draw[red] (0.5*\U-\S, -1.5*\U) .. controls (0.5*\U, -1.5*\U) and (0.5*\U, -1.5*\U) .. 		(0.5*\U, -1.5*\U+\S);
	\draw[red, ->] (0.5*\U, -1.5*\U+\S) -- (0.5*\U, 0);
	\begin{scope}[yscale=-1, xscale=-1, xshift=-\U]
	\draw[red, densely dotted] (-2*\S, -1.5*\U) -- (0, -1.5*\U);
	\draw[red, fill=red] (0, -1.5*\U) circle (\s);
	\draw[red] (0, -1.5*\U) -- (0.5*\U-\S, -1.5*\U);
	\draw[red] (0.5*\U-\S, -1.5*\U) .. controls (0.5*\U, -1.5*\U) and (0.5*\U, -1.5*\U) .. 		(0.5*\U, -1.5*\U+\S);
	\draw[red] (0.5*\U, -1.5*\U+\S) -- (0.5*\U, 0);
	\end{scope}
	\end{scope}
	
	\draw (0.5*\U,-2*\U) node[] {\textcolor{red}{$c_k$}};
\end{scope}

\begin{scope}[xshift=5*\U]
	
	\begin{scope}
	\clip (-0.75*\U, -2*\U) rectangle (1.75*\U, 2*\U);
	\draw[step=\U, lightgray, thin] (-0.5*\U, -1.75*\U) grid (1.5*\U, 1.75*\U);
	
	\draw (0,+\U) node {$\bpt$};
	\draw (0,0.1) node {\Huge $\vdots$};
	\draw (0,-\U) node {$\bpt$};
	\draw (\U,+\U) node {$\bpt$};
	\draw (\U,0.1) node {\Huge $\vdots$};
	\draw (\U,-\U) node {$\bpt$};
	
	\draw[red, densely dotted] (-2*\S, -1.5*\U) -- (0, -1.5*\U);
	\draw[red, fill=red] (0, -1.5*\U) circle (\s);
	\draw[red] (0, -1.5*\U) -- (0.5*\U-\S, -1.5*\U);
	\draw[red] (0.5*\U-\S, -1.5*\U) .. controls (0.5*\U, -1.5*\U) and (0.5*\U, -1.5*\U) .. 		(0.5*\U, -1.5*\U+\S);
	\draw[red] (0.5*\U, -1.5*\U+\S) -- (0.5*\U, 0);
	\begin{scope}[yscale=-1, xscale=-1, xshift=-\U]
	\draw[red, densely dotted] (-2*\S, -1.5*\U) -- (0, -1.5*\U);
	\draw[red, fill=red] (0, -1.5*\U) circle (\s);
	\draw[red] (0, -1.5*\U) -- (0.5*\U-\S, -1.5*\U);
	\draw[red] (0.5*\U-\S, -1.5*\U) .. controls (0.5*\U, -1.5*\U) and (0.5*\U, -1.5*\U) .. 		(0.5*\U, -1.5*\U+\S);
	\draw[red, ->] (0.5*\U, -1.5*\U+\S) -- (0.5*\U, 0);
	\end{scope}
	\end{scope}
	
	\draw (0.5*\U,-2*\U) node[] {\textcolor{red}{$\overline{c}_k$}};
\end{scope}

\end{tikzpicture}
\caption{The segments of an immersed curve corresponding to letters in loop calculus notation.}
\label{fig:loop-calculus-curve-segments}
\end{figure}

For readers more familiar with the notation of \cite{LOT}, we remark that it is easy to recover the type D structure $\CFD(M, \alpha, \beta)$ from the immersed curve representatives for $(M, \alpha, \beta)$. If an immersed curve is expressed as a cyclic word in $\alpha$ and $\beta$, each $\alpha$ or $\alpha^{-1}$ gives a generator of $\CFD$ with idempotent $\iota_1$, each $\beta$ or $\beta^{-1}$ gives a generator of $\CFD$ with idempotent $\iota_0$, and consecutive letters in a cyclic word are connected by an arrow (representing a term in the differential) as shown in Table \ref{table:curves-to-CFD}. For example, the immersed curve in Figure \ref{fig:immersed-curve-example} corresponds to the type D structure shown below:

\begin{center}
\begin{tikzpicture}[scale = 1.5]
\node(a) at (0,0) {$\bullet$};
\node(b) at (1,0) {$\circ$};
\node(c) at (2,0) {$\bullet$};
\node(d) at (3,0) {$\circ$};
\node(e) at (4,0) {$\bullet$};
\node(f) at (5,0) {$\circ$};
\node(g) at (6,0) {$\circ$};
\node(h) at (7,0) {$\bullet$};
\node(i) at (8,0) {$\circ$};
\node(j) at (9,0) {$\bullet$};
\node(k) at (10,0) {$\circ$};

\draw[->] (a) to node[above] {$\rho_3$} (b);
\draw[<-] (b) to node[above] {$\rho_{1}$} (c);
\draw[->] (c) to node[above] {$\rho_3$} (d);
\draw[->] (d) to node[above] {$\rho_2$} (e);
\draw[->] (e) to node[above] {$\rho_{1}$} (f);
\draw[<-] (f) to node[above] {$\rho_{23}$} (g);
\draw[<-] (g) to node[above] {$\rho_3$} (h);
\draw[->] (h) to node[above] {$\rho_{123}$} (i);
\draw[<-] (i) to node[above] {$\rho_1$} (j);
\draw[->] (j) to node[above] {$\rho_3$} (k);
\draw[<-, bend left = 20] (k) to node[above] {$\rho_{1}$} (a);
\end{tikzpicture}
\end{center}

\noindent Conversely, a type D structure $\CFD$ over the torus algebra can immediately be represented by a collection of cyclic words in $\{\alpha^{\pm 1}, \beta^{\pm 1} \}$, and thus by a collection of immersed curves, using Table \ref{table:curves-to-CFD}, provided that the basis of $\CFD$ is such that every generator is attached to exactly two arrows. The main content of \cite{HRW} is that it is always possible to choose such a basis for a type D structure over the torus algebra, or to choose a basis that nearly has this property in some precise sense (this latter case gives rise to immersed curves decorated by non-trivial local systems).

\begin{table}
\begin{tabular}{r|cccccc}
\hline
Pair of letters & $\beta^{-1} \alpha^{-1}$ & $\beta^{-1} \beta^{-1}$ & $\beta^{-1} \alpha$ & $\alpha \beta^{-1}$ & $\alpha \alpha$ & $\beta \alpha$ \\
Arrow & $\bullet \overset{\rho_{1}}{\longrightarrow} \circ$ & $\bullet \overset{\rho_{12}}{\longrightarrow} \bullet$ & $\bullet \overset{\rho_{123}}{\longrightarrow} \circ$ & $\circ \overset{\rho_{2}}{\longrightarrow} \bullet$& $\circ \overset{\rho_{23}}{\longrightarrow} \circ$ & $\bullet \overset{\rho_{3}}{\longrightarrow} \circ$ \\

\hline
Pair of letters & $\alpha\beta$ & $\beta\beta$ & $\alpha^{-1} \beta$ & $\beta \alpha^{-1}$ & $\alpha^{-1} \alpha^{-1}$ & $\alpha^{-1} \beta^{-1}$ \\
Arrow & $\circ \overset{\rho_{1}}{\longleftarrow} \bullet$ & $\bullet \overset{\rho_{12}}{\longleftarrow} \bullet$ & $\circ \overset{\rho_{123}}{\longleftarrow} \bullet$ & $\bullet \overset{\rho_{2}}{\longleftarrow} \circ$& $\circ \overset{\rho_{23}}{\longleftarrow} \circ$ & $\circ \overset{\rho_{3}}{\longleftarrow} \bullet$ \\

\hline
\end{tabular}
\caption{Arrows in $\CFD(M, \alpha, \beta)$ corresponding to pairs of consecutive letters in $\HFhat(M, \alpha, \beta)$.}
\label{table:curves-to-CFD}
\end{table}

\subsection{Pairing}

An important feature of bordered Floer homology is a pairing theorem that recovers $\HFhat(M_1 \cup M_2)$ given the bordered invariants of two manifolds with torus boundary $M_1$ and $M_2$. In the language of immersed curves, the pairing theorem is stated in terms of the intersection Floer homology of the two immersed multicurves in the punctured torus. More precisely, let $\phi: \partial M_1 \to \partial M_2$ be an orientation-reversing gluing map by which the two manifolds are identified. For $i \in \{1,2\}$, we choose basepoints $z_i \in \partial M_i$ such that $\phi(z_1) = z_2$ and let $\HFhat(M_i)$ denote the bordered invariant of $M_i$, which is a collection of (decorated) immersed curves in $\partial M_i \setminus z_i$. We now have that $\phi(\HFhat(M_1))$ and $\HFhat(M_2)$ are both decorated multcurves in the punctured torus $\partial M_2 \setminus z_2$. The pairing theorem asserts that
\begin{equation}\label{eq:pairing}
\HFhat(M_1 \cup_\phi M_2) \cong HF( \HFhat(M_2), \phi(\HFhat(M_1)) ),\
\end{equation}
where $HF$ on the right denotes intersection Floer homology in the punctured torus. Since intersection Floer homology is invariant under homotopy of the immersed curves, we can usually arrange that the curves intersect minimally; in this case the Floer complex has no differentials and we simply have that $\HFhat(M_1 \cup_\phi M_2)$ is a vector space generated by the intersections of $\HFhat(M_2)$ and $\phi(\HFhat(M_1))$. The one exception to this is when a curve in $\HFhat(M_2)$ is homotopic to a curve in $\phi(\HFhat(M_1))$; in this case admissibility considerations require a non-minimal intersection.  We remark, however, that this exception is not relevant in this paper since a homologically nontrivial curve in $\HFhat(M_2)$ is never homotopic to a curve in $\phi(\HFhat(M_1))$ if $M_1 \cup_\phi M_2$ is a rational homology sphere.

A refined version of the bordered  pairing theorem recovers the \spinc decomposition on a closed three manifold $Y = M_1 \cup_\phi M_2$, as described in \cite[Section 6.3]{HRW}.  For this, we work in the covering space $\overline T_Y = \widetilde T_{M_2} / \langle \lambda_2, \phi_*(\lambda_1) \rangle$, where $\lambda_i$ is the homological longitude of $M_i$, which is covered by both $\overline T_{M_2}$ and $\overline T_{M_1}$. We let $ \overline p_i$ denote the projection from $\overline T_{M_i}$ to $\overline T_Y$. Projection gives a surjective map
$$\pi: \Spinc(Y) \to \Spinc(M_1) \times \Spinc(M_2),$$
and for $\s_i \in \Spinc(M_i)$ the set $\pi^{-1}(\s_1 \times \s_2)$ is a torsor over $H_Y = H_1(\partial M_2)/ \langle \lambda_2, \phi_*(\lambda_1) \rangle$. The refined pairing theorem says that for each $\s_i \in \Spinc(M_i)$ and each $\s \in \pi^{-1}(\s_1 \times \s_2)$, there is an element $\alpha_{\s}$ of $H_Y$ such that $\HFhat(Y; \s)$ is given by the intersection Floer homology of $ \overline p_2(\HFhat(M_2; \s_2))$ and $ \alpha_{\s} \cdot \overline p_1( \HFhat(M_1; \s_1 ))$ in $\overline T_Y$. In other words, we consider the intersection Floer homology of a fixed lift of $\HFhat(M_2)$ from $T_{M_2}$ to $\overline T_Y$ and $m$ different lifts of $\phi(\HFhat(M_1))$ from $T_{M_2}$ to $\overline T_Y$, where $m = |H_Y|$, so that every intersection point in $T_{M_2}$ between $\HFhat(M_2)$ and $\phi(\HFhat(M_1))$ lifts to an intersection point in $\overline T_Y$. Then two intersection points $x$ and $y$ in $\overline T_Y$ give rise to generators in the same \spinc structure of $Y$ if and only if $x$ and $y$ both lie on the same lift of $\phi(\HFhat(M_1))$. 

In fact, we do not require the full pairing theorem in this paper. The only gluing that will be needed is the special case when $M_1$ is the solid torus $D^2 \times S^1$ and the gluing map takes the meridian $\partial D^2 \times \{pt\}$ to the parametrizing curve $\beta$ in $M_2$.  In this case $M_1 \cup_\phi M_2$ is the Dehn filling of $M_2$ along $\beta$, which we will also denote $M_2(\beta)$. Recall that our convention is to represent $\partial M_2$ as a square with opposite sides identified such that $\beta$ is horizontal and $\alpha$ is vertical; in this special case of filling $M_2$ along $\beta$, $\dim\HFhat(M_1 \cup_\phi M_2)$ counts the minimal intersection of $\HFhat(M_2)$ with a horizontal curve homotopic to $\beta$. An example of such a pairing, where $\HFhat(M_2)$ is the curve from Figure \ref{fig:immersed-curve-example}, is shown in Figure \ref{fig:pairing-example}.


\begin{figure}[]
\centering
\begin{tikzpicture}[on top/.style={preaction={draw=white,-,line width=#1}},
on top/.default=4pt]
\def\U{1.5cm}
\def\adj{0.3cm}
\def\s{0.05cm}
\def\u{0.1cm}
\def\S{0.15cm}

\begin{scope}[xshift=-4.5cm]

\begin{scope}
\clip (-\U, -\U) rectangle (\U, \U);
\draw[fill=black, opacity=0.1] (-3*\s, -10*\s) .. controls (-3*\s, -8*\s) and (-3*\s, -7*\s) ..
      (-3*\s, -6*\s) .. controls (-3*\s, -6*\s+\s) and (-7*\s+2*\s, -4*\s) ..
      (-7*\s, -4*\s) --
      (-\U, -4*\s) --
      (-\U, -10*\s) --
      cycle;
\draw[fill=black, opacity=0.1] (5*\s, -10*\s) --
      (5*\s, -6*\s) .. controls (5*\s, -5*\s) and (6*\s, -4*\s) ..
      (7*\s, -4*\s) --
      (\U, -4*\s) --
      (\U, -10*\s) --
      cycle;
\foreach \x in {-2*\U, 0, 2*\U}
\foreach \y in {-2*\U, 0, 2*\U} {
\begin{scope}[xshift=\x, yshift=\y]
\drawimmersedcurve
\end{scope}
}

\draw[blue] (-\U, -10*\s) -- (\U, -10*\s);
\draw[blue] (0.5*\U,-10*\s) node[anchor=north] {\scriptsize $\beta_A$};
\end{scope}

\draw[red, fill=red] (7*\s,0) circle (0.7*\s);
\draw[red] (0.5*\U-\s, -\s) -- (0.5*\U, 0) -- (0.5*\U-\s, \s);

\draw (-\U, -\U) rectangle (\U, \U);
\draw (\U,\U) node {$\bpt$};

\draw[->] (-\U, 0) -- (-\U, 0.5*\U);
\draw[->] (\U, 0) -- (\U, 0.5*\U);
\draw[->>] (0, \U) -- (0.5*\U, \U);
\draw[->>] (0, -\U) -- (0.5*\U, -\U);

\end{scope}

\begin{scope}[yshift=0.25*\U]
\begin{scope}[xscale=0.5, yscale=0.5]
	\begin{scope}
	\clip (-2*\U+6*\u, -2.5*\U) rectangle (6*\u, 1.5*\U);
		\begin{scope}[yshift=-2*\U]
		\draw[fill=black, opacity=0.1] (-3*\s, -10*\s) .. controls (-3*\s, -8*\s) and (-3*\s, -7*\s) ..
      		(-3*\s, -6*\s) .. controls (-3*\s, -6*\s+\s) and (-7*\s+2*\s, -4*\s) ..
      		(-7*\s, -4*\s) --
      		(-2*\U, -4*\s) --
      		(-2*\U, -10*\s) --
      		cycle;
		\draw[fill=black, opacity=0.1] (5*\s, -10*\s) --
      		(5*\s, -6*\s) .. controls (5*\s, -5*\s) and (6*\s, -4*\s) ..
      		(7*\s, -4*\s) --
      		(\U, -4*\s) --
      		(\U, -10*\s) --
      		cycle;
		\end{scope}
	\foreach \x in {-2, 0, 2}
	\foreach \y in {-4,0,4} {
	\begin{scope}[xshift=\x*\U, yshift=\y*\U]
	\drawimmersedcurve
	\end{scope}
	}
	\draw[green] (-2*\U, -10*\s) -- (\U, -10*\s);
	\draw[blue] (-2*\U, -2*\U-10*\s) -- (\U, -2*\U-10*\s);
	\draw (-\U,-\U) node {$\bpt$};
	\draw (-\U,\U) node {$\bpt$};
	\end{scope}
	\draw (-2*\U+6*\u, -2.5*\U) rectangle (6*\u, 1.5*\U);
	\draw[green] (0.8*\U, -10*\s) node {\scriptsize $\overline{\beta_A}$};
	\draw[blue] (0.8*\U, -2*\U-10*\s) node {\scriptsize $\overline{\beta_A}$};
	
	\draw[->] (-2*\U+6*\u, -\U) -- (-2*\U+6*\u, -0.5*\U);
	\draw[->] (6*\u, -\U) -- (6*\u, -0.5*\U);
	\draw[->>] (-\U, -2.5*\U) -- (-\U+6*\u, -2.5*\U);
	\draw[->>] (-\U, 1.5*\U) -- (-\U+6*\u, 1.5*\U);
\end{scope}
\end{scope}

\begin{scope}[xshift=4.5cm, xscale=0.5, yscale=0.5]

\draw[step=\U, lightgray, densely dotted, thin, xshift=0.5*\U, yshift=0.5*\U] (-3.75*\U,-3.75*\U) grid (2.75*\U,2.75*\U);
\draw[step=\U, lightgray, thin, xshift=0.5*\U, yshift=0.5*\U] (-3.5*\U,-3.5*\U) grid (2.5*\U,2.5*\U);

	\draw[fill=black, opacity=0.1] (-\U, -\U-3*\S) --
	(-\U, -\U-2*\S) .. controls (-\U, -\U-\S) and (-\U, -\U-\S) ..
	(-\U+\S, -\U-\S) --
	(-\S, -\U-\S) .. controls (0, -\U-\S) and (0, -\U-\S) ..
	(0, -\U-2*\S) --
	(0, -\U-3*\S) --
	cycle;
	
	\foreach \x in {-1, 0, 1} {
		\begin{scope}[xshift=\x*\U, yshift=2*\x*\U]
		\drawimmersedcurvevar
		\draw[red, fill=red] (-\U+2*\S, -\U) circle (1.4*\s);
		\end{scope}
	}
	\foreach \x in {-2, -1, ..., 3}
	\foreach \y in {-3, -2, ..., 2} {
		\draw (\x*\U-0.5*\U,\y*\U+0.5*\U) node {\tiny $\bpt$};
	}
	\draw[red, fill=red] (\U+2*\S, 3*\U) circle (1.4*\s);
	\draw[red, densely dotted] (\U+2*\S, 3*\U) -- (\U+6*\S, 3*\U);
	\draw[red, densely dotted] (-2*\U+2*\S, -3*\U) -- (-2*\U-2*\S, -3*\U);
	
	\draw[green] (-3*\U, -3*\S) -- (3*\U, -3*\S) node[anchor=west] {\scriptsize $\widetilde{\beta_A}$};
	\draw[blue] (-3*\U, -\U-3*\S) -- (3*\U, -\U-3*\S) node[anchor=west] {\scriptsize $\widetilde{\beta_A}$};
\end{scope}

\end{tikzpicture}
\caption{Pairing the immersed curve from Figure \ref{fig:immersed-curve-example}, in (transverse) rectilinear position, with the representative $\beta_A$ of the homology class of $\beta$. The pairing is shown in $T_{M_2}$, $\overline{T}_Y$, and $\widetilde{T}_{M_2}$ from left to right, with two lifts of $\beta_A$ shown in the covering spaces. Note that there is one intersection for each vertical segment in the curve but the curves are not in minimal position; a bigon between two intersection points is shaded. }
\label{fig:pairing-example}
\end{figure}

To recover the \spinc decomposition of $\HFhat(M_1 \cup M_2)$ in the special case that $\phi(\HFhat(M_1))$ is homotopic to $\beta$ in $T_{M_2}$, we consider the covering space $\overline T_Y = \overline T_{M_2} / \langle \phi_*( \lambda_1) \rangle$. Since $\lambda_1$ is given by the homology class of $\HFhat(M_1)$, we have $\overline T_Y = \overline T_{M_2} / \langle \beta \rangle$. Suppose $\lambda_2 = m[\alpha] + n[\beta]$; we will assume that $m \neq 0$, since otherwise $Y$ is not a rational homology sphere. If we identify $\widetilde T_{M_2}$ with $\R^2 \setminus \Z^2$ in the usual way, we have that $\overline T_Y$ is $\R^2 / \langle (1,0), (0, m)\rangle$ with the integer lattice points removed; in other words $\overline T_Y$ may be viewed as an $m \times 1$ rectangle (with $m$ punctures) with opposite sides identified. There are $m$ distinct lifts in $\overline T_Y$ of the horizontal line $\beta_A$ representing $\beta$ in $T_{M_2}$ (see Figure \ref{fig:pairing-example}), and for any $\s \in \Spinc(M_2)$ pairing any one of these lifts with $\HFhat(M_2; \s)$ gives $\HFhat(Y; \s')$ in one of the $m$ \spinc structures $\s'$ of $Y$ that restrict to $\s$ on $M_2$.
It is often convenient to lift further to $\widetilde T_{M_2}$; note that each lift of $\beta_A$ to $\overline T_Y$ lifts to infinitely many horizontal lines in $\widetilde T_{M_2}$, but
since the lifts of the curves $\HFhat(M_2)$ to $\widetilde T_{M_2}$ are periodic
we can pick any one of these lifts and use it to compute $\HFhat(Y)$ in the given \spinc structure. Continuing the example of the curve from Figures \ref{fig:immersed-curve-example}, Figure \ref{fig:pairing-example} also shows the pairing in $\overline T_Y$ and $\widetilde T_{M_2}$. There are two \spinc structures on $Y$, and the rank of $\HFhat(Y)$ is 1 in one \spinc structure and 3 in the other \spinc structure.

\subsection{Preferred forms for immersed curves}\label{sec:preferred-forms}

In the special case that $M_1 \cup M_2$ is Dehn filling $M_2$ along $\beta$, it is not difficult to extract $\dim \HFhat(M_1 \cup_\phi M_2)$ directly from the cyclic words representing the immersed curves in $\HFhat(M_2)$. To explain this, we will now discuss a few special representatives of the homotopy classes of immersed curves in $\HFhat(M_2)$. By assumption $\phi( \HFhat(M_1) )$ is homotopic to $\beta$ in $T_{M_2} \simeq (\R^2 / \Z^2) \setminus (0,0)$. We will consider two specific representatives of $[\beta]$ in $T_{M_2}$: we let $\beta_{sym}$ denote the horizontal curve $[0,1] \times \left\{\frac 1 2\right\}$ and, fixing a small $\epsilon > 0$, we let $\beta_A$ denote the curve $[0,1] \times \left\{\frac 1 2 - \epsilon\right\}$. To represent the collection of homotopy classes of immersed curves in $\HFhat(M_2)$ as explicit curves in $T_{M_2}$, we first construct them from the collection of cyclic words $\HFhat(M_2, \alpha, \beta)$ by concatenating length one horizontal and vertical segments, with each segment lying on $[0,1] \times \left\{ \frac 1 2 \right\}$ or $\left\{ \frac 1 2 \right\} \times [0,1]$, and smoothing the corners is a standard way. We will say an immersed curve is in \emph{rectilinear position} if it has this form. Note that curves in rectilinear position do not have transverse self-intersection, making them difficult to draw, so for convenience we often perturb the curve to achieve transverse self-intersection, allowing the horizontal and vertical segments to lie anywhere in an $\epsilon$ neighborhood of the original segments; we say the resulting curve is in \emph{transverse rectilinear position}. For example, the curves in Figure \ref{fig:pairing-example} are in transverse rectilinear position. 

If $\HFhat(M_2)$ is in (transverse) rectilinear position and we use $\beta_A$ to represent the image $\phi( \HFhat(M_1) )$, as in Figure \ref{fig:pairing-example}, then there is clearly one intersection point for each vertical segment in $\HFhat(M_2)$ or equivalently for each copy of $\alpha$ or $\alpha^{-1}$ in $\HFhat(M_2, \alpha, \beta)$. In fact, following the discussion in \cite{HRW}, when the curves are in this position there is an isomorphism at the chain level between the Lagrangian Floer chain complex and the box tensor product of the type A structure represented by $\beta_A$ with the type D structure represented by $\HFhat(M_2)$. More precisely, the pairing position described in \cite{HRW} can be obtained from rectilinear position by pushing horizontal and vertical segments of the multicurve curve representing the type D structure upward and rightward so that the endpoints of segments lie in the top right quadrant of the square and by pushing the curve representing the type A structure downward toward the lower left quadrant, though it is clear that only the latter translation suffices\footnote{This may seem to be the mirror image of the convention described in \cite{HRW}, but this is because we choose to work in $\partial M_2$ while \cite{HRW} describes the pairing in $\partial M_1$.}. Recall that each length one horizontal or vertical segment of the multicurve $\HFhat(M_2)$ corresponds to a generator of $\CFD(M_2,\alpha, \beta)$ with idempotent $\iota_0$ or $\iota_1$, respectively, and the curve $\beta_A$ corresponds to a type A structure with a single generator in the idempotent $\iota_1$; thus the fact that Floer homology picks out the vertical segments of $\HFhat(M_2)$ corresponds to the fact that the unique generator of the type A structure pairs with each $\iota_1$-generator of the type D structure.

It is useful that the pairing position described above gives a direct connection to the box tensor product of type A and type D modules, but this position has some disadvantages. The first drawback is that the curves are not in minimal position, so intersection points do not correspond directly to generators of Floer homology. The second drawback is that these diagrams are not symmetric; we will see that the curves involved are symmetric up to homotopy under the elliptic involution, but the representatives we have chosen of the homotopy class are not themselves fixed by that involution. Our arguments rely on the elliptic involution symmetry of $\HFhat(M_2)$, so we prefer to use representatives of both $\beta$ and $\HFhat(M_2)$ that respect this symmetry. We will address both these issues by using the representative $\beta_{sym}$ of $[\beta]$, which is fixed by the involution, and perturbing our curves representing $\HFhat(M_2)$ further into a form that we call \emph{symmetric $\beta$-pairing position}.

Before defining symmetric $\beta$-pairing position, we observe that any cyclic word $w$ can be written in the form
$$(\alpha^{s_1} \beta^{k_1} \alpha^{s_2} \beta^{k_2} \cdots \alpha^{s_\ell} \beta^{k_\ell}),  $$
where each $s_i \in \{\pm 1\}$ and each $k_i$ can be any integer. Note that the powers $k_i$ may be zero, though since we only allow reduced words we do not allow $k_i$ to be zero if the exponents on $\alpha$ before and after $\beta^{k_i}$ have opposite sign. Here $\ell$ is the total number of instances of $\alpha$ or $\alpha^{-1}$ in $w$; we may assume that $\ell > 0$, since if a cyclic word in $\HFhat(M_2)$ consists only of a power of $\beta$ then $M_2(\beta)$ is not a rational homology sphere. 
Given an immersed curve in rectilinear position, writing the corresponding cyclic word in the above form corresponds to viewing the decomposition into horizontal and vertical segments differently. Each instance of $\alpha^{s_i}$ represents a length one vertical segment as usual, but each instance of $\beta^{k_i}$ represents the concatenation of all the horizontal segments between two successive vertical segments. We will refer to this concatenation of length one horizontal segments as a \emph{maximal horizontal segment} of the immersed curve, and when necessary to avoid confusion we will refer to the length one segments discussed previously as \emph{unit segments}. Thus writing a cyclic word in the form above corresponds to viewing the immersed curve as alternating between unit vertical segments and maximal horizontal segments, noting that maximal horizontal segments may have length zero.

To put the curve in symmetric $\beta$-pairing position, we now move the top endpoint of each unit vertical segment down by $\epsilon/2$ and the bottom endpoint of each vertical segment up by $\epsilon/2$, and we slide each half of each maximal horizontal segment upward or downward by $\epsilon/2$ according to how the endpoint of the adjacent vertical segment moves. If the two halves of a horizontal segment move in opposite directions, we connect them by a short vertical piece of curve of length $\epsilon$. We remark that any maximal horizontal segment of length zero becomes, counterintuitively, just a vertical piece of length $\epsilon$ but we continue to refer to this as a (length zero) maximal horizontal segment. The resulting curve, after smoothing corners in a standard way, is said to be in symmetric $\beta$-pairing position. It is clear that when $\HFhat(M_2)$ is in symmetric $\beta$-pairing position it intersects $\beta_{sym}$ minimally and transversally, and that both curves are symmetric under the elliptic involution. Figure \ref{fig:symmetric-pairing-position} shows the curve from Figures \ref{fig:immersed-curve-example} and \ref{fig:pairing-example} in symmetric $\beta$-pairing position and paired with (lifts of) $\beta_{sym}$ in $\overline{T}_Y$ and $\widetilde{T}_{M_2}$. We remark that it is not in general possible to perturb these curves to achieve transverse self-intersection while preserving symmetry; in figures we perturb the curves slightly for clarity, but they should be understood to be in (likely non-transverse) symmetric position. 


\begin{figure}[]
\centering
\begin{tikzpicture}[on top/.style={preaction={draw=white,-,line width=#1}},
on top/.default=4pt]
\def\U{1.5cm}
\def\adj{0.3cm}
\def\s{0.05cm}
\def\u{0.1cm}
\def\S{0.15cm}

\begin{scope}[xshift=-4.5cm]
\drawimmersedcurvesym
	\draw (-0.5*\U,-0.5*\U) node {$\bpt$};
	\draw (-0.5*\U,0.5*\U) node {$\bpt$};
	\draw (0.5*\U,-0.5*\U) node {$\bpt$};
	\draw (0.5*\U,0.5*\U) node {$\bpt$};
\end{scope}

\begin{scope}[yshift=0.25*\U]
	\begin{scope}[xshift=0.5, yshift=0.5]
	\begin{scope}
	\clip (-\U+3*\u, -1.25*\U) rectangle (3*\u, 0.75*\U);
	\foreach \x in {-1, 0, 1}
	\foreach \y in {-2,0,2} {
	\begin{scope}[xshift=\x*\U, yshift=\y*\U]
	\drawimmersedcurvesym
	\end{scope}
	}
	\draw[green] (-2*\U, 0) -- (\U, 0);
	\draw[blue] (-2*\U, -\U) -- (\U, -\U);
	\end{scope}
	\draw (-0.5*\U,-0.5*\U) node {$\bpt$};
	\draw (-0.5*\U,0.5*\U) node {$\bpt$};
	\draw (-\U+3*\u, -1.25*\U) rectangle (3*\u, 0.75*\U);
	\draw[green] (0.5*\U, 0) node {\scriptsize $\overline{\beta_{sym}}$};
	\draw[blue] (0.5*\U, -\U) node {\scriptsize $\overline{\beta_{sym}}$};
	\draw[->] (-\U+3*\u, -0.5*\U) -- (-\U+3*\u, -0.25*\U);
	\draw[->] (3*\u, -0.5*\U) -- (3*\u, -0.25*\U);
	\draw[->>] (-0.5*\U, -1.25*\U) -- (-0.5*\U+3*\u, -1.25*\U);
	\draw[->>] (-0.5*\U, 0.75*\U) -- (-0.5*\U+3*\u, 0.75*\U);
	\end{scope}
\end{scope}

\begin{scope}[xshift=4.5cm, xscale=0.5, yscale=0.5]

\draw[step=\U, lightgray, densely dotted, thin, xshift=0.5*\U, yshift=0.5*\U] (-3.75*\U,-3.75*\U) grid (2.75*\U,2.75*\U);
\draw[step=\U, lightgray, thin, xshift=0.5*\U, yshift=0.5*\U] (-3.5*\U,-3.5*\U) grid (2.5*\U,2.5*\U);
	
	\begin{scope}
	\clip (-3*\U, -3.3*\U) rectangle (3*\U, 3.3*\U);
	\foreach \x in {-2,-1, 0, 1, 2} {
		\begin{scope}[xshift=\x*\U, yshift=2*\x*\U]
		\drawimmersedcurvesym
		\end{scope}
	}
	\foreach \x in {-2, -1, ..., 3}
	\foreach \y in {-3, -2, ..., 2} {
		\draw (\x*\U-0.5*\U,\y*\U+0.5*\U) node {\tiny $\bpt$};
	}
	\end{scope}
	\draw[green] (-3*\U, 0) -- (3*\U, 0) node[anchor=west] {\scriptsize $\widetilde{\beta_{sym}}$};
	\draw[blue] (-3*\U, -\U) -- (3*\U, -\U) node[anchor=west] {\scriptsize $\widetilde{\beta_{sym}}$};
\end{scope}

\end{tikzpicture}
\caption{The pairing diagram from Figure \ref{fig:pairing-example} in $\overline T_Y$ (left) and $\widetilde T_{M_2}$ (right), with the curve $\HFhat(M_2)$ in symmetric $\beta$-pairing position. The curve $\HFhat(M_2)$ is red but we highlight the horizontal segments by coloring them violet. The curves are in minimal position, and we see by counting intersection points that $\dim \HFhat(Y; \s)$ is 1 for the \spinc stucture corresponding to the blue lift of $\beta_{sym}$ curve and 3 for the \spinc structure corresponding to the green lift of $\beta_{sym}$. }
\label{fig:symmetric-pairing-position}
\end{figure}

When $\HFhat(M_2)$ is in symmetric $\beta$-pairing position, intersections with $\beta_{sym}$ are in one-to-one correspondence with the length $\epsilon$ vertical pieces that are inserted at the midpoint of some maximal horizontal segments. These in turn come from maximal horizontal segments for which the surrounding vertical segments are both oriented upward or both oriented downward, since this causes the two ends of the horizontal segment to be perturbed in opposite directions. It follows that, in terms of cyclic words, the rank of $\HFhat(M_1 \cup_\phi M_2)$ is obtained by counting instances of $\alpha \beta^k \alpha$ and $\alpha^{-1} \beta^k \alpha^{-1}$ in $\HFhat(M_2,\alpha, \beta)$  for all values of $k$.

\subsection{Computing Heegaard Floer invariants of graph manifolds}\label{sec:graph-mfds}


\begin{figure}[ht]
\fontsize{9pt}{13pt}
\scalebox{1}{%
    \def\svgwidth{\columnwidth}
    \import{./figures/}{plumbing-tree-moves-inkscape-version.pdf_tex}
}
\caption{The three elementary plumbing tree moves. Here $\Gamma_1$ and $\Gamma_2$ are arbitrary rooted plumbing trees whose roots have weights $n_1$ and $n_2$. The gray circles labeled by $\Gamma_1$ and $\Gamma_2$ represent (rooted) subtrees of a tree that agree with $\Gamma_1$ or $\Gamma_2$ except that the weight on the distinguished vertex may be different as indicated.}
\label{fig:plumbing-tree-moves}
\end{figure}

We are interested in computing $\HFhat$ of the graph manifold associated with a plumbing tree $\Gamma$, which we will denote $Y_\Gamma$. We will do this using bordered Floer homology, following the algorithm first developed in \cite{Hanselman:graph-mfds} and later simplified in loop calculus notation in \cite[Section 6]{HW:loop-calculus}. 

Consider a rooted plumbing tree $(\Gamma,v)$, that is, a plumbing tree $\Gamma$ with a distinguished vertex $v$ (the distinguished vertex is also referred to as the root). This determines a graph manifold with torus boundary, which we denote by $M_{\Gamma,v}$; the boundary occurs at the distinguished vertex and comes from removing a neighborhood of a regular fiber of the $S^1$-bundle associated with this vertex in the construction of a closed graph manifold from the plumbing tree. By convention, we will always fix parametrizing curves $\{\alpha, \beta\}$ for $\partial M_{\Gamma,v}$ where $\alpha$ is the fiber of the $S^1$-bundle associated with the root and $\beta$ is a curve lying in the base surface of that bundle. It is sufficient to restrict to connected plumbing trees, since disjoint unions of trees correspond to connected sums of graph manifolds. Recall that different plumbing trees may represent the same 3-manifold, and these trees are related by a set of moves that are described in \cite{Neumann}; in particular, valence one or two vertices (excluding the distinguished vertex) with weight $0$ or $\pm 1$ can be removed by Moves R1, R3, or R6. We will say that a (rooted) plumbing tree is \textit{reduced} if it has no valence one or two non-distinguished vertices with weight $0$ or $\pm 1$, and we will assume unless otherwise stated that plumbing trees are reduced.


The simplest rooted plumbing tree is the tree with no edges and a single vertex of weight zero, which is necessarily the root. Any other rooted plumbing tree $(\Gamma,v)$ can be constructed inductively from copies of this simple tree using a few basic operations: the twist operation $\mathcal{T}^m$ adds $m$ to the weight of the root, the extend operation $\mathcal{E}$ attaches a new zero weighted vertex to the root and makes the new vertex the root on the resulting tree, and the merge operation $\mathcal{M}$ combines two rooted plumbing trees by identifying the roots and adding their weights. We call these the three elementary operations on plumbing trees; see Figure \ref{fig:plumbing-tree-moves}. Each of these operations has a corresponding effect on the bordered Floer invariants of the corresponding bordered graph manifolds; we use $\twistloop^m$, $\extendloop$, and $\mergeloop$ to denote the operations on the cyclic word representations of the bordered invariants that are induced by $\mathcal{T}^m$, $\mathcal{E}$, and $\mathcal{M}$, respectively. Note that in the base case in which $\Gamma$ has a single vertex, the manifold $M_{\Gamma, v}$ is a solid torus with meridian $\beta$ and its bordered invariant is given by a single immersed curve homotopic to $\beta$, represented by the cyclic word $(\beta)$. Thus the bordered invariants for any $M_{\Gamma, v}$ can be obtained from copies of $(\beta)$ by applying the operations $\twistloop^m$, $\extendloop$, and $\mergeloop$.

The moves $\mathcal{T}^m$ and $\mathcal{E}$ do not change the underlying manifold with boundary $M_{\Gamma, v}$, they only affect the parametrization of the boundary. In particular, the move $\mathcal{T}^m$ reparametrizes by $m$ Dehn twists about $\alpha$, resulting in new parametrizing curves $\{\alpha', \beta'\} = \{\alpha, \beta + m\alpha\}$, while $\mathcal{E}$ gives the new parametrizing curves $\{\alpha', \beta'\} = \{\beta, -\alpha\}$. The bordered Floer invariants are unchanged as immersed curves in the boundary of the manifold but their representation as cyclic words in $\alpha$ and $\beta$ changes due to these reparametrizations. Thus the operation $\twistloop^m$ replaces instances of $\beta$ with $\beta' (\alpha')^{-m}$ and instances of $\beta^{-1}$ with $(\alpha')^{m} (\beta')^{-1}$, and the operation $\extendloop$ replaces $\beta$ with $\alpha'$, $\alpha$ with $(\beta')^{-1}$, $\beta^{-1}$ with $(\alpha')^{-1}$, and $\alpha^{-1}$ with $\beta'$. After the transformation, we drop the primes and $\alpha$ and $\beta$ now refer to parametrizing curves of the new bordered manifold.

The operation $\twistloop^m$ also has a nice description when the cyclic words are expressed in loop calculus notation: it replaces the letter $c_k$ with the letter $c_{k-m}$ and the letter $\bar c_k$ with the letter $\bar c_{k-m}$ but leaves loop calculus letters of type $a$ or $b$ unchanged. Both operations can also be understood as operations on immersed curves by interpreting cyclic words as homotopy classes of immersed curves in the punctured torus $T_M$. In particular, $\twistloop^m$ changes an immersed curve by applying $m$ negative Dehn twists about $\alpha$, and $\extendloop$ rotates an immersed curve by $90^\circ$ about the puncture.

The merge operation on plumbing trees is the only elementary operation that changes the topology of the associated manifold. The corresponding operation $\mergeloop$ inputs two collections of cyclic words $w_1$ and $w_2$, each representing an immersed multicurve, and produces a new collection of cyclic words $\mergeloop(w_1, w_2)$. The merge operation is symmetric, so $\mergeloop(w_1, w_2) = \mergeloop(w_2, w_1)$. We also have that $\mergeloop(w_1, w_2 \sqcup w'_2 ) = \mergeloop(w_1, w_2) \sqcup \mergeloop(w_1, w'_2)$, so it is enough to describe $\mergeloop(w_1, w_2)$ when $w_1$ and $w_2$ are each a single cyclic word. The merge operation was first described in \cite{HW:loop-calculus} in loop calculus notation in the special case that one of the input cyclic words (say $w_1)$ consists of only type $c$ letters, an assumption which holds in all of the cases we will consider in this paper. In this setting, we find the collection of cyclic words $\mergeloop(w_1, w_2)$ by constructing a toroidal grid in which each row corresponds to a loop calculus letter in $w_1$ and each column corresponds to a loop calculus letter in $w_2$ and filling in the grid with arrows labelled by loop calculus letters as follows:

\begin{center}
\begin{tikzpicture}[scale = 1.5,>=latex]
\scriptsize

\draw[dotted] (0,0) -- (1,0) -- (1,1) -- (0,1) -- (0,0);
\draw[->] (0, 1.2) to node[above]{$a_k$} (1, 1.2);
\draw[->] (-.2, 1) to node[left]{$c_m$} (-.2, 0);
\draw[->, bend right = 45] (.1, .9) to node[below]{$a_k$} (.9,.9);

\draw[dotted] (2,0) -- (3,0) -- (3,1) -- (2,1) -- (2,0);
\draw[->] (2, 1.2) to node[above]{$b_k$} (3, 1.2);
\draw[->] (1.8, 1) to node[left]{$c_m$} (1.8, 0);
\draw[->, bend left = 45] (2.1, .1) to node[above]{$b_k$} (2.9,.1);

\draw[dotted] (4,0) -- (5,0) -- (5,1) -- (4,1) -- (4,0);
\draw[->] (4, 1.2) to node[above]{$c_k$} (5, 1.2);
\draw[->] (3.8, 1) to node[left]{$c_m$} (3.8, 0);
\draw[->] (4.1, .9) to node[above right = -2pt]{$c_{k+m}$} (4.9,.1);

\draw[dotted] (6,0) -- (7,0) -- (7,1) -- (6,1) -- (6,0);
\draw[->] (6, 1.2) to node[above]{$\bar c_k$} (7, 1.2);
\draw[->] (5.8, 1) to node[left]{$c_m$} (5.8, 0);
\draw[->] (6.1, .1) to node[below right = -2pt]{$\bar c_{k+m}$} (6.9,.9);
\end{tikzpicture}
\end{center}

\noindent Remembering that opposite sides of the toroidal grid are identified, the arrows in the grid form one or more directed loops; each loop gives rise to a cyclic word by reading off the arrow labels in order, and this collection of cyclic words is $\mergeloop(w_1, w_2)$. The number of components in $\mergeloop(w_1, w_2)$ is $\gcd(n_1, n_2)$, where $n_1$ is the total number of (type $c$) letters in $w_1$ and $n_2$ is the number of type $c$ letters minus the number of type $\bar c$ letters in $w_2$ (up to inverting $w_2$ if necessary we may assume that $n_2$ is non-negative). When the cyclic words $w_1$ and $w_2$ represent bordered invariants for manifolds, different components of $\mergeloop(w_1, w_2)$ live in different \spinc structures of the corresponding manifold.
We remark that the merge operation is more subtle when both input cyclic words contain letters of type $a$ or $b$, but we will not need to consider this case. 


In \cite{HW:cabling}, it was observed that the merge operation in the case described above admits a convenient geometric description when the cyclic words are interpreted as immersed curves in the punctured torus lifted to $\R^2 \setminus \Z^2$. Let $\gamma_i$ be the immersed curve in the punctured torus corresponding to the word $w_i$, and let $\tilde\gamma_i$ be some lift of $\gamma_i$ to $\R^2 \setminus \Z^2$. For concreteness, we will assume that both curves are in rectilinear position, except for the following perturbation: we translate or tilt each maximal vertical segment by translating each endpoint left or right by $\epsilon$ in the direction of the adjacent horizontal arrow, replacing the maximal vertical segment with the line segment connecting these new endpoints. The assumption that $w_1$ contains only type $c$ segments in its loop calculus notation implies that each maximal vertical segment is adjacent to one horizontal segment on the left and one on the right (see Figure \ref{fig:loop-calculus-curve-segments} for details), so a length $k$ maximal vertical segment of $\tilde\gamma_1$ becomes a line segment of slope $\pm \frac{k}{2\epsilon}$. 
After this perturbation, $\tilde\gamma_1$ moves monotonically rightward and is the graph of some function, which we will denote $f_1$.

We will construct a curve in $\R^2 \setminus \Z^2$ representing a component of $\mergeloop(w_1,w_2)$ by translating each point $(x,y)$ on $\tilde\gamma_2$ upwards by $f_1(x) - \tfrac 1 2$.  We refer to the resulting curve as the \textit{vertical sum} of the curves $\tilde\gamma_2$ and $\tilde\gamma_1$, and denote it $\tilde\gamma_2 +_v \tilde\gamma_1$. Note that outside a neighborhood of the vertical segments, $f_1$ is a piecewise constant function with values in $\Z + \frac 1 2$. It follows that we shift each unit horizontal segment of $\tilde\gamma_2$ by an integral amount that is $\tfrac 1 2$ less than the height of the (unique) horizontal segment of $\tilde\gamma_1$ at the same $x$-coordinate, and we stretch or contract the maximal vertical segments accordingly.


The vertical sum of $\tilde\gamma_2$ and $\tilde\gamma_1$ described above gives one component of $\mergeloop(w_1,w_2)$; to get all components we repeat this operation after shifting $\tilde\gamma_2$ horizontally by different amounts. As before, let $n_i$ be the number of type $c$ letters minus the number of type $\bar c$ letters in $w_i$ in loop calculus notation, which is the same as the signed count of $\beta$ letters in $w_i$. Geometrically, $n_i$ can be interpreted as the net horizontal motion of $\gamma_i$, or equivalently the horizontal period of the periodic curve $\tilde\gamma_i$.
The curve $\tilde\gamma_2 +_v \tilde\gamma_1$ is periodic and is fixed by a translation with horizontal component $n' = \text{lcm}(n_1, n_2)$.
If we fix a lift $\tilde\gamma_2$ of $\gamma_2$, then translating $\tilde\gamma_2$ to the right by $i$ defines other lifts $\tilde\gamma_2^i$; the merge operation produces $\gcd(n_1,n_2)$ components, with one component given by $\tilde\gamma_2^i +_v \tilde\gamma_1$ for each $0 \le i < \gcd(n_1, n_2)$.

%
%
%
%
%
%
%
%

\begin{example}\label{ex:merge1}
If $w_1 = (c_0 c_{-1})$ and $w_2 = (c_0 c_0 c_{-1})$, then using the grid in Figure \ref{fig:VertSumEx1} we find that $\mergeloop(w_1, w_2)$ is given by the single cyclic word $(c_{0} c_{-1} c_{-1} c_{-1} c_{0} c_{-2})$. The same computation is performed as a vertical sum of curves to the right. For each point on $\tilde\gamma_2$ there is a corresponding point of $\tilde\gamma_2 +_v \tilde\gamma_1$ obtained by adding the height of $\tilde\gamma_1$ above the line $y = \tfrac 1 2$ to the $y$-coordinate.

\begin{figure}[ht]
\fontsize{18pt}{24pt}
\begin{tikzpicture}[scale = 1.5,>=latex]

\scriptsize
\draw[->] (.1, 2.2) to node[above]{$c_{0}$} (.9, 2.2);
\draw[->] (1.1, 2.2) to node[above]{$c_{0}$} (1.9, 2.2);
\draw[->] (2.1, 2.2) to node[above]{$c_{-1}$} (2.9, 2.2);

\draw[->] (-.2, 1.9) to node[left]{$c_0$} (-.2, 1.1);
\draw[->] (-.2, .9) to node[left]{$c_{-1}$} (-.2, .1);

\draw[dotted] (0,0) -- (3,0);
\draw[dotted] (0,1) -- (3,1);
\draw[dotted] (0,2) -- (3,2);
\draw[dotted] (0,0) -- (0,2);
\draw[dotted] (1,0) -- (1,2);
\draw[dotted] (2,0) -- (2,2);
\draw[dotted] (3,0) -- (3,2);

\draw[->] (.1, 1.9) to node[above right = -2pt]{$c_{0}$} (.9, 1.1);
\draw[->] (.1, .9) to node[above right = -2pt]{$c_{-1}$} (.9, .1);
\draw[->] (1.1, 1.9) to node[above right = -2pt]{$c_{0}$} (1.9, 1.1);
\draw[->] (1.1, .9) to node[above right = -2pt]{$c_{-1}$} (1.9, .1);
\draw[->] (2.1, 1.9) to node[above right = -2pt]{$c_{-1}$} (2.9, 1.1);
\draw[->] (2.1, .9) to node[above right = -2pt]{$c_{-2}$} (2.9, .1);

\end{tikzpicture}
\hspace{2 cm}
\scalebox{.45}{%
    \def\svgwidth{\columnwidth}
    \import{./figures/}{FigVertSumEx1.pdf_tex}
}
\caption{Merge $\mergeloop(w_1, w_2)$ and vertical sum $\tilde\gamma_2 +_v \tilde\gamma_1$ for Example \ref{ex:merge1}}
\label{fig:VertSumEx1}
\end{figure}

\end{example}

\begin{example}\label{ex:merge2}
If $w_1 = (c_0 c_{-1})$ and $w_2 = (a_{-1} b_{-1} c_{-4})$, then using the grid below we find that $\mergeloop(w_1, w_2)$ is given by the single cyclic word $(a_{-1} b_{-1} c_{-4} a_{-1} b_{-1} c_{-5})$. On the right is the corresponding vertical sum of curves. 

\begin{figure}
\begin{tikzpicture}[scale = 1.5,>=latex]

\scriptsize
\draw[->] (.1, 2.2) to node[above]{$a_{-1}$} (.9, 2.2);
\draw[->] (1.1, 2.2) to node[above]{$b_{-1}$} (1.9, 2.2);
\draw[->] (2.1, 2.2) to node[above]{$c_{-4}$} (2.9, 2.2);

\draw[->] (-.2, 1.9) to node[left]{$c_0$} (-.2, 1.1);
\draw[->] (-.2, .9) to node[left]{$c_{-1}$} (-.2, .1);

\draw[dotted] (0,0) -- (3,0);
\draw[dotted] (0,1) -- (3,1);
\draw[dotted] (0,2) -- (3,2);

\draw[dotted] (0,0) -- (0,2);
\draw[dotted] (1,0) -- (1,2);
\draw[dotted] (2,0) -- (2,2);
\draw[dotted] (3,0) -- (3,2);

\draw[->, bend right = 45] (.1, .9) to node[below]{$a_{-1}$} (.9,.9);
\draw[->, bend right = 45] (.1, 1.9) to node[below]{$a_{-1}$} (.9,1.9);

\draw[->, bend left = 45] (1.1, 1.1) to node[above]{$b_{-1}$} (1.9, 1.1);
\draw[->, bend left = 45] (1.1, .1) to node[above]{$b_{-1}$} (1.9, .1);

\draw[->] (2.1, 1.9) to node[above right = -2pt]{$c_{-4}$} (2.9, 1.1);
\draw[->] (2.1, .9) to node[above right = -2pt]{$c_{-5}$} (2.9, .1);
\end{tikzpicture}
\hspace{2 cm}
\scalebox{.2}{%
    \def\svgwidth{\columnwidth}
    \import{./figures/}{FigVertSumEx2.pdf_tex}
}
\caption{Merge $\mergeloop(w_1, w_2)$ and vertical sum $\tilde\gamma_2 +_v \tilde\gamma_1$ for Example \ref{ex:merge2}}
\label{fig:VertSumEx2}
\end{figure}

\end{example}

%

It will be helpful to keep track of the total number of $\alpha$ or $\beta$ letters in the collection of cyclic words as we build up the immersed curve invariant for a graph manifold. Towards this end, the following relation will be useful:

\begin{lemma}\label{lem:merge-parity}
Let $w_1$ and $w_2$ be cyclic words and assume that $w_1$ contains only type $c$ letters in loop calculus notation. For $i\in\{1,2\},$ let $m_i$ be the signed count of $\alpha$ letters in $w_i$ (that is, the number of $\alpha$ letters minus the number of $\alpha^{-1}$ letters) and let $n_i$ be the signed count of  $\beta$ letters in $w_i$. Let $m'$ and $n'$ denote the signed counts of $\alpha$ letters and $\beta$ letters, respectively, in the collection of cyclic words corresponding to $\mergeloop( w_1, w_2 )$. Then
\begin{align*}
m' &= m_1 n_2 + m_2 n_1, \text{ and } \\
n' &= n_1 n_2.
\end{align*}
\end{lemma}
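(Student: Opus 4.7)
The plan is to verify both identities by direct bookkeeping, cell by cell, of the contributions to $m'$ and $n'$ coming from the merge grid.

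First I would translate each loop calculus letter back into its underlying subword in $\alpha$ and $\beta$ to record its signed $\alpha$- and $\beta$-counts. From the definitions, $a_k$ corresponds to $\beta\alpha^k\beta^{-1}$, $b_k$ to $\alpha^k$, $c_k$ to $\beta\alpha^k$, and (using the bar convention $\overline{c_{-k}}\leftrightarrow \alpha^k\beta^{-1}$) $\bar c_k$ corresponds to $\alpha^{-k}\beta^{-1}$. Hence $a_k$ and $b_k$ each contribute $(k,0)$ to the pair (signed $\alpha$-count, signed $\beta$-count); $c_k$ contributes $(k,1)$; and $\bar c_k$ contributes $(-k,-1)$.

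Next I would write $w_2$ in loop calculus notation and partition its letters by type, letting $A$, $B$, $C^+$, $C^-$ denote the multisets of subscripts occurring on letters of type $a$, $b$, $c$, and $\bar c$. Then $n_2 = |C^+| - |C^-|$ and $m_2 = \sum_{k \in A} k + \sum_{k \in B} k + \sum_{k \in C^+} k - \sum_{k \in C^-} k$. The hypothesis that $w_1$ contains only type-$c$ letters means that its letters can be listed as $c_{m_1^{(1)}}, \ldots, c_{m_1^{(n_1)}}$, so that $w_1$ contributes exactly $n_1$ rows to the merge grid and $m_1 = \sum_{j=1}^{n_1} m_1^{(j)}$.

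The main step is to sum over all cells. The merge rules specify that the cell in row $c_m$ and column of type $a_k$, $b_k$, $c_k$, $\bar c_k$ outputs the letter $a_k$, $b_k$, $c_{k+m}$, $\bar c_{k+m}$ respectively. The total $\beta$-contribution from a single row $c_m$ is $|C^+|\cdot 1 + |C^-|\cdot(-1) = n_2$, independent of $m$; summing over the $n_1$ rows of $w_1$ yields $n' = n_1 n_2$. The total $\alpha$-contribution from a single row $c_m$ is
\[
\sum_{k\in A} k + \sum_{k\in B} k + \sum_{k\in C^+}(k+m) - \sum_{k \in C^-} (k+m) = m_2 + (|C^+|-|C^-|)m = m_2 + n_2 m,
\]
and summing over all rows of $w_1$ gives $m' = n_1 m_2 + n_2 m_1$, as claimed. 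There is no substantive obstacle beyond careful bookkeeping; the only subtlety is the sign convention for $\bar c_k$, namely that its $\alpha$-count is $-k$ and its $\beta$-count is $-1$, which must be applied consistently so that the $\beta$-tally from $\bar c$ columns produces the correct $n_2 = |C^+| - |C^-|$ rather than $|C^+| + |C^-|$.
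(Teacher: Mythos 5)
Your proof is correct and follows essentially the same strategy as the paper: translate each loop calculus letter to its signed $\alpha$- and $\beta$-contributions, then sum the contributions of the output letters over the merge grid. The only cosmetic difference is that you tally row by row, while the paper observes that the bilinear identities already hold square by square and then sums over all squares; both amount to the same bookkeeping, and your per-row computation correctly handles the $\bar c$ sign conventions.
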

\begin{proof}
Each $c_k$ piece in $w_i$ contributes $1$ to $n_i$ and $k$ to $m_i$, each $\bar c_k$ piece contributes $-1$ to $n_i$ and $-k$ to $m_i$, and each $a_k$ or $b_k$ piece contributes 0 to $n_i$ and $k$ to $m_i$. It is easy to check that the relations $m' = m_1 n_2 + m_2 n_1$ and $n' = n_1 n_2$ hold if we restrict to the contribution of each square of the grid arising from merging two loop calculus letters, and the result follows from summing over all squares in the grid.
\end{proof}

\subsection{Example computations} \label{sec:example-computations}

Using the operations $\twistloop$, $\extendloop$, and $\mergeloop$ we can compute the bordered Floer invariants of the manifold $M_{\Gamma,v}$ corresponding to a rooted plumbing tree $(\Gamma, v)$. From this we can easily recover $\HFhat$ of $Y_{\Gamma}$, which is obtained from $M_{\Gamma,v}$ by capping off with an appropriately framed solid torus; the framing is such that the meridian of the solid torus glues to $\beta$. By the pairing theorem, $\HFhat(Y_\Gamma)$ is given by the intersection Floer homology in the punctured torus of $\HFhat(M_{\Gamma,v})$ with a simple closed curve homotopic to $\beta$. In this section we illustrate this procedure with two explicit examples.

\begin{example}\label{ex:plumbing-tree-example1}
We will show that the immersed curve in Figure \ref{fig:immersed-curve-example}, which we are using as a running example, is the bordered invariant associated with the plumbing tree
$$\begin{tikzpicture}[scale = .8]
\scriptsize
\tikzstyle{every node}=[draw,circle,fill=white,minimum size=4pt,
                            inner sep=0pt]
\node at (-1.5,0) [circle, fill=black] [label = above: {$-2$}]{};
\node at (-1,0) [circle, fill=black] [label = above: {$-2$}]{};
\node at (-.5,0) [circle, fill=black] [label = above: {$-6$}]{};                          
\node at (0,0) [circle, fill=black] [label = above: {$-1$}]{};
\node at (.5,.4) [circle, fill=black] [label = above right: {$-2$}]{};
\node at (.5,-.4) [circle, fill=black] [label = above right: {$-3$}]{};
\draw[dashed] (-2,0) -- (-1.5, 0);
\draw (-1.5,.0) -- (0, 0);
\draw (.5, .4) -- (0,0) -- (.5,-.4);
\end{tikzpicture}.$$
We begin with the invariant associated with the plumbing tree \begin{tikzpicture}
\scriptsize
\tikzstyle{every node}=[draw,circle,fill=white,minimum size=4pt,
                            inner sep=0pt]
\node at (0,0) [circle, fill=black] [label = above right: {0}]{};
\draw[dashed] (-.5,.0) -- (0, 0);
\end{tikzpicture}
, which is represented by the cyclic word $(\beta)$, and we apply $\twistloop^{-2}$ to get the word $(\beta \alpha \alpha)$ associated with the plumbing tree 
\begin{tikzpicture}
\scriptsize
\tikzstyle{every node}=[draw,circle,fill=white,minimum size=4pt,
                            inner sep=0pt]
\node at (0,0) [circle, fill=black] [label = above right: {$-2$}]{};
\draw[dashed] (-.5,.0) -- (0, 0);
\end{tikzpicture}. 
We get the invariant associated with 
\begin{tikzpicture}
\scriptsize
\tikzstyle{every node}=[draw,circle,fill=white,minimum size=4pt,
                            inner sep=0pt]
\node at (0,0) [circle, fill=black] [label = above : {$0$}]{};
\node at (.5,0) [circle, fill=black] [label = above right: {$-2$}]{};
\draw[dashed] (-.5,.0) -- (0, 0);
\draw (0,0) -- (.5,0);
\end{tikzpicture}
by applying $\extendloop$. This gives $(\alpha \beta^{-1} \beta^{-1})$, which by inverting the word we may write as $(\beta \beta \alpha^{-1})$; in loop calculus notation this is $(c_0 c_{-1})$. A similar computation gives the curve $(\beta^3 \alpha^{-1})$ for the plumbing tree 
\begin{tikzpicture}
\scriptsize
\tikzstyle{every node}=[draw,circle,fill=white,minimum size=4pt,
                            inner sep=0pt]
\node at (0,0) [circle, fill=black] [label = above right: {0}]{};
\node at (.5,0) [circle, fill=black] [label = above right: {$-3$}]{};
\draw[dashed] (-.5,.0) -- (0, 0);
\draw (0,0) -- (.5,0);
\end{tikzpicture}
, which in loop calculus notation is $(c_0 c_0 c_{-1})$. As shown in Example \ref{ex:merge1}, merging these two cyclic words gives the cyclic word  $(c_0 c_{-1} c_{-1} c_{-1} c_0 c_{-2}) = (\beta \beta \alpha^{-1} \beta \alpha^{-1} \beta \alpha^{-1} \beta \beta \alpha^{-1} \alpha^{-1})$ for the plumbing tree
$$\begin{tikzpicture}[scale = .8]
\scriptsize
\tikzstyle{every node}=[draw,circle,fill=white,minimum size=4pt,
                            inner sep=0pt]
\node at (0,0) [circle, fill=black] [label = above: {$0$}]{};
\node at (.5,.4) [circle, fill=black] [label = above right: {$-2$}]{};
\node at (.5,-.4) [circle, fill=black] [label = above right: {$-3$}]{};
\draw[dashed] (-.5,.0) -- (0, 0);
\draw (.5, .4) -- (0,0) -- (.5,-.4);
\end{tikzpicture}.$$ 
We then apply $\twistloop^{-1}$ to get $(c_1 c_0 c_0 c_0 c_1 c_{-1}) = (\beta \alpha \beta \beta \beta \beta \alpha \beta \alpha^{-1})$, followed by $\extendloop$ to get the cyclic word $(\alpha \beta^{-1} \alpha \alpha \alpha \alpha \beta^{-1} \alpha \beta)$ for the plumbing tree
$$\begin{tikzpicture}[scale = .8]
\scriptsize
\tikzstyle{every node}=[draw,circle,fill=white,minimum size=4pt,
                            inner sep=0pt]
\node at (-.5,0) [circle, fill=black] [label = above: {$0$}]{};                          
\node at (0,0) [circle, fill=black] [label = above: {$-1$}]{};
\node at (.5,.4) [circle, fill=black] [label = above right: {$-2$}]{};
\node at (.5,-.4) [circle, fill=black] [label = above right: {$-3$}]{};
\draw[dashed] (-1,.0) -- (-.5, 0);
\draw (-.5,.0) -- (0, 0);
\draw (.5, .4) -- (0,0) -- (.5,-.4);
\end{tikzpicture}.$$
\sloppy Applying $\twistloop^{-6}$ gives $(\alpha \beta^{-1} \alpha^{-1} \alpha^{-1} \beta^{-1} \alpha \beta)$ and then applying $\extendloop$ gives the cyclic word \mbox{$(\beta^{-1} \alpha^{-1} \beta \beta \alpha^{-1} \beta^{-1} \alpha)$} for the plumbing tree
$$\begin{tikzpicture}[scale = .8]
\scriptsize
\tikzstyle{every node}=[draw,circle,fill=white,minimum size=4pt,
                            inner sep=0pt]
\node at (-1,0) [circle, fill=black] [label = above: {$0$}]{};
\node at (-.5,0) [circle, fill=black] [label = above: {$-6$}]{};                          
\node at (0,0) [circle, fill=black] [label = above: {$-1$}]{};
\node at (.5,.4) [circle, fill=black] [label = above right: {$-2$}]{};
\node at (.5,-.4) [circle, fill=black] [label = above right: {$-3$}]{};
\draw[dashed] (-1.5,0) -- (-1, 0);
\draw (-1,.0) -- (0, 0);
\draw (.5, .4) -- (0,0) -- (.5,-.4);
\end{tikzpicture}.$$
Applying $\twistloop^{-2}$ gives $(\beta^{-1} \alpha^{-1} \beta \alpha \alpha \beta \alpha^{-1} \beta^{-1} \alpha^{-1})$ and then applying $\extendloop$ gives the cyclic word $(\alpha^{-1} \beta \alpha \beta^{-1} \beta^{-1} \alpha \beta \alpha^{-1} \beta)$ associated with the plumbing tree
$$\begin{tikzpicture}[scale = .8]
\scriptsize
\tikzstyle{every node}=[draw,circle,fill=white,minimum size=4pt,
                            inner sep=0pt]
\node at (-1.5,0) [circle, fill=black] [label = above: {$0$}]{};
\node at (-1,0) [circle, fill=black] [label = above: {$-2$}]{};
\node at (-.5,0) [circle, fill=black] [label = above: {$-6$}]{};                          
\node at (0,0) [circle, fill=black] [label = above: {$-1$}]{};
\node at (.5,.4) [circle, fill=black] [label = above right: {$-2$}]{};
\node at (.5,-.4) [circle, fill=black] [label = above right: {$-3$}]{};
\draw[dashed] (-2,0) -- (-1.5, 0);
\draw (-1.5,.0) -- (0, 0);
\draw (.5, .4) -- (0,0) -- (.5,-.4);
\end{tikzpicture}.$$
To complete the example, we apply $\twistloop^{-2}$ to obtain the cyclic word $(\beta \alpha \beta^{-1} \alpha^{-1} \alpha^{-1} \beta^{-1} \alpha \beta \alpha \beta \alpha)$. The corresponding immersed curve is shown in Figure \ref{fig:immersed-curve-example}.
\end{example}

\begin{example}\label{ex:plumbing-tree-example2}
As a second example, we will compute $\HFhat(Y_\Gamma)$ where $\Gamma$ is the plumbing tree 
\begin{center}
\begin{tikzpicture}
\scriptsize
\tikzstyle{every node}=[draw,circle,fill=white,minimum size=4pt,
                            inner sep=0pt]
\node at (0,0) [circle, fill=black] {};
\node at (1,0) [circle, fill=black] {};
\node at (-.5,.5) [circle, fill=black] {};
\node at (-.5,-.5) [circle, fill=black] {};
\node at (1.5,.5) [circle, fill=black] {};
\node at (1.5,-.5) [circle, fill=black] {};
\draw (0,0) -- (1,0); \draw (-.5,-.5) -- (0,0) -- (-.5,.5); \draw (1.5,-.5) -- (1,0) -- (1.5,.5);
\put(-19,-1){$-3$}
\put(33,-1){$-1$}
\put(-32,-16){$-2$}
\put(-32,10){$-2$}
\put(46,-16){$-3$}
\put(46,10){$-2$}
\end{tikzpicture} \qquad .
\end{center}
Following the beginning of the previous example, we find that $(\alpha \beta^{-1} \alpha \alpha \alpha \alpha \beta^{-1} \alpha \beta)$ is the cyclic word for the plumbing tree
$$\begin{tikzpicture}[scale = .8]
\scriptsize
\tikzstyle{every node}=[draw,circle,fill=white,minimum size=4pt,
                            inner sep=0pt]
\node at (-.5,0) [circle, fill=black] [label = above: {$0$}]{};                          
\node at (0,0) [circle, fill=black] [label = above: {$-1$}]{};
\node at (.5,.4) [circle, fill=black] [label = above right: {$-2$}]{};
\node at (.5,-.4) [circle, fill=black] [label = above right: {$-3$}]{};
\draw[dashed] (-1,.0) -- (-.5, 0);
\draw (-.5,.0) -- (0, 0);
\draw (.5, .4) -- (0,0) -- (.5,-.4);
\end{tikzpicture}.$$
In loop calculus notation, this is $(\bar c_{-4} b_1 a_1)$, which is equivalent to $(a_{-1} b_{-1} c_{-4})$ by inverting the word. Merging this with the cyclic word $(c_0 c_{-1})$, as in Example \ref{ex:merge2}, gives the cyclic word $(a_{-1} b_{-1} c_{-4} a_{-1} b_{-1} c_{-5})$ associated with the plumbing tree 
$$\begin{tikzpicture}[scale = .8]
\scriptsize
\tikzstyle{every node}=[draw,circle,fill=white,minimum size=4pt,
                            inner sep=0pt]
\node at (-.5,0) [circle, fill=black] [label = above: {$0$}]{};
\node at (-1,-.4) [circle, fill=black] [label = above left: {$-2$}]{};                          
                          
\node at (0,0) [circle, fill=black] [label = above: {$-1$}]{};
\node at (.5,.4) [circle, fill=black] [label = above right: {$-2$}]{};
\node at (.5,-.4) [circle, fill=black] [label = above right: {$-3$}]{};
\draw[dashed] (-1,.4) -- (-.5, 0);
\draw (-1,-.4) -- (-.5,.0) -- (0, 0);
\draw (.5, .4) -- (0,0) -- (.5,-.4);
\end{tikzpicture}.$$
Applying $\twistloop^{-3}$ gives the word $(a_{-1} b_{-1} c_{-1} a_{-1} b_{-1} c_{-2}) = (\beta \alpha^{-1} \beta^{-1} \alpha^{-1} \beta \alpha^{-1} \beta \alpha^{-1} \beta^{-1} \alpha^{-1} \beta \alpha^{-2})$. Applying $\extendloop$ and cyclicly permuting gives the word $(\beta \beta \alpha \beta \alpha^{-1} \beta \alpha \beta \alpha \beta \alpha^{-1} \beta \alpha)$ associated with the plumbing tree
$$\begin{tikzpicture}[scale = .8]
\scriptsize
\tikzstyle{every node}=[draw,circle,fill=white,minimum size=4pt,
                            inner sep=0pt]
\node at (-.5,0) [circle, fill=black] [label = above: {$ -3$}]{};
\node at (-1,-.4) [circle, fill=black] [label = above left: {$-2$}]{};                          
\node at (-1,.4) [circle, fill=black] [label = above: {$0$}]{};                           
\node at (0,0) [circle, fill=black] [label = above: {$-1$}]{};
\node at (.5,.4) [circle, fill=black] [label = above right: {$-2$}]{};
\node at (.5,-.4) [circle, fill=black] [label = above right: {$-3$}]{};
\draw[dashed] (-1,.4) -- (-1.5, .8);
\draw (-1,.4) -- (-.5, 0);

\draw (-1,-.4) -- (-.5,.0) -- (0, 0);
\draw (.5, .4) -- (0,0) -- (.5,-.4);
\end{tikzpicture}.$$
Finally, applying $\twistloop^{-2}$ gives $(\beta \alpha \alpha \beta \alpha \alpha \alpha \beta \alpha \beta \alpha \alpha \alpha \beta \alpha \alpha \alpha \beta \alpha \beta \alpha \alpha \alpha)$, the cyclic word associated with $(\Gamma, v)$. The corresponding immersed curve $\HFhat(M_{\Gamma, v})$ can be seen on the right side of Figure \ref{fig:delta-sym-examples}. $Y_\Gamma$ is obtained from $M_{\Gamma,v}$ by capping off with a solid torus, which is framed so that the meridian is identified with $\beta$. It follows that $\HFhat(Y_\Gamma)$ is given by the (intersection) Floer homology of $\HFhat(M_{\Gamma,v})$ with a simple closed curve homotopic to $\beta$. Since there are no instances of $\alpha^{-1}$ in the cyclic word representing $\HFhat(M_{\Gamma,v})$, $\HFhat(Y_\Gamma)$ has one generator for each $\alpha$ in $\HFhat(M_{\Gamma,v})$ and thus $\dim \HFhat(Y_\Gamma)$ is 16.

\end{example}

\subsection{L-space gluing results}

\bigbreak

By restricting to bordered manifolds that have L-space fillings, we will be able to assume the bordered invariants we consider have some additional structure. In particular, this is the main mechanism by which we will ensure that the simplified version of the merge operation described in Section \ref{sec:graph-mfds} is sufficient. Here we recall some results about L-spaces for toroidal gluing and prove an L-space condition for the merge operation that we will need in our inductive arguments later.

L-spaces formed by toroidal gluing can be classified in terms of the sets of L-space slopes on the two manifolds being glued. Recall that for a manifold $M$ with torus boundary, a \emph{slope} on $\partial M$ is a primitive homology class in $H_1(\partial M)$ modulo ${\pm 1}$. If a pair of parametrizing curves ${\alpha, \beta}$ is fixed for $\partial M$, then the set of slopes may be identified with $\hat\Q = \Q \cup \left\{ \tfrac 1 0 \right\} $, where $[p\alpha + q\beta]$ is identified with $\tfrac{p}{q}$. Let $Sl(M)$ denote the set of slopes on $M$, and let $\mathcal{L}(M) \subset Sl(M)$ denote the set of \emph{L-space slopes}---that is, slopes for which Dehn filling yields an L-space. $\mathcal{L}(M)$ never contains the rational longitude $\lambda$ of $M$ and it is always either empty, a single point, a closed interval, or $\hat\Q \setminus \{\lambda\}$ \cite[Theorem 1.6]{RR}. We call the interior $\mathcal{L}^\circ(M)$ the set of \emph{strict L-space slopes} of $M$. When two manifolds with torus boundary are glued, the sets of strict L-space slopes for each manifold determines whether the resulting manifold is an L-space:

\begin{proposition}[{\cite[Theorem 1.14]{HRW}}] \label{prop:Lspace-gluing-thm}
Suppose $M_1$ and $M_2$ are boundary incompressible. Then $M_1 \cup_\phi M_2$ is an L-space if and only if $\phi_*( \mathcal{L}^\circ(M_1)) \cup \mathcal{L}^\circ(M_2) = Sl(M_2)$, where $\phi_*: Sl(M_1) \to Sl(M_2)$ is the map on slopes induced by $\phi$. In other words, $M_1 \cup_\phi M_2$ is an L-space if and only if every slope on the shared torus is a strict L-space slope on at least one side.
\end{proposition}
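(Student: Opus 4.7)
The plan is to leverage the immersed curves description of bordered Floer homology together with the pairing theorem \eqref{eq:pairing} in order to translate the question about L-spaces into a purely geometric statement about intersection numbers in the punctured torus.

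First I would reformulate the set of L-space slopes geometrically. For a boundary incompressible $M$ with torus boundary, a slope $s \in Sl(M)$ is an L-space slope exactly when the Dehn filling $M(s)$ satisfies $\dim \HFhat(M(s)) = |H_1(M(s);\Z)|$. By the pairing theorem, $\dim \HFhat(M(s))$ equals the minimal geometric intersection of the multicurve $\HFhat(M)$ with a representative of $s$ in $T_M$. Each homologically essential component $\gamma$ of $\HFhat(M)$, put into a standard (e.g.\ rectilinear or ``peg-board'') position, determines an interval $I(\gamma) \subset \hat{\Q}$ of slopes such that a line of slope $s$ intersects $\gamma$ minimally in the homological lower bound $\Delta(s,[\gamma])$ precisely when $s \notin I(\gamma)$, and strictly more otherwise. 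Summing these algebraic intersections over components of $\HFhat(M)$ for $s \notin \bigcup_\gamma I(\gamma)$ yields exactly $|H_1(M(s);\Z)|$; this identifies $\mathcal{L}^\circ(M)$ with the complement of $\bigcup_\gamma I(\gamma)$, excluding the rational longitude.

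Next I would apply the pairing theorem to the glued manifold. Choosing a common basepoint on the shared torus, both $\HFhat(M_2)$ and $\phi(\HFhat(M_1))$ live as multicurves in $\partial M_2 \setminus z_2$. A routine homological computation shows that
\[
\sum_{\gamma_1, \gamma_2} \Delta([\phi(\gamma_1)],[\gamma_2]) = |H_1(M_1 \cup_\phi M_2;\Z)|,
\]
where the sum is over pairs of components. Thus $M_1 \cup_\phi M_2$ is an L-space if and only if every pair of components realizes its minimal (i.e.\ homological) intersection number, i.e.\ we are never in the ``extra wrapping'' situation that occurs when two slope intervals contain a common slope.

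Now the equivalence follows by comparing the two characterizations. If $\phi_*(\mathcal{L}^\circ(M_1)) \cup \mathcal{L}^\circ(M_2) = Sl(M_2)$, then for each pair $(\gamma_1,\gamma_2)$ the slope of $\phi(\gamma_1)$ lies outside $I(\gamma_2)$ or the slope of $\gamma_2$ lies outside $\phi_*(I(\gamma_1))$; otherwise one could produce a slope lying in the slope intervals on both sides and hence in neither set of strict L-space slopes, contradicting the hypothesis. Minimal intersection is achieved in each pair, and summing gives exactly $|H_1|$. Conversely, if some slope $s$ is missing from $\phi_*(\mathcal{L}^\circ(M_1)) \cup \mathcal{L}^\circ(M_2)$, then $s$ belongs to $I(\gamma_2)$ for some component $\gamma_2$ of $\HFhat(M_2)$ and to $\phi_*(I(\gamma_1))$ for some component $\gamma_1$ of $\HFhat(M_1)$, which forces $|\gamma_2 \cap \phi(\gamma_1)| > \Delta([\gamma_2],[\phi(\gamma_1)])$ and hence $\dim \HFhat(M_1 \cup_\phi M_2) > |H_1|$.

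The main obstacle is the first step: establishing the precise geometric characterization of $\mathcal{L}^\circ(M)$ in terms of the slope intervals $I(\gamma)$. This requires a careful analysis of how the immersed curve representatives behave near a boundary slope---in particular, controlling the ``excess'' intersections contributed by curves whose slope interval contains the filling slope---and is where all the bordered-theoretic content is concentrated. Once this is in hand, the pairing-theorem arguments in the remaining steps are essentially bookkeeping in $H_1$.
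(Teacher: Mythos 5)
This proposition is quoted from \cite[Theorem 1.14]{HRW}; the paper does not reprove it, so there is no in-paper argument to compare against, but your sketch does follow the same immersed-curve strategy that underlies the original HRW proof (translate L-spaceness into minimality of geometric intersection via the pairing theorem, then organize minimality around slope intervals).

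I would push back, though, on your closing claim that the bordered-theoretic content is concentrated in the first step and the rest is bookkeeping. The heart of the theorem is the combinatorial equivalence you invoke silently in both directions, namely that $\phi(\gamma_1)$ and $\gamma_2$ realize their homological intersection number if and only if the slope of each curve lies outside the detected interval of the other. In the converse you write that a common bad slope ``forces $|\gamma_2 \cap \phi(\gamma_1)| > \Delta$,'' and in the forward direction you pass from ``one slope lies outside the other's interval'' directly to ``minimal intersection is achieved.'' Neither step is automatic: knowing that a single line of slope $s$ hits each curve in excess does not by itself tell you the two curves must hit \emph{each other} in excess, and knowing the slopes of the curves avoid each other's intervals does not obviously preclude excess crossings coming from overlapping wraps around the punctures. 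That implication is a nontrivial statement about how periodic curves in $\R^2\setminus\Z^2$ must link, and it is precisely where the pegboard analysis earns its keep. Two smaller gaps: the identity $\sum \Delta([\phi(\gamma_1)],[\gamma_2]) = |H_1(M_1\cup_\phi M_2)|$ that you call routine relies on the theorem that the total homology class of $\HFhat(M_i)$ is an exactly determined multiple of the rational longitude, and it degenerates when $\HFhat(M_i)$ has homologically trivial components (in that case both sides of the biconditional are false, but your sum-over-pairs argument no longer computes the right thing, so that case must be dispatched separately); and the assertion that each individual $I(\gamma)$ is a single interval is strictly stronger than the connectedness of $\mathcal{L}(M)$ from \cite[Theorem 1.6]{RR} and is where the boundary incompressibility hypothesis actually enters.
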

In particular, it is clear that for $M_1 \cup M_2$ to be an L-space both $M_1$ and $M_2$ must have non-empty set of strict L-space slopes. We have a term for such manifolds:
\begin{definition}
A manifold with torus boundary $M$ is \emph{Floer simple} if it has more than one L-space Dehn filling (equivalently, if $\mathcal{L}^\circ(M)$ is non-empty).
\end{definition}

It was shown in \cite[Proposition 6]{HRRW} that the Floer simple condition is equivalent to another condition defined in terms of loop calculus notation known as the simple loop type condition.

\begin{definition}
A manifold with torus boundary $M$ is of \emph{simple loop type} if for some choice of parametrization $(\alpha, \beta)$ of $\partial M$ the invariant $\HFhat(M, \alpha, \beta)$ consists of exactly one cyclic word for each spin$^c$ structure of $M$ and these words contain only type $c$ letters in loop calculus notation.
\end{definition}
\noindent In particular, a key consequence of Floer simplicity is that the multicurve $\HFhat(M)$ has one curve for each \spinc structure.

The equivalence between the Floer simple and simple loop type conditions can be stated as follows when considering a particular slope:
\begin{proposition}[{See \cite[Proposition 6]{HRRW} and \cite[Proposition 3.9]{RR}}]
\label{prop:infty-strict-lspace-slope}
For a bordered manifold with torus boundary $(M,\alpha,\beta)$, the $\infty$ slope (that is, the slope $\alpha$) is a strict L-space slope if and only if $\HFhat(M, \alpha, \beta)$ consists of exactly one cyclic word for each spin$^c$ structure of $M$ and, up to reversing the cyclic words, these words contain only type $c$ letters in loop calculus notation.
\end{proposition}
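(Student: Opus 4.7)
The plan is to deduce this as a repackaging of the two cited results; the bulk of the work is a careful matching of loop calculus combinatorics with the L-space slope criterion of \cite[Proposition 3.9]{RR}.

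For the reverse implication, I would start by assuming $\HFhat(M, \alpha, \beta)$ consists of exactly one cyclic word per \spinc structure with only type $c$ letters. Reading Figure \ref{fig:loop-calculus-curve-segments}, a type-$c_k$ segment of the corresponding immersed curve (placed in rectilinear position) enters and exits horizontally and changes $\alpha$-coordinate by $k$, so after reversing orientation if necessary the curve projects strictly monotonically onto the $\beta$-axis. For any slope $s$ sufficiently close to $\infty$, the minimal geometric intersection number of a simple closed curve of slope $s$ with each component of $\HFhat(M)$ equals the absolute value of the component's $\alpha$-homology class, which in turn equals the signed count of letters in the corresponding cyclic word. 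Summing and applying the pairing theorem, $\dim\HFhat(M(s)) = |H_1(M(s);\Z)|$ for all such $s$, which places an open neighborhood of $\infty$ inside $\mathcal{L}(M)$ and so puts $\infty$ in $\mathcal{L}^\circ(M)$.

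For the forward direction, assume $\infty \in \mathcal{L}^\circ(M)$. Then $\mathcal{L}^\circ(M) \neq \emptyset$, so $M$ is Floer simple and \cite[Proposition 6]{HRRW} produces \emph{some} parametrization $(\alpha', \beta')$ in which $\HFhat(M, \alpha', \beta')$ has exactly one pure-type-$c$ cyclic word per \spinc structure. In particular, as an unparametrized multicurve, $\HFhat(M)$ has exactly one component per \spinc structure. It then remains to show that in the original parametrization $(\alpha, \beta)$ these cyclic words are still pure type $c$ (after possibly reversing). For this, I would invoke \cite[Proposition 3.9]{RR}, which characterizes the set of strict L-space slopes geometrically in terms of the slopes realized by segments of $\HFhat(M)$: the presence of any letter of type $a$, $b$, or $\bar c$ in $\HFhat(M, \alpha, \beta)$ would produce a curve segment that, when paired with certain slopes arbitrarily close to $\infty$, contributes an extra intersection beyond the minimum dictated by its $\alpha$-homology class. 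This would prevent all such nearby slopes from being L-space, contradicting $\infty \in \mathcal{L}^\circ(M)$.

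The main obstacle will be formalizing the last step — namely, explicitly translating the criterion of \cite[Proposition 3.9]{RR} into the statement that each loop calculus letter other than $c_k$ obstructs some slope near $\infty$ from being L-space. This is really a local computation on the four segment types shown in Figure \ref{fig:loop-calculus-curve-segments}: for each of $a_k$, $b_k$, $\bar c_k$ one must exhibit an arbitrarily small perturbation of the vertical slope against which the segment intersects non-minimally, and then verify by the pairing theorem that this excess intersection survives in the filling. Once this dictionary is in place, both implications reduce to intersection counting.
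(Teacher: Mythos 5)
The paper does not prove this proposition; it is stated with a bare citation to \cite[Proposition 6]{HRRW} and \cite[Proposition 3.9]{RR}, so there is no internal argument to compare yours against, and the question is whether your derivation from those references actually closes.

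Your reverse direction ($\Leftarrow$) is roughly sound as a sketch, with two points to tighten. First, "the component's $\alpha$-homology class" is ambiguous: for the claimed equality with the number of type-$c$ letters, the quantity you want is $\left|[\gamma]\cdot[\alpha]\right|$, i.e.\ the absolute value of the $\beta$-coefficient of $[\gamma]$, not the coefficient of $\alpha$. Second, to conclude L-space you need that the intersection count you exhibit is the sharp lower bound $|H_1(M(s);\Z)|$; this requires the fact that $\HFhat(M)$ has exactly one component per spin$^c$ structure, each in the class of the rational longitude (cf.\ \cite[Corollary 6.6]{HRW}), which should be made explicit rather than left implicit.

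The forward direction ($\Rightarrow$) contains the genuine gap. You correctly invoke \cite[Proposition 6]{HRRW} to get Floer simplicity and one component per spin$^c$ structure, but the decisive step—that any letter of type $a$, $b$, or $\bar c$ in the $(\alpha,\beta)$-word forces some slope in every punctured neighborhood of $\infty$ to fail to be an L-space slope—is exactly what you defer as "the main obstacle," and it is never carried out. That step \emph{is} the implication. Since you point to \cite[Proposition 3.9]{RR} as the tool, you would need to either quote it precisely and match its hypotheses to "no $a$, $b$, $\bar c$ letters after possibly reversing orientation," or actually perform the promised local intersection computation for each non-$c$ segment type. As written, the forward direction names the difficulty without resolving it.
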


Note that Proposition \ref{prop:Lspace-gluing-thm} requires both $M_1$ and $M_2$ to be boundary incompressible (if either is not, a similar statement holds but with $\mathcal{L}^\circ(M_i)$ replaced with $\mathcal{L}(M_i)$). For this reason, it is helpful to identify when a plumbed 3-manifold is boundary compressible.

\begin{lemma}\label{lem:boundary-compressible}
Given a reduced rooted plumbing tree $(\Gamma, v)$, the graph manifold with boundary $M_{\Gamma,v}$ is boundary compressible if and only if $\Gamma$ is linear (that is, $v$ is a leaf of $\Gamma$ and all other vertices of $\Gamma$ have valence at most two).
\end{lemma}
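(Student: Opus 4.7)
Since $M_{\Gamma,v}$ is an irreducible $3$-manifold with torus boundary (as $\Gamma$ is a connected tree giving an irreducible plumbed manifold), boundary compressibility of $\partial M_{\Gamma,v}$ is equivalent to $M_{\Gamma,v} \cong S^1 \times D^2$. I will prove each direction of the equivalence under this reformulation.

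For the $(\Leftarrow)$ direction, suppose $\Gamma$ is linear with $v$ at a leaf, say $\Gamma$ is the path $v = v_0, v_1, \ldots, v_k$ with weights $a_0, a_1, \ldots, a_k$. Starting from the one-vertex tree with weight $0$ (whose associated manifold is $S^1 \times D^2$), one constructs $(\Gamma, v)$ by applying $\twistloop^{a_k}, \extendloop, \twistloop^{a_{k-1}}, \extendloop, \ldots, \extendloop, \twistloop^{a_0}$ in succession. As recalled earlier in the paper, neither $\twistloop^m$ nor $\extendloop$ changes the underlying manifold with boundary (only its boundary parametrization), so $M_{\Gamma,v} \cong S^1 \times D^2$, which is boundary compressible.

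For the $(\Rightarrow)$ direction I argue the contrapositive by induction on $|V(\Gamma)|$. Suppose $\Gamma$ is not linear with $v$ at a leaf; then either $\deg_\Gamma(v) \geq 2$ or some non-root vertex has degree $\geq 3$. Pick such a vertex $u_0$. The Seifert fibered piece $P_{u_0}$ in the plumbing decomposition of $M_{\Gamma,v}$ is an $S^1$-bundle over a sphere with $\geq 3$ open disks removed, exactly one of which lies on the side of $\partial M_{\Gamma,v}$. For $M_{\Gamma,v}$ to be a solid torus, the other plumbing tori bounding $P_{u_0}$ must be inessential in $M_{\Gamma,v}$ (since a solid torus contains no essential tori); this forces each of the sub-branches of $\Gamma$ on the other sides of these tori to have underlying manifold a solid torus, hence by the inductive hypothesis each such sub-branch is linear with its root-vertex at a leaf. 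The attaching slope of the resulting solid-torus Dehn filling onto the corresponding boundary torus of $P_{u_0}$ is determined by the Hirzebruch--Jung continued fraction $[a_1, a_2, \ldots, a_m]$ of the weights along the sub-branch; reducedness of $\Gamma$ (all non-distinguished valence-$\leq 2$ vertices have weights outside $\{0, \pm 1\}$) ensures that each such continued fraction $p/q$ in lowest terms has $|p| \geq 2$. Therefore the resulting Seifert fibration of $M_{\Gamma,v}$ over a disk has $\geq 2$ nontrivial exceptional fibers of orders $\geq 2$, contradicting the fact that a solid torus admits only Seifert fibrations over the disk with at most one exceptional fiber.

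The main obstacle is the continued-fraction analysis: one must verify that for any chain $a_1, \ldots, a_m$ with $|a_i| \geq 2$ for all $i$, the rational $[a_1, \ldots, a_m] = p/q$ satisfies $|p| \geq 2$. This is a standard fact for Hirzebruch--Jung-type continued fractions (provable by an elementary induction on $m$) and guarantees that the exceptional fibers from each branch attachment are genuinely nontrivial, closing the induction.
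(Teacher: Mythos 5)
Your forward direction ($\Leftarrow$) matches the paper's, and your backward direction takes a genuinely different route — via Seifert fibration theory and a count of exceptional fibers, rather than the paper's route through Neumann's primeness theorem for closed plumbed manifolds and the classification of lens space plumbings. With the gap noted below filled, the Seifert-fibration approach would likely work and is arguably more self-contained.

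However, there is a genuine gap at the very first step. You assert that $M_{\Gamma,v}$ is irreducible, parenthetically justified by ``$\Gamma$ is a connected tree giving an irreducible plumbed manifold.'' This is not an obvious fact, and it is precisely the nontrivial input the paper is careful to supply. Gluing irreducible Seifert pieces along tori need not yield an irreducible manifold unless the gluing tori are incompressible from both sides, and that incompressibility in turn depends on the weights not being degenerate (which is roughly what ``reduced'' guards against, but establishing this rigorously is the substance of Neumann's Theorem 4.3, applicable only after converting to Neumann normal form via his Theorem 4.1 — and the paper explicitly flags that ``reduced'' in its sense is \emph{not} Neumann's normal form). Because you assume irreducibility, you can go directly from ``boundary compressible'' to ``solid torus''; the paper instead has to allow $M_{\Gamma,v} \cong (S^1\times D^2) \,\#\, Y$ for a closed summand $Y$, fill along a generic slope to get a closed plumbed manifold, and invoke Neumann's primeness to conclude $Y \cong S^3$. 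Without the irreducibility input your reduction ``boundary compressible $\Leftrightarrow$ solid torus'' does not hold, and the downstream Seifert-fibration contradiction never gets off the ground. You would need to either cite the relevant result (essentially Neumann, after normalizing the tree) or supply an independent incompressibility argument for the plumbing tori. As a secondary point, the step ``the plumbing tori must be inessential, which forces the sub-branch manifolds to be solid tori'' is correct but compresses a case analysis (compressible vs.\ boundary-parallel, and which side the compressing disk lies on); it would be worth spelling out, since for the vertex $u_0 \neq v$ one is working inside a proper sub-piece $Q \subsetneq M_{\Gamma,v}$ and must argue separately that $Q$ cannot embed in a solid torus with incompressible boundary.
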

\begin{proof}
If $v$ is a leaf of $\Gamma$ and all other vertices of $\Gamma$ have valence at most two, then $M_{\Gamma,v}$ is a solid torus and is thus boundary compressible. The converse follows from\cite[Theorem 4.3]{Neumann}, which states that a closed 3-manifold associated with a connected plumbing tree in normal form is prime.  Note that a reduced plumbing tree in our sense is not the same as the normal form used in \cite{Neumann}, but it can be put into normal form without changing the connectedness of the tree \cite[Theorem 4.1]{Neumann}. If $M_{\Gamma,v}$ is boundary compressible then it is a solid torus connect summed with some closed 3-manifold $Y$. We may choose a generic filling slope such that filling $M_{\Gamma,v}$ is a (nontrivial) lens space connect summed with $Y$ and such that this filling is represented by a reduced plumbing tree $\Gamma'$ obtained from $\Gamma$ by attaching a new linear chain of vertices to the distinguished vertex $v$. Since $\Gamma'$ is connected, we must have that $Y_{\Gamma'}$ is prime, so $Y \cong S^3$. It follows that $Y_{\Gamma'}$ is a lens space, but a reduced plumbing tree for a lens space has vertices of valence at most two, so the boundary vertex $v$ in $\Gamma$ is a leaf (i.e. has valence one) and all the other vertices of $\Gamma$ have valence at most two.
\end{proof}

Using the L-space condition for toroidal gluing (Proposition \ref{prop:Lspace-gluing-thm}), we will prove an L-space condition for the merge operation that we will need in our inductive computation of bordered invariants in Section \ref{sec:main-proof}.

\begin{lemma}\label{lem:Lspace-merge-condition}
Consider reduced rooted trees $\Gamma_1$ and $\Gamma_2$, each with more than one vertex, and let $\Gamma_{12} = \mathcal{M}(\Gamma_1, \Gamma_2)$. If $M_{\Gamma_{12}}$ has an L-space slope $r$ not in $\Z \cup \{\infty\}$ (this is in particular true if $M_{\Gamma_{12}}$ is Floer simple), then both $M_{\Gamma_{1}}$ and $M_{\Gamma_{2}}$ are Floer simple and at least one of them has $\infty$ as a strict L-space slope.
\end{lemma}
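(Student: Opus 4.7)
The plan is to realize $M_{\Gamma_{12}}$ topologically as a toroidal gluing in which both $M_{\Gamma_1}$ and $M_{\Gamma_2}$ appear as codimension-zero submanifolds, and then to apply Proposition~\ref{prop:Lspace-gluing-thm} twice in succession. The decomposition comes from the merge operation itself: since $\mathcal{M}(\Gamma_1, \Gamma_2)$ identifies the roots of $\Gamma_1$ and $\Gamma_2$ while summing their weights, the merged vertex's Seifert piece contains essential tori $T_1, T_2 \subset M_{\Gamma_{12}}$ together with a Seifert fibered ``connecting piece'' $N$ over a pair of pants, giving a decomposition
\[
M_{\Gamma_{12}} \;=\; M_{\Gamma_1} \cup_{T_1} N \cup_{T_2} M_{\Gamma_2},
\]
where the three boundary components of $N$ are $T_1$, $T_2$, and $\partial M_{\Gamma_{12}}$, and where the plumbing convention identifies the Seifert fiber direction of $N$ at $T_i$ with the slope $\beta_i$ on $M_{\Gamma_i}$. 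In particular, the $\infty$ slope of $M_{\Gamma_i}$ corresponds to the section slope of $N$ at $T_i$.

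\textbf{Floer simplicity of the two sides.} For Floer simplicity, I would set $Y := M_{\Gamma_{12}}(r)$, which is an L-space by hypothesis, and regroup the decomposition as $Y = M_{\Gamma_1} \cup_{T_1} \bigl( N_r \cup_{T_2} M_{\Gamma_2} \bigr)$, where $N_r$ denotes $N$ Dehn filled along $r$ at $\partial M_{\Gamma_{12}}$. Provided both sides of this toroidal gluing are boundary incompressible, Proposition~\ref{prop:Lspace-gluing-thm} immediately gives $\mathcal{L}^\circ(M_{\Gamma_1}) \neq \emptyset$; by Lemma~\ref{lem:boundary-compressible}, the only obstruction is when $\Gamma_1$ is linear with root a leaf, in which case $M_{\Gamma_1}$ is a solid torus and is trivially Floer simple. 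A symmetric argument regrouping $Y$ along $T_2$ handles $M_{\Gamma_2}$.

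\textbf{The fiber-slope claim and the main obstacle.} The crux is the $\infty$ claim, which uses $r = p/q \notin \Z \cup \{\infty\}$, equivalently $q \geq 2$. Since $r$ is not the fiber slope of $N$, the Dehn filling extends the Seifert fibration through the attaching solid torus and introduces an exceptional fiber of multiplicity $q \geq 2$; hence $N_r$ is Seifert fibered over an annulus with a nontrivial exceptional fiber, so it is not merely $T^2 \times I$ up to twisting. Applying Proposition~\ref{prop:Lspace-gluing-thm} at $T_1$ and at $T_2$ inside $Y$, and invoking the classification of L-space slopes for such Seifert pieces (as developed in \cite{HRW, RR}), I expect to force the section slope of $N$ at $T_1$ or at $T_2$ to lie in $\mathcal{L}^\circ$ of the corresponding $M_{\Gamma_i}$ side rather than on the complementary side containing $N_r$; via the plumbing identification this is exactly the statement that $\infty \in \mathcal{L}^\circ(M_{\Gamma_1})$ or $\infty \in \mathcal{L}^\circ(M_{\Gamma_2})$. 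The hard part will be ruling out the case where the section slope lies on the complementary side of the gluing theorem at \emph{both} tori, which requires careful bookkeeping with the Seifert invariants of $N_r$ and with how slopes translate under the plumbing gluing maps, together with the constraint that the rational longitude of each $M_{\Gamma_i}$ is never a strict L-space slope.
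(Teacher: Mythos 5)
Your decomposition is essentially the same one the paper uses, just described topologically rather than via plumbed manifolds: writing $\Gamma_0$ for a linear rooted tree encoding the $r$-framed solid torus, the paper regroups $M_{\Gamma_{12}}(r)$ as $M_{\Gamma_{02}} \cup M_{\Gamma_1}$ and as $M_{\Gamma_{01}} \cup M_{\Gamma_2}$, where $\Gamma_{0i} = \mathcal{M}(\Gamma_0, \Gamma_i)$, and your $N_r \cup_{T_2} M_{\Gamma_2}$ is exactly $M_{\Gamma_{02}}$. Describing that piece as a plumbed manifold is what makes the rest of the argument go: since $\Gamma_0$, $\Gamma_1$, $\Gamma_2$ are reduced with more than one vertex, $\Gamma_{01}$ and $\Gamma_{02}$ are reduced with root of valence at least two, so Lemma \ref{lem:boundary-compressible} gives boundary incompressibility of \emph{both} sides of each regrouped gluing (you address $M_{\Gamma_1}$'s side but not the complementary side, which also needs checking). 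With that, Proposition \ref{prop:Lspace-gluing-thm} yields Floer simplicity of $M_{\Gamma_1}$ and $M_{\Gamma_2}$, as you say.

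The genuine gap is the half you flag as hard: ruling out that the relevant slope fails to be a strict L-space slope on both $M_{\Gamma_1}$ and $M_{\Gamma_2}$. You don't have an argument for this, and the route you sketch (classifying L-space slope intervals for the Seifert piece $N_r$ and doing bookkeeping with its Seifert invariants) is substantially harder than what the paper does and is not obviously going to close. The paper sidesteps this by using Proposition \ref{prop:infty-strict-lspace-slope}: $\infty$ is a strict L-space slope for a Floer simple bordered manifold if and only if its immersed-curve words contain only type $c$ letters in loop calculus. If $M_{\Gamma_1}$ has a type $a$ or $b$ letter, then so does $M_{\Gamma_{01}} = M_{\mathcal{M}(\Gamma_0,\Gamma_1)}$, because merging with the all-$c$ word of $\Gamma_0$ never eliminates $a$ or $b$ letters; hence $\infty$ is not a strict L-space slope for $M_{\Gamma_{01}}$, and Proposition \ref{prop:Lspace-gluing-thm} applied to $M_{\Gamma_{01}} \cup M_{\Gamma_2}$ forces $\infty$ to be a strict L-space slope for $M_{\Gamma_2}$. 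That is the missing ingredient in your proposal: an algebraic characterization of the $\infty$ slope that behaves well under merge, not a Seifert-geometric slope-interval argument. Without some analog of it, a two-sided failure is not ruled out.

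Two smaller issues. First, the tori $T_1$, $T_2$ you introduce are vertical (fibered) tori inside the single Seifert piece at the merged root, so they are generally not essential; this is why the boundary incompressibility hypothesis of Proposition \ref{prop:Lspace-gluing-thm} has to be verified directly rather than asserted, and is why the paper routes through Lemma \ref{lem:boundary-compressible}. Second, your slope identification is inverted: at $T_i$ the Seifert fiber of $N$ matches the Seifert fiber of $M_{\Gamma_i}$, which is the parametrizing curve $\alpha$, and so $\infty = [\alpha]$ of $M_{\Gamma_i}$ is the \emph{fiber} slope of $N$ at $T_i$, not the section slope as you wrote. (The fiber/section exchange you may be remembering happens at the tori between \emph{adjacent} vertices of the plumbing tree, not at a vertical torus interior to one Seifert piece.)
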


\begin{proof}
Let $\Gamma_0$ be a linear rooted plumbing tree for an $r$-framed solid torus; since $r \neq \infty$ we can choose $\Gamma_0$ to be reduced, and since $r \not\in \Z$ we have that $\Gamma_0$ has more than one vertex. Attaching $\Gamma_0$ to $\Gamma_{12}$ by identifying the distinguished vertices and adding their weights gives a plumbing tree for the closed graph manifold $M_{\Gamma_{12}}(r)$, which by assumption is an L-space. While this manifold is constructed by gluing a solid torus $M_{\Gamma_0}$ to $M_{\Gamma_{12}}$, we will consider two other ways of cutting this manifold along tori:
$$M_{\Gamma_{12}}(r) = M_{\Gamma_{12}} \cup M_{\Gamma_0} = M_{\Gamma_{02}} \cup M_{\Gamma_1} = M_{\Gamma_{01}} \cup M_{\Gamma_2},$$
where $\Gamma_{0i} = \mathcal{M}(\Gamma_0, \Gamma_i)$ for $i =1,2$. In each gluing, the parametrizing curve $\alpha$ of one side is identified with $\alpha$ of the other, and $\beta$ is identified with $-\beta$. Since $\Gamma_0$, $\Gamma_1$, and $\Gamma_2$ are all reduced with more than one vertex each, $\Gamma_{01}$ and $\Gamma_{02}$ are reduced with distinguished vertices of valence at least two.
By Proposition \ref{lem:boundary-compressible}, $M_{\Gamma_{02}}$ and $M_{\Gamma_{01}}$ are boundary incompressible. It follows from Proposition \ref{prop:Lspace-gluing-thm} that $M_{\Gamma_{1}}$ and $M_{\Gamma_{2}}$ are both Floer simple.

Now suppose $M_{\Gamma_{1}}$ does not have $\infty$ as a strict L-space slope; we will show that $M_{\Gamma_{2}}$ must have  $\infty$ as a strict L-space slope. Since $M_{\Gamma_{1}}$ does not have $\infty$ as a strict L-space slope, its bordered Floer invariant contains type $a$ and $b$ letters in loop calculus notation by Proposition \ref{prop:infty-strict-lspace-slope}. These type $a$ and $b$ letters remain after merging with $M_{\Gamma_0}$, a solid torus whose bordered invariant contains only type $c$ letters in loop calculus notation, so $M_{\Gamma_{01}}$ also does not have $\infty$ as a strict L-space slope. It then follows from Proposition \ref{prop:Lspace-gluing-thm} that $\infty$ is a strict L-space slope of $M_{\Gamma_{2}}$.
\end{proof}

\begin{remark}\label{rmk:Lspace-merge-condition}
If we remove the condition that $\Gamma_1$ and $\Gamma_2$ have more than one vertex, we can realize the twist operation as a degenerate case of the merge operation: in particular, if $\Gamma_2$ is a single vertex with weight $m$ then $\merge(\Gamma_1, \Gamma_2)$ is the same as $\twist^m(\Gamma_1)$. In this case, the statement in Lemma \ref{lem:Lspace-merge-condition} nearly holds except that $M_{\Gamma_1}$ may not be Floer simple. It is clear that $M_{\Gamma_2}$ is Floer simple and has $\infty$ as a strict L-space slope, since it is an integer framed solid torus, and since $M_{\Gamma_1}$ agrees with $M_{\Gamma_{12}}$ up to reparametrization it has at least one L-space slope, but it is Floer simple only if $M_{\Gamma_{12}}$ is.
\end{remark}

\subsection{Gradings}\label{sec:gradings}

We now review gradings in bordered Floer homology, which we will need to compute $d$-invariants. We begin by reviewing the grading package in the traditional formulation of bordered Floer homology in Section \ref{sec:gradings-traditional}, before discussing a geometric interpretation of grading differences in Sections \ref{sec:grading-curves-same-spinc} and \ref{sec:rational-grading-curves}. We note that the material in Section \ref{sec:gradings-traditional} will not be needed outside of the proof of Proposition \ref{prop:rational-grading-lemma}, and that Proposition \ref{prop:rational-grading-lemma} is more general than we need in our arguments. The trusting reader may wish to skip this subsection entirely, noting only the statements of Corollaries \ref{cor:symmetry-same-grading} and \ref{cor:grading-difference-for-fixed-points}.

\subsubsection{Gradings in bordered Floer homology} \label{sec:gradings-traditional}

For a closed three manifold $Y$ and a torsion \spinc structure $\s$, $\HFhat(Y; \s)$ admits an absolute $\Q$-grading. In particular, when $Y$ is a rational homology sphere $\HFhat(Y)$ admits an absolute $\Q$-grading across all \spinc structures. When bordered Floer homology is used to compute $\HFhat$ of $Y = M_1 \cup_\phi M_2$ we are able to recover this grading, though only up to an overall shift, giving a relative $\Q$-grading on $\HFhat(Y)$. We sketch here how this relative grading is computed; see \cite[Section 11.1]{LOT} for more details.

For a manifold $M$ with torus boundary, the type D structure $\CFD(M)$ defined in \cite{LOT} is equipped with a relative grading $\gr_D$ by a quotient of a non-commutative group $G$. The elements of $G$ can be represented by triples $(m;a,b)$ where $m,a,b \in \frac 1 2 \Z$ and $a+b \in \Z$; the first component is called the Maslov component and the remaining two components are the spin$^c$ components. The group operation for this group is given by
$$(m_1; a_1, b_1) \cdot (m_2; a_2, b_2) = (m_1 + m_2 + \left|\begin{smallmatrix} a_1 & b_1 \\ a_2 & b_2 \end{smallmatrix}\right|; a_1+a_2, b_1+b_2).$$
After fixing a distinguished generator $x^D_0$ of $\CFD(M)$, which we declare to have grading $(0;0,0)$, the grading $\gr_D$ takes values in $G$ modulo right multiplication by $P(x^D_0)$, where $P(x^D_0)$ is the set of gradings on the periodic domains at $x^D_0$. Note that for rational homology tori $P(x^D_0)$ is cyclic. The grading $\gr_D$ satisfies $\gr_D(\partial x) = \lambda^{-1} \gr_D(x)$, where $\lambda = (1;0,0)$, and $\gr_D(\rho_I \otimes x) = \gr(\rho_I) \cdot \gr_D(1\otimes x)$, where the grading $\gr$ on the torus algebra is determined by
$$\gr(\rho_1) = (-\frac 1 2; \frac 1 2, -\frac 1 2), \quad \gr(\rho_2) = (-\frac 1 2; \frac 1 2, \frac 1 2), \quad \gr(\rho_3) = (-\frac 1 2; -\frac 1 2, \frac 1 2)$$
and the fact that $\gr(\rho_I \rho_J) = \gr(\rho_I)\gr(\rho_J)$. These rules determine the grading on all generators that can be connected to $x^D_0$ by a sequence of operations. In practice it is possible to extract a generator of $P(x^D_0)$ from $\CFD$ by computing the total change in grading along a loop starting and ending at $x^D_0$, which must be an element of $P(x^D_0)$. The grading $\gr_A$ on $\CFA(M)$ can be obtained from $\gr_D$ (for corresponding generators or periodic domains) by switching the two spin$^c$ components and multiplying the spin$^c$ components by $-1$. We let $x^A_0$ denote the generator of $\CFA$ corresponding to $x^D_0$ and let $P(x^A_0)$ denote the corresponding set of periodic domain gradings; $\gr_A$ takes values in $G$ modulo left multiplication by $P(x^A_0)$. In this paper we will primarily be concerned with the case that $(M_1, \alpha_1, \beta_1)$ is $(D^2\times S^1, m, \ell)$, in which case $\CFA(M_1, \alpha_1, \beta_1)$ has a single generator $x_0^A$ of grading $(0;0,0)$ and $P(x_0^A)$ is generated by $(-\frac 1 2; 1, 0)$.

Given two bordered manifolds $(M_1, \alpha_1, \beta_1)$ and $(M_2, \alpha_2, \beta_2)$ whose boundaries are glued via a map $\phi$ taking $\alpha_1$ to $\beta_2$ and $\beta_1$ to $\alpha_2$, the pairing theorem in \cite{LOT} gives that
$$\HFhat(M_1 \cup M_2) \cong H_*\left(\CFA(M_1, \alpha_1, \beta_1) \boxtimes \CFD(M_2, \alpha_1, \beta_2) \right).$$
The box tensor product admits a grading $\gr^\boxtimes$ defined by
$$\gr^\boxtimes(x^A\otimes x^D) = \gr_A(x^A) \cdot \gr_D(x^D),$$
which takes values in $P(x^A_0) \backslash G / P(x^D_0)$ where $x^A_0$ and $x^D_0$ are distinguished generators in $\CFA(M_1,\alpha_1,\beta_1)$ and $\CFD(M_2, \alpha_2, \beta_2)$, respectively. Let $P_A$ and $P_D$ denote generators of $P(x^A_0)$ and $P(x^D_0)$. If $M_1 \cup_\phi M_2$ is a rational homology sphere then the spin$^c$ components of $P_A$ and $P_D$ are linearly independent vectors in $\left(\frac 1 2 \Z\right)^2$ and span a lattice; we can uniquely represent each element of $P(x^A_0) \backslash G / P(x^D_0)$ by an element of $G$ whose spin$^c$ components lie in a fundamental domain of the lattice. In this form $\gr^\boxtimes$ recovers both the spin$^c$ decomposition of $\HFhat(M_1 \cup M_2)$ and the relative Maslov grading in each spin$^c$ structure: generators in the same spin$^c$ structure have the same spin$^c$ components, and for generators in the same spin$^c$ structure as $x^A_0 \otimes x^D_0$ the Maslov component of $\gr^\boxtimes$ gives the relative Maslov grading. In other words, to compute the relative Maslov grading $\gr$ of $x^A \otimes x^D$ in the same spin$^c$ structure as $x^A_0 \otimes x^D_0$, we compute $\gr_A(x^A) \cdot \gr_D(x^D)$, multiply on the left by an appropriate integral power of $P_A$ and on the right by an appropriate integral power of $P_D$ such that the resulting spin$^c$ components are zero, and then read off the Maslov component. For a rational homology sphere, it was shown in \cite{LOTQGradings} that this same procedure also recovers the relative $\mathbb{Q}$-grading between different spin$^c$ structures if we allow fractional powers of $P_A$ and $P_D$, where we define $(m;a,b)^r$ to be $(rm; ra, rb)$ for any rational $r$. We will be particularly interested in the rational grading differences between different \spinc structures.


\subsubsection{Relative grading within a \spinc structure via immersed curves}\label{sec:grading-curves-same-spinc}
Fortunately the relative grading on $\HFhat(M_1 \cup_{\phi} M_2)$ admits a nice description in terms of immersed curves. We begin by considering the grading difference between two generators in the same \spinc structure of $Y$, for which the grading difference is integral. By Theorem 5 of \cite{HRW-companion}, the relative grading on $\HFhat(Y)$ in each spin$^c$ structure agrees with the standard relative grading for Floer homology of immersed curves under the isomorphism in Equation \eqref{eq:pairing}; we will now review the relevant relative grading on the Floer homology of curves. We restrict to the special case that both generators correspond to intersection points of a single component of $\phi(\HFhat(M_1))$ and a single component of $\HFhat(M_2)$; the general case is similar but requires the grading decorations on multicurves that we will not discuss.

Before describing the relative grading, we must introduce some notation. Let $x$ and $y$ be intersection points of $\phi(\HFhat(M_1, \s_1))$ and $\HFhat(M_2, \s_2)$ in $T_{M_2}$ that lie on the same component of $\phi(\HFhat(M_1, \s_1))$ and on the same component of $\HFhat(M_2, \s_2)$. Consider paths $P_1$ from $y$ to $x$ in $\phi(\HFhat(M_1, \s_1))$ and $P_2$ from $x$ to $y$ in $\HFhat(M_2, \s_2)$ and consider the closed path $P = P_1 \cdot P_2$ (that is, the composition of the paths formed by $P_1$ followed by $P_2$). The generators $x$ and $y$ are in the same \spinc component if and only if $P_1$ and $P_2$ can be chosen such that $P$ is nullhomologous in the torus $\partial M_2$; this ensures that $P$ lifts to a closed path in $\widetilde T_{M_2}$. We define the rotation numbers $\rot(P_i)$ of the path $P_i$ to be the total rotation of the tangent vector when traversing the path $P_i$, measured in radians divided by $\pi$. We also define $\theta(x)$ and $\theta(y)$ to be the angles between $\phi(\HFhat(M_1))$ and $\HFhat(M_2)$ at $x$ and $y$, respectively, specifically the angle traced out by counterclockwise rotation from $\phi(\HFhat(M_1))$ to $\HFhat(M_2)$ (again measured in radians divided by $\pi$). We then define
$$\rot(P) = \rot(P_1) + \theta(x) + \rot(P_2) - \theta(y).$$
Note that if we arrange the curves to intersect orthogonally then we simply have
$$\rot(P) = \rot(P_1)+\rot(P_2).$$
With this notation in place we can define the grading difference between the two generators to be
$$\gr(y) - \gr(x) = 2w(P) - \rot(P)$$ 
where $w(P)$ is the winding number of $P$ around the puncture in $T_{M_2}$. To summarize, combining these definitions with \cite[Theorem 5]{HRW-companion} gives the following:

\begin{proposition}\label{prop:old-grading-lemma}
Let $x$ and $y$ be generators of $\HFhat(M_1 \cup_\phi M_2; \s)$ for some $\s$ such that $x$ and $y$ are represented by intersection points on the same component of $\phi(\HFhat(M_1, \s_1))$ and $\HFhat(M_2, \s_2)$, and choose $P_1$, $P_2$, and $P$ as above such that $P$ is nullhomologous in the torus $\partial M_2$. Then the grading difference is given by
$$\gr(y) - \gr(x) = 2w(P) - \rot(P).$$ 
\end{proposition}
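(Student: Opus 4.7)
The proposition is essentially a restatement of \cite[Theorem 5]{HRW-companion} in the notation introduced just above, so the plan is to invoke that theorem and verify that the combinatorial expression $2w(P) - \rot(P)$ agrees with the Maslov index it produces. The first step is to apply the pairing theorem in Equation \eqref{eq:pairing} together with \cite[Theorem 5]{HRW-companion} to identify the relative Maslov grading on $\HFhat(Y;\s)$ with the standard relative grading on the Lagrangian intersection Floer complex of $\phi(\HFhat(M_1, \s_1))$ and $\HFhat(M_2, \s_2)$ inside $T_{M_2}$. Because $x$ and $y$ both arise from intersections on a single pair of components of these multicurves, the grading decorations on the multicurves play no role, and the grading difference comes entirely from a Maslov index computation in surface Floer homology.

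Next I would verify that the hypothesis that $P = P_1 \cdot P_2$ is nullhomologous in $\partial M_2$ is precisely the condition that $x$ and $y$ represent generators in a common $\spinc$ structure. This follows from the $\spinc$-refined pairing theorem reviewed earlier: different $\spinc$ structures correspond to different relative lifts of the two multicurves to $\overline T_Y$, and two intersection points sit on a shared pair of lifts precisely when the closed curve made from arcs joining them in each multicurve bounds in $\partial M_2$. Thus, given the hypothesis, $P$ lifts to a closed loop in $\widetilde T_{M_2}$ bounding an (immersed) domain $D$, which we interpret as a Whitney disk from $x$ to $y$.

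The final step is the Maslov index calculation. For a disk in a punctured surface with boundary on two transverse Lagrangian $1$-submanifolds and corners at $x$ and $y$, the Maslov index admits a topological formula of the shape $\mu(D) = 2 n_z(D) - \rot(\partial D)$, where $n_z(D) = w(P)$ is the multiplicity of $D$ at the puncture and $\rot(\partial D)$ is the total rotation of the boundary tangent vector. Decomposing $\partial D$ into the smooth arcs $P_1, P_2$ and the corner turnings at $x, y$, with contributions $+\theta(x)$ and $-\theta(y)$ dictated by the counterclockwise convention for $\theta$ and the direction in which each corner is traversed, gives exactly $\rot(P)$ as defined. Combined with the grading identification from the first step, this yields $\gr(y) - \gr(x) = 2w(P) - \rot(P)$.

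The main obstacle is sign bookkeeping: one must verify that the orientation of $P$ used here (along $P_1$ from $y$ to $x$, then along $P_2$ from $x$ to $y$), the counterclockwise convention for $\theta(x), \theta(y)$, the choice of whether $D$ is a Whitney disk from $x$ to $y$ or from $y$ to $x$, and the sign conventions for the Maslov index and winding number in \cite{HRW-companion} are all mutually consistent. Once this is pinned down the formula is immediate from \cite[Theorem 5]{HRW-companion}, and a useful sanity check is that the right-hand side $2w(P) - \rot(P)$ is manifestly independent of the choices of $P_1$ and $P_2$ within their homotopy classes rel endpoints, since the only freedom (adding loops to $P_1$ or $P_2$) alters $2w(P)$ and $\rot(P)$ by the same amount.
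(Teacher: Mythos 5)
Your proposal matches the paper's treatment: the proposition is presented there as a direct consequence of \cite[Theorem 5]{HRW-companion}, which identifies the grading on $\HFhat(Y;\s)$ with the standard relative grading on intersection Floer homology of the immersed curves, together with the combinatorial description $2w(P) - \rot(P)$ of that grading — exactly the two ingredients you combine. The paper gives essentially no separate proof (it simply states the formula as the standard grading and cites \cite[Theorem 5]{HRW-companion}), so your fleshing-out of the Maslov index computation and the $\spinc$-matching condition is in the same spirit but slightly more explicit than what appears in print.
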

\noindent  In particular, if $P$ bounds a bigon from $x$ to $y$ then $\gr(y) - \gr(x) = 2w(P) - 1$.

\subsubsection{Rational grading differences via immersed curves}\label{sec:rational-grading-curves}

The goal of the rest of this section is to extend the geometric interpretation of the integral grading differences in Proposition \ref{prop:old-grading-lemma} to describe the rational grading difference between two generators of $\HFhat(M_1 \cup_\phi M_2)$ in different \spinc structures of a rational homology sphere $M_1 \cup_\phi M_2$ that restrict to the same \spinc structures on $M_1$ and $M_2$. For simplicity we will only do this in the case that $M_1$ is a solid torus whose meridian is identified with $\beta$ in $\partial M_2$; we also continue to restrict to the case of generators lying on the same component of each immersed multicurve, and further assume that this component is homologically nontrivial. We let $\gamma$ be the component of $\HFhat(M_2)$ containing the two points, so that we are interested in computing relative gradings for the (intersection) Floer homology of $\gamma$ with $\beta$. Much of this subsection is devoted to setting up notation to formally state and prove the result, but we begin with an informal statement: to compute the rational grading difference $\gr(y) - \gr(x)$ we construct a three sided path in the plane $\widetilde T_{M_2}$ for which one side is the curve $\gamma$ repeated some number $\ell$ times, one side is the subpath $\rho_{x,y}$ of $\gamma$ from $x$ to $y$ repeated some number $n$ times, and the third side is a horizontal straight line; the grading difference is twice the signed area enclosed by this path (assuming the curves are in rectilinear position) minus twice the area of a triangle determined by the three vertices of this path minus the net rotation along the path, divided by $n$.

We will assume that $\gamma$ is in symmetric $\beta$-pairing position. The result will be stated in terms of symmetric pairing position; that is, we consider the Floer chain complex of $\gamma$ with $\beta_{sym}$, the particular representative of $[\beta]$ given by the horizontal curve $[0,1] \times \left\{ \frac 1 2 \right\}$. However, in the proof we will need to consider the Floer chain complex of $\gamma$ with $\beta_A$, the representative of $[\beta]$ given by $[0,1] \times \left\{ \frac 1 2 - \epsilon\right\}$, since this Floer complex directly relates to the box tensor product of type A and D modules. We now introduce some notation to relate the generators of these two complexes. Recall that generators of $\HF(\gamma, \beta_{sym})$ are in one-to-one correspondence with intersections between $\gamma$ and $\beta_{sym}$ (since the curves are in minimal position) and these occur at the midpoints of maximal horizontal segments in $\gamma$ for which exactly one of the two adjacent vertical segments lies below the maximal horizontal segment in the cover $\widetilde T_{M_2}$.
Given such a maximal horizontal segment midpoint $x$ on $\gamma$, we let $x^D$ denote the intersection of $\gamma$ with $\beta_A$ that lies on the adjacent unit vertical segment that is below the maximal horizontal segment (see Figure \ref{fig:labeling-generators}). Recall that each unit vertical segment of $\gamma$ corresponds to an $\iota_1$-generator of the type D structure represented by $\gamma$. It follows that $x^D$, lying on a vertical segment of $\gamma$, also corresponds to some $\iota_1$-generator of the type D structure; we will use $\bar x^D$ to denote the this generator of the type D structure. By slight abuse of notation, we will use $x$ and $x^D$ to refer both to points on $\gamma$ and to the corresponding generators of $\CF(\gamma, \beta_{sym})$ and $\CF(\gamma, \beta_A)$, respectively.

%
%
%

Because symmetric $\beta$-pairing position lies within an $(\epsilon/2)$-neighborhood of rectilinear position, it follows from the discussion in Section \ref{sec:preferred-forms} that the Floer chain complex $\CF(\gamma, \beta_A)$ is isomorphic to the box tensor product of the type A module corresponding to $\beta$ and the type D module corresponding to $\gamma$. Under this isomorphism, the generator $x^D$ of $\CF(\gamma, \beta_A)$ corresponds to the generator $\bar x_0^A \otimes \bar x^D$ in the box tensor product, where $\bar x_0^A$ is the unique generator of the type A module associated with $\beta$.
Note that $\beta_{sym}$ can be continuously deformed into $\beta_A$ in such a way that the intersection with $\gamma$ moves continuously from $x$ to $x^D$, so the corresponding generators $x$ of $\CF(\gamma, \beta_{sym})$ and $x^D$ of $\CF(\gamma, \beta_A)$ are related. In particular they have the same grading, so $\gr(x) = \gr(x^D) = \gr(\bar x_0^A \otimes \bar x^D)$.

\begin{figure}[ht]
\fontsize{18pt}{24pt}
\scalebox{.6}{%
    \def\svgwidth{\columnwidth}
    \import{./figures/}{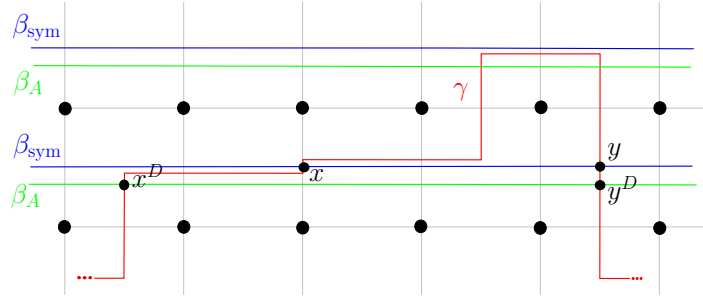}
}
\caption{Two intersection points $x$ and $y$ of $\gamma$ with $\beta_{sym}$ and the corresponding intersections $x^D$ and $y^D$ of $\gamma$ with $\beta_A$.}
\label{fig:labeling-generators}
\end{figure}

%

Fixing one intersection $x$ of $\gamma$ with $\beta_{sym}$ we can view $\gamma$ as a closed path starting and ending at $x$; we'll denote this path by $\gamma_x$. Let $\langle b_\gamma, a_\gamma \rangle$ be the vector representing $[\gamma]$ in terms of the  basis $\{\beta, \alpha\}$ for $H_1(\partial M_2)$, so that the path $\gamma$ lifts to a path in $\widetilde T_{M_2} \cong \R^2 \setminus \Z^2$ that moves $b_\gamma$ units to the right and $a_\gamma$ units up. We will also consider the path $\gamma_{x^D}$ which traverses the curve $\gamma$ starting and ending at $x^D$; this has the same net movement when lifted to the plane.
For any other intersection $y$ of $\gamma$ with $\beta_{sym}$, let $\rho_{x,y}$
denote the subpath of $\gamma$ starting at $x$ and ending at $y$, and let $\rho_{x^D,y^D}$ denote the subpath of $\gamma_{x^D}$ starting at $x^D$ and ending at $y^D$. Note that $\rho_{x,y}$ consists of unit vertical length separated by maximal horizontal segments, with half of a maximal horizontal segment on either end; $\rho_{x^D,y^D}$ on the other hand has a full maximal horizontal segment on one end and ends with a unit vertical segment on the other end. See Figure \ref{fig:grading-lemma-repeated-paths} for an example of $\gamma_x$, $\gamma_{x^D}$, $\rho_{x,y}$, and $\rho_{x^D,y^D}$.

Let $\langle b_{x,y}, a_{x,y} \rangle$ denote the vector corresponding to a lift of the path $\rho_{x,y}$ to $\widetilde T_{M_2}$, where $a_{x,y} \in \Z$ and $b_{x,y} \in \frac 1 2 \Z$. We also let $\langle b_{x^D,y^D}, a_{x^D,y^D} \rangle$ denote the vector corresponding to a lift of $\rho_{x^D,y^D}$, noting that $a_{x^D,y^D} = a_{x,y}$ but $b_{x^D,y^D}$ differs from $b_{x,y}$ if the maximal horizontal segments containing $x$ and $y$ have different lengths.
As described in \cite[Remark 34]{HRW-companion}, the two spin$^c$ components of $\gr_D$ of generators correspond to the coordinates of (the midpoints of) the corresponding unit segments of an immersed curve in rectilinear position, with the first spin$^c$ component given by the $x$-coordinate and the second spin$^c$ component given by the opposite of the $y$-coordinate. It follows that if $\gr_D(\bar x^D) = (0;0,0)$, then $\gr_D(\bar y^D)= (m_{x,y}; b_{x^D,y^D}, -a_{x^D,y^D})$ for some rational $m_{x,y}$. Similarly, by computing the grading change along the closed path $\gamma_{x^D}$ we find that $\gr_D(\bar x^D) = (m_{\gamma}; b_\gamma, -a_\gamma)$ for some rational $m_{\gamma}$, and thus this must be a power of $P_D$.

By assumption $\gamma$ is homologically nontrivial in $\partial M_2$, so $\langle b_\gamma, a_\gamma \rangle \neq \vec 0$. In fact, we may assume that $a_\gamma \neq 0$ since $[\gamma]$ is a multiple of the homological longitude and this is not a multiple of $\beta$ since $M_2(\beta)$ is a rational homology sphere. It follows that we can choose integers $n \neq 0$ and $\ell$ so that $n a_{x,y} = \ell a_\gamma$. We can construct a path $\widetilde{\gamma_x^\ell}$ that starts at a lift of $x$ by concatenating $\ell$ copies of the path $\gamma_x$ in $T_{M_2}$ and lifting the resulting path to $\widetilde{T}_{M_2}$. We similarly define a path $\widetilde{\gamma_{x^D}^\ell}$ by lifting the concatenation of $\ell$ copies of $\gamma_{x^D}$. We also define a path $\widetilde{\rho_{x,y}^n}$ by lifting the concatenation of $n$ copies of $\rho_{x,y}$, but since $\rho_{x,y}$ starts and ends at different points some care is needed to define this concatenation. In particular, we must translate each copy of $\rho_{x,y}$ horizontally if necessary so that its initial point coincides with the previous copy's final point; note that the points $x$ and $y$ are midpoints of maximal horizontal segments so may appear at either coordinates $(0, \tfrac 1 2)$ or $(\tfrac 1 2, \tfrac 1 2)$ in $T_{M_2}$, and each copy of $\rho_{x,y}$ is translated either an integer or half integer amount. Due to horizontal translations by half integer amounts the concatenated path may pass through the marked point in $T_{M_2}$, but the marked points are not relevant in the present construction so this is not a problem. We have chosen $n$ and $\ell$ so that if the lifts $\widetilde{\rho_{x,y}^n}$ and $\widetilde{\gamma_x^\ell}$ begin at the same point their terminal points also occur at the same height and are separated horizontally by some distance $k$; in particular, $k = n b_{x,y} - \ell b_\gamma \in \frac 1 2 \Z$. We will also consider the path $\widetilde{\rho_{x^D,y^D}^n}$ defined similarly using $\rho_{x^D,y^D}$ instead of $\rho_{x,y}$, noting that the terminal points of $\widetilde{\gamma_{x^D}^\ell}$ and $\widetilde{ \rho_{x^D,y^D}^n}$ still lie at the same height but are separated by a different horizontal distance $k^D =n b_{x^D,y^D} - \ell b_\gamma \in \Z$. See Figure \ref{fig:grading-lemma-repeated-paths} for an example of $\widetilde{ \gamma^\ell_x}$, $\widetilde{\rho^n_{x,y}}$, $\widetilde{ \gamma^\ell_{x^D}}$, and $\widetilde{\rho^n_{x^D,y^D}}$.

\begin{figure}
\vspace{3mm}
\labellist

\small
\pinlabel {$a_\gamma$} at -20 1060
\pinlabel {$b_\gamma$} at 160 1300

\pinlabel {$a_{x,y}$} at 330 1000
\pinlabel {$b_{x,y}$} at 490 1180

\pinlabel {$a_\gamma$} at 640 1040
\pinlabel {$b_\gamma$} at 820 1280

\pinlabel {$a_{x^D,y^D}$} at 975 980
\pinlabel {$b_{x^D,y^D}$} at 1190 1160

\normalsize
\pinlabel { $P =  \widetilde{\rho_{x,y}^n} \cdot \widetilde{\beta^{-k}} \cdot \left(\widetilde{\gamma_x^\ell}\right)^{-1}$} at 270 -20
\pinlabel { $P^D =  \widetilde{\rho_{x^D,y^D}^n} \cdot \widetilde{\beta^{-k^D}} \cdot \left(\widetilde{\gamma_{x^D}^\ell}\right)^{-1}$} at 1000 -20

\color{red}
\pinlabel {$x$} at 55 870
\pinlabel {$y$} at 240 1110
\pinlabel {$x$} at 295 1230
\pinlabel {\Large $ \widetilde{\gamma_x}$} at 160 860

\pinlabel {$x^D$} at 730 850
\pinlabel {$y^D$} at 970 1100
\pinlabel {$x^D$} at 970 1215
\pinlabel {\Large $ \widetilde{\gamma_{x^D}}$} at 880 860

\pinlabel {\Large $\widetilde{\gamma^\ell_x }$} at 300 530
\pinlabel {\Large $\widetilde{\gamma^\ell_{x^D} }$} at 900 530

\color{InkscapePurple}
\pinlabel {$x$} at 415 870
\pinlabel {$y$} at 600 1110
\pinlabel {\Large $ \widetilde{\rho_{x,y}}$} at 520 860

\pinlabel {$x^D$} at 1085 850
\pinlabel {$y^D$} at 1330 1095
\pinlabel {\Large $ \widetilde{\rho_{x^D,y^D}}$} at 1260 860

\pinlabel {\Large $\widetilde{\rho^n_{x,y} }$} at 460 400
\pinlabel {\Large $\widetilde{\rho^n_{x^D,y^D} }$} at 1130 400

\color{blue}
\pinlabel {$ \widetilde{\beta^k}$} at 550 790
\pinlabel {$ \widetilde{\beta^{k^D}}$} at 1200 780

\endlabellist

\includegraphics[scale = .3]{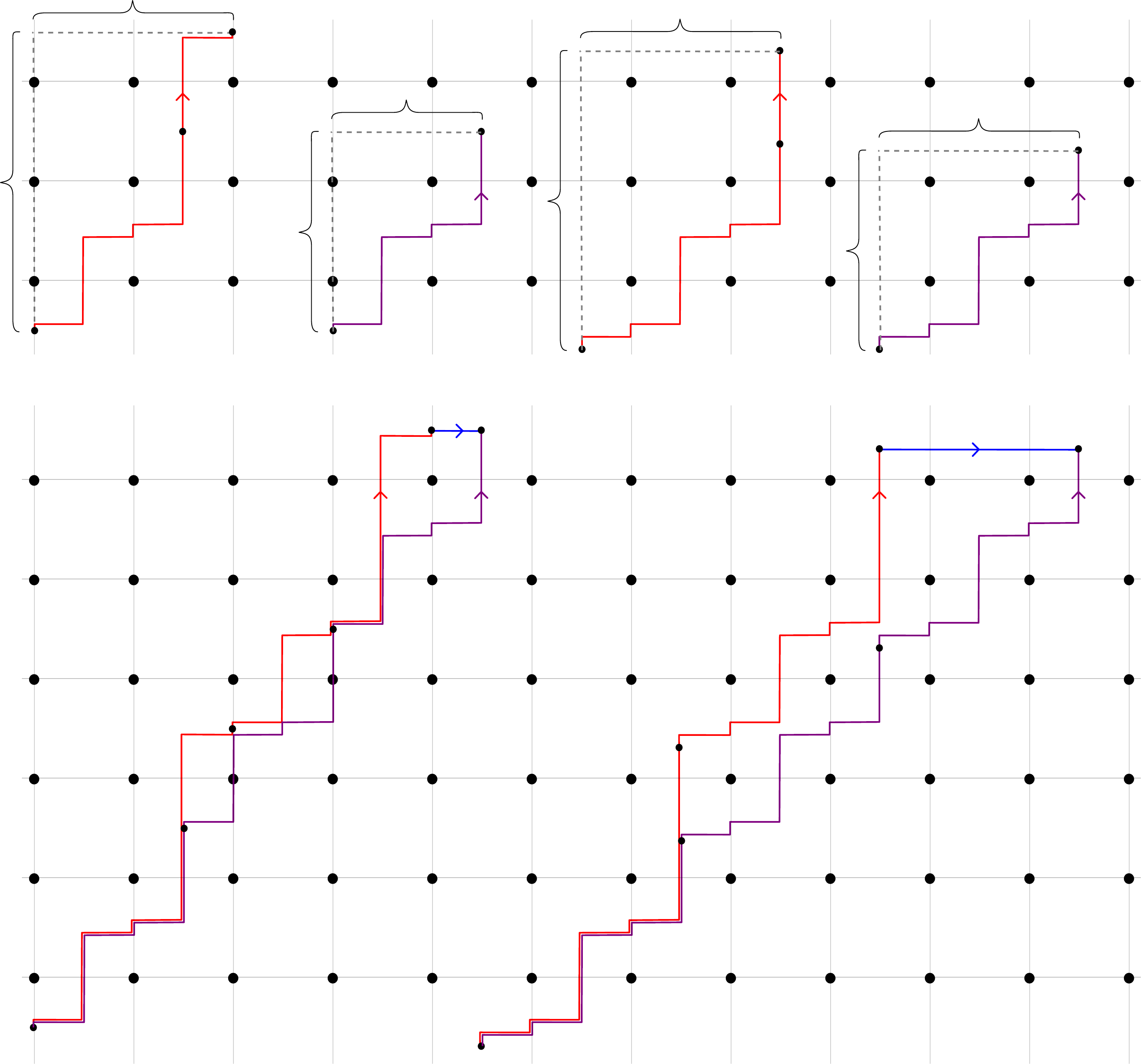}
\vspace{3mm}
\caption{The top shows an example of a path $\widetilde{ \gamma_x}$ and the subpath $\widetilde{\rho_{x,y}}$, along with the variants $\widetilde{ \gamma_{x^D}}$ and $\widetilde{\rho_{x^D,y^D}}$. Below are the repeated paths $\widetilde{ \gamma^\ell_x}$ and $\widetilde{\rho^n_{x,y}}$, where in this example $\ell = 2$ and $n = 3$, which combine with a lift of a length $k = \frac 1 2$ path along $\beta$ to form a closed path $P$; there is also an analogous closed path $P^D$ using the type D variants of the paths. The areas enclosed by these paths are $\text{Area}(P) = 2$ and $\text{Area}(P^D) = 5$.
}
\label{fig:grading-lemma-repeated-paths}
\end{figure}

Given the paths above, let $P$ denote the closed path $\widetilde{ \rho_{x,y}^n } \cdot \widetilde{\beta^{-k}} \cdot \left(\widetilde{ \gamma_x^\ell}\right)^{-1}$, where $\widetilde{\beta^{-k}}$ is a lift of a path on $\beta_{sym}$ that starts at $x$ and moves leftward by $k$ units (or rightward by $-k$ units). We will consider the area bounded by $P$, with the area of each region enclosed counted with multiplicity given by the winding number of $P$ around that region; we denote this by $\text{Area}(P)$. Note that $P$ consists of maximal horizontal segments of integer or half integer length that lie on (or within an $\epsilon$-neighborhood of) horizontal lines with $y$-coordinates in $\Z + \frac 1 2$ and unit vertical segments that lie on vertical lines with $x$-coordinates in $\frac 1 2 \Z $.  When computing the area, we ignore the perturbation of the horizontal segments from rectilinear position (or, equivalently, we take the limit as $\epsilon \to 0$), so that $\text{Area}(P) \in \frac 1 2 \Z$. In the course of proving Proposition \ref{prop:rational-grading-lemma} we will also consider the closed path $P^D = \widetilde{ \rho_{x^D,y^D}^n } \cdot \widetilde{\beta^{-k^D}} \cdot \left(\widetilde{ \gamma_{x^D}^\ell} \right)^{-1}$, defined analogously to $P$ using the the type D variations of the paths. Note that since the path $\rho_{x^D, y^D}$ moves rightward by $b_{x^D,y^D} \in \Z$, successive copies of $\rho_{x^D, y^D}$ do not need to be translated horizontally before concatenation. As a result, $P^D$ is disjoint from the punctures and moreover the area $\text{Area}(P^D)$ is an integer and agrees with the sum of the winding numbers of $P^D$ about each puncture. See Figure \ref{fig:grading-lemma-repeated-paths} for an example of $P$ and $P^D$.

Rotation numbers for smooth paths are defined as before, and the rotation number for a piecewise smooth path for which adjacent smooth segments are parallel at their boundary is the sum of the rotation numbers of the smooth segments. We define
$$\rot(P) = \rot(\widetilde{ \rho_{x,y}^n }) - \rot(\widetilde{ \gamma_x^\ell}) = n \rot( \rho_{x,y} ) - \ell \rot( \gamma_x ),$$
and similarly for $\rot(P^D)$. We do not include terms for rotation at the corners in the definition of $\rot(P)$, but note that the paths $\widetilde{ \gamma_x^\ell}$ and $\widetilde{ \rho_{x,y}^n }$ coincide near their initial points and are both perpendicular to $\widetilde{\beta^k}$ at their terminal points.


\begin{proposition} \label{prop:rational-grading-lemma}
Let $\gamma$ be an immersed curve in $T_{M_2}$ not homologous to a multiple of $\beta$ and assumed to be in symmetric $\beta$-pairing position, and let $x$ and $y$ be two generators of $HF(\gamma, \beta_{sym})$. Constructing the path $P = \widetilde{ \rho_{x,y}^n } \cdot \widetilde{\beta^{-k}} \cdot \left(\widetilde{ \gamma_x^\ell}\right)^{-1}$ as described above, the grading difference between $x$ and $y$ is determined by
$$\gr(y) - \gr(x) = \frac{2\textup{Area}(P) - \rot(P) - (n-1)k a_{x, y}}{n} $$

\end{proposition}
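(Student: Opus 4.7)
My plan is to combine the bordered Floer grading package of Section \ref{sec:gradings-traditional} with a geometric interpretation of the type $D$ gradings. Since $M_1$ is the solid torus with meridian $\beta$, the type $A$ structure has a unique generator $\bar x_0^A$ of grading $(0;0,0)$ and $P_A=(-\tfrac12;1,0)$. Normalizing $\gr_D(\bar x^D)=(0;0,0)$, the geometric interpretation of the \spinc components of $\gr_D$ (Remark 34 of \cite{HRW-companion}) yields $\gr_D(\bar y^D) = (m_{x,y};\, b_{x^D,y^D},\,-a_{x^D,y^D})$ and $P_D = (m_\gamma;\, b_\gamma,\, -a_\gamma)$ for some rationals $m_{x,y}$ and $m_\gamma$, since the \spinc components precisely record the positions of $\bar y^D$ and of a lift of $\bar x^D$ obtained by traversing $\gamma_{x^D}$ once. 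The generators $x$ and $x^D$ (and $y$ and $y^D$) are connected by the homotopy from $\beta_{sym}$ to $\beta_A$, which preserves gradings, so $\gr(y)-\gr(x)$ equals the corresponding grading difference in $\CF(\gamma,\beta_A)\cong\CFA(M_1)\boxtimes\CFD(M_2)$.

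Following \cite{LOTQGradings}, I would compute the rational grading difference as the Maslov component of $P_A^s\cdot \gr_D(\bar y^D)\cdot P_D^t$ where $s,t$ are chosen so the \spinc components vanish. Solving the resulting linear system gives $t=-\ell/n$ and $s=-k^D/n$, where $k^D=n b_{x^D,y^D}-\ell b_\gamma$. Applying the noncommutative group law in $G$ and reading off the Maslov component yields
\begin{equation*}
\gr(y)-\gr(x) \;=\; \frac{k^D}{2n} + m_{x,y} - \frac{\ell}{n}\,m_\gamma + \frac{\ell}{n}\bigl(a_\gamma b_{x^D,y^D}-a_{x^D,y^D}b_\gamma\bigr).
\end{equation*}

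The geometric content of the proof lies in reinterpreting $m_{x,y}$ and $m_\gamma$ by telescoping over the $\CFD$ arrows. Each consecutive pair of letters along $\gamma$ (see Table \ref{table:curves-to-CFD}) corresponds to a factor $\gr(\rho_I)^{\mp 1}\lambda^{\mp 1}$; iterating along the path $\widetilde{\rho_{x^D,y^D}}$ and collecting Maslov contributions, one checks that via the Pick's/Green's theorem identification of winding around lattice points with signed area for rectilinear curves (as in the proof of Proposition \ref{prop:old-grading-lemma}),
\[
m_{x,y} \;=\; 2\,S\bigl(\widetilde{\rho_{x^D,y^D}}\bigr) - \rot\bigl(\rho_{x^D,y^D}\bigr),
\]
up to explicit boundary terms, where $S(\cdot)=\tfrac12\int (x\,dy-y\,dx)$ is the line-integral signed area. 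An analogous formula holds for $m_\gamma$ along $\widetilde{\gamma_{x^D}}$. Substituting these expressions, the determinant cross term $\tfrac{\ell}{n}(a_\gamma b_{x^D,y^D}-a_{x^D,y^D}b_\gamma)$ is exactly $\tfrac{2}{n^2}$ times the signed area of the Euclidean triangle with vertices at the three corners of the lifted path $P^D$, so Green's theorem assembles the three area contributions into $\tfrac{2}{n}\textup{Area}(P^D)-\tfrac{1}{n}\rot(P^D)$, yielding the formula with $P^D$ in place of $P$. Converting $P^D$ to $P$ accounts for the $\epsilon/2$ horizontal shifts between $\beta_A$-pairing position and symmetric $\beta$-pairing position: this changes the horizontal connecting length from $k^D$ to $k$, shifts the enclosed area, and produces precisely the $(n-1)k\,a_{x,y}$ correction together with cancellations of the $\tfrac{k^D}{2n}$ term.

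The main obstacle is the third step: extending the area/rotation identification of Proposition \ref{prop:old-grading-lemma} (which is stated for null-homologous closed loops in the torus) to the open pieces $\widetilde{\rho_{x^D,y^D}}$ and $\widetilde{\gamma_{x^D}}$ of the path $P^D$. This requires careful bookkeeping of boundary contributions at each junction where smooth pieces are concatenated, as well as verifying that the Pick-type identification between winding around punctures and enclosed area is compatible with the half-integer coordinates of rectilinear position. The reconciliation of $P^D$ with $P$ under the $\epsilon/2$ perturbations, which introduces the non-obvious $(n-1)k a_{x,y}$ term, is the final piece requiring close attention.
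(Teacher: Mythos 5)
The algebraic portion of your argument—normalizing $\gr_D(\bar x^D) = (0;0,0)$, reading off the \spinc components of $\gr_D(\bar y^D)$ and $P_D$ from coordinates via Remark 34 of \cite{HRW-companion}, solving for the exponents of $P_A$ and $P_D$, and the final $P^D \to P$ conversion—matches the paper's proof, and your intermediate formula for $\gr(y)-\gr(x)$ is equivalent to Equation \eqref{eq:grading-of-y} (using $k^D = nb_{x^D,y^D}-\ell b_\gamma$ and $\ell a_\gamma = n a_{x^D,y^D}$). The divergence, and the gap, is in the step you flag as the obstacle: you want to show $m_{x,y} = 2\,S(\widetilde{\rho_{x^D,y^D}}) - \rot(\rho_{x^D,y^D})$ (and likewise for $m_\gamma$) by telescoping the $\CFD$ grading rule along an \emph{open} arc, conceding only that this holds ``up to explicit boundary terms.'' Those boundary terms are not a routine afterthought: for an open arc the quantity $S(\cdot)=\tfrac12\int(x\,dy-y\,dx)$ is not translation-invariant and does not even determine a well-defined number until a base point is chosen, while the winding-number-equals-area identification underlying Proposition \ref{prop:old-grading-lemma} is a statement about null-homologous closed loops in $T_{M_2}$. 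Without pinning down exactly how the base-point ambiguity and endpoint contributions cancel against the cross-term $\frac{\ell}{n}(a_\gamma b_{x^D,y^D}-a_{x^D,y^D}b_\gamma)$, the substitution step is not proved, and this is precisely where the difficulty lies.

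The paper avoids interpreting $m_{x,y}$ and $m_\gamma$ for open arcs at all. Instead it introduces auxiliary type D structures ${\bf D}_\gamma$ and ${\bf D}_\rho$ whose immersed curves (or train tracks) contain the repeated lifts $\widetilde{\gamma_{x^D}^\ell}$ and $\widetilde{\rho_{x^D,y^D}^n}$, together with an auxiliary type A structure ${\bf A}$ whose curve $\tilde\delta$ passes through all three endpoints. Pairing ${\bf A}$ with each ${\bf D}$ produces intersection points in the same \spinc structure of the resulting filling, so the already-established integral formula of Proposition \ref{prop:old-grading-lemma} applies directly, giving closed-loop winding number and rotation terms with no boundary ambiguity; subtracting the two pairings isolates exactly $2\,\text{Area}(P^D)-\rot(P^D)-(n-1)k^D a_{x^D,y^D}$. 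If you want to complete your more direct telescoping route you would need to set up a base-point-independent version of $S$, track the grading increment across each of the four arrow types in Table \ref{table:curves-to-CFD}, and verify that the accumulated boundary contributions reassemble into the determinant cross-term; alternatively, adopting the auxiliary-pairing construction removes the open-arc issue entirely.
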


\begin{proof}[Proof of Proposition \ref{prop:rational-grading-lemma}]
We fix the relative gradings on the type D structure corresponding to $\gamma$ so that $\gr_D(\bar x^D) = (0;0,0)$ and recall that the type A structure associated with $\beta_{sym}$ has a single generator $\bar x^A_0$ with $\gr_A(\bar x^A_0) = (0;0,0)$. We have that $\gr(x) = \gr(x^D) = \gr(\bar x_0^A \otimes \bar x^D) = 0$ and our goal is to compute $\gr(y) = \gr(y^D) = \gr(\bar x_0^A \otimes \bar y^D)$. 
This can be obtained from $\gr^\boxtimes( \bar x_0^A \otimes \bar y^D ) = \gr_D(\bar y^D)$. Following the discussion above, we have $\gr_D(\bar y^D) = (m_{x,y}; b^D_{x,y}, -a^D_{x,y})$, where $\langle b^D_{x,y}, a^D_{x,y}\rangle$ is the vector corresponding to the path $\rho_{x,y}^D$. 
We take $P_A$ to be $(-\frac 1 2; 1, 0)$, and we let $P_\gamma$ denote $(m_{\gamma}; b_{\gamma}, -a_\gamma)$, which is the power of $P_D$ corresponding to the grading change when traversing the path $\gamma_{x^D}$.
  We compute $\gr(y)$ by taking the Maslov component of $(m_{x,y}; b_{x^D,y^D}, -a_{x^D,y^D})$ after multiplying on the left by some rational power of $P_A$ and on the right by some rational power of $P_D$ chosen so that we end up with spin$^c$ components that are zero. In this case we use the product
$$P_A^{-\frac{k^D}{n}}  (m_{x,y}; b_{x^D,y^D}, -a_{x^D,y^D}) P_\gamma^{-\frac{\ell}{n}} = (\frac{k^D}{2n} ; -\frac{k^D}{n}, 0) (m_{x,y}; b_{x^D,y^D}, -a_{x^D,y^D}) (-\frac{\ell m_\gamma}{n}; -\frac {\ell b_\gamma}{n}, \frac{\ell a_\gamma}{n})$$
The second spin$^c$ component is zero since $a_{x,y} = a_{x^D,y^D}$ and $\ell$ and $n$ were chosen so that $n a_{x,y} = \ell a_\gamma$. The first spin$^c$ component is zero since $k^D$ is precisely the difference $n b_{x^D,y^D} - \ell b_\gamma$. Thus
\begin{equation}\label{eq:grading-of-y}
\gr(y) = m_{x,y} + \frac{k^D}{2n} - \frac{\ell m_\gamma}{n} + \frac{k^D a_{x^D,y^D}}{n}.
\end{equation}
Having computed $\gr(y)$ in terms of the unknown gradings $(m_{x,y}; b_{x^D,y^D}, -a_{x^D,y^D})$ and $(m_\gamma; b_\gamma, -a_\gamma)$; our strategy will be to relate this information to areas and rotation numbers by considering a different tensor product.

Let $\bf{D}_\gamma$ be a hypothetical type D structure whose immersed curve representative, when lifted to $\widetilde{T}_{M_2}$, contains the path $\widetilde{ \gamma_{x^D}^\ell}$; recall that $\widetilde{ \gamma_{x^D}^\ell}$ is formed by lifting the concatenation of $\ell$ copies of $\gamma_{x^D}$.  Let $x^D_{0}$ denote the beginning of the path $\widetilde{ \gamma_{x^D}^\ell}$ and let $x^D_{i}$ denote the lift of the end of the $i$th copy of $\gamma_{x^D}$ in the concatenation, with $x^D_\ell$ giving the endpoint of $\widetilde{ \gamma_{x^D}^\ell}$; for each point, adding a bar to the notation refers to the corresponding generator of $\bf{D}_\gamma$. Similarly, let $\bf{D}_\rho$ be any type D structure that contains a portion described by the path $\widetilde{ \rho_{x^D,y^D}^n }$. We remark that this path will only be an immersed curve segment if the path $\rho_{x^D,y^D}$, which is vertical at its endpoints, is oriented the same direction at both endpoints. In this case, as with $\bf{D}_\gamma$, we can take $\bf{D}_\rho$ to be any type D structure whose immersed curve representative lifted to $\widetilde{T}_{M_2}$ contains the curve segment $\widetilde{ \rho_{x^D,y^D}^n }$. If $\rho_{x^D,y^D}$ is oriented in opposite directions at its endpoints, then $\widetilde{ \rho_{x^D,y^D}^n }$ will not be a smooth curve segment, but it will be a piecewise smooth path for which the two smooth segments are tangent when they share an endpoint, so it lies on an immersed train track of the form described in \cite{HRW}. Such train tracks encode type D structures, albeit with a non-preferred basis, and we choose $\bf{D}_\rho$ to be a type D structure that is represented by a train track containing $\widetilde{ \rho_{x^D,y^D}^n }$. While we usually simplify a train track to an immersed curves, all the relevant geometric constructions work for suitable immersed train tracks as well. In particular, the pairing theorem still relates Floer homology to the box tensor product, and the grading difference formula Prop \ref{prop:old-grading-lemma} still applies (where the rotation number of a piecewise smooth path in a train track is the sum of the rotation number of its smooth pieces); see \cite[Section 2]{HRW} for details. We let $y^D_0$ denote the starting point of the path $\widetilde{ \rho_{x^D,y^D}^n }$ and let $y^D_i$ denote the end of the $i$th lift of $\rho_{x^D,y^D}$ along the path, using bars as usual to denote the corresponding generators of $\bf{D}_\rho$. Note that the points $x^D_0$ and $y^D_0$ coincide in $\widetilde{T}_{M_2}$. Finally, let $\bf{A}$ be any type A structure whose corresponding immersed curve $\delta$, when lifted to the curve $\tilde \delta$ in $\widetilde{T}_{M_2}$, passes through the three points $x^D_0 = y^D_0$, $x^D_\ell$ and $y^D_n$ and connects the last two of these points (which must lie at the same height) by a horizontal line segment. 
Note that each of these three points lies on a unit vertical segment of $\widetilde{ \gamma_{x^D}^\ell}$ or $\widetilde{ \rho_{x^D,y^D}^n }$; we assume that $\tilde\delta$  is in rectilinear position, translated slightly downward and leftward into pairing position, so that each of these points lies on a unit horizontal segment of $\tilde\delta$ corresponding to a generator of $\bf{A}$. We let $x_0^A$, $x_\gamma^A$, and $x_\rho^A$ denote points on the lift $\tilde\delta$ corresponding to the points $x^D_0 = y^D_0$, $x^D_\ell$ and $y^D_n$, respectively, and use bars to indicate the corresponding generators of $\bf{A}$. We let $\widetilde{\delta_\gamma}$ and $\widetilde{\delta_\rho}$ denote the paths in $\tilde\delta$ from $x_0^A$ to $x_\gamma^A$ and $x_\rho^A$, respectively.
See Figure \ref{fig:grading-lemma-area} for an example.


\begin{figure}
\labellist

\pinlabel {\small $k^D$} at 800 805

\color{red}
\pinlabel {$\widetilde{\gamma^\ell_{x^D} }$} at 520 520
\pinlabel {\small $x_0^D$} at 165 -5
\pinlabel {\small $x_1^D$} at 405 370
\pinlabel {\small $x_\ell^D$} at 650 715

\color{InkscapePurple}
\pinlabel { $\widetilde{\rho^n_{x^D,y^D} }$} at 740 400
\pinlabel {\small $y_0^D$} at 220 5
\pinlabel {\small $y_1^D$} at 460 255
\pinlabel {\small $y_2^D$} at 700 495
\pinlabel {\small $y_n^D$} at 940 720

\color{blue}
\pinlabel {$ \widetilde{\delta}$} at 20 400
\pinlabel {\small $x_0^A$} at 165 45
\pinlabel {\small $x_\rho^A$} at 940 765
\pinlabel {\small $x_\gamma^A$} at 650 765

\endlabellist
\includegraphics[scale = .3]{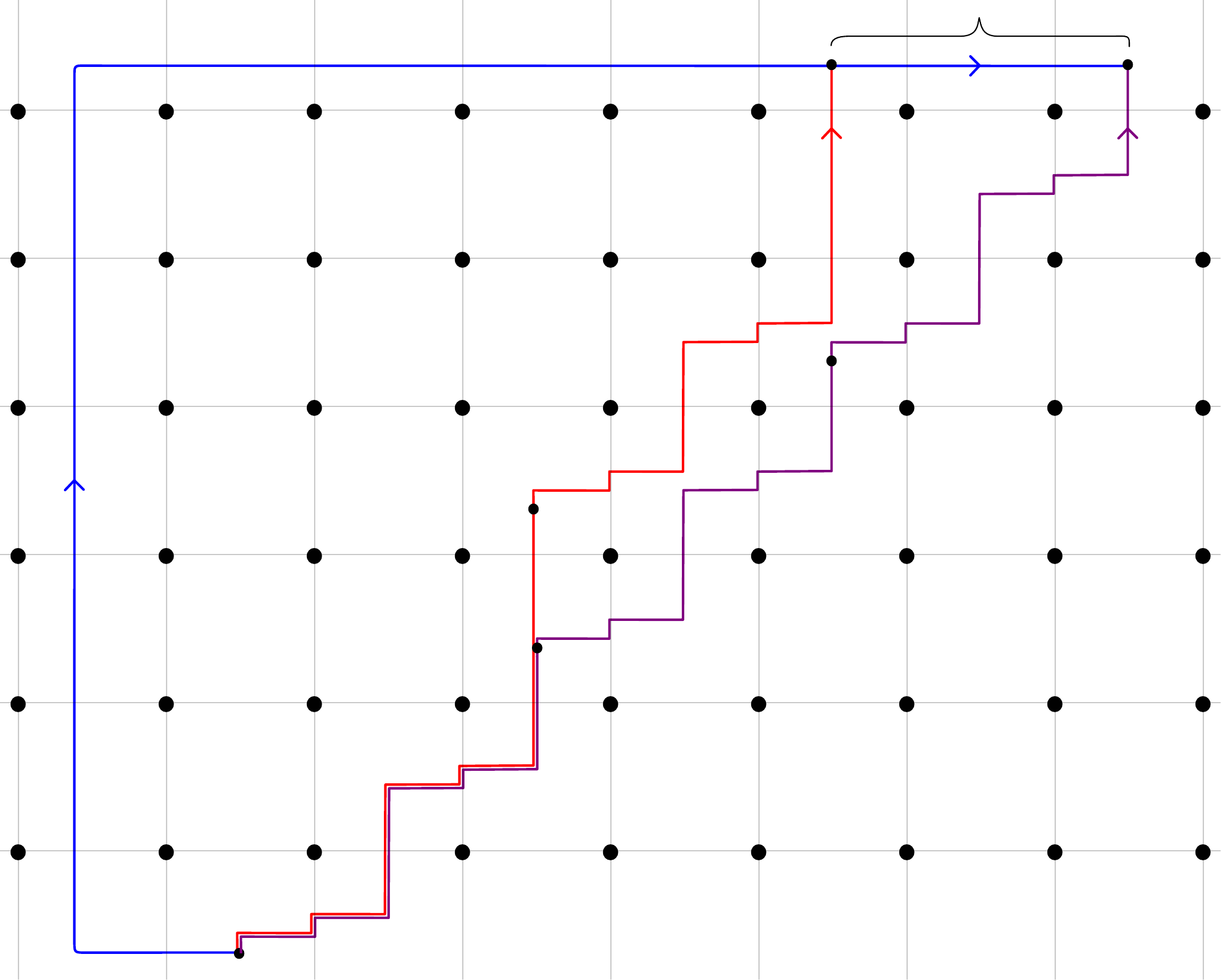}
\caption{The paths $\widetilde{\gamma_{x^D}^\ell}$ and $\widetilde{ \rho_{x^D, y^D}^n}$ in $\widetilde{T}_{M_2}$ from Figure \ref{fig:grading-lemma-repeated-paths}, along with an immersed curve $\tilde\delta$ that intersect both paths at their endpoints. These paths correspond to portions of type D structures ${\bf D}_\gamma$ and ${\bf D}_\rho$, respectively, and $\tilde\delta$ represents part of a type A structure ${\bf A}$. To compute $\gr_D( y^D_n) = \left( \gr_D( y^D) \right)^n$, we consider the gradings of the various intersections of these three curves as generators of the appropriate Floer homologies.}
\label{fig:grading-lemma-area}
\end{figure}

We fix relative gradings on $\bf{D}_\gamma$, $\bf{D}_\rho$, and $\bf{A}$ so that $\gr_D(\bar x^D_0)$, $\gr_D(\bar y^D_0)$, and $\gr_A(\bar x_0^A)$ are all $(0;0,0)$. Since the path from $y^D_0$ to $y^D_1$ is just a lift of $\rho_{x^D,y^D}$, it is clear that $\gr_D(\bar y^D_1) = \gr_D(\bar y^D) = (m_{x,y}; b_{x^D,y^D}, -a_{x^D,y^D})$. Moreover, since this path repeats we have that $\gr_D(\bar y^D_i) = \gr_D(\bar y^D)^i$ and in particular $\gr_D(\bar y^D_n) = (n m_{x,y}; n b_{x^D,y^D}, -n a_{x^D,y^D})$. Similarly, $\gr_D(\bar x^D_i) = P_\gamma^i$ and in particular $\gr_D(\bar x^D_\ell) = (\ell m_\gamma; \ell b_\gamma, -\ell a_\gamma)$.
To relate the gradings of generators of $\bf{A}$ to the coordinates of the corresponding unit horizontal and vertical segments in the immersed curve for $\bf{A}$, note that immersed curves representing type A structures are reflections across a line of slope $-1$ of the immersed curves representing the corresponding type D structure. Since the reflection swaps the $x$ and $y$-coordinates and negates both and changing from type A to type D structures swaps and negates the two \spinc components of the grading, we can check that the coordinate of a segment on $\widetilde\delta$ correspond to the spin$^c$ components in $\gr_A$ of the corresponding generator of $\bf{A}$ with a minus sign on the $x$-coordinate.
 It follows that $\gr_A(\bar x^A_\gamma) = (m_A; -\ell b_\gamma, \ell a_\gamma)$ for some rational $m_A$, and from this we see that $\gr^\boxtimes( \bar x^A_\gamma \otimes \bar x^D_\ell) = (\ell m_\gamma + m_A; 0, 0)$ and so 
$$\gr( \bar x^A_\gamma \otimes \bar x^D_\ell) = \ell m_\gamma + m_A.$$
 To compute the grading of $\bar x^A_\rho$, we observe that $x^A_\rho$ is $k^D = n b_{x^D,y^D} - \ell b_{\gamma}$ units to the right of $x^A_\gamma$ and connected to $x^A_\gamma$ by $\vert k^D \vert$ unit horizontal segments on the $\widetilde\delta$ curve. Each of these unit horizontal segments, with generators $u$ and $v$ on the left and right end respectively, corresponds to a type A operation $m_3(v, \rho_2, \rho_1) = u$; this is the same operation appearing in the type A module corresponding to $\beta$ and gives the grading relationship $\gr_A(u) = \gr_A(v) \cdot P_A$.
 From this we compute that
$$\gr_A( \bar x^A_\rho )  = \gr_A( \bar x^A_\gamma) \cdot P_A^{-k^D} = (m_A; -\ell b_\gamma, \ell a_\gamma)(\frac {k^D}{2}; -k^D, 0) = (m_A + \frac{k^D}{2} + k^D \ell a_\gamma; -\ell b_\gamma - k^D, \ell a_\gamma), $$
$$\gr^\boxtimes(\bar x^A_\rho \otimes \bar y^D_n) = (m_A + \frac{k^D}{2} + k^D\ell a_\gamma; -\ell b_\gamma - k^D, \ell a_\gamma) \cdot (n m_{x,y}; n b_{x^D,y^D}, -n a_{x^D,y^D}) $$
$$ \hspace{3 cm} = (m_A + \frac{k^D}{2} + k^D \ell a_\gamma + n m_{x,y}; 0, 0),$$
$$\gr(\bar x^A_\rho \otimes \bar y^D_n) = m_A + \frac{k^D}{2} + k^D \ell a_\gamma + nm_{x,y}.$$
We now consider the difference
$$\gr(\bar x^A_\rho \otimes \bar y^D_n) - \gr( \bar x^A_\gamma \otimes \bar x^D_\ell) = \frac{k^D}{2} + k^D \ell a_\gamma - \ell m_\gamma + nm_{x,y} = \frac{k^D}{2} + k^D n a_{x^D,y^D} - \ell m_\gamma + nm_{x,y}$$
and note from \eqref{eq:grading-of-y} that
$$n \gr(y) = n m_{x,y} + \frac{k^D}{2} - \ell m_\gamma + k^D a_{x^D,y^D} = \left[\gr(\bar x^A_\rho \otimes \bar y^D_n) - \gr( \bar x^A_\gamma \otimes \bar x^D_\ell) \right] - (n-1)k^D a_{x^D,y^D} .$$
Since $\bar x_\rho^A \otimes \bar y^D_n$ lies in the same spin$^c$ structure as $\bar x_0^A \otimes \bar y^D_0$, we can use Proposition \ref{prop:old-grading-lemma} to compute
$$\gr(\bar x^A_\rho \otimes \bar y^D_n) = 2 w\left(\widetilde{\rho_{x^D,y^D}^n} \cdot \widetilde{\delta_\rho}^{-1}\right) - \rot(\widetilde{\rho_{x^D,y^D}^n}) + \rot(\widetilde{\delta_\rho}).$$
Similarly we find that
$$\gr(\bar x^A_\gamma \otimes \bar x^D_\ell) = 2 w\left(\widetilde{\gamma_{x^D}^\ell} \cdot \widetilde{\delta_\gamma}^{-1}\right) - \rot(\widetilde{\gamma_{x^D}^\ell}) + \rot(\widetilde{\delta_\gamma}). $$
Because the curves are in an arbitrarily small neighborhood of rectilinear position, the winding number of a closed path is the same as the signed area enclosed by that path. When we subtract these two gradings the areas cancel except for the area $\text{Area}(P^D)$ enclosed by the path $P^D = \widetilde{ \rho_{x^D,y^D}^n } \cdot \widetilde{\beta^{-k^D}} \cdot \left(\widetilde{ \gamma_{x^D}^\ell}\right)^{-1}$. The rotation numbers of $\widetilde{\delta_\rho}$ and $\widetilde{\delta_\gamma}$ also cancel; these agree since $\tilde\delta$ has no rotation between $x^A_\gamma$ and $x^A_\rho$. We thus have
\begin{equation}\label{eq:n-times-grading-difference}
n \gr(y) = 2\text{Area}(P^D) - \rot(P^D) - (n-1)k^D a_{x^D, y^D} .
\end{equation}

It only remains to relate the rotation numbers and areas bounded by the path $P^D$ and the path $P$ used in the statement of the proposition. It is clear that $\rot(P^D) = \rot(P)$. For the area, we  notice that the path $\widetilde{ \rho_{x^D,y^D}^n }$ can be obtained from $\widetilde{ \rho_{x,y}^n }$ by changing the length of the horizontal segment at the beginning and end of each copy of $\rho_{x,y}$. More precisely, if $h_x$ and $h_y$ denote the length of the maximal horizontal segment of $\gamma$ containing $x$ and $y$ respectively (recorded as negative if the path moves leftward on the horizontal segment), then we obtain $\rho_{x^D,y^D}$ from $\rho_{x,y}$ by adding a horizontal segment of length $\frac{h_x}{2}$ to the beginning and removing a horizontal segment of length $\frac{h_y}{2}$ from the end. It follows that in addition to moving the starting point leftward by $\frac{h_x}{2}$, the path $\widetilde{ \rho_{x^D,y^D}^n }$ is obtained from $\widetilde{ \rho_{x,y}^n }$ by translating all the vertical segments in the $i$th copy of $\rho_{x,y}$ leftward by $(i-1) \frac{h_y - h_x}{2}$. The path $\widetilde{ \gamma_{x^D}^\ell}$ is the same as $\widetilde{ \gamma_{x}^\ell}$ except for its initial and final horizontal segments; we move the starting point leftward by $\frac{h_x}{2}$ units, add a $\frac{h_x}{2}$ to the length of the horizontal segment at the beginning of $\widetilde{ \gamma_{x^D}^\ell}$ and remove the length $\frac{h_x}{2}$ horizontal segment at the end of $\widetilde{ \gamma_{x^D}^\ell}$. From this we can see that
$$\text{Area}(P) = \text{Area}(P^D) + a_{x, y} \frac{h_y - h_x}{2}  \sum_{i = 1}^n (i-1) = \text{Area}(P^D) + a_{x, y} \frac{h_y - h_x}{2} \frac{n(n-1)}{2}.$$
Recall that $a_{x^D, y^D}= a_{x,y}$. We also have that $k - k^D = n \frac{h_y - h_x}{2}$, so \eqref{eq:n-times-grading-difference} becomes
$$n \gr(y) = 2\text{Area}(P) - \rot(P) - (n-1)k a_{x, y} .$$
The result follows after dividing by $n$.
\end{proof}

\begin{remark}
Proposition \ref{prop:rational-grading-lemma} contains Proposition \ref{prop:old-grading-lemma} (in the case that $M_1$ is a solid torus whose meridian glues to $\beta$) as a special case, since if $x$ and $y$ are in the same \spinc structure then $a_{x,y} = \ell = 0$, and we can take $n = 1$.
\end{remark}

\begin{remark}\label{rmk:rational-grading-lemma-adjusted-path}
The path $P$ used in the statement of the Proposition \ref{prop:rational-grading-lemma} could be defined differently, making notation more cumbersome but simplifying the path. Note that the first portion of $\widetilde{ \rho_{x,y}^n }$, following one lift of $\rho_{x,y}$, agrees with the first portion $\widetilde{ \gamma_x^\ell}$, so removing the relevant portions at the beginning and end of $P$ gives a different closed path $P'$ that encloses the same area and has the same rotation number. This path has three sides formed from $(n-1)$ lifts of $\rho_{x,y}$, $\ell$ lifts of $\gamma_x$ with a copy of $\rho_{x,y}$ removed, and a length $k$ path along a lift of $\beta_{sym}$. Note that $(n-1)k a_{x,y}$ can be interpreted as twice the area of the triangle in $\R^2$ with straight line edges whose three corners are the same as those of $P'$.
On the other hand, for a cleaner statement at the expense of defining yet another variation on the path $P$, the term $(n-1) k a_{x,y}$ could be wrapped into the term $2 \text{Area}(P)$ by defining a path $P''$ constructed in the same way as $P$ except that we add a horizontal segment moving leftward by $\frac k n$ to the end of $\rho_{x,y}$ before iterating; note that this has the effect of shifting the $i$th copy of $\rho_{x,y}$ in $\widetilde{ \rho_{x,y}^n }$ leftward by $\frac{(i-1)k}{n}$ units. After this shift the endpoints of $\widetilde{ \gamma_x^\ell}$ and $\widetilde{ \rho_{x,y}^n }$ agree, so no horizontal component along $\beta$ is required when constructing $P''$. We can check that $\rot(P'') = \rot(P')$ and $\text{Area}(P'') = \text{Area}(P') - a_{x,y} \frac k n \frac{n(n-1)}{2}$, and so
$$\gr(y) - \gr(x) = \frac{2 \text{Area}(P'') - \rot(P'')}{n}.$$
\end{remark}

\begin{example}\label{ex:grading-example}
To demonstrate Proposition \ref{prop:rational-grading-lemma}, we return to the running example in Figure \ref{fig:symmetric-pairing-position} and compute the relative gradings of $\HF(\gamma, \beta)$. Let $y_1$, $y_2$, and $y_3$ denote the three generators in the \spinc structure for which the rank is three, as shown in Figure \ref{fig:grading-example}, and let $x$ denote the sole generator in the other \spinc structure. We declare that $\gr(x) = 0$ and compute the gradings of each $y_i$ relative to this. The paths $\rho_{x,y_i}$ are shown in Figure \ref{fig:grading-example}. In each case, $a_{x,y_i} = 1$ and $a_\gamma = 2$ so we can take $n = 2$ and $\ell = 1$ when constructing the path $P$ appearing in Proposition \ref{prop:rational-grading-lemma}, which we denote by $P_i$. We remove the initial copy of $\rho_{x,y_i}$ from the beginning and end of the path, as in Remark \ref{rmk:rational-grading-lemma-adjusted-path}, to give paths $P'_i$; these are shown in Figure \ref{fig:grading-example}. Note that since $\rot(\gamma_x) = 0$, we have that $\rot(P'_i) = 2 \rot(\rho_{x,y_i})$. For $y_1$ we have that $k = 1$, $\text{Area}(P'_1) = 1$, and $\rot(\rho_{x,y_1}) = 0$, so $\gr(y_1) = \frac{2 - 0 - 1}{2} = \frac{1}{2}$. For $y_2$ we have that $k = 0$, $\text{Area}(P'_2) = \frac{5}{2}$, and $\rot(\rho_{x,y_2}) = 1$, so $\gr(y_2) = \frac{5 - 2 - 0}{2} = \frac{3}{2}$. For $y_3$ we have that $k = -1$, $\text{Area}(P'_3) = 0$, and $\rot(\rho_{x,y_3}) = 0$, so $\gr(y_3) = \frac{0 - 0 +1}{2} = \frac{1}{2}$.
\end{example}
\input{figures/9.tex}

We observe that in the previous example $\gr(y_1) = \gr(y_3)$. This fact can be seen more clearly by using the symmetry of the curve $\gamma$ under rotation about $x$, which takes $y_1$ to $y_3$. If we choose a different path $\gamma_x$ when computing $\gr(y_3)$, namely the same path followed in the opposite direction, then the paths $\gamma_x$ and $\rho_{x,y_3}$ used in the computation of $\gr(y_3)$ are rotations about $x$ of the paths $\gamma_x$ and $\rho_{x,y_1}$ used in the computation of $\gr(y_1)$. Since the geometric relationship between these curves is preserved by that rotation, it is clear the gradings should agree. This observation generalizes to the following:
\begin{corollary}\label{cor:symmetry-same-grading}
Suppose $\gamma$ is in symmetric $\beta$-pairing position and $x$ is an intersection point of $\gamma$ with $\beta_{sym}$ such that $\gamma$ is fixed setwise by rotation by $\pi$ about $x$. If $y$ and $y'$ are two intersection points of $\gamma$ with $\beta_{sym}$ that are swapped by rotation about $x$, then $\gr(y) = \gr(y')$.
\end{corollary}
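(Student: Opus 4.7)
The plan is to use the rotational symmetry $r$ to transport the grading computation for $y$ to one for $y'$. Let $r \colon T_{M_2} \to T_{M_2}$ be rotation by $\pi$ about $x$; by hypothesis $r$ fixes $\gamma$ setwise and sends $y$ to $y'$, and since $\beta_{sym}$ is the horizontal line through $x$ it is also fixed setwise. Fix a lift $\tilde x \in \R^2 \setminus \Z^2$ of $x$ and let $\tilde r$ denote rotation by $\pi$ about $\tilde x$; this is an orientation-preserving isometry of $\widetilde T_{M_2}$ that lifts $r$ and preserves $\Z^2$.

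First, apply Proposition \ref{prop:rational-grading-lemma} to express
$$\gr(y) - \gr(x) = \frac{2\,\text{Area}(P) - \rot(P) - (n-1)k\, a_{x,y}}{n}$$
for a closed path $P = \widetilde{\rho_{x,y}^n} \cdot \widetilde{\beta^{-k}} \cdot (\widetilde{\gamma_x^\ell})^{-1}$ with the usual parameters. Next, I consider $P' := \tilde r(P)$ and claim that $P'$ is exactly the path prescribed by Proposition \ref{prop:rational-grading-lemma} for computing $\gr(y') - \gr(x)$, provided we replace the orientation of $\gamma$ by its reverse $\gamma^-$ (which is still in symmetric $\beta$-pairing position, since that position was defined without reference to orientation). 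Indeed, because $\tilde r$ reverses the tangent to $\gamma$ at $\tilde x$, it sends $\widetilde{\gamma_x^\ell}$ to $\widetilde{(\gamma^-)_x^\ell}$, sends $\widetilde{\rho_{x,y}^n}$ to $\widetilde{(\rho^-_{x,y'})^n}$ where $\rho^-_{x,y'}$ is the forward subpath in $\gamma^-$ from $x$ to $y'$, and sends $\widetilde{\beta^{-k}}$ to a horizontal segment along another lift of $\beta_{sym}$.

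The parameters for Proposition \ref{prop:rational-grading-lemma} applied to $\gamma^-$ and $y'$ satisfy $a_{\gamma^-} = -a_\gamma$, $b_{\gamma^-} = -b_\gamma$, $a^-_{x,y'} = -a_{x,y}$, $b^-_{x,y'} = -b_{x,y}$, and $n^- = n$, $\ell^- = \ell$; consequently $k^- = n^- b^-_{x,y'} - \ell^- b_{\gamma^-} = -k$. Since $\tilde r$ is an orientation-preserving isometry of the plane, $\text{Area}(P') = \text{Area}(P)$ and $\rot(P') = \rot(P)$. Therefore
$$\gr(y') - \gr(x) = \frac{2\,\text{Area}(P') - \rot(P') - (n-1) k^- a^-_{x,y'}}{n} = \frac{2\,\text{Area}(P) - \rot(P) - (n-1)k\, a_{x,y}}{n} = \gr(y) - \gr(x),$$
which gives $\gr(y) = \gr(y')$. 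The main subtlety will be verifying that $\tilde r(P)$ really does coincide with the path prescribed by Proposition \ref{prop:rational-grading-lemma} for $\gamma^-$ and $y'$, which reduces to tracking how each of the three pieces $\widetilde{\rho_{x,y}^n}$, $\widetilde{\beta^{-k}}$, $\widetilde{\gamma_x^\ell}$ transforms under $\tilde r$ together with the orientation-blindness of symmetric $\beta$-pairing position; the rest is bookkeeping of signs.
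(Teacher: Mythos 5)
Your proof is correct and takes essentially the same approach as the paper: rotate the comparison paths about $x$, observe that this preserves the area and rotation-number terms in Proposition \ref{prop:rational-grading-lemma}, and note that the sign flips in $k$ and $a_{x,y}$ cancel in the product $(n-1)k\,a_{x,y}$. Your treatment is somewhat more explicit than the paper's (introducing the reversed orientation $\gamma^-$ and tracking each parameter), but it is the same argument.
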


\begin{proof}
This follows immediately from the symmetry of the curve and the geometric description of the grading difference between $y$ or $y'$ and $x$. The paths used to construct $P$ when computing $\gr(y) - \gr(x)$ can be chosen to be the rotation about $x$ of those used to construct $P$ when computing $\gr(y') - \gr(x)$, so the area and rotation number agree for these paths. We have that $a_{x,y} = -a_{x,y'}$ but also the value of $k$ is negated for $y'$ relative to $y$, and these sign changes cancel.
\end{proof}

\begin{remark}
This symmetry in gradings of $\HFhat(M_1 \cup_\phi M_2)$ is not readily apparent if the bordered computation is done algebraically rather than using immersed curves. The type A module $\CFA(M_1)$ has a single generator with idempotent $\iota_1$, so the generators of $\widehat{ \mathit{CF} }(M_1 \cup_\phi M_2) = \CFA(M_1) \boxtimes \CFD(M_2)$ are in one-to-one correspondence with $\iota_1$-generators of $\CFD(M_2)$. But given a point $x$ about which the immersed curve is symmetric, $\CFD(M_2)$ is not symmetric in an obvious sense about the corresponding generator $\bar x^D$. In particular, the gradings $\gr_D(\bar y^D)$ and $\gr_D(\overline{y'}^D)$ are not clearly related. Geometrically, this corresponds to the fact that the immersed curve is not symmetric about $x^D$ and the path from $x^D$ to $y^D$ is not a rotation of the path from $x^D$ to $(y')^D$. The fact that the different gradings $\gr_D(\bar y^D)$ and $\gr_D(\overline{y'}^D)$ give the same result after tensoring with $\gr_A(\bar x_0^A)$ and multiplying by powers of $P_A$ and $P_D$ is nontrivial. By allowing us to perturb curves to a symmetric position, the immersed curve computation elucidates this symmetry in the gradings.
\end{remark}

Proposition \ref{prop:rational-grading-lemma} is more general than we will need in this paper. Apart from the symmetry observation in \ref{cor:symmetry-same-grading}, we will only need to use Proposition \ref{prop:rational-grading-lemma} in the following special case, namely the case that $\gamma$ is symmetric under a rotation by $\pi$ and $x$ and $y$ are both fixed points of this symmetry.

\begin{corollary}\label{cor:grading-difference-for-fixed-points}
Suppose $\gamma$ is in symmetric $\beta$-pairing position and $x$ is an intersection point of $\gamma$ with $\beta_{sym}$ such that $\gamma$ is fixed setwise by rotation by $\pi$ about $x$. Suppose $y$ is another intersection point of $\gamma$ with $\beta_{sym}$ that is exactly halfway along $\gamma$, so that the map from $\gamma$ to itself induced by the rotation by $\pi$ about $x$ fixes $y$. Then viewing $x$ and $y$ as generators of $\HF(\gamma, \beta_{sym})$ we have
$$\gr(y) - \gr(x) = \text{Area}(P') - \rot({\rho_{x,y}}),$$
where $\rho_{x,y}$ is a path from $x$ to $y$ along $\gamma_x$, $\rho_{y,x}$ is the complementary path in $\gamma_x$ from $y$ to $x$ (continuing in the same direction, so that $\rho_{x,y} \circ \rho_{y,x}$ gives the path $\gamma_x$), and $P' = \widetilde{\rho_{x,y}} \cdot \left(\widetilde{\rho_{y,x}} \right)^{-1}$.\end{corollary}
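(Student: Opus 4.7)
The plan is to specialize Proposition \ref{prop:rational-grading-lemma} to this symmetric situation and simplify each ingredient using the hypothesis. Since $y$ lies exactly halfway along $\gamma$ and is fixed by the rotation by $\pi$ about $x$, the lift $\tilde{y}_1$ reached from $\tilde{x}$ along $\widetilde{\rho_{x,y}}$ must coincide with the midpoint $\tilde{x} + \tfrac{1}{2}(b_\gamma, a_\gamma)$. Therefore $(b_{x,y}, a_{x,y}) = \tfrac{1}{2}(b_\gamma, a_\gamma)$, the relation $n\,a_{x,y} = \ell\, a_\gamma$ is solved by $n=2$, $\ell=1$, and the horizontal offset $k = n b_{x,y} - \ell b_\gamma$ vanishes. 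The formula of Proposition \ref{prop:rational-grading-lemma} then reduces to
\[
\gr(y) - \gr(x) \;=\; \textup{Area}(P) \;-\; \tfrac{1}{2}\rot(P).
\]

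The next step is to compute $\rot(P) = 2\rot(\rho_{x,y}) - \rot(\gamma_x)$ and show that $\rot(\gamma_x) = 0$. The rotation $r$ by $\pi$ about $x$ is an orientation-preserving isometry whose derivative is $-I$; by hypothesis it fixes both $x$ and $y$, so it must carry $\rho_{x,y}$ (the arc of $\gamma_x$ from $x$ to $y$) onto the complementary arc traversed backwards---that is, to the reverse of $\rho_{y,x}$. Because a Euclidean rotation preserves the rotation number of a smooth arc while path reversal negates it, this forces $\rot(\rho_{x,y}) = -\rot(\rho_{y,x})$, hence $\rot(\gamma_x) = \rot(\rho_{x,y}) + \rot(\rho_{y,x}) = 0$ and $\rot(P) = 2\rot(\rho_{x,y})$. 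Substituting gives
\[
\gr(y) - \gr(x) \;=\; \textup{Area}(P) \;-\; \rot(\rho_{x,y}).
\]

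The most delicate remaining step, essentially the content of Remark \ref{rmk:rational-grading-lemma-adjusted-path} in the present specialization, is the identification $\textup{Area}(P) = \textup{Area}(P')$. Unpacking definitions, $P$ consists of four arcs: $\widetilde{\rho_{x,y}}$ from $\tilde{x}$ to $\tilde{y}_1$; a translate of $\widetilde{\rho_{x,y}}$ from $\tilde{y}_1$ to $\tilde{x}+(b_\gamma,a_\gamma)$; $(\widetilde{\rho_{y,x}})^{-1}$ from $\tilde{x}+(b_\gamma,a_\gamma)$ back to $\tilde{y}_1$; and $(\widetilde{\rho_{x,y}})^{-1}$ from $\tilde{y}_1$ back to $\tilde{x}$. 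The first and last arcs retrace the same curve in opposite directions, so their contributions to the signed area cancel (by Green's theorem, or equivalently because $P$ has the form $P_1 \cdot M \cdot P_1^{-1}$). What remains is a closed loop based at $\tilde{y}_1$ bounded by the translated copy of $\widetilde{\rho_{x,y}}$ and $(\widetilde{\rho_{y,x}})^{-1}$; translating this loop by $-(b_{x,y}, a_{x,y})$ to base it at $\tilde{x}$ identifies it with the loop $P'$ in the statement, where $\widetilde{\rho_{y,x}}$ is interpreted as the lift making $\widetilde{\rho_{x,y}} \cdot (\widetilde{\rho_{y,x}})^{-1}$ a closed path at $\tilde{x}$. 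Signed area is translation-invariant, so $\textup{Area}(P) = \textup{Area}(P')$, and the desired identity follows.
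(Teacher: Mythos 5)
Your proof is correct and follows essentially the same route as the paper's: specialize Proposition \ref{prop:rational-grading-lemma} with $n=2$, $\ell=1$, $k=0$ and use the rotational symmetry to simplify the area and rotation terms. The only cosmetic difference is that you verify $\text{Area}(P)=\text{Area}(P')$ directly via the $P_1 \cdot M \cdot P_1^{-1}$ cancellation, whereas the paper delegates this to Remark \ref{rmk:rational-grading-lemma-adjusted-path}.
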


\begin{proof}
By the symmetry of the curve, rotation by $\pi$ about $x$ takes $\rho_{x,y}$ to $-\rho_{y,x}$. It follows that $a_\gamma = 2 a_{x,y}$ and $b_\gamma = 2 b_{x,y}$, so we can take $n = 2$ and $\ell = 1$ when defining the path $P$ in Proposition \ref{prop:rational-grading-lemma} and we have that $k = 0$.
$P'$ is the path $P$ from Proposition \ref{prop:rational-grading-lemma} with a copy of $\rho_{x,y}$ removed at the beginning and end, as in Remark \ref{rmk:rational-grading-lemma-adjusted-path}. Symmetry also implies that $\rot(\rho_{y,x}) = -\rot(\rho_{x,y})$ and so $\rot(P') = 2\rot{\rho_{x,y}}$. Proposition \ref{prop:rational-grading-lemma} then gives
$$\gr(y) - \gr(x) = \frac{2\text{Area}(P') - \rot(P')}{2} = \text{Area}(P') - \rot(\rho_{x,y}) . $$
\end{proof}


\section{An invariant from symmetric curves}\label{sec:delta-sym}

\subsection{\texorpdfstring{$\Deltadb$ for symmetric curves}{Delta\_sym for symmetric curves}}
In this section we extract a numerical invariant $\Deltadb$ from the immersed curve invariant for certain bordered $\Z_2$-homology solid tori $M$; in the case that a certain filling of $M$ is an L-space we will see that $\Deltadb$ agrees with the difference between the $d$-invariants of that filling in its two spin structures. To begin, we define $\Deltadb(\gamma)$ for an immersed curve $\gamma$ in the punctured torus that is fixed by the elliptic involution and that has an odd number of $\alpha^{\pm 1}$ letters or an odd number of $\beta^{\pm 1}$ letters in the corresponding cyclic word $w_\gamma$. We assume that $\gamma$ is in rectilinear position and we consider a pair of antipodal fixed points $x_0$ and $x_1$ of the elliptic involution; that is, we choose points $x_0$ and $x_1$ such that rotation by $\pi$ about $x_0$ is a symmetry of $\gamma$ and fixes $x_1$. Note that $x_0$ and $x_1$ must lie at either a midpoint or an endpoint of a unit horizontal or vertical segment. Let $\gamma_{01}$ denote a path in $\gamma$ from $x_0$ to $x_1$, let $\tilde\gamma_{01}$ denote a lift to the plane, and let $\ell_{01}$ be the straight line segment from the initial point of $\tilde\gamma_{01}$ to the terminal point of $\tilde\gamma_{01}$. We define the \emph{deviation from the diagonal of $\tilde\gamma_{01}$} to be the (signed) area enclosed between $\tilde\gamma_{01}$ and $\ell_{01}$, in which the area of each region of $\R^2 \setminus (\tilde\gamma_{01}\cup \ell_{01})$ is counted with multiplicity given by the winding number of the path $\gamma_{01} \cdot \ell_{01} ^{-1}$ around the region. 

We will define $\Deltadb(\gamma)$ to be twice the deviation from the diagonal of $\tilde\gamma_{01}$ for a path $\gamma_{01}$ as described above. We first observe that this is well defined once we select the points $x_0$ and $x_1$; that is, it does not depend on the choice of path from $x_0$ to $x_1$. There are two obvious paths from $x_0$ to $x_1$ each covering a different half of the curve $\gamma_0$, but rotation by $\pi$ about $x_0$ takes one of these paths to the other, so their deviation from the diagonal is clearly the same. We could also consider paths that differ from these by adding full loops around $\gamma$, but the deviation from the diagonal of $\gamma$ (as a path from $x_0$ to itself) is zero since by symmetry the contribution of the portion from $x_0$ to $x_1$ cancels the contribution of the portion from $x_1$ to $x_0$.
We also note that the pair of points $\{x_0, x_1\}$ is effectively unique in the sense that if there is another such pair $\{x'_0, x'_1\}$ then there must be a translational symmetry of $\tilde\gamma$ taking $\{x_0, x_1\}$ to $\{x'_0, x'_1\}$; choosing this other pair of points (and possibly reindexing them) would produce the same value for $\Deltadb(\gamma)$.
%
However, switching the role of $x_0$ and $x_1$ changes $\Deltadb(\gamma)$ by a sign, since the deviation from the diagonal of $\tilde\gamma_{10}$, the lift of a path from $x_1$ to $x_0$, is the opposite of the deviation from the diagonal of $\tilde\gamma_{01}$. Thus as described so far $\Deltadb(\gamma)$ is well defined only up to sign.

We can remove this sign indeterminacy in the definition of $\Deltadb(\gamma)$ by setting a convention for labelling the pair of fixed points. Here we will use the assumption that the word $w_\gamma$ has an odd number of $\alpha^{\pm 1}$ letters or an odd number of $\beta^{\pm 1}$ letters. If $w_\gamma$ has an odd number of $\alpha^{\pm 1}$ letters, then one of the two fixed points falls at the midpoint of a unit vertical segment (i.e. has integral $y$-coordinate) and the other falls on a maximal horizontal segment (i.e. has half-integer $y$-coordinate); in this case we let $x_0$ be the point with half-integer $y$-coordinate and $x_1$ be the point with integer $y$-coordinate. If $w_\gamma$ has an odd number of $\beta^{\pm 1}$ letters, then one of the two fixed points has integral $x$-coordinate and the other has half-integer $x$-coordinate; in this case we let $x_0$ be the point with integer $x$-coordinate and $x_1$ be the point with half-integer $x$-coordinate. See Figure \ref{fig:delta-sym-examples} for examples of $x_0$ and $x_1$. Note that the convention above is consistent in the case that $w_\gamma$ has both an odd number of $\alpha^{\pm 1}$ letters and an odd number of $\beta^{\pm 1}$ letters, since no point on $\gamma$ can have integer $x$- and $y$-coordinates. By setting this convention, $\Deltadb(\gamma)$ is well-defined.
\begin{definition}\label{def:delta-sym-curve}
If $\gamma$ is a homotopy class of immersed curves in the punctured torus that is fixed by the elliptic involution and whose cyclic word $w_\gamma$ has an odd number of $\alpha^{\pm}$ letters or an odd number of $\beta^{\pm}$ letters, then we define $\Deltadb(\gamma)$ to be twice the deviation from the diagonal of $\tilde\gamma_{01}$, where $x_0$, $x_1$, and $\tilde\gamma_{01}$ are as described above.
\end{definition}

\begin{example}\label{ex:delta-sym-example1}
The computation of $\Deltadb(\gamma)$ for the immersed curve $\gamma$ represented by the cyclic word $(\beta \alpha \beta \alpha \beta^{-1} \alpha^{-2} \beta^{-1} \alpha \beta \alpha)$, which is our running example appearing in Figures \ref{fig:immersed-curve-example}, \ref{fig:pairing-example}, and \ref{fig:symmetric-pairing-position} and arising from the plumbing tree in Example \ref{ex:plumbing-tree-example1}, is shown on the left side of Figure \ref{fig:delta-sym-examples}. The point $x_0$ has integer $x$-coordinate and the point $x_1$ has half-integer $x$-coordinate. The deviation from the diagonal of $\tilde \gamma_{01}$ is $\frac 5 4$, the area of the shaded region in the figure. Thus $\Deltadb(\gamma) = \frac 5 2$.
\end{example}

\begin{example}\label{ex:delta-sym-example2}
The computation of $\Deltadb(\gamma)$ for the immersed curve $\gamma$ represented by the cyclic word $(\beta \alpha^2 \beta \alpha^3 \beta \alpha \beta \alpha^3 \beta \alpha^3 \beta \alpha \beta \alpha^3)$, which arises from the plumbing tree in Example \ref{ex:plumbing-tree-example2}, is shown on the right side of Figure \ref{fig:delta-sym-examples}. The deviation from the diagonal of $\tilde \gamma_{01}$ is the signed area of the shaded region in the figure, with purple regions counting negatively. The easiest way to compute this area is to find the area of the right triangle formed by $\ell_{01}$ and the dotted lines in the figure, which is $14$, and subtract the area between $\tilde\gamma_{01}$ and the dotted line, also 14. Thus the deviation from the diagonal is 0 and $\Deltadb(\gamma) = 0$.
\end{example}



\begin{figure}[]
\centering
\begin{tikzpicture}[on top/.style={preaction={draw=white,-,line width=#1}},
on top/.default=4pt]
\newcommand{\drawcurveleft}{
\draw[red, rounded corners=0.5*\u] (-0.5*\U, -\U) -- (\u,-\U) -- (\u,0) -- (\U,0) -- (\U, \U-\u) -- (0,\U-\u) -- (0,0);
}
\newcommand{\drawcurveright}{
\draw[red, rounded corners=0.5*\v] (0,0) -- (0,-\V) -- (-\V, -\V) -- (-\V, -4*\V) -- (-2*\V, -4*\V) -- (-2*\V, -5*\V) -- (-3*\V, -5*\V) -- (-3*\V, -8*\V) -- (-3.5*\V, -8*\V);
}
\def\U{1.2cm}
\def\u{0.07cm}
\def\V{0.7cm}
\def\v{0.05cm}
\def\radiussympoint{0.05cm}

\begin{scope}[yshift=-\U]
\draw[step=\U, lightgray, densely dotted, thin, xshift=-0.5*\U, yshift=-0.5*\U] (-0.75*\U,-0.75*\U) grid (4.75*\U,3.75*\U);
\draw[step=\U, lightgray, thin, xshift=-0.5*\U, yshift=-0.5*\U] (-0.5*\U,-0.5*\U) grid (4.5*\U,3.5*\U);
	
	\foreach \x in {0,1} {
	\begin{scope}[xshift=\x*\U, yshift=2*\x*\U]
      		\foreach \y in {-1, 1}{
		\begin{scope}[xscale=\y, yscale=\y]
		\drawcurveleft
		\end{scope}
		}	
	\end{scope}
	}
	\draw[densely dotted, red] (-1*\U, -\U) -- (-0.5*\U, -\U);
	\draw[red] (-0.75*\U, -\U) -- (-0.5*\U, -\U);
	\draw[densely dotted, red] (1.5*\U, 3*\U) -- (2*\U, 3*\U);
	\draw[red] (1.5*\U, 3*\U) -- (1.75*\U, 3*\U);	
	
	\foreach \x in {0,1,2} {
	\draw[fill] (-0.5*\U + \x*\U, -\U + 2*\x*\U) node[anchor=south west] {\small $x_0$} circle (\radiussympoint);
	}
	\foreach \x in {0,1} {
	\draw[fill=white] (\x*\U, 2*\x*\U) node[anchor=south west] {\small $x_1$} circle (\radiussympoint);
	}
	
	\begin{scope}[xshift=3*\U]
	\draw[red, fill=black, fill opacity=0.2, rounded corners=0.5*\u] (-0.5*\U, -\U) -- (\u,-\U) -- (\u,0) -- (\U,0) -- (\U, \U-\u) -- (0,\U-\u) -- (0,0);
	\draw (-0.5*\U, -\U) -- (0,0);
		\draw[fill] (-0.5*\U, -\U) node[anchor=east] {\small $x_0$} circle (\radiussympoint);
		\draw[fill=white] (0,0) node[anchor=east] {\small $x_1$} circle (\radiussympoint);
		\draw[red] (0.5*\U, \U) node[anchor=south] {\small $\tilde \gamma_{01}$};
		\draw (-0.4*\U, -0.5*\U) node[anchor=south] {\small $\ell_{01}$};
	\end{scope}
	
	\foreach \x in {0, 1, ..., 4}
	\foreach \y in {-1, 0, ..., 2} {
		\draw (\x*\U-0.5*\U,\y*\U+0.5*\U) node {\tiny $\bpt$};
	}
\end{scope}

\begin{scope}[xshift=10cm]
\draw[step=\V, lightgray, densely dotted, thin, xshift=-0.5*\V, yshift=-0.5*\V] (-3.75*\V,-8.75*\V) grid (4.75*\V,9.75*\V);
\draw[step=\V, lightgray, thin, xshift=-0.5*\V, yshift=-0.5*\V] (-3.5*\V,-8.5*\V) grid (4.5*\V,9.5*\V);
	
      	\foreach \y in {-1, 1}{
	\begin{scope}[xscale=\y, yscale=\y]
	\drawcurveright
	\end{scope}
	}
	\draw[densely dotted, red] (-4*\V, -8*\V) -- (-3.5*\V, -8*\V);
	\draw[densely dotted, red] (4*\V, 8*\V) -- (3.5*\V, 8*\V);
	
	\foreach \x in {0,1} {
	\draw[fill] (-3.5*\V + 7*\x*\V, -8*\V + 16*\x*\V) node[anchor=south east] {\small $x_0$} circle (\radiussympoint);
	}
	\draw[fill=white] (0,0) node[anchor=east] {\small $x_1$} circle (\radiussympoint);
	
	\begin{scope}[xshift=3*\V]
		\begin{scope}
		\clip (0,0) -- (0,-\V) -- (-\V, -\V) -- (-\V, -4*\V) -- (-2*\V, -4*\V) -- (-2*\V, -5*\V) -- (-3*\V, -5*\V) -- (-3*\V, -8*\V) -- (-3.5*\V, -8*\V) -- (0,-8*\V);
		\draw[fill=violet, fill opacity=0.2, rounded corners=0.5*\v] (0,0) -- (-3.5*\V, -8*\V) -- (-3.5*\V,0);
		\end{scope}
		\begin{scope}
		\clip (0,0) -- (0,-\V) -- (-\V, -\V) -- (-\V, -4*\V) -- (-2*\V, -4*\V) -- (-2*\V, -5*\V) -- (-3*\V, -5*\V) -- (-3*\V, -8*\V) -- (-3.5*\V, -8*\V) -- (-3.5*\V,0);
		\draw[fill=black, fill opacity=0.2, rounded corners=0.5*\v] (0,0) -- (-3.5*\V, -8*\V) -- (0,-8*\V);
		\end{scope}
	\draw[dashed] (-3.5*\V, -8*\V) -- (0,-8*\V) -- (0,0);
	\drawcurveright
		\draw[fill] (-3.5*\V, -8*\V) node[anchor=south east] {\small $x_0$} circle (\radiussympoint);
		\draw[fill=white] (0,0) node[anchor=east] {\small $x_1$} circle (\radiussympoint);
		\draw[red] (-3*\V, -6*\V) node[anchor=east] {\small $\tilde \gamma_{01}$};
		\draw (-1.5*\V, -3.5*\V) node[anchor=south east] {\small $\ell_{01}$};
	\end{scope}
	
	\foreach \x in {-3, -2, ..., 4}
	\foreach \y in {-9, -8, ..., 8} {
		\draw (\x*\V-0.5*\V,\y*\V+0.5*\V) node {\tiny $\bpt$};
	}

\end{scope}

\end{tikzpicture}
\caption{The computations of $\Deltadb(\gamma)$ from Examples \ref{ex:delta-sym-example1} (left) and \ref{ex:delta-sym-example2} (right). In each case, a lift of $\gamma$ to the plane is shown with (lifts of) the two fixed points $x_0$ and $x_1$ of the elliptic involution symmetry. Separately, the segment $\tilde\gamma_{01}$ from a lift of $x_0$ to a lift of $x_1$ is shown and the region between this curve and the straight line $\ell_{01}$ connecting the endpoints is shaded. Twice the signed area of the shaded region gives $\Deltadb(\gamma)$.}
\label{fig:delta-sym-examples}
\end{figure}

\subsection{\texorpdfstring{$\Deltadb$ for $\Z_2$-homology solid tori}{Delta\_sym for Z\_2 homology solid tori}}

We now turn to 3-manifolds $M$ with torus boundary that are $\Z_2$-homology solid tori, which equivalently means they have some Dehn filling that is a $\Z_2$-homology sphere. We first observe that such an $M$ has a single self-conjugate \spinc-structure. 
 
\begin{lemma}
If $M$ is a $\Z_2$-homology solid torus, then $M$ has a single self-conjugate \spinc structure $\s_0$.
\end{lemma}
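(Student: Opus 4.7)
The plan is to identify the set of self-conjugate \spinc structures on $M$ as a torsor over the 2-torsion subgroup $H^2(M;\Z)[2]$, verify that this torsor is nonempty, and then show that $H^2(M;\Z)[2]$ vanishes under the $\Z_2$-homology solid torus hypothesis.

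First I would recall that $\Spinc(M)$ is an affine space over $H^2(M;\Z)$ on which conjugation acts as an involution satisfying $\overline{\s + a} = \bar\s - a$ for every $a \in H^2(M;\Z)$ (this follows from the standard facts $c_1(\bar\s) = -c_1(\s)$ and $c_1(\s + a) = c_1(\s) + 2a$). If $\s$ and $\s'$ are both self-conjugate and $\s' = \s + a$, then $\s' = \overline{\s'} = \bar\s - a = \s - a$, forcing $2a = 0$. Hence the self-conjugate \spinc structures form either the empty set or a torsor over $H^2(M;\Z)[2]$; it is nonempty because every 3-manifold is parallelizable, hence spin, and any spin structure induces a self-conjugate \spinc structure.

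Next I would compute $H^2(M;\Z)[2]$. Writing $H_1(M;\Z) = \Z \oplus T$ with $T$ the torsion subgroup, the universal coefficient theorem gives $H_1(M;\Z/2) \cong \Z/2 \oplus (T/2T)$, and the hypothesis $H_1(M;\Z/2) = \Z/2$ forces $T/2T = 0$; since $T$ is finite, it must have odd order. As $M$ is a compact 3-manifold with nonempty boundary, it is homotopy equivalent to a 2-complex, so $H_2(M;\Z)$ is free abelian, and another application of universal coefficients yields $H^2(M;\Z) \cong \Z^{b_2} \oplus T$, whose 2-torsion subgroup is $T[2] = 0$. Combining with the previous paragraph, the set of self-conjugate \spinc structures is a nonempty torsor over the trivial group, hence has exactly one element. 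There is no substantive obstacle in this argument; it is essentially a bookkeeping exercise combining the standard affine structure on $\Spinc(M)$ with the universal coefficient theorem.
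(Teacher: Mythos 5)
Your proof is correct and follows essentially the same strategy as the paper's: show the set of self-conjugate \spinc structures is nonempty, recognize it as a torsor over the $2$-torsion of $H^2(M;\Z)$, and show that $2$-torsion vanishes. The paper establishes the vanishing by computing $H^2$ of a $\Z_2$-homology-sphere filling $Y$ and transferring via Mayer--Vietoris, and gets nonemptiness from a parity count; you instead work with $M$ directly via the universal coefficient theorem and get nonemptiness from the fact that every orientable $3$-manifold is spin --- both minor simplifications that do not change the underlying argument.
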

\begin{proof}
Let $Y$ be a $\Z_2$-homology sphere Dehn filling of $M$.
Since $H^1(Y; \Z/2\Z)=0$, $Y$ is a $\Q HS^3$. By the long exact sequence associated to $0 \to \Z \to \Z \to \Z /2\Z \to 0$, $H^2(Y;\Z)$ has no 2-torsion. This fact, together with the Mayer-Vietoris long exact sequence for $Y = M \cup (S^1 \times D^2)$, implies that $H^2(M;\Z)$ has no 2-torsion either; therefore $|H^2(M;\Z)|$ is odd.

Since $|H^2(M;\Z)|$ is odd, $M$ has an odd number of self-conjugate \spinc structures. In particular it has at least one, which we denote by $\s_0$. 
It is straightforward to check that the set $\Spinc_{sc}(W)$ of self-conjugate \spinc structures of $W$ corresponds to
\[
\Spinc_{sc}(W) =
\{
\s_0 + \alpha \,|\, 2\alpha = 0 \mbox{ in } H^2(W;\Z)
\},
\]
and is therefore a torsor over the 2-torsion subgroup $\mathcal{T}_2(H^2(W;\Z))$. Since $\mathcal{T}_2(H^2(M;\Z)) = 0$, $\s_0$ must be the only self-conjugate \spinc structure of $M$.
\end{proof}

Given a boundary parametrization $(\alpha, \beta)$ on a $\Z_2$-homology solid torus $M$, we can detect which Dehn fillings of $M$ are $\Z_2$-homology spheres by counting $\alpha$ and $\beta$ letters in the cyclic words representing the bordered invariant $\HFhat(M, \alpha, \beta)$.

\begin{lemma}\label{lem:parity-of-alpha-beta}
If $(M,\alpha,\beta)$ is a bordered $\Z_2$-homology solid torus then at least one of the fillings $M(\alpha)$ and $M(\beta)$ is a $\Z_2$-homology sphere. Moreover, $M(\alpha)$ is a $\Z_2$-homology sphere if and only if the number of $\beta$ or $\beta^{-1}$ letters in the cyclic words representing $\HFhat(M,\alpha,\beta)$ is odd and $M(\beta)$ is a $\Z_2$-homology sphere if and only if the number of $\alpha$ or $\alpha^{-1}$ letters in the cyclic words representing $\HFhat(M,\alpha,\beta)$ is odd.
\end{lemma}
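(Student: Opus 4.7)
The plan is to combine a homological identification of $\Z_2$-homology sphere Dehn fillings with the pairing theorem for bordered Floer homology and a standard parity fact for Heegaard Floer homology of rational homology spheres, treating the rational-homology-sphere case and its failure separately.

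First, I will observe that since $M$ is a $\Z_2$-homology solid torus, $H_1(M;\Z_2) \cong \Z_2$ and the map $i_* \colon H_1(\partial M; \Z_2) \to H_1(M;\Z_2)$ has an order-two kernel generated by some nonzero class $\ell_2 \in H_1(\partial M;\Z_2)$. A slope $\gamma$ yields a $\Z_2$-homology sphere filling precisely when $\gamma \not\equiv 0, \ell_2 \pmod{2}$, so among the three nonzero classes $\alpha$, $\beta$, $\alpha+\beta$ in $H_1(\partial M;\Z_2)$, at least two give $\Z_2$-homology sphere fillings; in particular, at least one of $M(\alpha)$ and $M(\beta)$ is such a filling. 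Combined with the vanishing of $\mathrm{Tor}(H_1(M;\Z), \Z_2)$, this also shows $b_1(M)=1$ and that the integral homological longitude $\lambda$ is well-defined, with $\lambda \equiv \ell_2 \pmod{2}$.

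Next, I will suppose $M(\beta)$ is a rational homology sphere, and let $N$ denote the total number of $\alpha^{\pm 1}$ letters across the cyclic words of $\HFhat(M, \alpha, \beta)$. After placing $\HFhat(M)$ in rectilinear position, each unit vertical segment meets the horizontal representative $\beta_A$ of $[\beta]$ transversely in exactly one point, so $|\HFhat(M) \cap \beta_A| = N$. Since no homologically nontrivial component of $\HFhat(M)$ is homotopic to $\beta_A$ when $M(\beta)$ is a rational homology sphere, the pairing theorem identifies $\HFhat(M(\beta))$ with the homology of a Floer chain complex on these $N$ intersection points, and since taking homology over $\F_2$ preserves dimension modulo $2$, we obtain $\dim_{\F_2}\HFhat(M(\beta)) \equiv N \pmod{2}$. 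I will then invoke the standard fact $\chi(\HFhat(Y;\s)) = \pm 1$ for each spin$^c$ structure on a rational homology sphere $Y$, which gives $\dim \HFhat(Y;\s) \equiv 1 \pmod{2}$ and hence $\dim\HFhat(Y) \equiv |\Spinc(Y)| = |H_1(Y;\Z)| \pmod{2}$. Applying this to $Y = M(\beta)$ shows $N$ is odd if and only if $|H_1(M(\beta))|$ is odd if and only if $M(\beta)$ is a $\Z_2$-homology sphere.

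Finally, if $M(\beta)$ is not a rational homology sphere then $\lambda = \pm \beta$ in $H_1(\partial M;\Z)$, and \cite[Corollary 6.6]{HRW} guarantees that the total homology class of $\HFhat(M;\s)$ in each spin$^c$ structure is $\lambda$, whose $\alpha$-coefficient vanishes. The signed count of $\alpha^{\pm 1}$ letters is then zero, and since signed and unsigned counts share the same parity, $N$ is even. As $M(\beta)$ is certainly not a $\Z_2$-homology sphere in this case, both sides of the biconditional fail and the claim holds vacuously. The analogous statement for $M(\alpha)$ follows by exchanging $\alpha$ and $\beta$ throughout. The subtlest step is this non-rational-homology-sphere case: admissibility of the pairing theorem fails when components of $\HFhat(M)$ can be homotoped onto $\beta_A$, so one must bypass the pairing theorem and use the homology-class property of the bordered invariant directly.
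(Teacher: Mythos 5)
Your proof is correct, and it takes a somewhat different route from the paper's in two places. For the ``at least one filling is a $\Z_2$-homology sphere'' claim, you argue directly via homology: $H_1(M;\Z_2)\cong\Z_2$ and the $\F_2$-Lagrangian $\ker\bigl(i_*\colon H_1(\partial M;\Z_2)\to H_1(M;\Z_2)\bigr)$ is spanned by a single class $\ell_2$, so among $\alpha,\beta,\alpha+\beta$ at most one reduces to $\ell_2$ mod $2$. The paper instead argues by contradiction using the letter counts themselves: if both counts were even, Dehn twisting about $\alpha$ or $\beta$ changes one count by the other, so the even-even property is invariant under any reparametrization and no filling could be a $\Z_2$-homology sphere. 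Your route is more self-contained topologically; the paper's stays inside the combinatorics of cyclic words it has already set up, which makes the ``only one obstruction'' picture visible from the letter counts alone.

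For the biconditional, both proofs reduce to the pairing theorem plus the fact that a rational homology sphere $Y$ has $\dim\HFhat(Y;\s)$ odd in each $\spinc$ structure, hence $\dim\HFhat(Y)\equiv |H_1(Y;\Z)|\pmod 2$. You are more cautious than the paper about admissibility: you restrict the pairing-theorem step to the case where the filling is a rational homology sphere (so no curve in $\HFhat(M)$ is homotopic to $\beta$), and dispose of the non-QHS case by \cite[Corollary~6.6]{HRW}, noting that when $\lambda=\pm\beta$ the signed $\alpha$-count vanishes and hence the unsigned count is even. This case split is not strictly necessary: even when the curves must be taken in non-minimal position for admissibility, the extra intersection points come in cancelling pairs, so the $\F_2$-dimension of the Floer chain complex is still congruent mod $2$ to the $\alpha^{\pm 1}$ letter count, which is the observation the paper uses implicitly. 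Your version has the virtue of explicitly flagging the subtlety and giving a clean alternative argument in the degenerate case, at the cost of invoking an extra result.

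One small cosmetic point: a primitive slope $\gamma$ always has $\gamma\not\equiv 0\pmod 2$, so in your characterization ``$\gamma\not\equiv 0,\ell_2\pmod 2$'' the first exclusion is automatic.
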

\begin{proof}
We first show that a 3-manifold $Y$ is a $\Z_2$-homology sphere if and only if the total rank of $\HFhat(Y)$ is odd. If $Y$ is a $\Z_2$-homology sphere then it has a single spin structure. Since $Y$ is a rational homology sphere as well, it follows that it has a single self-conjugate \spinc structure. Other \spinc structures come in pairs and $\HFhat(Y)$ has odd rank in each \spinc structure, so the total rank of $\HFhat(Y)$ is odd. Conversely, if $Y$ is not a $\Z_2$-homology sphere, then it has an even number of spin structures. It also has an even number of self-conjugate \spinc structures, unless $Y$ is not a rational homology sphere in which case the rank of $\HFhat(Y)$ is even in every \spinc structure.

The argument above shows that $M(\alpha)$ is a $\Z_2$-homology sphere if and only if the total rank of $\HFhat(M(\alpha))$ is odd.  It is clear from the pairing theorem that the total rank of $\HFhat(M(\alpha))$ is congruent modulo 2 to the total number of $\beta$ or $\beta^{-1}$ letters in the cyclic words representing $\HFhat(M, \alpha, \beta)$. Similarly, $M(\beta)$ is a $\Z_2$-homology sphere if and only if $\HFhat(M, \alpha, \beta)$ contains an odd number of $\alpha$ and $\alpha^{-1}$ letters. Finally, note that if neither $M(\alpha)$ nor $M(\beta)$ is a $\Z_2$-homology sphere, then $\HFhat(M,\alpha,\beta)$ contains both an even number of $\alpha^{\pm 1}$ letters and an even number of $\beta^{\pm 1}$ letters. But this property will remain true for any reparametrization of $M$, since any reparametrization can be realized as a sequence of Dehn twists about $\alpha$ or $\beta$, which have the effect of increasing or decreasing the number of $\alpha^{\pm 1}$ letters by the number of $\beta^{\pm 1}$ letters or vice-versa. It follows that no Dehn filling of $M$ is a $\Z_2$-homology sphere, contradicting the fact that $M$ is a $\Z_2$-homology solid torus.
\end{proof}

It will be helpful to keep track of which Dehn fillings of $(M, \alpha, \beta)$ are $\Z_2$-homology spheres, so we introduce the following terminology:
\begin{definition}
We say that $(M, \alpha, \beta)$ is a $\Z_2$-homology solid torus of \emph{type $\alpha$}, \emph{type $\beta$}, or \emph{type $\alpha\beta$} depending on whether the $\alpha$ filling, $\beta$ filling, or both produce a $\Z_2$-homology sphere.
\end{definition}

We will restrict to manifolds $M$ that have at least one L-space filling. The reason is that this assumption ensures $\HFhat(M)$ has exactly one homologically nontrivial curve in each \spinc structure. To see this, note that any straight line of any slope other than the rational longitude intersects any lift of a homologically nontrivial curve in $\HFhat(M)$ to the plane at least once. If there is more than one such curve in any \spinc structure $\s$ of $M$ then for any filling and any \spinc structure on the filling that restricts to $\s$ there is more than one intersection point. We now observe that for a bordered $\Z_2$-homology solid torus $M$ with at least one L-space filling there is a distinguished curve, $\gamma$, which is the unique homologically nontrivial curve in $\HFhat(M, \s_0)$ for $\s_0$ the unique self-conjugate \spinc structure on $M$. Since $\s_0$ is self-conjugate, the conjugation symmetry of $\HFhat(M)$ implies that $\gamma$ is fixed by the elliptic involution. Let $a_\gamma$ and $b_\gamma$ denote the constants such that $[\gamma] = a_\gamma [\alpha] + b_\gamma [\beta]$ in $H_1(T_M; \Z)$.

\begin{lemma} \label{lem:hst-types}
Let $(M,\alpha,\beta)$ be a bordered $\Z_2$-homology solid torus with at least one L-space filling, and let $\gamma$ be the distinguished component of $\HFhat(M,\alpha, \beta)$ as described above. Then at least one of $a_\gamma$ or $b_\gamma$ is odd, and  $M(\alpha)$ (resp. $M(\beta)$) is a $\Z_2$-homology sphere if and only if $b_\gamma$ (resp $a_\gamma$) is odd.
\end{lemma}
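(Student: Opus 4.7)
The plan is to argue that $[\gamma]$ equals the homological longitude $[\lambda]$, extract the parities of $a_\gamma$ and $b_\gamma$ from the parities of the total $\alpha^{\pm 1}$ and $\beta^{\pm 1}$ letter counts in $\HFhat(M,\alpha,\beta)$, and then apply Lemma~\ref{lem:parity-of-alpha-beta}.

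First, I would show that for every $\s \in \Spinc(M)$ the multicurve $\HFhat(M;\s)$ contains exactly one homologically essential component. A lift to $\R^2 \setminus \Z^2$ of any essential immersed curve meets every straight line of slope other than the rational longitude at least once, so if two essential components lived in some $\HFhat(M;\s)$ then for any L-space filling $Y$ and any $\s' \in \Spinc(Y)$ restricting to $\s$ we would have $\dim \HFhat(Y;\s') \geq 2$, contradicting the L-space condition; conversely the L-space condition forces at least one generator in every spin$^c$ structure of $Y$, so at least one essential component must exist in each $\HFhat(M;\s)$. By Corollary 6.6 of \cite{HRW} (applicable since having a $\Z_2$-homology sphere filling forces $b_1(M)=1$), the multicurve $\HFhat(M;\s)$ represents the homological longitude $[\lambda]$; since any additional components in $\HFhat(M;\s)$ are by construction homologically trivial, the unique essential component itself represents $[\lambda]$. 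In particular $a_\gamma[\alpha]+b_\gamma[\beta] = [\lambda]$, and primitivity of $[\lambda]$ gives $\gcd(a_\gamma,b_\gamma)=1$, so at least one of $a_\gamma,b_\gamma$ is odd.

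Next, I would carry out a parity count. Any homologically trivial cyclic word has signed count $0$ of $\alpha$ letters and $0$ of $\beta$ letters, so its unsigned counts of $\alpha^{\pm 1}$ and $\beta^{\pm 1}$ letters are both even; trivial components therefore do not affect parities. An essential cyclic word representing $[\lambda]$ has signed counts $\pm a_\gamma$ and $\pm b_\gamma$, so its unsigned counts are congruent to $a_\gamma$ and $b_\gamma$ modulo~$2$. Summing over the $|\Spinc(M)| = |H^2(M;\Z)|$ spin$^c$ structures of $M$, and using that this number is odd (as established in the preceding lemma on the existence of a unique self-conjugate spin$^c$ structure), the total unsigned count of $\alpha^{\pm 1}$ (resp.\ $\beta^{\pm 1}$) letters in $\HFhat(M,\alpha,\beta)$ is congruent to $a_\gamma$ (resp.\ $b_\gamma$) modulo~$2$.

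Finally, Lemma~\ref{lem:parity-of-alpha-beta} translates this into the desired conclusion: $M(\alpha)$ is a $\Z_2$-homology sphere if and only if the $\beta^{\pm 1}$ letter count of $\HFhat(M,\alpha,\beta)$ is odd, equivalently $b_\gamma$ is odd, and similarly for $M(\beta)$ and $a_\gamma$. The assertion that at least one of $a_\gamma, b_\gamma$ is odd then follows either from primitivity of $[\lambda]$ as in the first step or from Lemma~\ref{lem:parity-of-alpha-beta}, which ensures at least one of the two fillings is a $\Z_2$-homology sphere. I do not expect any serious obstacle; the main point of care is treating the possible homologically trivial components of $\HFhat(M)$, which is resolved by the observation that they contribute evenly to both letter counts.
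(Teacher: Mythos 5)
Your proof takes essentially the same route as the paper: reduce to a parity statement about $\alpha^{\pm 1}$ and $\beta^{\pm 1}$ letter counts via Lemma~\ref{lem:parity-of-alpha-beta}, observe that homologically trivial components contribute evenly, and deduce that the parity is determined by the essential components. The one small difference is in how you dispose of the non-$\s_0$ spin$^c$ structures: the paper's proof observes that they come in conjugate pairs, with the elliptic involution preserving unsigned letter counts, whereas you instead note that the total number of spin$^c$ structures is odd and that each essential component contributes the same parity as $\gamma$. Both observations are valid and comparably brief.

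One inaccuracy you should correct: the homological longitude $\lambda$, as defined in this paper (a generator of the kernel of $i_*\colon H_1(\partial M;\Z) \to H_1(M;\Z)$), need not be a primitive class in $H_1(\partial M;\Z)$. It equals $c$ times the primitive rational longitude, where $c$ is the order of the rational longitude in $H_1(M;\Z)$, and $c$ may be greater than one. So the assertion $\gcd(a_\gamma,b_\gamma)=1$ is false in general. What rescues the argument is that $c$ divides the order of the torsion subgroup of $H_1(M;\Z)$, which you already established is odd; hence $a_\gamma$ and $b_\gamma$ have the same parities as the coordinates of the primitive rational longitude, and the ``at least one odd'' conclusion still holds. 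Alternatively, your fallback appeal to Lemma~\ref{lem:parity-of-alpha-beta} yields the same conclusion without invoking primitivity at all, and this is the route the paper takes.
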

\begin{proof}
By Lemma \ref{lem:parity-of-alpha-beta} we just need to show the parity of $a_\gamma$ (resp. $b_\gamma)$ is the same as the parity of the total number of $\alpha^{\pm 1}$ (resp. $\beta^{\pm 1}$) letters in the cyclic words representing $\HFhat(M,\alpha,\beta)$. This is true because homologically trivial curves in $\HFhat(M, \alpha, \beta; \s_0)$ have an even number of $\alpha^{\pm 1}$ letters and an even number of $\beta^{\pm 1}$ letters, and \spinc structures not equal to $\s_0$ come in pairs contributing an even number of either type of letter.
\end{proof}

We can now define $\Deltadb$ for a bordered $\Z_2$-homology solid torus with an L-space filling.

\begin{definition}
Let $(M, \alpha, \beta)$ be a bordered manifold such that $M$ is a $\Z_2$-homology solid torus with an L-space filling. We define $\Deltadb(M, \alpha, \beta)$ to be $\Deltadb(\gamma)$, where $\gamma$ is the unique homologically nontrivial curve in $\HFhat(M, \alpha, \beta; \s_0)$ for $\s_0$ the unique self-conjugate \spinc structure on $M$.
\end{definition}

As an example, the bordered manifolds associated with the rooted plumbing trees in Examples \ref{ex:plumbing-tree-example1} and \ref{ex:plumbing-tree-example1} are both $\Z_2$-homology solid tori and $\Deltadb$ for these manifolds is given by Examples \ref{ex:delta-sym-example1} and \ref{ex:delta-sym-example2}.

\begin{remark}
It is not necessary to restrict to manifolds with an L-space filling to define $\Deltadb$. We only use that the multicurve $\HFhat(M, \alpha, \beta;\s_0)$ has a single homologically nontrivial component, a strictly weaker condition than $M$ having an L-space filling. Even this assumption could be removed by defining $\Deltadb(M, \alpha, \beta)$ to be an appropriate weighted sum of $\Deltadb(\gamma_i)$ over the homologically nontrivial components $\gamma_i$ of $\HFhat(M, \alpha, \beta; \s_0)$. But in what follows we need to restrict to manifolds with L-space fillings anyway to ensure that we can use the merge operation in its simple form.
\end{remark}

It will be helpful to observe that when $\Deltadb (M, \alpha, \beta)$ is defined, the coordinates of the symmetric points $x_0$ and $x_1$ (in the torus viewed as $\R^2 / \Z^2$) determine and are determined by the $\alpha, \beta$ type of $(M, \alpha, \beta)$ as follows:
\begin{prop} \label{prop:x0x1-coordinates-by-type}
Let $M$ be a $\Z_2$-homology solid torus for which $\Deltadb$ is defined, and let $x_0$ and $x_1$ be the fixed points of the elliptic involution on the distinguished component of $\HFhat(M)$.
\begin{itemize}
\item $M$ has type $\alpha$ if and only if $x_0$ is at $(0, \tfrac 1 2)$ and $x_1$ is at $(\tfrac 1 2, \tfrac 1 2)$;
\item $M$ has type $\beta$ if and only if $x_0$ is at $(\tfrac 1 2, \tfrac 1 2)$ and $x_1$ is at $(\tfrac 1 2, 0)$;
\item $M$ has type $\alpha \beta$ if and only if $x_0$ is at $(0, \tfrac 1 2)$ and $x_1$ is at $(\tfrac 1 2, 0)$.
\end{itemize}
\end{prop}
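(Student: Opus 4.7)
The plan is to first establish the displacement formula
\[
x_1 - x_0 \equiv (b_\gamma/2,\, a_\gamma/2) \pmod{\Z^2}
\]
in $T_M \cong \R^2/\Z^2$, and then to combine this with the parity characterization of the type from Lemma~\ref{lem:hst-types} and the labelling convention for $(x_0, x_1)$ to read off the coordinates in each of the three cases.

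To derive the displacement formula, I would first observe that since $r$ acts by rotation by $\pi$ at each of its fixed points, it reverses any direction through such a point; hence $r|_\gamma$ is an orientation-reversing involution of the circle $\gamma$ and so has exactly two fixed points, namely $x_0$ and $x_1$. Lift this setup to $\widetilde T_M$: choose a lift $\tilde x_0$ of $x_0$ and let $\bar r$ be the unique lift of $r$ with $\bar r(\tilde x_0) = \tilde x_0$, so that $\bar r(p) = 2\tilde x_0 - p$ for every $p$. Parameterize $\tilde\gamma \colon \R \to \widetilde T_M$ by $\tilde\gamma(0) = \tilde x_0$ and $\tilde\gamma(t+1) = \tilde\gamma(t) + (b_\gamma, a_\gamma)$. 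Since $\bar r$ preserves $\tilde\gamma$ setwise, reverses its orientation, and fixes $\tilde\gamma(0)$, the identity $\bar r \circ \tilde\gamma(t) = \tilde\gamma(-t)$ holds, which rearranges to
\[
\tilde\gamma(t) + \tilde\gamma(-t) = 2\tilde x_0.
\]
Setting $t = 1/2$ and using $\tilde\gamma(-1/2) = \tilde\gamma(1/2) - (b_\gamma, a_\gamma)$ gives $\tilde\gamma(1/2) = \tilde x_0 + (b_\gamma/2,\, a_\gamma/2)$. The point $\gamma(1/2)$ is the other $r$-fixed point on $\gamma$, namely $x_1$, and projecting to $T_M$ yields the claimed displacement formula.

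For the case analysis, recall from Lemma~\ref{lem:hst-types} that type $\alpha$ corresponds to $b_\gamma$ odd and $a_\gamma$ even, type $\beta$ to $a_\gamma$ odd and $b_\gamma$ even, and type $\alpha\beta$ to both odd. The three non-puncture fixed points of $r$ in $T_M$ are $(0, 1/2)$, $(1/2, 0)$, and $(1/2, 1/2)$, and $\gamma$ passes through exactly two of them. In type $\alpha$ the displacement formula forces $x_1 - x_0 \equiv (1/2, 0)$, whose only realization by a pair from the three fixed points is $\{(0, 1/2), (1/2, 1/2)\}$; in type $\beta$ the difference is $(0, 1/2)$, forcing $\{(1/2, 0), (1/2, 1/2)\}$; and in type $\alpha\beta$ the difference is $(1/2, 1/2)$, forcing $\{(0, 1/2), (1/2, 0)\}$. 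The labelling convention (for $b_\gamma$ odd, $x_0$ is the point with integer $x$-coordinate and $x_1$ the one with half-integer $x$-coordinate; for $a_\gamma$ odd, $x_1$ is the point with integer $y$-coordinate and $x_0$ the one with half-integer $y$-coordinate) then uniquely assigns $x_0$ and $x_1$ in each case and agrees in the overlap of the two conventions. The three cases partition the $\Z_2$-homology solid tori, so the ``if and only if'' statements follow automatically.

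I do not anticipate a serious obstacle here: once the displacement formula $\tilde\gamma(1/2) = \tilde x_0 + (b_\gamma/2,\, a_\gamma/2)$ is nailed down (the only non-trivial step, and a short computation from the symmetry $\bar r \circ \tilde\gamma(t) = \tilde\gamma(-t)$ and periodicity), the rest is a routine three-case enumeration against the three fixed points of the elliptic involution.
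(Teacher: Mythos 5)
Your proof is correct and takes essentially the same route as the paper: both arguments reduce the statement to a parity check on $(a_\gamma, b_\gamma)$ (via Lemma~\ref{lem:hst-types}), combined with the labelling convention. The paper's version is terse: it asserts, for type $\alpha$, that ``$x_0$ and $x_1$ have the same $y$-coordinate and different $x$-coordinates'' as an immediate consequence of the parity of the segment counts, without spelling out why. Your displacement formula $x_1 - x_0 \equiv (b_\gamma/2, a_\gamma/2) \pmod{\Z^2}$ makes that step explicit, which is a genuine improvement in readability; the case enumeration and convention check that follow are correct and agree exactly with the statement.

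One small point to tighten: the assertion that $\bar r \circ \tilde\gamma(t) = \tilde\gamma(-t)$ follows merely because $\bar r$ preserves $\tilde\gamma$ setwise, reverses orientation, and fixes $\tilde\gamma(0)$ is not automatic for an arbitrary parametrization -- in general you only get $\bar r \circ \tilde\gamma = \tilde\gamma \circ \phi$ for some decreasing involution $\phi$ of $\R$ with $\phi(0) = 0$ and $\phi(t+1) = \phi(t) - 1$, and the other fixed point sits at the parameter $t^*$ with $\phi(t^*) = t^* - 1$, which need not equal $\tfrac 1 2$. You can repair this either by noting that such a parametrization with $\phi(t) = -t$ can always be chosen (conjugating the orientation-reversing involution of $S^1$ to the standard reflection, compatibly with the $\Z$-periodicity), or more directly by avoiding the parametrization entirely: pick a lift $\tilde x_1$ of $x_1$ on $\tilde\gamma$, observe that $\bar r(\tilde x_1) = \tilde x_1 + k(b_\gamma, a_\gamma)$ for some integer $k$, deduce $\tilde x_1 - \tilde x_0 = -\tfrac k 2 (b_\gamma, a_\gamma)$, and conclude $k$ is odd (else $x_1 = x_0$ in $T_M$), which gives the displacement formula immediately. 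Either fix is short and does not change the structure of your argument.
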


\begin{proof}
First note that the coordinates of $x_0$ and $x_1$ must be either $(0,\tfrac 1 2)$, $(\tfrac 1 2, 0)$, or $(\tfrac 1 2, \tfrac 1 2)$; this follows from the following facts: each coordinate is in $\tfrac 1 2 \Z$ since each point lies on the midpoint or endpoint of a unit horizontal or vertical segment, and neither point can coincide with the puncture at $(0,0)$. If $M$ is type $\alpha$, then by the proof of Lemma \ref{lem:parity-of-alpha-beta} there are an odd number of $\beta$ segments in $\gamma$ and an even number of $\alpha$ segments; it follows that in this case $x_0$ and $x_1$ have the same $y$-coordinate and different $x$-coordinates. The pair of points must have coordinates $(0, \tfrac 1 2)$ and $(\tfrac 1 2, \tfrac 1 2)$, and by the convention defined immediately before Definition \ref{def:delta-sym-curve}, $x_0$ is the point with integer $x$-coordinate.  The cases that $M$ has type $\beta$ or type $\alpha\beta$ are similar and left to the reader.
\end{proof}

\subsection{\texorpdfstring{Effect of plumbing tree moves on $\Deltadb$}{Effect of plumbing tree moves on Delta\_sym}}

We are particularly interested in the invariant $\Deltadb(M,\alpha, \beta)$ when $(M, \alpha, \beta)$ is the bordered manifold associated with a rooted plumbing tree $(\Gamma, v)$; in this case we will also use the notation $\Deltadb(\Gamma, v)$ for $\Deltadb(M, \alpha, \beta)$. For the remainder of this section we will explore how $\Deltadb(\Gamma, v)$ changes under the elementary operations $\extend$, $\twist^{\pm 1}$ and $\merge$ on rooted plumbing trees.

\begin{proposition}[Effect of extend move on $\Deltadb$]\label{prop:delta-d-extend}
Let $(\Gamma, v)$ be a rooted tree such that $M_{\Gamma, v}$ is a $\Z_2$-homology solid torus with an L-space filling and suppose that $(\Gamma, v) = \extend(\Gamma', v')$. Then $\Deltadb(\Gamma', v')$ is defined and 
$$\Deltadb(\Gamma, v) = - \Deltadb(\Gamma', v').$$
\end{proposition}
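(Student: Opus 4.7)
The plan is to exploit the fact that the extend move $\extend$ does not change the underlying bordered manifold; it only changes the parametrization of $\partial M$. More precisely, if $(\Gamma, v) = \extend(\Gamma', v')$, then $M_{\Gamma, v} = M_{\Gamma', v'}$ as manifolds with torus boundary, and the new parametrizing curves are $\alpha' = \beta$ and $\beta' = -\alpha$. In particular, since $M_{\Gamma, v}$ is a $\Z_2$-homology solid torus with an L-space filling, so is $M_{\Gamma', v'}$, and hence $\Deltadb(\Gamma', v')$ is defined.

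First I would observe that the distinguished component $\gamma$ of the immersed curve in $\partial M$ is the same on both sides; only its presentation in $(\R^2/\Z^2) \setminus \{(0,0)\}$ changes. A direct change-of-basis computation using the relations above shows that the new presentation is obtained from the old by the linear map $R(x, y) = (-y, x)$, i.e., a $90^\circ$ counterclockwise rotation about the puncture. Since $R$ is an isometry of $\R^2$ that preserves the integer lattice, it permutes the three non-puncture fixed points of the elliptic involution: $R(0, \tfrac12) \equiv (\tfrac12, 0)$, $R(\tfrac12, 0) = (0, \tfrac12)$, and $R(\tfrac12, \tfrac12) \equiv (\tfrac12, \tfrac12)$ modulo $\Z^2$.

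Next I would use Proposition \ref{prop:x0x1-coordinates-by-type} to track the labeling of $x_0$ and $x_1$ under $R$. Because $\extend$ interchanges the $\alpha$- and $\beta$-fillings (up to sign), the type of $(\Gamma, v)$ as a $\Z_2$-homology solid torus is $\beta$, $\alpha$, or $\alpha\beta$ according to whether $(\Gamma', v')$ is of type $\alpha$, $\beta$, or $\alpha\beta$. A quick check against the coordinates specified in Proposition \ref{prop:x0x1-coordinates-by-type} then yields the identities $x_0 = R(x_1')$ and $x_1 = R(x_0')$ in all three cases; that is, $R$ swaps the roles of the two distinguished fixed points. This case-by-case verification is elementary but will be the main bookkeeping step of the argument, since everything that follows is driven by this swap.

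The final step is to compute the deviation from the diagonal of $\tilde\gamma_{01}$ by choosing $\tilde\gamma_{01}$ to be the lift of the $R$-image of a path on the old curve from $x_1'$ to $x_0'$, which is the reverse of a path from $x_0'$ to $x_1'$; the chord $\ell_{01}$ is the $R$-image of the correspondingly reversed chord $\ell'_{10}$. Since $R$ is an orientation-preserving isometry, it preserves the signed area enclosed with multiplicity given by winding number, while reversing the direction of the closed path $\tilde\gamma_{01} \cdot \ell_{01}^{-1}$ negates all winding numbers. The net effect is to negate the deviation from the diagonal, yielding $\Deltadb(\Gamma, v) = -\Deltadb(\Gamma', v')$ as desired.
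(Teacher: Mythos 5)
Your proposal is correct and follows essentially the same path as the paper's proof: both exploit the fact that $\extend$ is a pure reparametrization inducing a $90^\circ$ rotation of the immersed curve, both observe that the rotation swaps the labels $x_0 \leftrightarrow x_1$ by consulting the coordinate conventions (the paper argues directly from the labeling convention preceding Definition \ref{def:delta-sym-curve}, while you do an equivalent case-check against Proposition \ref{prop:x0x1-coordinates-by-type}), and both conclude that orientation-preservation of $R$ plus the reversal of the path direction negates the deviation from the diagonal. Your version is slightly more explicit in the bookkeeping but is not a genuinely different argument.
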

\begin{proof}
It is clear that $\Deltadb(\Gamma', v')$ is defined since $(\Gamma', v')$ defines the same manifold as $(\Gamma, v)$ just with a different parametrization. Let $\gamma$ be the distinguished curve component of the bordered invariant for $(\Gamma, v)$, and let $\gamma'$ be the distinguished curve component associated with $(\Gamma', v')$. Since the extend operation $\extend$ is just a reparametrizing taking $\beta$ to $\alpha$ and $\alpha$ to $-\beta$, $\gamma'$ is a 90 degree rotation of $\gamma$. It is clear that the deviation from the diagonal of $\tilde\gamma_{01}$ is the same as the deviation from the diagonal of a rotation of $\tilde\gamma_{01}$, so $\Deltadb$ is unchanged up to sign. However, the two symmetric points $x_0$ and $x_1$ on $\gamma$ are labeled such that either $x_0$ has an integral $x$-coordinate and $x_1$ has a half-integral $x$-coordinate, or $x_0$ has a half-integral $y$-coordinate and $x_1$ has an integral $y$-coordinate. These conditions are switched after rotation by 90 degrees, so the images of $x_0$ and $x_1$ on $\gamma$ under the rotation are $x'_1$ and $x'_0$, respectively. It follows that the deviation from the diagonal of $\tilde\gamma'_{01}$ is the same as the deviation from the diagonal of $\tilde\gamma_{10}$, and so $\Deltadb(\Gamma, v) = - \Deltadb(\Gamma', v')$.
\end{proof}

\begin{proposition}[Effect of merge move on $\Deltadb$]\label{prop:delta-d-merge}
Let $(\Gamma, v)$ be a rooted tree such that $M_{\Gamma, v}$ is a $\Z_2$-homology solid torus for which some filling along a slope $r$ not in $\Z \cup \{\infty\}$ is an L-space, and suppose $(\Gamma, v) = \merge( (\Gamma', v'), (\Gamma'', v''))$ where $\Gamma'$ and $\Gamma''$ each have more than one vertex. Then $M_{\Gamma', v'}$ and $M_{\Gamma'', v''}$ are both Floer simple, they are both $\Z_2$-homology solid tori, and they are not both of type $\beta$. Moreover, $\Deltadb$ for $(\Gamma, v)$ can be obtained from $\Deltadb$ for $(\Gamma', v')$ and $(\Gamma'', v'')$ as follows:

\begin{itemize}
\item if $M_{\Gamma', v'}$ has  type $\beta$ and $M_{\Gamma'', v''}$ has type $\alpha$, then $\Deltadb(\Gamma, v) = \Deltadb(\Gamma', v')$
\item if $M_{\Gamma', v'}$ has  type $\beta$ and $M_{\Gamma'', v''}$ has type $\alpha\beta$, then $\Deltadb(\Gamma, v) = -\Deltadb(\Gamma', v')$
\item if $M_{\Gamma'', v''}$ has  type $\beta$ and $M_{\Gamma', v'}$ has type $\alpha$, then $\Deltadb(\Gamma, v) = \Deltadb(\Gamma'', v'')$
\item if $M_{\Gamma'', v''}$ has  type $\beta$ and $M_{\Gamma', v'}$ has type $\alpha\beta$, then $\Deltadb(\Gamma, v) = -\Deltadb(\Gamma'', v'')$
\item if neither $M_{\Gamma', v'}$ nor $M_{\Gamma'', v''}$ has type $\beta$, then $\Deltadb(\Gamma, v) = \Deltadb(\Gamma', v') + \Deltadb(\Gamma'', v'')$.
\end{itemize}
\end{proposition}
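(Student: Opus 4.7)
The proof naturally divides into two parts: the topological/algebraic claims about $M_{\Gamma', v'}$ and $M_{\Gamma'', v''}$, and the area formula for $\Deltadb(\Gamma, v)$ via the vertical sum. For the first part, the hypothesis that $M_{\Gamma, v}$ has an L-space filling along a slope outside $\Z \cup \{\infty\}$ triggers Lemma \ref{lem:Lspace-merge-condition}: both $M_{\Gamma', v'}, M_{\Gamma'', v''}$ are Floer simple and, after possibly relabeling, $M_{\Gamma', v'}$ has $\infty$ as a strict L-space slope, so by Proposition \ref{prop:infty-strict-lspace-slope} the cyclic word for $\gamma'$ consists only of type-$c$ letters up to reversal. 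Lemma \ref{lem:merge-parity} then gives $a_\gamma \equiv a_{\gamma'} b_{\gamma''} + a_{\gamma''} b_{\gamma'}$ and $b_\gamma \equiv b_{\gamma'} b_{\gamma''}$ modulo $2$. The $\Z_2$-homology solid torus hypothesis forces $(a_\gamma, b_\gamma) \not\equiv (0,0)$, and a short parity case analysis shows that each of $(a_{\gamma'}, b_{\gamma'})$ and $(a_{\gamma''}, b_{\gamma''})$ must be nonzero mod $2$ (so both sides are $\Z_2$-homology solid tori), and that both being type $\beta$ (i.e.\ $a_{\gamma'}, a_{\gamma''}$ odd and $b_{\gamma'}, b_{\gamma''}$ even) would force $a_\gamma, b_\gamma$ both even, contradicting the hypothesis.

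For the area formula, I would realize $\gamma$ as the relevant component of the vertical sum $\gamma'' +_v \gamma'$ from Section \ref{sec:graph-mfds}, with $\gamma'$ represented as the graph of a monotone function $f$. A direct computation using the functional equation $f(2a-x) = 2b - f(x)$ for elliptic fixed points of $\gamma'$ shows that $\gamma$ inherits an elliptic fixed point at $(A, b' + b'' - \tfrac 1 2)$ for each pair of fixed points $(A, b') \in \gamma'$ and $(A, b'') \in \gamma''$ sharing $x$-coordinate $A$. Feeding the coordinates listed in Proposition \ref{prop:x0x1-coordinates-by-type} into this rule produces the two fixed points $x_0, x_1$ of $\gamma$ in each of the five cases, recovers the type of $M_{\Gamma, v}$, and, via the convention of Definition \ref{def:delta-sym-curve}, identifies which pair of parent fixed points corresponds to $x_0$ and which to $x_1$. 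I would then compute the signed area by Green's theorem applied to the closed contour $\tilde\gamma_{01} \cdot \ell_{01}^{-1}$: parametrizing $\tilde\gamma_{01}$ via the vertical sum and integrating by parts, the line integrals split additively, with linear boundary terms from the two sides canceling, yielding the identity $\mathrm{Area}(\tilde\gamma_{01}, \ell_{01}) = \mathrm{Area}(\tilde\gamma''_{0''1''}, \ell''_{0''1''}) + \mathrm{Area}(\tilde\gamma'_{0'1'}, \ell'_{0'1'})$.

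The case analysis then proceeds as follows. If neither side is of type $\beta$, the projected paths on $\gamma'$ and $\gamma''$ are precisely the half-period paths defining $\Deltadb(\Gamma', v')$ and $\Deltadb(\Gamma'', v'')$, so $\Deltadb(\Gamma, v) = \Deltadb(\Gamma', v') + \Deltadb(\Gamma'', v'')$. When $M_{\Gamma', v'}$ is of type $\beta$ (cases 1 and 2), the $\gamma''$-side path becomes $b_{\gamma'}/2$ full periods of $\tilde\gamma''$, whose signed area vanishes by the symmetry of $\tilde\gamma''$, while the $\gamma'$-side path consists of $b_{\gamma''}$ consecutive half-periods of $\tilde\gamma'$ whose signed contributions alternate and telescope (since $b_{\gamma''}$ is odd) to a single $\pm \Deltadb(\Gamma', v')$, with sign $+$ in case 1 (type $\alpha$) and $-$ in case 2 (type $\alpha\beta$) as dictated by the convention. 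Cases 3 and 4 follow by swapping $(\Gamma', v')$ and $(\Gamma'', v'')$. The main obstacle is this last piece of sign bookkeeping: the convention of Definition \ref{def:delta-sym-curve} that fixes the order of $x_0$ and $x_1$ interacts in a subtle way with the vertical sum, and verifying that the projected paths on $\tilde\gamma'$ and $\tilde\gamma''$ realize the expected half-period traversals in the correct orientation in each case is the most delicate step.
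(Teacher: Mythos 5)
Your proposal is correct and follows essentially the same line as the paper's proof: invoke Lemmas~\ref{lem:Lspace-merge-condition} and~\ref{lem:merge-parity} for the Floer-simple and parity claims, realize the distinguished symmetric curve as a vertical sum (with a horizontal shift aligning lifts of $x_1'$ and $x_1''$), and use additivity of the deviation-from-diagonal together with a half-period reduction and the coordinate data from Proposition~\ref{prop:x0x1-coordinates-by-type}. One place to tighten up the bookkeeping: the projected paths are actually $m'/2$ and $m''/2$ laps of $\tilde\gamma'$ and $\tilde\gamma''$, where $m', m''$ are the coprime integers with $n'm' = n''m'' = \operatorname{lcm}(n',n'')$ and $n^{(i)} = b_{\gamma^{(i)}}$, so your lap counts $b_{\gamma'}/2$ and $b_{\gamma''}$ should be divided by $\gcd(b_{\gamma'},b_{\gamma''})$, and in the neither-type-$\beta$ case the projected paths are generally not single half-periods --- the correct justification, as in the paper, is that they differ from the half-period paths by full laps, which contribute zero deviation by the rotational symmetry (this only changes parity-irrelevant factors since the $\gcd$ is forced to be odd).
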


%
%
\begin{proof}
The fact that both plumbing tree inputs to the merge operation must correspond to Floer simple manifolds follows from Lemma \ref{lem:Lspace-merge-condition}.
Moreover, $\infty$ is a strict $L$-space slope for at least one of the two inputs, hence one of the two input words consists in calculus notation only of $c$ letters, up to orientation. To see that both inputs determine $\Z_2$-homology solid tori, we use Lemma \ref{lem:merge-parity} and observe that if the cyclic words associated with either input have an even number of $\alpha$ letters and an even number of $\beta$ letters then the same is true of the cyclic words associated with $(\Gamma, v)$, contradicting the assumption that $M_{\Gamma, v}$ is a $\Z_2$-homology solid torus. Moreover, if both inputs are $\Z_2$-homology solid tori of type $\beta$ then the corresponding collections of cyclic words for both inputs have even numbers of $\beta$ letters and once again the cyclic words associated with $(\Gamma, v)$ would have an even number of $\alpha$ letters and an even number of $\beta$ letters, a contradiction.

To compute $\Deltadb(\Gamma, v)$ from $\Deltadb(\Gamma', v')$ and $\Deltadb(\Gamma'', v'')$ we must relate the distinguished symmetric component $\gamma$ of the merged invariant $\HFhat(M_{\Gamma, v})$ to the distinguished symmetric components of $\HFhat(M_{\Gamma', v'})$ and $\HFhat(M_{\Gamma'', v''})$, which we denote by $\gamma'$ and $\gamma''$. At least one of these curves has only letters of type $c$ in loop calculus notation; we will assume without loss of generality that $M_{\Gamma', v'}$ has this property. We may now use the simplified version of the merge operation described in Section \ref{sec:graph-mfds}, and its geometric realization. Recall that the components of $\HFhat(M_{\Gamma, v})$ come from the vertical sum of various horizontal shifts of components of $\HFhat(M_{\Gamma', v'})$ and $\HFhat(M_{\Gamma'', v''})$ after lifting these curves to $\R^2 \setminus \Z^2$. We claim that $\tilde\gamma$ is the vertical sum of $\tilde\gamma'$ with a particular horizontal shift of $\tilde\gamma''$, namely one for which some lifts $\tilde x'_1$ and $\tilde x''_1$ of $x'_1$ and $x''_1$ have the same $x$-coordinate. Here $\{x'_0, x'_1\}$ and $\{x''_0, x''_1\}$ are the pairs of fixed points of the elliptic involution symmetry on $\gamma'$ and $\gamma''$, respectively, labeled using the usual convention. By Proposition \ref{prop:x0x1-coordinates-by-type}, $x'_1$ and $x''_1$ always have $x$-coordinate in $\Z + \tfrac 1 2$, so it is always possible to choose a lift $\tilde\gamma''$ with this property (in contrast, the $x$-coordinate of $x'_0$ or $x''_0$ may be an integer or half-integer depending on the type of the corresponding manifold). Let $\tilde\gamma$ denote this particular vertical sum of $\tilde\gamma''$ and $\tilde\gamma'$. It is clear that $\tilde\gamma$ is symmetric under rotation by $\pi$ about the point corresponding to $\tilde x'_1$ and $\tilde x''_1$.
Moreover, the component of $\HFhat(M_{\Gamma, v})$ arising from the lift of $\gamma''$ that is shifted rightward from $\tilde\gamma''$ by $k$ units is the same as the rotation by $\pi$ of the component arising from shifting $\tilde\gamma''$ leftward by $k$ units. It follows that all components of $\HFhat(M_{\Gamma, v})$ coming from summing $\tilde\gamma'$ with other shifts of $\tilde\gamma''$ come in pairs which are rotations of each other.
Similarly, all components of the merged invariant $\HFhat(M_{\Gamma, v})$ arising from components of $\HFhat(M_{\Gamma', v'})$ other than $\gamma'$ come in pairs that differ by a rotation, since the components of $\HFhat(M_{\Gamma', v'})$ other than $\gamma'$ come in such pairs. The same is true for components of the merged invariant $\HFhat(M_{\Gamma, v})$ arising from components of $\HFhat(M_{\Gamma'', v''})$  that are not in the unique self-conjugate \spinc structure. Thus all homologically nontrivial components of $\HFhat(M_{\Gamma, v})$ other than the one specified above come in pairs. From this it is straightforward to see that $\tilde\gamma$ must be (a lift of) the distinguished symmetric curve in $\HFhat(M_{\Gamma, v})$.

Let $n$, $n'$, and $n''$ denote the signed count of $\beta$ letters in the cyclic words representing $\gamma$, $\gamma'$, and $\gamma''$, respectively. Note that $n$ is the least common multiple of $n'$ and $n''$, since the merge of the words representing $\gamma'$ and $\gamma''$ has a signed total of $n'n''$ beta letters (this follows from Lemma \ref{lem:merge-parity}), which is then evenly divided amongst the $gcd(n',n'')$-many components of the merge of the words representing $\gamma'$ and $\gamma''$. Let $m'$ and $m''$ be the relatively prime integers such that $n' m' = n'' m'' = \text{lcm}(n', n'')$.
When we construct $\tilde \gamma$ by vertically adding $\tilde\gamma'$ to $\tilde\gamma''$, the curve $\tilde\gamma$ repeats after $m''$ periods of $\tilde\gamma''$. More precisely, we will consider the path $\gamma''_{[m'']}$ consisting of $m''$ laps around $\gamma''$, starting and ending at $x''_1$, and the path $\gamma''_{[m''/2]}$ which consists of $m''/2$ full laps around $\gamma''$ starting at $x''_1$ and ending at $x''_1$ or $x''_0$ depending on whether $m''$ is even or odd. In a similar way, we define $\gamma'_{[m']}$ and $\gamma'_{[m'/2]}$, and use $\tilde\gamma'_{[m']}$, $\tilde\gamma'_{[m'/2]}$, $\tilde\gamma''_{[m'']}$ and $\tilde\gamma''_{[m''/2]}$ to denote lifts of theses curves to $\R^2 \setminus \Z^2$. To ensure that the vertical sum $\tilde\gamma''_{[m'']} +_v \tilde\gamma'_{[m']}$ gives $\tilde\gamma$, we fix the lift $\tilde\gamma''_{[m'']}$ so that its starting point (a lift of $x''_1$) has the same $x$-coordinate as a lift of $x'_1$ on $\tilde\gamma'$; note that the portion of $\tilde\gamma'$ starting and ending at the same $x$-coordinates as $\tilde\gamma''_{[m'']}$ is a lift $\tilde\gamma'_{[m']}$ of $\gamma'_{[m']}$. 
Now the initial point and the midpoint of the vertical sum $\tilde\gamma''_{[m'']} +_v \tilde\gamma'_{[m']}$ are the two symmetry points, which for now we call $\tilde x_i$ and $\tilde x_j$ respectively since we do not yet know which is a lift of $x_0$ and which is a lift of $x_1$.
(They are distinct because the fact that not both of $M_{\Gamma',v'}$ and $M_{\Gamma'',v''}$ are of type $\beta$ forces at least one between $m'$ and $m''$ to be odd, and from here one can use Proposition \ref{prop:x0x1-coordinates-by-type} to show that the initial point and the midpoint have different type.)
We let $\tilde \gamma_{ij}$ denote this path along $\tilde\gamma$ from $\tilde x_i$ to $\tilde x_j$.

The indices $i$ and $j$ depend on the $\alpha$, $\beta$ types of the input $\Z_2$-homology solid tori. Proposition \ref{prop:x0x1-coordinates-by-type} determines whether the $x$-coordinates and $y$-coordinates of the initial points and midpoints of $\tilde\gamma''_{[m'']}$ and $\tilde\gamma'_{[m']}$ are in $\Z$ or $\Z+\tfrac 1 2$, and using the fact that the vertical sum adds $y$-coordinates and then subtracts $\tfrac 1 2$, we can determine the same information for $\tilde x_i$ and $\tilde x_j$. We can check that $\tilde x_i = \tilde x_1$ and $\tilde x_j = \tilde x_0$ unless one input is of type $\beta$ and the other is of type $\alpha \beta$, in which case $\tilde x_i = \tilde x_0$ and $\tilde x_j = \tilde x_1$. Some examples of the vertical sums described here are shown in Figure \ref{fig:effect-of-merge-examples}.

It is easy to see that the deviation from the diagonal for curves is additive under vertical summing. From this we get that the deviation from the diagonal of $\tilde\gamma_{ij}$ is the sum of the deviation from the diagonal of $\tilde\gamma'_{[m'/2]}$ and of $\tilde\gamma''_{[m''/2]}$.  The deviation from the diagonal of $\tilde\gamma'_{[m'/2]}$ is zero by symmetry if $m'$ is even, since then $\tilde\gamma'_{[m'/2]}$ is a lift of a full number of laps around $\gamma'$ and the deviation from the diagonal from $\tilde x_0$ to $\tilde x_1$ cancels that from $\tilde x_1$ to $\tilde x_0$. Note that $m'$ is even precisely when $M_{\Gamma'', v''}$ has type $\beta$ (so that $n''$ is even) and $M_{\Gamma', v'}$ does not (since $M_{\Gamma', v'}$ and $M_{\Gamma'', v''}$ can't both be type $\beta$). If on the other hand $m'$ is odd, then the deviation from the diagonal of $\tilde\gamma'_{[m'/2]}$ is the same as the deviation from the diagonal of $\tilde\gamma'_{10}$ since these paths agree up to adding full loops around $\tilde\gamma'$. Similarly, the deviation from the diagonal of $\tilde\gamma''_{[m''/2]}$ is zero if $m''$ is even and agrees with the deviation from the diagonal of $\tilde\gamma''_{10}$ if $m''$ is odd.

To summarize, we have that the deviation from the diagonal of $\tilde\gamma_{10}$ is the sum of the deviations from the diagonal of $\tilde\gamma'_{10}$ and of $\tilde\gamma''_{10}$, where either of these contributions is replaced by zero if the $\Z_2$-homology solid torus corresponding to the other curve has type $\beta$, and there is a negative sign if one input has type $\beta$ and the other has type $\alpha\beta$. Since $\Deltadb(\gamma)$ is $-2$ times the deviation from the diagonal of $\tilde\gamma_{10}$, the result follows.
\end{proof}

\begin{figure}
\labellist

\color{red}
\pinlabel {$\widetilde\gamma''_{[\frac{m''}{2}]}$} at -50 310
\pinlabel {\scriptsize $(m'' = 1)$} at -50 230

\pinlabel {$\widetilde\gamma''_{[\frac{m''}{2}]}$} at 720 310
\pinlabel {\scriptsize $(m'' = 2)$} at 720 240

\color{blue}
\pinlabel {$\widetilde\gamma'_{[\frac{m'}{2}]}$} at -50 550
\pinlabel {\scriptsize $(m' = 3)$} at -50 470

\pinlabel {$\widetilde\gamma'_{[\frac{m'}{2}]}$} at 720 550
\pinlabel {\scriptsize $(m' = 3)$} at 720 470

\color{InkscapePurple}
\pinlabel {$\widetilde\gamma''_{[\frac{m''}{2}]} +_v \widetilde\gamma'_{[\frac{m'}{2}]}$} at 20 810

\pinlabel {$\widetilde\gamma''_{[\frac{m''}{2}]} +_v \widetilde\gamma'_{[\frac{m'}{2}]}$} at 720 810

\endlabellist
\hspace{6mm}
\includegraphics[scale = .25]{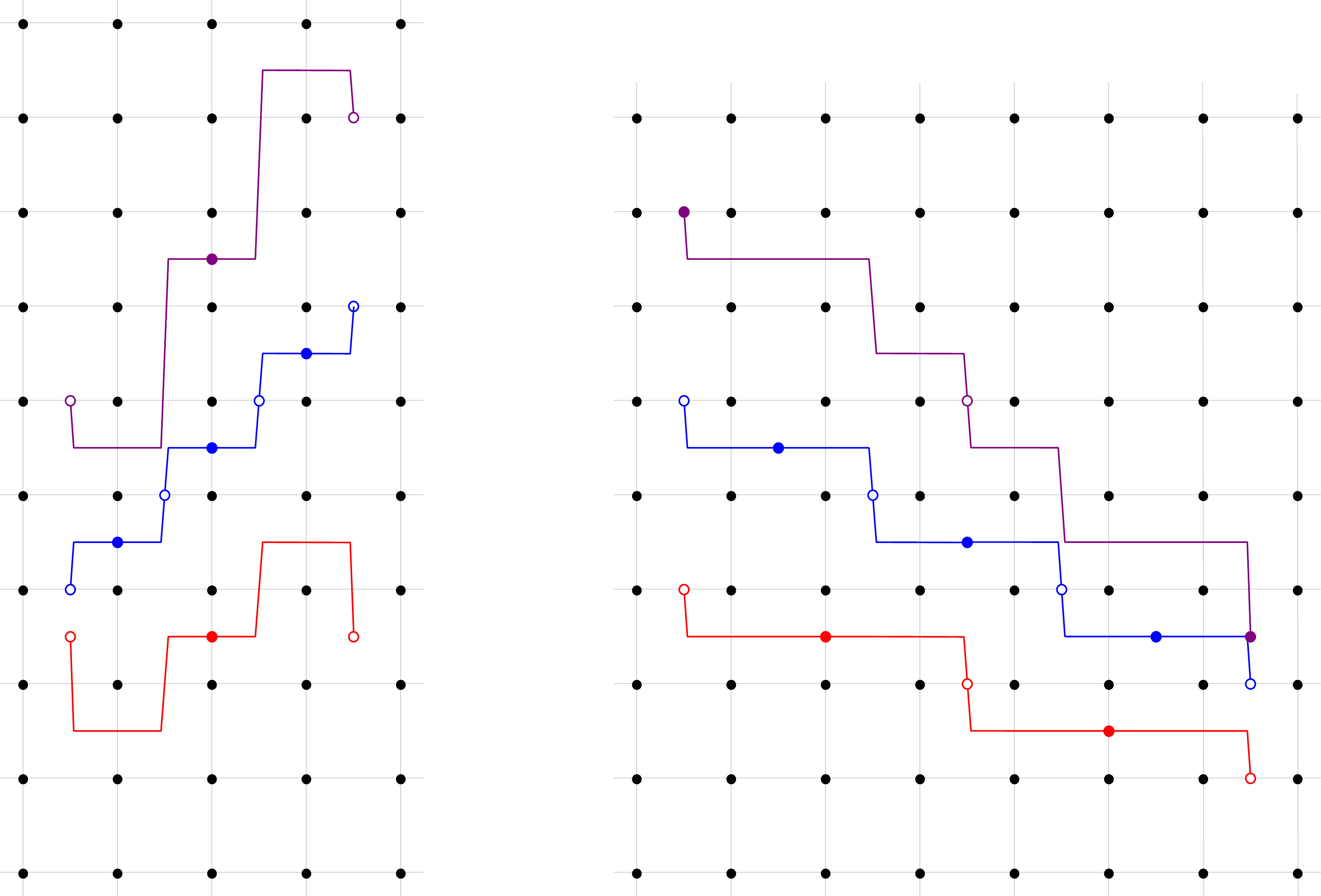}
\caption{Constructing one period of $\tilde\gamma$ from $m'$ periods of $\tilde\gamma'$ and $m''$ periods of $\tilde\gamma''$. Solid dots represent lifts of the points $x_0$, $x'_0$, or $x''_0$, while open dots represent lifts of $x_1$, $x'_1$, or $x''_1$. On the left, $m'$ and $m''$ are both odd. We see that in this case the first half $\tilde\gamma_{ij}$ of the resulting curve is $\tilde\gamma_{10}$. Furthermore, the first halves of $\tilde\gamma'_{[m']}$ and $\tilde\gamma''_{[m'']}$ have deviation from the diagonal equal to that of $\tilde\gamma'_{10}$ and $\tilde\gamma''_{10}$, respectively, and the sum of these is the deviation from the diagonal of $\tilde\gamma_{10}$. On the right, $m'$ is odd while $m''$ is even, so the deviation from the diagonal of the first half of $\tilde\gamma'_{[m']}$ agrees with that of $\tilde\gamma'_{10}$, but the deviation from the diagonal of the first half of $\tilde\gamma''_{[m'']}$ is zero. Moreover, in this case $M_{\Gamma', v'}$ has type $\beta$ and $M_{\Gamma'', v''}$ has type $\alpha\beta$, and we see that the first half $\tilde\gamma_{ij}$ is $\tilde\gamma_{01}$ rather than $\tilde\gamma_{10}$, resulting in an extra minus sign in the $\Deltadb$ of the resulting curve.}
\label{fig:effect-of-merge-examples}
\end{figure}

\begin{proposition}[Effect of twist move on $\Deltadb$]\label{prop:delta-d-twist}
Let $(\Gamma, v)$ be a rooted tree such that $M_{\Gamma, v}$ is a $\Z_2$-homology solid torus with an L-space filling, and suppose $(\Gamma, v) = \twist^{\pm 1}(\Gamma', v')$. Note that $M_{\Gamma', v'}$ is also a $\Z_2$-homology solid torus with an L-space filling, so $\Deltadb(\Gamma', v')$ is defined. Then we have that
\begin{itemize}
	\item if $M_{\Gamma', v'}$ has type $\beta$ then $\Deltadb(\Gamma, v) = -\Deltadb(\Gamma', v')$.
	\item if $M_{\Gamma', v'}$ has type $\alpha$ or type $\alpha\beta$ then $\Deltadb(\Gamma, v) = \Deltadb(\Gamma', v') \mp \frac 1 4$.
\end{itemize} 
\end{proposition}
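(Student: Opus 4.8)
The plan is to exploit the explicit description of the twist operation as a reparametrization. Since $\twist^{\pm 1}$ leaves the underlying manifold $M_{\Gamma,v}=M_{\Gamma',v'}$ unchanged and only modifies the parametrization by $\mp1$ Dehn twists about $\alpha$, the manifold $M_{\Gamma',v'}$ is again a $\Z_2$-homology solid torus with an L-space filling, so $\Deltadb(\Gamma',v')$ is defined. On immersed curves the operation is realized by the area-preserving shear $T$ of $\widetilde T_{M_{\Gamma,v}}\cong\R^2\setminus\Z^2$ sending $(x,y)\mapsto(x,y\mp x)$: this descends to the torus, preserves the integer lattice, and commutes with the elliptic involution $r$. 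Writing $\gamma$ for the distinguished curve of $M_{\Gamma',v'}$ (in rectilinear position) and $\gamma^*$ for its rectilinear representative after applying $T$, the curve $\gamma^*$ is again symmetric under $r$, with fixed points $T(x_0)$ and $T(x_1)$.

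First I would record how the $\alpha,\beta$-type changes. From $[\gamma^*]=a_{\gamma^*}[\alpha]+b_{\gamma^*}[\beta]$ with $a_{\gamma^*}=a_\gamma\mp b_\gamma$ and $b_{\gamma^*}=b_\gamma$, a parity check (via Lemma~\ref{lem:hst-types}) shows type $\beta\mapsto$ type $\beta$, type $\alpha\mapsto$ type $\alpha\beta$, and type $\alpha\beta\mapsto$ type $\alpha$. Comparing the coordinates of the symmetric points listed in Proposition~\ref{prop:x0x1-coordinates-by-type} with their images under $T$, one finds that when $M_{\Gamma',v'}$ has type $\alpha$ or $\alpha\beta$ the labels are preserved, i.e.\ $x_0^*=T(x_0)$ and $x_1^*=T(x_1)$, whereas when $M_{\Gamma',v'}$ has type $\beta$ the shear exchanges the labels, $x_0^*=T(x_1)$ and $x_1^*=T(x_0)$. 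This swap is precisely the source of the overall sign in the type $\beta$ case.

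Next I would reduce the statement to a single area computation. Because the chord $\ell^*_{01}$ joining $x_0^*,x_1^*$ is $T(\ell_{01})$ (up to orientation) and $T$ preserves the signed area of closed loops, one obtains
\[
\Deltadb(\Gamma,v)-\varepsilon\,\Deltadb(\Gamma',v')=2\,\text{Area}(\Lambda),\qquad
\varepsilon=\begin{cases}+1,&\text{type }\alpha\text{ or }\alpha\beta,\\ -1,&\text{type }\beta,\end{cases}
\]
where $\Lambda$ is the closed loop bounded by the sheared half-curve $T(\tilde\gamma_{01})$ (resp.\ $T(\tilde\gamma_{10})$ in the type $\beta$ case) and its rectilinear representative $\tilde\gamma^*_{01}$, the two paths sharing both endpoints. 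It then remains to show $\text{Area}(\Lambda)=\mp\tfrac18$ when $M_{\Gamma',v'}$ has type $\alpha$ or $\alpha\beta$, and $\text{Area}(\Lambda)=0$ when it has type $\beta$.

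This last step is the main obstacle. Parametrizing along the curve, the height of $T(\tilde\gamma_{01})$ is piecewise linear of slope $\mp1$ with integer jumps at the vertical unit segments, and rectilinearizing it onto the horizontal lines of half-integer height replaces this by the nearest staircase; the difference of the two heights is a sawtooth of amplitude $\tfrac12$ whose integral over each full period vanishes. Hence $\text{Area}(\Lambda)$ only sees the two partial periods at the endpoints $x_0,x_1$, and these are pinned down by the coordinates from Proposition~\ref{prop:x0x1-coordinates-by-type} (whether each point has integer or half-integer $x$-coordinate, and whether it lies on a horizontal or a vertical unit segment). A careful bookkeeping of these endpoint terms and their signs — which must also accommodate distinguished curves carrying $a$- and $b$-type loop-calculus segments, where the curve reverses horizontal direction — yields $\mp\tfrac18$ when exactly one endpoint has half-integer $x$-coordinate (types $\alpha$ and $\alpha\beta$) and a cancellation giving $0$ in the type $\beta$ case. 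Substituting into the displayed identity gives the proposition. A convenient consistency check is the degenerate case where $(\Gamma',v')$ is a single $0$-weighted vertex: there $\gamma=(\beta)$ is horizontal, $\gamma^*=(\beta\alpha^{\mp1})$ is the curve of a $(\mp1)$-framed solid torus, and a direct picture computation shows $\Deltadb$ changes from $0$ to $\mp\tfrac14$.
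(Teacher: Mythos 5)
Your approach is genuinely different from the paper's. The paper realizes $\twist^{\pm 1}$ as the degenerate merge with the single-vertex tree $\Gamma_{\twist^\pm}$, whose curve is $(\beta\alpha^{\mp 1})$ and whose $\Deltadb$ equals $\mp\frac14$ (essentially the direct computation you do as a consistency check), and then reads the proposition off the bullet points of Proposition~\ref{prop:delta-d-merge}. You instead argue from scratch via the geometric action of the twist as the linear shear $T:(x,y)\mapsto(x,y\mp x)$ of the plane. The preliminary observations --- that $T$ preserves signed area, commutes with the elliptic involution, changes the $\alpha,\beta$-type in the pattern $\alpha\leftrightarrow\alpha\beta$, $\beta\mapsto\beta$, and preserves or swaps the labels $\{x_0,x_1\}$ according to the type --- are all correct and reduce the proposition to showing $\text{Area}(\Lambda)=\mp\frac18$ for types $\alpha,\alpha\beta$ and $0$ for type $\beta$.

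That area computation, which you flag as the main obstacle and defer to ``careful bookkeeping,'' is where the gap lies. Your sawtooth heuristic is the right idea --- the vertical discrepancy between $T(\tilde\gamma_{01})$ and the rectilinear representative $\tilde\gamma^*_{01}$ is governed by $m(x-\mathrm{round}(x))$ --- but as stated it implicitly parametrizes the curve by $x$ and so only makes sense when the curve moves monotonically rightward, and the endpoint contributions are asserted rather than computed. What needs to be justified is: (i) that $\tilde\gamma^*$ really is a small vertical homotope of $T(\tilde\gamma)$ (namely the vertical sum $\tilde\gamma +_v (\beta\alpha^{\mp 1})$ in the paper's sense), so that no puncture is swept; (ii) that columns traversed by $a$- and $b$-type loop calculus segments contribute zero; and (iii) the precise endpoint contributions, including the type-$\beta$ case where both endpoints have half-integer $x$-coordinate, exactly where $\mathrm{round}$ jumps. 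One clean way to handle all of this at once is to write $\text{Area}(\Lambda) = -m\int_{\tilde\gamma_{01}}(x-\mathrm{round}(x))\,dx$ and invoke the fundamental theorem of calculus with the continuous antiderivative $\frac m2(x-\mathrm{round}(x))^2$, which absorbs direction reversals automatically and reduces the integral to the two endpoint values $0$ or $\frac m8$ according to whether that endpoint has integer or half-integer $x$-coordinate. But you leave this step as a sketch, so as written the proof is incomplete. The paper's route sidesteps the area computation entirely by deferring to the independently needed merge proposition, giving a shorter proof of this particular statement at the cost of that dependency.
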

\begin{proof}
This is nearly a special case of Proposition \ref{prop:delta-d-merge} if we remove the hypothesis that $\Gamma''$ has more than one vertex, since applying $\twist^{\pm 1}$ has the same effect as merging with the rooted tree $\Gamma_{\twist^\pm}$ whose single vertex $v$ has weight $\pm 1$. The proof of Proposition \ref{prop:delta-d-merge} goes through unchanged in this setting except that $M_{\Gamma', v'}$ is no longer guaranteed to be Floer simple (see Remark \ref{rmk:Lspace-merge-condition}). The immersed curve for the corresponding manifold $M_{\Gamma_{\twist^\pm}, v}$ is represented by the cyclic word $(\beta \alpha^{\mp 1})$ and this manifold is a $\Z_2$-homology solid torus of type $\alpha\beta$. We can check that $\Deltadb(\Gamma_{\twist^\pm}, v)= \mp \frac 1 4$. If $M_{\Gamma', v'}$ has type $\beta$ then $\Deltadb(\Gamma, v) = -\Deltadb(\Gamma', v')$ by the second bullet point in Proposition \ref{prop:delta-d-merge}, and if $M_{\Gamma', v'}$ has type $\alpha$ or $\alpha\beta$ then $\Deltadb(\Gamma, v) = \Deltadb(\Gamma', v') \mp \frac 1 4$ by the last bullet point in Proposition \ref{prop:delta-d-merge}.
\end{proof}

\subsection{\texorpdfstring{$\Deltadb$ and grading differences}{Delta\_sym and grading differences}}
The invariant $\Deltadb$ we have defined for a bordered $\Z_2$-homology solid torus $(M,\alpha,\beta)$ is closely related to, though not always the same as, the difference between the gradings in two distinguished generators of $\HFhat$ for a certain filling of $M$. We will focus on the case that $(M,\alpha,\beta)$ has type $\alpha$, which means that Dehn filling along $\beta$ is not a $\Z_2$-homology sphere. By Proposition \ref{prop:x0x1-coordinates-by-type}, the symmetric points $x_0$ and $x_1$ on $\gamma$ both have half-integer $y$-coordinates, which implies that they both occur at midpoints of maximal horizontal segments of $\gamma$. In particular, $x_0$ and $x_1$ determine two distinguished generators of the Floer homology of $\HFhat(M, \alpha, \beta)$ with $\beta_{sym}$, which by abuse of notation we also denote $x_0$ and $x_1$.

The difference in grading between these two generators is given by Corollary \ref{cor:grading-difference-for-fixed-points} as the sum of an area term and a rotation term:
$$\gr(x_1) - \gr(x_0) = \text{Area}(P') - \rot(\gamma_{01})$$
where $P'$ is a lift of $\gamma_{01}$ followed by a lift of $-\gamma_{10}$ translated to match the endpoints.

\begin{lemma}\label{lem:Delta-sym-is-area-term}
If $(M,\alpha,\beta)$ is a bordered $\Z_2$-homology solid torus of type $\alpha$, and $x_0$ and $x_1$ are the generators of $\HFhat(M(\beta))$ arising from the symmetric points $x_0$ and $x_1$ on $\gamma$, then $\Deltadb(M,\alpha,\beta)$ is the area term of $\gr(x_1) - \gr(x_0)$.
\end{lemma}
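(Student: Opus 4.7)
The plan is to apply Corollary \ref{cor:grading-difference-for-fixed-points} with $x = x_0$ and $y = x_1$ and then match the area term of the resulting grading difference with $\Deltadb(\gamma)$. First I would check that the corollary applies: by Proposition \ref{prop:x0x1-coordinates-by-type}, when $(M, \alpha, \beta)$ is of type $\alpha$ both $x_0$ and $x_1$ have $y$-coordinate $\tfrac{1}{2}$, so they lie on $\beta_{sym}$ and define generators of $\HF(\gamma, \beta_{sym})$; moreover they are the two fixed points of the elliptic-involution action on the abstract loop $\gamma$, so $x_1$ is exactly halfway along $\gamma$ from $x_0$. The corollary then yields
\[
\gr(x_1) - \gr(x_0) = \textrm{Area}(P') - \rot(\rho_{x_0, x_1}),
\]
where $P'$ is the closed loop built (as in Remark \ref{rmk:rational-grading-lemma-adjusted-path}, with $n = 2$, $\ell = 1$, $k = 0$) from a translated lift of $\rho_{x_0, x_1}$ together with the complementary half of a lift $\widetilde{\gamma_{x_0}}$. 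The area term of $\gr(x_1) - \gr(x_0)$ is $\textrm{Area}(P')$, so the lemma reduces to showing $\textrm{Area}(P') = \Deltadb(\gamma)$.

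The main step is a symmetry decomposition of $P'$. Fix the lift $\tilde\gamma_{01}$ of $\rho_{x_0, x_1}$ from Definition \ref{def:delta-sym-curve}, running from a lift $\tilde x_0$ of $x_0$ to the adjacent lift $\tilde x_1$ of $x_1$, and set $v_{01} = \tilde x_1 - \tilde x_0$; by the involution symmetry the homology class of $\gamma$ is $v = 2 v_{01}$. Tracing through the construction of $P'$, one side is the translated lift $\tilde\gamma_{01} + v_{01}$ running from $\tilde x_1$ to $\tilde x_0 + v$, while the other side is the reversal of the continuation $B$ of $\tilde\gamma$ from $\tilde x_1$ to $\tilde x_0 + v$. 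The elliptic involution lifts to a rotation $r_1$ by $\pi$ about $\tilde x_1$ that preserves $\tilde\gamma$ and reverses its orientation, which identifies $B^{-1}$ with $r_1(\tilde\gamma_{01})$. Letting $\ell' = \ell_{01} + v_{01}$ denote the straight segment from $\tilde x_1$ to $\tilde x_0 + v$ and inserting $(\ell')^{-1} \cdot \ell'$ decomposes $P'$ as a concatenation of two loops based at $\tilde x_1$:
\[
P' = \bigl[(\tilde\gamma_{01} + v_{01}) \cdot (\ell')^{-1}\bigr] \cdot \bigl[\ell' \cdot r_1(\tilde\gamma_{01})\bigr].
\]

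Each of these two sub-loops encloses (winding-number-weighted) signed area equal to the deviation from the diagonal of $\tilde\gamma_{01}$: the first is just the translate of $\tilde\gamma_{01} \cdot \ell_{01}^{-1}$ by $v_{01}$, and for the second the identity $\ell' = r_1(\ell_{01})^{-1}$ gives $\ell' \cdot r_1(\tilde\gamma_{01}) = r_1(\ell_{01}^{-1} \cdot \tilde\gamma_{01})$, which is the image of the loop $\tilde\gamma_{01} \cdot \ell_{01}^{-1}$ under the orientation-preserving isometry $r_1$ (after re-basing). Summing yields $\textrm{Area}(P') = 2 \cdot (\textrm{deviation from the diagonal of } \tilde\gamma_{01}) = \Deltadb(\gamma)$, which is exactly the area term of $\gr(x_1) - \gr(x_0)$. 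The main obstacle is extracting the geometric description of $P'$ from its algebraic definition in Corollary \ref{cor:grading-difference-for-fixed-points}: the translated lift of $\rho_{x_0, x_1}$ appearing in $P'$ is \emph{not} a subpath of $\tilde\gamma$, and it is only via the rotational symmetry $r_1$ about $\tilde x_1$ (rather than about $\tilde x_0$) that one matches it correctly to the complementary half of $\tilde\gamma$.
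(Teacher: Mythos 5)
Your proof is correct and follows the same route as the paper: apply Corollary~\ref{cor:grading-difference-for-fixed-points} with $x = x_0$, $y = x_1$, and then use the $\pi$-rotation symmetry of $\tilde\gamma$ to show that $\mathrm{Area}(P')$ is twice the deviation from the diagonal. You make the symmetry argument more explicit than the paper does, by inserting $(\ell')^{-1}\cdot\ell'$ into $P'$ and identifying each of the two resulting sub-loops as a translate (respectively, a rotation by $\pi$ about $\tilde x_1$) of the loop $\tilde\gamma_{01}\cdot\ell_{01}^{-1}$; the paper instead phrases the same splitting via a rotation about the midpoint of $\ell_{01}$ and lets the equality of the two areas follow from that rotation. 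Both versions are the same idea, and your more explicit decomposition is a clean way to see it.
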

\begin{proof}
By the rotational symmetry of $\gamma$, $-\gamma_{10}$ is the same as the rotation by $\pi$ about $x_1$ of $\gamma_{01}$. In the cover, the relevant translation of $-\tilde\gamma_{10}$ is the rotation of $\tilde\gamma_{01}$ by $\pi$ about the midpoint of the line segment between the endpoints of $\tilde\gamma_{01}$. It follows that the area enclosed by $P$ is twice the area enclosed by $\tilde\gamma_{01}$ and this line segment, and this by definition is $\Deltadb(M,\alpha,\beta)$.
\end{proof}

In the case that $M(\beta)$ is an L-space, we can show that $\Deltadb(M,\alpha,\beta)$ is precisely $\gr(x_1) - \gr(x_0)$ and moreover that this is the difference in the $d$-invariants between the two spin structures of $M(\beta)$.

\begin{lemma}\label{lem:Delta-sym-is-Delta-d}
Let $(M,\alpha,\beta)$ be a bordered $\Z_2$-homology solid torus of type $\alpha$, and suppose that the lift $\tilde \gamma$ of the distinguished symmetric immersed curve $\gamma$ associated with $(M,\alpha,\beta)$ is embedded in $\R^2 \setminus \Z^2$. If $M(\beta)$ is an L-space, then the two spin structures on $M(\beta)$ can be labeled $\s_0$ and $\s_1$ such that
$$\Deltadb(M,\alpha,\beta) = d(M(\beta); \s_1) - d(M(\beta); \s_0).$$
\end{lemma}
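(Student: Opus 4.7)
My plan is to identify $x_0$ and $x_1$ as the unique generators of $\HFhat(M(\beta))$ in the two spin structures and then to show that the rotation correction in the grading formula of Corollary \ref{cor:grading-difference-for-fixed-points} drops out under the embeddedness hypothesis. First I would observe that, because $M$ has type $\alpha$, a Mayer--Vietoris argument with $\Z/2$-coefficients gives $H_1(M(\beta); \Z/2) \cong \Z/2$, so $M(\beta)$ has exactly two spin structures, corresponding to two self-conjugate $\Spinc$ structures. By Proposition \ref{prop:x0x1-coordinates-by-type}, $x_0 = (0, \tfrac{1}{2})$ and $x_1 = (\tfrac{1}{2}, \tfrac{1}{2})$ both lie on $\beta_{sym}$, and once $\gamma$ is placed in symmetric $\beta$-pairing position they are the only transverse intersection points of $\gamma$ with $\beta_{sym}$. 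Thus they yield the two generators of $\HF(\gamma, \beta_{sym}) \cong \HFhat(M(\beta))$. Using the refined pairing theorem in the cover $\overline{T}_Y$, I would then verify that $\tilde{x}_0$ and $\tilde{x}_1$ lie on two distinct lifts of $\beta_{sym}$, each invariant (as a lift) under the lifted conjugation action; this identifies them as generators in the two self-conjugate structures $\s_0$ and $\s_1$.

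Next, because $M(\beta)$ is an L-space, $\HFhat(M(\beta); \s_i)$ has rank one and $d(M(\beta); \s_i)$ equals the Maslov grading of its unique generator, so after a suitable labeling,
\[
d(M(\beta); \s_1) - d(M(\beta); \s_0) = \gr(x_1) - \gr(x_0).
\]
Applying Corollary \ref{cor:grading-difference-for-fixed-points} yields $\gr(x_1) - \gr(x_0) = \mathrm{Area}(P') - \rot(\gamma_{01})$, while Lemma \ref{lem:Delta-sym-is-area-term} identifies $\mathrm{Area}(P')$ with $\Deltadb(M, \alpha, \beta)$.

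The main obstacle will be showing that $\rot(\gamma_{01}) = 0$, which is precisely where the embeddedness of $\tilde\gamma$ enters. In symmetric $\beta$-pairing position, $\gamma$ meets $\beta_{sym}$ orthogonally at $x_0$ and $x_1$, so the tangent to $\gamma$ at both endpoints of $\tilde\gamma_{01}$ is vertical; hence $\rot(\gamma_{01})$ is forced to lie in $\Z$. The embeddedness of $\tilde\gamma$ in $\R^2 \setminus \Z^2$, together with its periodicity and the $\pi$-rotational symmetry about $\tilde{x}_0$ and $\tilde{x}_1$ (under which $\tilde\gamma_{10}$ is a rotation of $\tilde\gamma_{01}$), will prevent $\tilde\gamma_{01}$ from winding around any lattice point, since any such winding would be duplicated by the rotational symmetry along the complementary half and force a self-intersection of the full curve $\tilde\gamma$. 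The signed corner turns along $\tilde\gamma_{01}$ therefore cancel, giving $\rot(\gamma_{01}) = 0$, and the identities above combine to yield $\Deltadb(M, \alpha, \beta) = d(M(\beta); \s_1) - d(M(\beta); \s_0)$.
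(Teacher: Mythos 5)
Your overall architecture is right — reduce to Corollary \ref{cor:grading-difference-for-fixed-points} and Lemma \ref{lem:Delta-sym-is-area-term}, then show the rotation term vanishes — but two of the intermediate steps contain genuine gaps.

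First, the claim that $x_0$ and $x_1$ ``are the only transverse intersection points of $\gamma$ with $\beta_{sym}$'' is false in general. Being a $\Z_2$-homology solid torus of type $\alpha$ only constrains $H_1(M(\beta);\Z/2)$; the group $H_1(M(\beta);\Z)$ can be large (e.g.\ $\Z/10$), and since $M(\beta)$ is an L-space, $\operatorname{rk}\HFhat(M(\beta)) = |H_1(M(\beta))|$ can far exceed $2$. The points $x_0$ and $x_1$ are the two \emph{fixed points of the elliptic involution on $\gamma$}, not the only intersections with $\beta_{sym}$; many other passes of $\gamma$ through $(0,\tfrac12)$ and $(\tfrac12,\tfrac12)$ give additional generators in non-self-conjugate \spinc structures. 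This matters because it undercuts your proposed identification of $x_0, x_1$ with the spin structures directly. The paper's own proof deliberately avoids proving that $\s_{x_0}$ and $\s_{x_1}$ are the spin structures (it even remarks that this is expected but not proven); instead it uses Corollary \ref{cor:symmetry-same-grading} to show that every $d$-invariant other than $d(\s_{x_0})$ and $d(\s_{x_1})$ occurs an even number of times, and since the two spin $d$-invariants have the same property, the two sets must coincide whenever the differences are nonzero (and the statement is trivial when they are zero). Your plan to ``verify'' directly via lifts in $\overline T_Y$ is more ambitious and would need to actually be carried out; as stated it is an unfulfilled promissory note.

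Second, and more seriously, your argument that $\rot(\gamma_{01})=0$ does not use the L-space hypothesis, and cannot be correct as a consequence of embeddedness and symmetry alone. The correct argument (and the one the paper gives) has two halves: because $M(\beta)$ is an L-space, the lift $\tilde\gamma$ meets each horizontal line at half-integer height \emph{exactly once} (one generator per \spinc structure); because $\tilde\gamma$ is embedded, the arc between consecutive such crossings must therefore have zero net rotation. Your proposal replaces this with a claim that ``winding around a lattice point'' would force a self-intersection by symmetry. But $\rot$ measures the turning of the tangent vector, not winding around punctures, and there is no obstruction in principle to an embedded symmetric curve segment having nonzero tangent rotation if it crosses some half-integer horizontal line more than once. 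Indeed, the paper's running example (Example \ref{ex:running-example-Delta-sym-not-grading-diff}) exhibits a symmetric curve with $\rot(\gamma_{01}) = 1$; what fails there is precisely the L-space hypothesis, which is the ingredient your argument omits. You should incorporate the single-crossing-per-line fact, as it is the crux of the vanishing of the rotation term.
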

\begin{proof}
We assume that $\tilde\gamma$ is in symmetric $\beta$-pairing position. Because $M(\beta)$ is an L-space, $\tilde\gamma$ intersects each horizontal line of half-integer height exactly once. Because $\tilde\gamma$ is embedded the net rotation along an arc connecting the intersections with horizontal lines at successive heights must be zero. It follows that $\rot(\gamma_{01}) = 0$, and so by 
Lemma \ref{lem:Delta-sym-is-area-term} we have $\Deltadb(M,\alpha,\beta) = \gr(x_1) - \gr(x_0)$.

It remains to show that this grading difference is the difference in the $d$-invariants of the two spin structures. Because $M(\beta)$ is an L-space, the $d$-invariant of $M(\beta)$ in a given \spinc structure is simply the grading of the unique generator of $\HFhat(M(\beta))$ in that \spinc structure. Letting $\s_{x_0}$ and $\s_{x_1}$ be the \spinc structures of $M(\beta)$ containing the generators $x_0$ and $x_1$, we have that 
$$\Deltadb(M,\alpha,\beta) = \gr(x_1) - \gr(x_0) = d(M(\beta); \s_{x_1}) - d(M(\beta); \s_{x_0}).$$
We expect that $\s_{x_0}$ and $\s_{x_1}$ are always the spin structures, but we will not prove this. Instead we argue that all other $d$-invariants for $M(\beta)$ other than those associated with $\s_{x_0}$ and $\s_{x_1}$ come in pairs; this is clearly true for \spinc structures on $M(\beta)$ that do not restrict to the unique self-conjugate \spinc structure on $M$. For those that do, the corresponding generators of $\HFhat(M(\beta))$ come from intersection points of the distinguished symmetric curve $\gamma$ with $\beta_{sym}$ and Corollary \ref{cor:symmetry-same-grading} implies that each generator has the same grading as its rotation about $x_0$. Thus either $d(M(\beta); \s_{x_0}) = d(M(\beta); \s_{x_1})$ or $d(M(\beta); \s_{x_0})$ and $d(M(\beta); \s_{x_1})$ are the only two $d$-invariants that occur an odd number of times for $\HFhat(M(\beta))$. Similarly, since $d(M(\beta); \s) = d(M(\beta); \bar\s)$ for any \spinc structure $\s$ with conjugate $\bar\s$, and since the spin structures of the rational homology sphere $M(\beta)$ are the same as the self-conjugate \spinc structures, either the two spin $d$-invariants are equal or they are the only two $d$-invariants that occur an odd number of times. It follows that if $d(M(\beta); \s_{x_0}) \neq d(M(\beta); \s_{x_1})$ then $\s_{x_0}$ and $\s_{x_1}$ must be the spin structures on $M(\beta)$ and we let $\s_0 = \s_{x_0}$ and $\s_1 = \s_{x_1}$. If instead $d(M(\beta); \s_{x_0}) = d(M(\beta); \s_{x_1})$ then we can let $\s_0$ and $\s_1$ be the two spin structures (which are not necessarily $\s_{x_0}$ and $\s_{x_1}$) indexed in either order. In either case
\[d(M(\beta); \s_{x_1}) - d(M(\beta); \s_{x_0}) = d(M(\beta); \s_1) - d(M(\beta); \s_0). \hfill\qedhere\]
\end{proof}

We remark that when $M(\beta)$ is not an L-space $\Deltadb(M,\alpha,\beta)$ can fail to agree with $d(M(\beta); \s_1) - d(M(\beta); \s_0)$ in two ways. First, if $\rot(\gamma_{01})$ is nonzero, $\Deltadb(M,\alpha,\beta)$ will not agree with the difference between the grading of the distinguished generators $x_1$ and $x_0$. Second, the difference between these distinguished generators may not give the difference between the $d$-invariants. Both of these failures occur with our running example:

\begin{example}\label{ex:running-example-Delta-sym-not-grading-diff}
Consider the manifold $(M, \alpha, \beta)$ associated with the rooted tree in Example \ref{ex:plumbing-tree-example1}. In Example \ref{ex:delta-sym-example1} we computed that $\Deltadb(M, \alpha, \beta) = \frac 5 2$. The gradings for the $\beta$-filling of this manifold were computed in Example \ref{ex:grading-example}. The symmetric points $x_0$ and $x_1$ correspond to the generators $x$ and $y_2$ as labeled in Example \ref{ex:grading-example}, so we have that $\gr(x_1) - \gr(x_0) = \frac 3 2 = \Deltadb(M, \alpha, \beta) - \rot(\gamma_{01})$. We can deduce the $d$-invariants from $\HFhat$ in this case and we find that the $d$-invariant in the \spinc structure containing the generators $y_1$, $y_2$, and $y_3$ is $\gr(y_1) = \gr(y_3)$, and so the difference between the two $d$-invariants is $\frac 1 2$.
\end{example}


\section{An invariant from relative Wu sets}\label{mubarsection}

In this section we study rooted plumbing trees $(\Gamma, v)$ whose associated 3-manifolds are $\mathbb{Z}_2$-homology solid tori and define an (integer-valued) invariant $\Delta \overline{\mu}$ of $(\Gamma, v)$. 

Given a Wu set $S$ on a plumbing tree $\Gamma$, Neumann \cite{Neumann-invt} and Siebenmann \cite{Siebenmann} independently defined an integer $\bar\mu$ that gives rise to an invariant $\bar\mu(Y, \mathfrak{s})$ of plumbed 3-manifolds $Y$ equipped with spin structures $\mathfrak{s}$. The invariant $\bar\mu(Y, \mathfrak{s})$ satisfies the following properties: When $Y$ is an integer homology sphere (with its unique spin structure), $\bar\mu(Y, \mathfrak{s})$ reduces (modulo 16) to the Rokhlin invariant $\mu(Y) \in \set{0,8} \subset 16\Z$. If $Y$ is the double branched cover $\Sigma_2(L)$ of a plumbing link $L$, then $\bar\mu(Y, \mathfrak{s})$ agrees with the signature of $L$ at the quasi-orientation of $L$ that corresponds to $\mathfrak{s}$ \cite[Theorem 5]{Saveliev}. In Section \ref{sec:Wu_old} we will review the construction of $\bar\mu$.

The rest of the section is outlined as follows: In Section \ref{relativeWusets} we generalize the notion of Wu sets to the setting of rooted trees. In Section \ref{deltamubar} we define our invariant $\Delta \overline{\mu}$ and show in Lemma \ref{lem:relative-to-absolute-mu-bar} that $\Delta \overline{\mu}$ can be thought of as a relative version of $\bar\mu$. In Section \ref{deltamubar} we study the behavior of $\Delta \overline{\mu}$ under the twist, extend, and merge operations and use this to relate $\Delta \overline{\mu}$ to the invariant $\Delta_{sym}$ from Section \ref{sec:delta-sym} in Proposition \ref{prop:main-thm-rooted-trees}.

\subsection{\texorpdfstring{Wu sets and the Neumann-Siebenmann invariant $\bar\mu$}{Wu sets and the Neumann-Siebenmann invariant mu-bar}}
\label{sec:Wu_old}

Given a plumbing tree $\Gamma$ with vertex set $V(\Gamma)$, one can define an intersection form $Q_{\Gamma}(\cdot, \cdot)$ on $\Z \langle V(\Gamma) \rangle$ by setting
\[
Q_{\Gamma}(v,w) =
\begin{cases}
n_{w} & \mbox{if $v = w$} \\
1 & \mbox{if $v$ and $w$ are adjacent} \\
0 & \mbox{otherwise}
\end{cases}
\]
on $V(\Gamma)$ and extending linearly. A \textit{Wu set} for $\Gamma$ is a subset $S \subset V(\Gamma)$ that satisfies
\[
Q_{\Gamma}(S, w) \equiv n_w \pmod 2  \qquad \forall w \in V(\Gamma).
\]
Here we're thinking of $S$ as an element of $\Z \langle V(\Gamma) \rangle$ by taking the sum of the vertices in $S$ with coefficient 1 for each term.

Given any tree $\Gamma$, the set of Wu sets on $\Gamma$ is in 1-1 correspondence with the set of spin structures on $Y_{\Gamma}$, the plumbing 3-manifold associated with $\Gamma$; for details, see \cite[Proposition 5.7.11]{Gompf-Stipsicz} (where Wu sets are described as characteristic sublinks). We note that Wu sets can be defined for more general plumbing graphs than weighted trees, but we restrict our attention to weighted trees, since this is the case we need.

A simple induction argument using the fact that each tree has a leaf shows that if $S$ is a Wu set for a weighted tree $\Gamma$, then no two adjacent vertices can belong to $S$ \cite{Stipsicz}.
Then to every subset $S \subset V(\Gamma)$ we can associate an \textit{embedded} surface $\Sigma_S$ in the plumbing 4-manifold $X_{\Gamma}$ by taking the union of the zero sections of the $D^2$-bundles over $S^2$ that are associated with the vertices in $S$.
Note that if $\Gamma$ is not a tree because it has cycles, then there may be two adjacent vertices in the same Wu set $S$, and therefore $\Sigma_S$ would be just immersed, not embedded.

The Neumann-Siebenmann invariant $\overline{\mu}$ of a weighted tree $\Gamma$ is then the function that assigns to each Wu set $S$ the following integer:
\[
\overline{\mu}(\Gamma, S) = \sigma(X_{\Gamma}) - [\Sigma_S]^2.
\]
Here $\sigma(X_{\Gamma})$ is the signature of $X_{\Gamma}$ and $[\Sigma_{S}]^2$ is the self-intersection of $\Sigma_{S}$. We remark that there are different conventions in the literature regarding the definition of $\overline{\mu}(\Gamma, S)$ -- for example the definition in \cite{Saveliev} differs from ours by a factor of $\frac18$.

If $S_0$ and $S_1$ are Wu sets on a plumbing tree $\Gamma$, then 
\begin{equation}
\label{eq:mu-mu}
\overline{\mu}(\Gamma, S_1) - \overline{\mu}(\Gamma, S_0) =  [\Sigma_{S_0}]^2 - [\Sigma_{S_1}]^2 = Q_{\Gamma}(S_0, S_0) - Q_{\Gamma}(S_1, S_1) = \sum\limits_{v \in S_0} n_v - \sum\limits_{v \in S_1} n_v,
\end{equation}
where $n_v$ is the weight on vertex $v$, and in the last equality we used the fact that a Wu set on a tree does not contain any two adjacent vertices.
This will be important for us in Section \ref{deltamubar}.

\subsection{Relative Wu sets for rooted plumbing trees}\label{relativeWusets}

In this subsection, we generalize the notion of a Wu set to the setting of rooted plumbing trees. We will take the same notation as in the previous subsection.

\begin{definition}
Let $(\Gamma, v)$ be a (connected) rooted plumbing tree.
A \emph{relative Wu set} for $(\Gamma, v)$ is a subset $S \subset V(\Gamma)$ such that
\[
Q_{\Gamma}(S, w) \equiv n_w \pmod 2  \qquad \forall w \in V(\Gamma) \setminus \set v.
\]
When there's no confusion, we will denote $Q_{\Gamma}(S, w)$ by $S \cdot w$. We say that $S$ is:
\begin{itemize}
\item \emph{type 0} if $v \notin S$, and \emph{type 1} if $v \in S$;
\item \emph{balanced} if $Q_\Gamma(S, v) \equiv n_v \pmod 2$, and \emph{unbalanced} otherwise.
\end{itemize}
\end{definition}

\noindent Balanced relative Wu sets for $(\Gamma, v)$ coincide with regular Wu sets for $\Gamma$.

We will denote the set of relative Wu sets for $(\Gamma, v)$ by $\Wu(\Gamma, v)$. Depending on whether a relative Wu set is type 0 or 1, balanced or unbalanced, we get a partition:
\[
\Wu(\Gamma, v) =
\Wu_b^0(\Gamma, v) \sqcup
\Wu_b^1(\Gamma, v) \sqcup
\Wu_u^0(\Gamma, v) \sqcup
\Wu_u^1(\Gamma, v).
\]
It is helpful to keep track of the counts of relative Wu sets of each type, so we will define 
$$\vec n(\Gamma, v) := \left(\# Wu_b^0(\Gamma, v), \#Wu_b^1(\Gamma, v), \#Wu_u^0(\Gamma, v), \#Wu_u^1(\Gamma, v)\right).$$
We will call $\vec n(\Gamma, v)$ the \emph{Wu type} of $(\Gamma, v)$.
\begin{example}
\label{ex:relative-Wu-sets-v_0}
Let $(\set{v_0}, v_0)$ be the graph consisting of a single 0-weighted vertex $v_0$, which is also the terminal vertex. Then there are exactly two relative Wu sets, both balanced, one of type 0 and one of type 1. In other words, $\vec n(\Gamma, v) = (1,1,0,0)$. More precisely,
\begin{align*}
\Wu_b^0(\Gamma, v) &= \set{\emptyset} &
\Wu_b^1(\Gamma, v) &= \set{\{v_0\}} \\
\Wu_u^0(\Gamma, v) &= \emptyset  &
\Wu_u^1(\Gamma, v) &= \emptyset.
\end{align*}
\end{example}

\subsection{Effect of plumbing tree moves on relative Wu sets}

In this subsection we explain how the twist, extend, and merge operations from Section \ref{sec:graph-mfds} performed on rooted plumbing trees $(\Gamma, v)$ affect the relative Wu sets for $(\Gamma, v)$. 


\subsubsection{Twist move}

Recall that a rooted plumbing tree $(\Gamma, v)$ is obtained from $(\Gamma', v)$ by a \emph{($\pm 1$)-twist} move $\twist^{\pm 1}$  if $V(\Gamma) = V(\Gamma')$, $E(\Gamma) = E(\Gamma')$, and the weight of the terminal vertex $v$ changes by $\pm 1$, while the other weights remain the same:
\[
n_w = 
\begin{cases}
n'_w & \mbox{if $w \neq v$} \\
n'_w \pm 1 & \mbox{if $w=v$}.
\end{cases}
\]

\begin{lemma}
\label{lem:twist}
Suppose $(\Gamma, v)$ is obtained from $(\Gamma', v)$ by a ($\pm 1$)-twist move. Then
\begin{align*}
\Wu_b^0(\Gamma, v) &= \Wu_u^0(\Gamma', v) &
\Wu_b^1(\Gamma, v) &= \Wu_b^1(\Gamma', v)  \\
\Wu_u^0(\Gamma, v) &= \Wu_b^0(\Gamma', v) &
\Wu_u^1(\Gamma, v) &= \Wu_u^1(\Gamma', v).
\end{align*}
In particular, the twist operation takes a plumbing tree with Wu type $(a, b, c, d)$ to a plumbing tree with Wu type $(c, b, a, d)$.
\end{lemma}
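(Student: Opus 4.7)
The plan is to observe that the twist operation preserves the underlying set of relative Wu sets and then to carefully track how it affects the balanced/unbalanced dichotomy. The type (0 or 1) of a relative Wu set depends only on whether $v \in S$, so it is clearly unchanged.

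First I would check that $\Wu(\Gamma, v) = \Wu(\Gamma', v)$ as subsets of $2^{V(\Gamma)}$. Since the edge sets of $\Gamma$ and $\Gamma'$ agree and their weights agree away from $v$, the entries $Q_\Gamma(S, w)$ and $Q_{\Gamma'}(S, w)$ coincide for every $w \ne v$, and the weight $n_w$ also agrees there. Hence the defining condition $Q_{\bullet}(S, w) \equiv n_w \pmod 2$ for all $w \ne v$ gives the same subset of $V(\Gamma)$ in both cases.

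Next I would compute the change in the diagonal entry at $v$. If $v \notin S$, then $Q_\Gamma(S, v) = Q_{\Gamma'}(S, v)$, because the only term depending on $n_v$ or $n_v'$ is the diagonal contribution $[v \in S]\, n_v$. If $v \in S$, then $Q_\Gamma(S, v) - Q_{\Gamma'}(S, v) = n_v - n_v' = \pm 1$. Combined with the fact that $n_v \equiv n_v' \pm 1 \pmod 2$, the balanced condition
\[
Q_\Gamma(S, v) \equiv n_v \pmod 2
\]
translates into $Q_{\Gamma'}(S, v) \equiv n_v' + 1 \pmod 2$ when $v \notin S$, and into $Q_{\Gamma'}(S, v) \equiv n_v' \pmod 2$ when $v \in S$. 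In other words, for type $0$ sets the balanced/unbalanced status flips, and for type $1$ sets it is preserved.

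Putting these pieces together yields the four equalities stated in the lemma, and the transformation $(a,b,c,d) \mapsto (c,b,a,d)$ of the Wu type then follows by reading off the cardinalities. The only subtlety is being careful with the two cases $v \in S$ versus $v \notin S$ when comparing the diagonal entries, but this is entirely a parity bookkeeping exercise rather than a conceptual obstacle.
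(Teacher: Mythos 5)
Your argument is correct and matches the paper's proof: both establish that the twist move preserves the underlying set of relative Wu sets and the type (since these depend only on conditions at vertices other than $v$ and on membership of $v$), and both then reduce to the parity computation at $v$ showing that balanced/unbalanced flips for type $0$ and is preserved for type $1$.
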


\begin{proof}
From the definition of a relative Wu set, it is immediate to check that a subset $S \subset V(\Gamma') = V(\Gamma)$ is a relative Wu set for $(\Gamma, v)$ if and only if it is a relative Wu set for $(\Gamma', v)$. It's clear that a relative Wu set for $(\Gamma, v)$ is of type 0 (respectively type 1) if and only if is of type 0 (respectively type 1) for $(\Gamma', v)$. Hence it's enough to show that the twisting operation reverses balanced and unbalanced for type 0 relative Wu sets, but preserves balanced and unbalanced for type 1 relative Wu sets.
This is true because $n_v \equiv n'_v + 1 \pmod 2$, and $Q_{\Gamma}(S,v) \equiv Q_{\Gamma'}(S,v) \pmod 2$ if and only if $S$ has type 0.
\end{proof}

\subsubsection{Extend move}

Recall that a rooted plumbing tree $(\Gamma, v)$ is obtained from $(\Gamma',v')$ by an \emph{extend} move $\extend$ if $V(\Gamma) = V(\Gamma') \cup \set{v}$, $E(\Gamma) = E(\Gamma') \cup \set{e_{(v,v')}}$, where $e_{(v,v')}$ is a single edge between $v$ and $v'$, and the weight $n_v$ of the new vertex $v$ is 0:
\[
n_w = 
\begin{cases}
n'_w & \mbox{if $w \neq v$} \\
0 & \mbox{if $w=v$}.
\end{cases}
\]

\begin{lemma}\label{lem:extend}
Suppose $(\Gamma, v)$ is obtained from $(\Gamma', v')$ by an extend move. Then
\begin{align*}
\Wu_b^0(\Gamma, v) &= \Wu_b^0(\Gamma', v') &
\Wu_b^1(\Gamma, v) &= \{ S \cup \{v \} \,\vert\, S \in \Wu_u^0(\Gamma', v') \} \\
\Wu_u^0(\Gamma, v) &= \Wu_b^1(\Gamma', v') &
\Wu_u^1(\Gamma, v) &= \{ S \cup \{v \} \,\vert\, S \in \Wu_u^1(\Gamma', v') \}.
\end{align*}
In particular, the extend operation takes a plumbing tree with Wu type $(a, b, c, d)$ to a plumbing tree with Wu type $(a, c, b, d)$.
\end{lemma}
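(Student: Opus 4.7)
The plan is to unpack the four claimed equalities by a straightforward bookkeeping argument: given any $S \subset V(\Gamma)$, I set $S' := S \cap V(\Gamma')$, compute the intersection products $Q_\Gamma(S, w)$ and $Q_{\Gamma'}(S', w)$ in parallel, and read off the (relative) Wu conditions on the two trees. Since $v$ has weight $n_v = 0$ and is adjacent only to $v'$ in $\Gamma$, the key formulas are
\[
Q_\Gamma(S, w) = Q_{\Gamma'}(S', w) \quad \text{for } w \in V(\Gamma') \setminus \{v'\}, \qquad Q_\Gamma(S, v') = Q_{\Gamma'}(S', v') + \chi_S(v),
\]
\[
Q_\Gamma(S, v) = \chi_S(v') \cdot 1 + \chi_S(v) \cdot 0 = \chi_S(v'),
\]
where $\chi_S$ denotes the indicator function. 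The relative Wu condition for $(\Gamma, v)$ ignores the vertex $v$ itself, so it reduces to the first two formulas.

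I would then split into the two cases according to whether $S$ is type $0$ or type $1$. In the type $0$ case, $S = S'$ and the relative Wu condition for $(\Gamma, v)$ becomes: $S'$ satisfies the relative Wu condition for $(\Gamma', v')$ at every non-root vertex, \emph{and} additionally at $v'$; that is, $S'$ is a balanced relative Wu set for $(\Gamma', v')$. In the type $1$ case, $S = S' \cup \{v\}$ and the condition at $v'$ flips, so $S'$ must be a relative Wu set for $(\Gamma', v')$ that is \emph{unbalanced}. This already produces a bijection between type $0$ (resp.\ type $1$) relative Wu sets of $(\Gamma, v)$ and balanced (resp.\ unbalanced) relative Wu sets of $(\Gamma', v')$.

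It remains to track the balanced/unbalanced status of $S$ itself in $(\Gamma, v)$. Using $Q_\Gamma(S, v) = \chi_S(v')$ and $n_v = 0$, the set $S$ is balanced in $(\Gamma, v)$ if and only if $v' \notin S$, i.e.\ if and only if $S'$ is of type $0$ in $(\Gamma', v')$. Combining this with the type-$0$/type-$1$ analysis above gives the four equalities: a type $0$ balanced $S$ in $\Gamma$ corresponds to a type $0$ balanced $S'$ in $\Gamma'$; a type $0$ unbalanced $S$ in $\Gamma$ corresponds to a type $1$ balanced $S'$; a type $1$ balanced $S = S' \cup \{v\}$ corresponds to a type $0$ unbalanced $S'$; and a type $1$ unbalanced $S = S' \cup \{v\}$ corresponds to a type $1$ unbalanced $S'$. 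Reading off the cardinalities yields the claimed effect on the Wu type $(a,b,c,d) \mapsto (a,c,b,d)$.

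There is no real obstacle here beyond keeping the bookkeeping clean: the entire content is the observation that attaching a $0$-framed leaf and redesignating the root simultaneously (i) absorbs the Wu condition at the old root $v'$ into the distinction balanced/unbalanced, and (ii) swaps which of ``balanced/unbalanced'' corresponds to ``type $0$/type $1$'' depending on the membership of $v$. The mild subtlety worth being explicit about in the write-up is that the bijections in the type $1$ rows are given by $S \mapsto S \setminus \{v\}$ rather than by identity, which is why $\Wu_b^1(\Gamma, v)$ and $\Wu_u^1(\Gamma, v)$ are written as sets of the form $\{S' \cup \{v\}\}$ in the statement.
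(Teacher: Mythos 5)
Your proposal is correct and takes essentially the same approach as the paper. The paper defines the forward map $f\colon \Wu(\Gamma', v') \to \Wu(\Gamma, v)$ that adds $v$ exactly when $S$ is unbalanced for $(\Gamma', v')$ and then observes the two exchanges (type $0$ $\leftrightarrow$ balanced, balanced $\leftrightarrow$ type $0$); you run the same calculation in the other direction by restricting $S \in \Wu(\Gamma, v)$ to $S' = S \cap V(\Gamma')$ and doing the casework on whether $v \in S$, which is just the inverse bijection written out explicitly.
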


\begin{proof}
Consider the map $f \colon \Wu(\Gamma', v') \to \Wu(\Gamma,v)$ given by
\begin{equation}
\label{eq:extend-Wu}
f(S) =
\begin{cases}
S & \mbox{if $Q_{\Gamma'}(S, v') \equiv n'_{v'} \pmod 2$}\\
S \cup \set{v} & \mbox{if $Q_{\Gamma'}(S, v') \not\equiv n'_{v'} \pmod 2$}
\end{cases}
\end{equation}
We first check that $f(S)$ is a relative Wu set for $(\Gamma,v)$, namely that $Q_{\Gamma}(f(S), w) \equiv n_w$ for each $w \in \Gamma \setminus \set{v}$. The only non-trivial vertex to check is $v'$. The choice of whether or not to include $v$ in $f(S)$ is done precisely to satisfy the condition that $Q_{\Gamma}(f(S), v') \equiv n_{v'}$ (note that it is immediate to check that exactly one of the choices will yield a relative Wu set). Hence $f$ is well-defined. The map $f$ has an inverse, given by restricting a Wu set from $\Gamma$ to $\Gamma'$, so $f$ is bijective. 

Finally observe that $f(S)$ has type 0 if and only if $S$ is balanced, and $f(S)$ is balanced if and only if $S$ has type 0.
The first claim follows immediately from Equation \eqref{eq:extend-Wu}, and the second claim follows from the fact that
\[
Q_{\Gamma}(f(S), v) - n_{v} = Q_{\Gamma}(f(S), v) - 0 = 
\begin{cases}
0 & \mbox{if $v' \notin S$}\\
1 & \mbox{if $v' \in S$}
\end{cases}
\]
since $v'$ is the only vertex of $\Gamma$ adjacent to $v$ and $n_{v} = 0$.
\end{proof}

\subsubsection{Merge move}
Recall that a rooted plumbing tree $(\Gamma, v)$ is obtained from rooted plumbing trees $(\Gamma', v')$ and $(\Gamma'', v'')$ by a \emph{merge} move $\merge$ if $V(\Gamma) = V(\Gamma') \cup V(\Gamma'') / (v' \sim v'')$, $E(\Gamma) = E(\Gamma') \cup E(\Gamma'')$, the terminal vertex $v$ is the merged vertex $v' = v''$, and the weight $n_v$ of $v$ is $n'_{v'} + n''_{v''}$:
\[
n_w = 
\begin{cases}
n'_w & \mbox{if $w\in V(\Gamma') \setminus \set{v'}$} \\
n''_w & \mbox{if $w\in V(\Gamma'') \setminus \set{v''}$} \\
n'_{v'} + n''_{v''} & \mbox{if $w=v$}.
\end{cases}
\]

\begin{lemma}\label{lem:merge}
Suppose $(\Gamma, v)$ is obtained from $(\Gamma', v')$ and $(\Gamma'', v'')$ by a merge move. Then the relative Wu sets for $(\Gamma, v)$ can be identified with pairs of relative Wu sets for $(\Gamma', v')$ and $(\Gamma'', v'')$:
\begin{align*}
\Wu_b^0(\Gamma, v) & \leftrightarrow (\Wu_b^0(\Gamma', v') \times \Wu_b^0(\Gamma'', v'')) \cup (\Wu_u^0(\Gamma', v') \times \Wu_u^0(\Gamma'', v'')) \\
\Wu_b^1(\Gamma, v) & \leftrightarrow (\Wu_b^1(\Gamma', v') \times \Wu_b^1(\Gamma'', v'')) \cup (\Wu_u^1(\Gamma', v') \times \Wu_u^1(\Gamma'', v'')) \\
\Wu_u^0(\Gamma, v) & \leftrightarrow (\Wu_b^0(\Gamma', v') \times \Wu_u^0(\Gamma'', v'')) \cup (\Wu_u^0(\Gamma', v') \times \Wu_b^0(\Gamma'', v'')) \\
\Wu_u^1(\Gamma, v) & \leftrightarrow (\Wu_b^1(\Gamma', v') \times \Wu_u^1(\Gamma'', v'')) \cup (\Wu_u^1(\Gamma', v') \times \Wu_b^1(\Gamma'', v'')).
\end{align*}
In particular, merge takes plumbing trees with Wu types $(a', b', c', d')$ and {$(a'', b'', c'', d'')$} to a plumbing tree with Wu type $(a'a'' + c'c'', b'b'' + d'd'', a'c'' + c' a'', b' d'' + d' b'')$.
\end{lemma}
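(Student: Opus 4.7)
The plan is to set up an explicit bijection between subsets of $V(\Gamma)$ and pairs of subsets of $V(\Gamma')$ and $V(\Gamma'')$, and then track how the relative Wu set conditions, the type, and the balance property interact under this correspondence.

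First I would define the map. Given $S \subset V(\Gamma)$, set $S' = S \cap V(\Gamma')$ and $S'' = S \cap V(\Gamma'')$ under the natural inclusions induced by $v' \sim v'' = v$. Conversely, any pair $(S', S'')$ with $v' \in S' \iff v'' \in S''$ assembles to a subset $S = S' \cup S'' \subset V(\Gamma)$. In particular, $v \in S$ iff $v' \in S'$ iff $v'' \in S''$, so $S$ has type 0 (resp.\ type 1) exactly when both $S'$ and $S''$ have type 0 (resp.\ type 1). This already explains why only pairs matching in type appear on the right-hand side.

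Next I would check that the relative Wu condition at every vertex $w \neq v$ splits cleanly. For $w \in V(\Gamma') \setminus \{v'\}$, the neighbors of $w$ in $\Gamma$ all lie in $V(\Gamma')$ (since $\Gamma'$ and $\Gamma''$ meet only at $v$), so $Q_\Gamma(S, w) = Q_{\Gamma'}(S', w)$, and $n_w = n'_w$. Hence the Wu condition at $w$ holds for $S$ iff it holds for $S'$, and symmetrically for $w \in V(\Gamma'') \setminus \{v''\}$. Thus $S \in \Wu(\Gamma, v)$ iff $S' \in \Wu(\Gamma', v')$ and $S'' \in \Wu(\Gamma'', v'')$ (and they agree in type).

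The balance comparison at $v$ is the crux, but it is a direct parity computation. Since every neighbor of $v$ in $\Gamma$ is a neighbor either of $v'$ in $\Gamma'$ or of $v''$ in $\Gamma''$, and since $n_v = n'_{v'} + n''_{v''}$ while the diagonal contribution (when $v \in S$) also splits as $n_v = n'_{v'} + n''_{v''}$, we get
\[
Q_\Gamma(S, v) - n_v \;\equiv\; \bigl(Q_{\Gamma'}(S', v') - n'_{v'}\bigr) + \bigl(Q_{\Gamma''}(S'', v'') - n''_{v''}\bigr) \pmod 2.
\]
Therefore $S$ is balanced iff $S'$ and $S''$ have the \emph{same} balance status (both balanced or both unbalanced), and unbalanced iff exactly one of $S', S''$ is unbalanced.

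Combining these three observations yields the four identifications stated in the lemma, and the Wu type formula then follows by counting pairs in each case. The only mildly subtle point is keeping the bookkeeping straight between type 0 and type 1 pairs (since the type must match on both sides), but this is a routine consequence of the rule $v \in S \iff v' \in S' \iff v'' \in S''$ established at the outset; no deeper argument is needed.
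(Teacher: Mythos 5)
Your proof is correct and takes essentially the same route as the paper: define the restriction map $S \mapsto (S\cap V(\Gamma'), S\cap V(\Gamma''))$, note that the type-matching condition $v'\in S' \iff v''\in S''$ characterizes the image, observe that the Wu condition at vertices $w\neq v$ splits over the two subtrees, and verify that the balance defect is additive because $n_v = n'_{v'}+n''_{v''}$ and the neighborhood of $v$ partitions over $\Gamma'$ and $\Gamma''$.
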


\begin{proof}
We will show that the function
\begin{align*}
f \colon \Wu(\Gamma, v) &\to \Wu(\Gamma', v') \times \Wu(\Gamma'', v'')\\
S &\mapsto (S \cap V(\Gamma'), S \cap V(\Gamma''))
\end{align*} 
gives the above identifications. From the definition of a relative Wu set, it is immediate to check that if $S$ is a relative Wu set, then $S \cap V(\Gamma')$ and $S \cap V(\Gamma'')$ are also relative Wu sets, so the map $f$ is well-defined. Moreover, since $V(\Gamma) = V(\Gamma') \cup V(\Gamma'')$, the map $f$ is injective.

We now check that the image of $f$ consists of all pairs $(S', S'')$ that are either both of type $0$ or both of type $1$. One direction is clear --- if $S$ is type 0 (resp.\ type 1), then both $S \cap V(\Gamma')$ and $S \cap V(\Gamma'')$ are type 0 (resp.\ type 1). Conversely, every pair $(S', S'')$ of relative Wu sets can be glued to form a relative Wu set $S = S' \cup S''$, since for all $w \neq v$ the equation $Q_{\Gamma}(S, w) \equiv n_{w}$ follows from the corresponding equation for $\Gamma'$ or $\Gamma''$. Hence we have that $f(S) = (S', S'')$ if and only if $S'$ and $S''$ are both of type 0 or both of type 1.

Lastly, we look at how $f$ affects the property of being balanced or unbalanced. If we denote $f(S) = (S', S'')$, then it is immediate to check that
\[
Q_{\Gamma}(S,v) = Q_{\Gamma'}(S',v') + Q_{\Gamma''}(S'',v''),
\]
since $n_{v} = n'_{v'} + n''_{v''}$. It follows that $S$ is balanced (i.e.\ $Q_{\Gamma}(S,v) \equiv n_{v} \pmod 2$) if and only if $S'$ and $S''$ are both balanced or both unbalanced, which concludes the proof.
\end{proof}

Using the above lemmas, we get the following property of relative Wu sets.

\begin{lemma}
\label{lem:TwoNon-emptyWuSet}
Any rooted weighted tree $(\Gamma, v)$ has Wu type $(k,k,0,0)$, $(k,0,k,0)$, or $(0,k,k,0)$ where $k$ is a power of two.
\end{lemma}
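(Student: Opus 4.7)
The plan is to prove the statement by induction on the number of elementary operations ($\twist^{\pm 1}$, $\extend$, $\merge$) needed to construct $(\Gamma,v)$ from the trivial rooted tree with a single $0$-weighted vertex. The base case is Example \ref{ex:relative-Wu-sets-v_0}, which gives Wu type $(1,1,0,0)$, of the first form with $k=2^0$. For brevity I will write Type~A, B, C for the three allowed forms $(k,k,0,0)$, $(k,0,k,0)$, $(0,k,k,0)$. The key observation is that each allowed Wu type satisfies $\#\Wu^1_u=0$, so the fourth coordinate vanishes, and this property will be preserved by every elementary operation. Thus the inductive step reduces to checking that the set $\{\text{A},\text{B},\text{C}\}$ (with $k$ a power of $2$) is closed under each operation.

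For the twist and extend cases the verification is immediate from the permutation formulas in Lemmas~\ref{lem:twist} and~\ref{lem:extend}: $\twist^{\pm 1}$ acts as $(a,b,c,d)\mapsto(c,b,a,d)$, swapping A and C and fixing B; $\extend$ acts as $(a,b,c,d)\mapsto(a,c,b,d)$, swapping A and B and fixing C. In both cases the $k$-value is unchanged, so it remains a power of $2$.

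The main step is the merge case, where by Lemma~\ref{lem:merge} the output Wu type equals
\[
(a'a''+c'c'',\; b'b''+d'd'',\; a'c''+c'a'',\; b'd''+d'b'').
\]
Since the inductive hypothesis forces $d'=d''=0$, the fourth coordinate is automatically $0$. I will then enumerate the six unordered combinations of A, B, C and check directly:
A$\cdot$A $\to$ $(kk',kk',0,0)$ (Type A), A$\cdot$B $\to$ $(kk',0,kk',0)$ (Type B), A$\cdot$C $\to$ $(0,kk',kk',0)$ (Type C), B$\cdot$B $\to$ $(2kk',0,2kk',0)$ (Type B), B$\cdot$C $\to$ $(kk',0,kk',0)$ (Type B), C$\cdot$C $\to$ $(kk',kk',0,0)$ (Type A). In each case the resulting $k$-value is $kk'$ or $2kk'$, hence a power of $2$.

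The only mild subtlety is the B$\cdot$B case, which produces an extra factor of $2$; this is the reason the statement requires $k$ to be a \emph{power of} $2$ rather than an arbitrary product of prior $k$-values, and it is why all three allowed types must be closed under merge simultaneously rather than individually. Once the six merge calculations are performed, the induction is complete and the statement follows.
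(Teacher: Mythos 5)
Your proof is correct and takes essentially the same approach as the paper: induction on the elementary operations $\twist^{\pm1}$, $\extend$, $\merge$ from the single-vertex base case, using the explicit Wu-type formulas from Lemmas~\ref{lem:twist}, \ref{lem:extend}, and \ref{lem:merge}. Your six-case unordered enumeration of merges matches the paper's $3\times3$ symmetric table, and your remark that the $B\cdot B$ case is what forces ``power of two'' rather than just ``equal to $1$'' is a nice explanatory touch, though not a deviation in substance.
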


\begin{proof}
We'll think of $(\Gamma, v)$ as being built inductively using a series of $(\pm 1)$-twist, extend, and merge moves. The base case is $(\{v_0\}, v_0)$, which has Wu type $(1,1,0,0)$ as shown in Example \ref{ex:relative-Wu-sets-v_0}. The twist and extend operations permute the first three coordinates of the Wu type, so they preserve the fact that the Wu type is one of the options listed. If the inputs to the merge operation have one of the Wu types listed, then by Lemma \ref{lem:merge} the Wu type of the output is determined by the Wu types of the inputs as indicated in the following table:
$$\begin{array}{c|ccc}
 & (k,k,0,0) & (k,0,k,0) & (0,k,k,0) \\
 \hline
 (\ell,\ell,0,0) & (k\ell, k\ell, 0, 0) & (k\ell, 0, k\ell, 0) & (0, k\ell, k\ell, 0)\\
 (\ell,0,\ell,0) & (k\ell, 0, k\ell, 0) & (2 k\ell, 0, 2 k\ell, 0) & (k\ell, 0, k\ell, 0)\\
 (0,\ell,\ell,0) & (0, k\ell, k\ell, 0) & (k\ell, 0, k\ell, 0) &  (k\ell, k\ell, 0, 0)
 \end{array}$$
In particular, the output of the merge operation has one of the desired Wu types.
\end{proof}


Since the Wu sets for $\Gamma$ correspond to the balanced Wu sets for $(\Gamma, v)$, if $(\Gamma, v)$ has Wu type $(k,k,0,0)$, $(k,0,k,0)$, or $(0,k,k,0)$ then $\Gamma$ has either $k$ or $2k$ Wu sets, and hence the plumbing 3-manifold $Y_\Gamma$ has either $k$ or $2k$ spin structures. 
Now think of $Y_\Gamma$ as the $\beta$-filling of the bordered 3-manifold $(M_{\Gamma, v}, \alpha, \beta)$. 
Because the twist and extend operations preserve the fact that a plumbing tree has Wu type $(k,k,0,0)$, $(k,0,k,0)$, or $(0,k,k,0)$, it follows that any Dehn filling of $(M_{\Gamma, v}, \alpha, \beta)$ has either $k$ or $2k$ spin structures. In particular, if $(M_{\Gamma, v}, \alpha, \beta)$ has some filling with a single spin structure (equivalently if $M_{\Gamma, v}$ is a $\Z_2$-homology solid torus), then $k$ must be 1, that is, $(\Gamma, v)$ has Wu type $(1,1,0,0)$, $(1,0,1,0)$, or $(0,1,1,0)$, which implies that $(\Gamma, v)$ has two relative Wu sets.

The Wu type determines which of the $\alpha$, $\beta$ fillings of a $\Z_2$-homology solid torus are $\Z_2$-homology spheres. Recall that the $\beta$-filling of $M_{\Gamma, v}$ is $Y_\Gamma$ and the $\alpha$-filling of $M_{\Gamma, v}$ is $Y_{\Gamma'}$ where $(\Gamma, v) = \extend( \Gamma', v')$. One can check that $M_{\Gamma, v}$ is a $\Z_2$-homology solid torus of type $\alpha$, $\beta$, or $\alpha\beta$ if and only if $(\Gamma, v)$ has Wu type $(1,1,0,0)$, $(1,0,1,0)$, or $(0,1,1,0)$, respectively.

\subsection{\texorpdfstring{$\Delta \overline{\mu}$ and plumbing tree moves}{Delta mu-bar and plumbing tree moves}}\label{deltamubar}
We now define an invariant $\Delta \bar\mu$ of rooted plumbing trees $(\Gamma, v)$ whose associated 3-manifolds are $\Z_2$-homology solid tori. Given any relative Wu set $S$ of $(\Gamma, v)$, we have the integer $\sum_{w \in S} n_w$. The invariant $\Delta \bar\mu$ is the difference in $\sum_{w \in S} n_w$ between the two relative Wu sets of $(\Gamma, v)$. To make this well-defined, we just need to fix a convention for the ordering of the two relative Wu sets.

\begin{definition}
Let $(\Gamma, v)$ be a rooted plumbing tree such that $M_{\Gamma, v}$ is a $\Z_2$-homology solid torus. If $(\Gamma, v)$ has Wu type $(1,1,0,0)$ or $(0,1,1,0)$, let $S_1$ denote the relative Wu set in $\Wu_b^1(\Gamma, v)$, and if $(\Gamma, v)$ has Wu type $(1,0,1,0)$, let $S_1$ denote the relative Wu set in $\Wu_b^0(\Gamma, v)$; in any case let $S_0$ denote the other relative Wu set of $(\Gamma, v )$. We define $\Delta\bar\mu(\Gamma, v)$ to be $\sum_{w \in S_1} n_w - \sum_{w \in S_0} n_w$.

\end{definition}

The motivation for this definition is the following:
\begin{lemma}\label{lem:relative-to-absolute-mu-bar}
If $(\Gamma, v)$ has Wu type $(1,1,0,0)$, so that $Y_\Gamma$ has two spin structures, then $\Delta\bar\mu(\Gamma, v)$ gives the difference between the $\bar\mu$-invariants for the two spin structures of $Y_\Gamma$.
\end{lemma}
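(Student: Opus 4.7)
The plan is to invoke Equation \eqref{eq:mu-mu} almost directly, so the argument is essentially bookkeeping. The main task is to verify that in the Wu type $(1,1,0,0)$ case the two distinguished relative Wu sets $S_0$ and $S_1$ used to define $\Delta\bar\mu(\Gamma,v)$ match up, under the standard correspondence, with the two spin structures of $Y_\Gamma$.

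First, I would unpack what Wu type $(1,1,0,0)$ means. By definition, this says $\Wu_b^0(\Gamma,v) = \{S_0\}$ and $\Wu_b^1(\Gamma,v) = \{S_1\}$, with $\Wu_u^0$ and $\Wu_u^1$ both empty. Thus $(\Gamma,v)$ has exactly two relative Wu sets, both balanced. Since (as noted immediately after the definition of relative Wu set) the balanced relative Wu sets for $(\Gamma,v)$ are precisely the Wu sets for $\Gamma$ in the sense of Section \ref{sec:Wu_old}, we conclude that $\Gamma$ has exactly two Wu sets, namely $S_0$ and $S_1$. Via the standard correspondence \cite[Proposition 5.7.11]{Gompf-Stipsicz}, these give the two spin structures $\mathfrak{s}_0,\mathfrak{s}_1$ on $Y_\Gamma$, confirming in particular that $Y_\Gamma$ really does carry two spin structures in this case.

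Next, I would apply Equation \eqref{eq:mu-mu} to the pair $S_0, S_1$, which reads
\[
\bar\mu(\Gamma, S_1) - \bar\mu(\Gamma, S_0) \;=\; \sum_{w \in S_0} n_w - \sum_{w \in S_1} n_w.
\]
Comparing with the defining formula $\Delta\bar\mu(\Gamma,v) = \sum_{w \in S_1} n_w - \sum_{w \in S_0} n_w$ for the Wu type $(1,1,0,0)$ case, we obtain
\[
\Delta\bar\mu(\Gamma,v) \;=\; \bar\mu(\Gamma, S_0) - \bar\mu(\Gamma, S_1) \;=\; \bar\mu(Y_\Gamma,\mathfrak{s}_0) - \bar\mu(Y_\Gamma,\mathfrak{s}_1),
\]
which is the claimed difference of $\bar\mu$-invariants (with a specific sign dictated by the convention that $S_1 \in \Wu_b^1$).

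There is no serious obstacle here: the content of the lemma is entirely the translation of a definition through the two correspondences (balanced relative Wu sets $\leftrightarrow$ Wu sets of $\Gamma$, and Wu sets of $\Gamma$ $\leftrightarrow$ spin structures on $Y_\Gamma$), followed by a one-line invocation of Equation \eqref{eq:mu-mu}. The one point that deserves a brief mention in the write-up is the sign convention, so that readers can see unambiguously that the chosen labeling of $S_0,S_1$ in the $(1,1,0,0)$ case yields the difference in the stated order rather than its negative.
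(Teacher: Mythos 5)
Your proof is correct and follows the same route as the paper, which simply says the claim follows immediately from Equation \eqref{eq:mu-mu}. You spell out the bookkeeping (balanced relative Wu sets $\leftrightarrow$ Wu sets $\leftrightarrow$ spin structures, and the sign), but the essential content is the same one-line invocation.
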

\begin{proof}
This follows immediately from Equation \eqref{eq:mu-mu}.
\end{proof}

For the remainder of this section we will explore how $\Delta\bar\mu(\Gamma, v)$ changes under the elementary operations $\extend$, $\twist^{\pm 1}$, and $\merge$ on rooted plumbing trees.

\begin{lemma}\label{lem:delta-mu-extend}
Let $(\Gamma, v)$ be a plumbing tree for which $M_{\Gamma, v}$ is a $\Z_2$-homology solid torus, and suppose $(\Gamma, v) = \extend(\Gamma', v')$. Then $\Delta\bar\mu( \Gamma, v)= -\Delta\bar\mu(\Gamma', v')$.
\end{lemma}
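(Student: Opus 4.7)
The plan is to verify the claim by unwinding the definition of $\Delta\bar\mu$ in terms of relative Wu sets and applying the bijection supplied by Lemma \ref{lem:extend}. The key observation is that the extend move adjoins a single new vertex $v$ of weight $0$ to $\Gamma'$, so any vertex that appears in a relative Wu set for $(\Gamma,v)$ but not for the corresponding relative Wu set of $(\Gamma',v')$ contributes $0$ to $\sum_{w\in S}n_w$. The entire argument will therefore reduce to matching labels correctly and reading off a sign.

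First I would note that $M_{\Gamma,v}$ being a $\Z_2$-homology solid torus forces the Wu type of $(\Gamma,v)$ to be one of $(1,1,0,0)$, $(1,0,1,0)$, or $(0,1,1,0)$ by (the discussion following) Lemma \ref{lem:TwoNon-emptyWuSet}. Since extend permutes the first three entries of the Wu type as $(a,b,c,d)\mapsto(a,c,b,d)$ (Lemma \ref{lem:extend}), the pair $(\Gamma',v')$ also has one of these three types: $(1,1,0,0)\leftrightarrow(1,0,1,0)$ are swapped by extend, while $(0,1,1,0)$ is fixed. So in each case $\Delta\bar\mu(\Gamma',v')$ is defined.

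Next I would go through the three cases and identify the distinguished relative Wu sets. In each case, write $S_1,S_0$ for the two relative Wu sets of $(\Gamma,v)$ as in the definition of $\Delta\bar\mu$, and $S'_1,S'_0$ analogously for $(\Gamma',v')$. If $(\Gamma,v)$ has type $(1,1,0,0)$ then $(\Gamma',v')$ has type $(1,0,1,0)$, so $S_1$ is the unique element of $\Wu_b^1(\Gamma,v)$ and $S'_1$ is the unique element of $\Wu_b^0(\Gamma',v')$; using Lemma \ref{lem:extend} one finds $S_1=S'_0\cup\{v\}$ and $S_0=S'_1$. If $(\Gamma,v)$ has type $(1,0,1,0)$ then $(\Gamma',v')$ has type $(1,1,0,0)$, and the bijection gives $S_1=S'_0$ and $S_0=S'_1$. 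Finally, if $(\Gamma,v)$ has type $(0,1,1,0)$ then so does $(\Gamma',v')$, and one gets $S_1=S'_0\cup\{v\}$ and $S_0=S'_1$.

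In every case $S_1$ agrees with $S'_0$ up to possibly adjoining the weight-zero vertex $v$, and $S_0=S'_1$. Since $n_v=0$, adjoining $v$ does not change $\sum_{w}n_w$, so
\[
\Delta\bar\mu(\Gamma,v)=\sum_{w\in S_1}n_w-\sum_{w\in S_0}n_w=\sum_{w\in S'_0}n_w-\sum_{w\in S'_1}n_w=-\Delta\bar\mu(\Gamma',v'),
\]
which is the desired identity. The only point requiring care is the bookkeeping of which $\Wu$-class serves as $S_1$ versus $S_0$ in the type $(1,0,1,0)$ case, where the convention picks the balanced type-$0$ set; otherwise this is a direct translation of Lemma \ref{lem:extend}.
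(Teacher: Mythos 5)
Your proof is correct and takes essentially the same approach as the paper: apply Lemma \ref{lem:extend} to transport relative Wu sets, use $n_v=0$ to see the weighted sums are unchanged, and then check that the labelling conventions swap $S_1$ and $S_0$. The only difference is that you spell out the three-case bookkeeping where the paper merely says it is ``straightforward to check''; your case analysis is accurate.
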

\begin{proof}
First note that $M_{\Gamma', v'}$, which is only a reparametrization of $M_{\Gamma, v}$, is still a $\Z_2$-homology solid torus, so $\Delta\bar\mu(\Gamma', v')$ is defined. By Lemma \ref{lem:extend}, the relative Wu sets for $(\Gamma, v)$ restricted to $V(\Gamma)\setminus\{v\}$ are the same subsets of vertices as the relative Wu sets for $(\Gamma', v')$. Since the weights on the vertices in $V(\Gamma)\setminus\{v\}$ are the same as the weights on the corresponding vertices in $\Gamma'$, and the weight on $v$ is $0$, each relative Wu set $S_i$ for $(\Gamma, v)$ corresponds to a relative Wu set $S'_j$ for $(\Gamma', v')$ with $\Sigma_{w\in S_i}  n_w = \Sigma_{w' \in S'_j} n'_{w'}$. It only remains to check the indexing of the relative Wu sets before and after the extend move. It is straightforward to check that $S_0$ for $(\Gamma, v)$ corresponds to $S'_1$ for $(\Gamma', v')$, and so
$$\Delta\bar\mu(\Gamma, v) = \sum_{w \in S_1} n_w - \sum_{w\in S_0} n_w =\sum_{w' \in S'_0} n'_{w'} -  \sum_{w' \in S'_1} n'_{w'} = -\Delta\bar\mu(\Gamma', v').$$
\end{proof}

\begin{lemma}\label{lem:delta-mu-twist}
Let $(\Gamma, v)$ be a plumbing tree for which $M_{\Gamma, v}$ is a $\Z_2$-homology solid torus, and suppose $(\Gamma, v) = \twist^{\pm 1}(\Gamma', v)$.
If $M_{\Gamma, v}$ is a $\Z_2$-homology solid torus of type $\beta$, then $\Delta\bar\mu( \Gamma, v)= -\Delta\bar\mu(\Gamma', v)$. If $M_{\Gamma, v}$ is a $\Z_2$-homology solid torus of type $\alpha$ or type $\alpha\beta$, then $\Delta\bar\mu( \Gamma, v)= \Delta\bar\mu(\Gamma', v) \pm 1$.
\end{lemma}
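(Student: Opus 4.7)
The plan is to reduce the statement to a careful bookkeeping of two pieces of data: (i) how the pair of relative Wu sets of $(\Gamma',v)$ is relabeled as $S_0,S_1$ in $(\Gamma,v)$ after the twist, and (ii) how the sums $\sum_{w\in S_i}n_w$ change, noting that $n_w=n'_w$ for all $w\neq v$ and $n_v=n'_v\pm 1$. The key observation is that $\sum_{w\in S_i}n_w$ differs from $\sum_{w\in S'_j}n'_w$ (for $S_i$ matched with $S'_j$ as subsets of $V(\Gamma)=V(\Gamma')$) by $\pm 1$ if $v\in S_i$ and by $0$ otherwise.

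The first step is to use Lemma \ref{lem:twist} to pin down, in each of the three cases ($M_{\Gamma,v}$ of type $\beta$, $\alpha$, or $\alpha\beta$), the Wu type of $(\Gamma',v)$ and, more importantly, the explicit correspondence of relative Wu sets. Since the twist acts on Wu types as $(a,b,c,d)\mapsto(c,b,a,d)$, type $\beta$ lifts to type $\beta$, while type $\alpha$ and type $\alpha\beta$ are swapped by the twist. Then I would match the distinguished sets: in the type $\beta$ case ($\vec n=(1,0,1,0)$), $S_1$ of $(\Gamma,v)$ lies in $\Wu^0_b(\Gamma,v)=\Wu^0_u(\Gamma',v)$, which is exactly where $S'_0$ of $(\Gamma',v)$ lives, and symmetrically $S_0$ of $(\Gamma,v)$ corresponds to $S'_1$ of $(\Gamma',v)$. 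In the type $\alpha$ and type $\alpha\beta$ cases, the labelings match: $S_1\leftrightarrow S'_1$ and $S_0\leftrightarrow S'_0$.

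Once these matchings are set, the conclusion is nearly automatic. In the type $\beta$ case, both $S_0$ and $S_1$ have type $0$, so $v$ does not lie in either; hence the sums $\sum_{w\in S_i}n_w$ are literally equal to $\sum_{w\in S'_{1-i}}n'_w$, and swapping the indices gives $\Delta\bar\mu(\Gamma,v)=-\Delta\bar\mu(\Gamma',v)$. In the type $\alpha$ and type $\alpha\beta$ cases, $S_1$ has type $1$ (so $v\in S_1$) and $S_0$ has type $0$ (so $v\notin S_0$). Since $n_v-n'_v=\pm 1$, we get
\[
\sum_{w\in S_1}n_w=\sum_{w\in S'_1}n'_w\pm 1,\qquad \sum_{w\in S_0}n_w=\sum_{w\in S'_0}n'_w,
\]
and subtracting gives $\Delta\bar\mu(\Gamma,v)=\Delta\bar\mu(\Gamma',v)\pm 1$.

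There is no real obstacle here beyond getting the indexing conventions right; the main care is just making sure that the identification of $\Wu_b^\bullet$ and $\Wu_u^\bullet$ from Lemma \ref{lem:twist} is applied consistently with the indexing of $S_0,S_1$ set in the definition of $\Delta\bar\mu$, which has slightly different conventions for Wu types $(1,0,1,0)$ versus $(1,1,0,0)$ and $(0,1,1,0)$. This is what produces the case distinction in the statement.
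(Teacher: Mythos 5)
Your proof is correct and follows essentially the same approach as the paper: use Lemma~\ref{lem:twist} to match relative Wu sets between $(\Gamma',v)$ and $(\Gamma,v)$, note that in the type $\beta$ case the indices swap while in the other cases they match, and then track the single weight change at $v$. One minor observation: you are more precise than the paper in noting that the twist \emph{swaps} types $\alpha$ and $\alpha\beta$ (the paper just says the Wu type of $(\Gamma',v)$ is also one of $(1,1,0,0)$ or $(0,1,1,0)$, which is all that's needed since in either case $S'_1\in\Wu_b^1(\Gamma',v)$); this extra precision does not change the argument.
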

\begin{proof}
It is again clear that $M_{\Gamma', v}$ is still a $\Z_2$-homology solid torus, so $\Delta\bar\mu(\Gamma', v)$ is defined. If $M_{\Gamma, v}$ is of type $\beta$, then $(\Gamma, v)$ has Wu type $(1,0,1,0)$, and the same is true for $(\Gamma', v)$ since $\twist^{\pm 1}$ interchanges the relative Wu sets in $\Wu_b^0$ and $\Wu_u^0$. We get that $S_0 = S'_1$ and $S_1 = S'_0$ as subsets of $V(\Gamma)$. Moreover, no weights have changed for the vertices in any of these Wu sets because none of these Wu sets contain $v$. It follows that
$$\Delta\bar\mu(\Gamma, v) = \sum_{w\in S_1} n_w - \sum_{w\in S_0} n_w =  \sum_{w'\in S'_0} n'_{w'} - \sum_{w'\in S'_1} n'_{w'} = -\Delta\bar\mu(\Gamma', v').$$

If $M_{\Gamma, v}$ is of type $\alpha$ or $\alpha \beta$, then $(\Gamma, v)$ has Wu type $(1,1,0,0)$ or $(0,1,1,0)$ respectively, and the same is true for $(\Gamma', v)$. In both cases $S_1 \in \Wu_b^1(\Gamma, v)$ and $S'_1 \in \Wu_b^1(\Gamma', v)$ correspond to the same subset of vertices that contains $v$, while $S_0$ and $S'_0$ correspond to the same subset of vertices that does not contain $v$. Since $\twist^{\pm 1}$ changes the weight on $v$ by $\pm 1$, it follows that 
$$\Delta\bar\mu(\Gamma, v) = \sum_{w\in S_1} n_w - \sum_{w \in S_0} n_w = \left(\left(\sum_{w' \in S'_1} n'_{w'}\right) \pm 1\right) - \sum_{w' \in S'_0} n'_{w'} = \Delta\bar\mu(\Gamma', v) \pm 1.$$
\end{proof}

\begin{lemma}\label{lem:delta-mu-merge}
Let $(\Gamma, v)$ be a plumbing tree for which $M_{\Gamma, v}$ is a $\Z_2$-homology solid torus, and suppose $(\Gamma, v) = \merge( (\Gamma', v'), (\Gamma'', v'') )$. Then $M_{\Gamma', v'}$ and  $M_{\Gamma'', v''}$ are both $\Z_2$-homology solid tori, and they are not both of type $\beta$---that is, $(\Gamma', v')$ and $(\Gamma'', v'')$ are not both of Wu type $\vec n = (1,0,1,0)$.
Moreover:

\begin{itemize}
\item if $\vec n(\Gamma', v') = (1,0,1,0)$, then
\[
\Delta\bar\mu(\Gamma, v) = 
\begin{cases}
\Delta\bar\mu(\Gamma', v'), & \text{if }  \vec n(\Gamma'', v'') = (1,1,0,0) \\
- \Delta\bar\mu(\Gamma', v'), & \text{if } \vec n(\Gamma'', v'') = (0,1,1,0)
\end{cases}
\]
\item if $\vec n(\Gamma'', v'') = (1,0,1,0)$, then
\[
\Delta\bar\mu(\Gamma, v) = 
\begin{cases}
\Delta\bar\mu(\Gamma'', v''), & \text{if }  \vec n(\Gamma', v') = (1,1,0,0) \\
- \Delta\bar\mu(\Gamma'', v''), & \text{if } \vec n(\Gamma', v') = (0,1,1,0)
\end{cases}
\]
\item if neither $(\Gamma', v')$ nor $(\Gamma'', v'')$ has $\vec n= (1,0,1,0)$, then $\Delta\bar\mu(\Gamma, v) = \Delta\bar\mu(\Gamma', v') + \Delta\bar\mu(\Gamma'', v'')$.
\end{itemize}

\end{lemma}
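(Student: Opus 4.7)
The plan is to use Lemma \ref{lem:merge} to convert statements about relative Wu sets of $(\Gamma, v)$ into statements about pairs of relative Wu sets of $(\Gamma', v')$ and $(\Gamma'', v'')$, then perform a short case analysis.

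First I would establish the structural claims. By Lemma \ref{lem:TwoNon-emptyWuSet}, every rooted plumbing tree has Wu type $(k,k,0,0)$, $(k,0,k,0)$, or $(0,k,k,0)$ for some power $k$ of $2$; the hypothesis that $M_{\Gamma, v}$ is a $\Z_2$-homology solid torus forces the Wu type of $(\Gamma, v)$ to have $k=1$. Plugging Wu types $\vec n(\Gamma', v')$ and $\vec n(\Gamma'', v'')$ into the merge formula of Lemma \ref{lem:merge} shows that unless each input itself has $k=1$, the output would fail to have $k=1$; hence both inputs are $\Z_2$-homology solid tori. Likewise, if both inputs had Wu type $(1,0,1,0)$, the merge rule would produce $(2,0,0,0)$, which is not one of the three allowed types, so not both inputs can be of type $\beta$.

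The computational heart of the proof is the following identity: if a relative Wu set $S$ for $(\Gamma, v)$ corresponds under Lemma \ref{lem:merge} to a pair $(S', S'')$, then
\[
\sum_{w \in S} n_w \;=\; \sum_{w \in S'} n'_w + \sum_{w \in S''} n''_w.
\]
This holds because $n_w$ agrees with $n'_w$ or $n''_w$ on $V(\Gamma) \setminus \{v\}$, while $n_v = n'_{v'} + n''_{v''}$; and $v \in S$ if and only if both $v' \in S'$ and $v'' \in S''$ (since the correspondence preserves type), so the contribution of $v$ to the left-hand side is exactly accounted for by the contributions of $v'$ and $v''$ on the right-hand side.

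With this identity in hand, I would do a case-by-case analysis based on $\vec n(\Gamma', v')$ and $\vec n(\Gamma'', v'')$, in each case using Lemma \ref{lem:merge} to identify which pair of relative Wu sets corresponds to $S_1$ and which to $S_0$. For example, if $\vec n(\Gamma', v') = (1,0,1,0)$ and $\vec n(\Gamma'', v'') = (1,1,0,0)$, then $\vec n(\Gamma, v) = (1,0,1,0)$, and tracing through the four intersection formulas shows $S_1 \leftrightarrow (S_1', S_0'')$ and $S_0 \leftrightarrow (S_0', S_0'')$; the identity above then yields $\Delta\bar\mu(\Gamma, v) = \Delta\bar\mu(\Gamma', v')$. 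The remaining seven (non-forbidden) pairings are analogous: whenever exactly one of the two inputs is type $\beta$, only that input's indices get swapped or preserved under the correspondence, producing $\pm\Delta\bar\mu$ for that input while the other input's contributions cancel (because both of its Wu sets coincide under the correspondence); when neither input is type $\beta$, the correspondence sends $S_1 \leftrightarrow (S_1', S_1'')$ and $S_0 \leftrightarrow (S_0', S_0'')$, and the identity gives additivity. The main bookkeeping obstacle is verifying in each of the mixed cases that the unique pair determined by the merge rule correctly lines up with the sign conventions chosen for $S_0$ and $S_1$; this is routine once one has tabulated the Wu types of all inputs and outputs.
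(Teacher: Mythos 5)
Your proposal is correct and follows essentially the same strategy as the paper's proof: both derive the structural claims from the Wu-type table in Lemma \ref{lem:TwoNon-emptyWuSet}, use the correspondence of Lemma \ref{lem:merge} to identify the relative Wu sets of $(\Gamma, v)$ with pairs, and then run a case analysis on Wu types. The one nice difference is that you isolate the additivity identity $\sum_{w \in S} n_w = \sum_{w' \in S'} n'_{w'} + \sum_{w'' \in S''} n''_{w''}$ as an explicit preliminary step (the paper uses it implicitly in each case), which streamlines the remaining bookkeeping; your worked example $\vec n(\Gamma', v') = (1,0,1,0)$, $\vec n(\Gamma'', v'') = (1,1,0,0)$ correctly reproduces the paper's identification $S_1 \leftrightarrow (S_1', S_0'')$, $S_0 \leftrightarrow (S_0', S_0'')$.
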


\begin{proof}
The first claim that both inputs to the merge operation determine $\Z_2$-homology solid tori and they are not both of type $\beta$ (equivalently, the plumbing trees do not both have Wu type $(1,0,1,0)$) follows easily from the table in the proof of Lemma \ref{lem:TwoNon-emptyWuSet}. Next let $S_i$, $S'_i$, and $S''_i$ be the relative Wu sets for $(\Gamma, v)$, $(\Gamma', v')$, and $(\Gamma'', v'')$, respectively, for $i \in \{0,1\}$.

If both inputs have Wu type $(1,1,0,0)$ or both have Wu type $(0,1,1,0)$, then by Lemma \ref{lem:merge} we see that $(\Gamma, v)$ has Wu type $(1,1,0,0)$ and that $S_0 = S'_0 \cup S''_0$ and $S_1 = S'_1 \cup S''_1$. It follows that
\begin{align*}
\Delta\bar\mu(\Gamma, v) =\sum_{w\in S_1} n_w -  \sum_{w\in S_0}n_w &= \left(\sum_{w'\in S'_1} n'_{w'} + \sum_{w''\in S''_1} n''_{w''}\right)  - \left(\sum_{w' \in S'_0} n'_{w'} + \sum_{w''\in S''_0} n''_{w''}\right) \\
&= \Delta\bar\mu(\Gamma', v') + \Delta\bar\mu(\Gamma'', v'').
\end{align*}

If one input to the merge operation has Wu type $(1,1,0,0)$ and the other has Wu type $(0,1,1,0)$, then by Lemma \ref{lem:merge} we see that $(\Gamma, v)$ has Wu type $(0,1,1,0)$ and once again that $S_0 = S'_0 \cup S''_0$ and $S_1 = S'_1 \cup S''_1$. As in the previous case, it follows that $\Delta\bar\mu(\Gamma, v) = \Delta\bar\mu(\Gamma', v') + \Delta\bar\mu(\Gamma', v')$.

If $(\Gamma', v')$ has Wu type $(1,0,1,0)$ and $(\Gamma'', v'')$ has Wu type $(1,1,0,0)$, then by Lemma \ref{lem:merge} we see that $(\Gamma, v)$ has Wu type $(1,0,1,0)$. In this case, $S_0 = S'_0 \cup S''_0$ and $S_1 = S'_1 \cup S''_0$,
\begin{align*}
\Delta\bar\mu(\Gamma, v) = \sum_{w\in S_1} n_w - \sum_{w\in S_0} n_w &= \left(\sum_{w'\in S'_1} n'_{w'} + \sum_{w''\in S''_0} n''_{w''}\right) - \left(\sum_{w'\in S'_0} n'_{w'} + \sum_{w''\in S''_0}n''_{w''}\right) \\
&= \sum_{w'\in S'_1} n'_{w'} - \sum_{w'\in S'_0} n'_{w'} = \Delta\bar\mu(\Gamma', v').
\end{align*}
If $(\Gamma', v')$ has Wu type $(1,0,1,0)$ and $(\Gamma'', v'')$ has Wu type $(0,1,1,0)$, the reasoning is the same as above, except that in this case $S_1 = S'_0 \cup S''_0$ and $S_0 = S'_1 \cup S''_0$, so
\[
\Delta\bar\mu(\Gamma, v) = - \Delta\bar\mu(\Gamma', v').
\]
Finally, a similar argument shows that if $(\Gamma'', v'')$ has Wu type $(1,0,1,0)$, then $\Delta\bar\mu(\Gamma, v) = \pm \Delta\bar\mu(\Gamma'', v'')$, depending on whether $(\Gamma', v')$ has Wu type $(1,1,0,0)$ or $(0,1,1,0)$. \end{proof} 

{\color{brown}
} 


\section{Proof of Theorem \ref{thm:main}}\label{sec:main-proof}

We can now prove Theorem \ref{thm:main}. We first show that $\Deltadb$ agrees with a constant multiple of $\Delta\bar\mu$ for rooted plumbing trees before relating these two invariants to $\Delta d$ and $\Delta \sigma$ respectively. We restrict to rooted plumbing trees $(\Gamma,v)$ for which some filling of the 3-manifold $M_{\Gamma, v}$ is an L-space, and we will place a restriction on which filling is an L-space.
Recall that the purpose of this restriction is that it ensures we may use the simplified merge operation described in Section \ref{sec:graph-mfds}.
To state this restriction, note that given a rooted tree $(\Gamma, v)$, we can define a new rooted tree $(\Gamma_* , v_*)$ by removing $v$ if it is a leaf, in which case the vertex adjacent to $v$ becomes the new root, repeating this process until the resulting root $v_*$ is not a leaf of $\Gamma_*$, and changing the weight of $v_*$ to 0. Note that when $(\Gamma, v)$ is constructed inductively by applying twist, extend and merge operations, $(\Gamma_*, v_*)$ is the rooted tree obtained immediately after the last merge operation, or in the case that $(\Gamma, v)$ is a linear tree and no merge operations are needed in its construction $(\Gamma_*, v_*)$ is the tree with a single 0-weighted vertex.

Since $M_{\Gamma, v}$ and $M_{\Gamma_*, v_*}$ are the same 3-manifold with different boundary parameterizations, any slope $r$ for $M_{\Gamma, v}$ determines a slope $r_*$ for $M_{\Gamma_*, v_*}$. We will assume that $M_{\Gamma, v}$ has an L-space filling along some slope $r$ for which $r_*$ is not in $\Z \cup \{\infty\}$. This condition is automatically satisfied if $M_{\Gamma, v}$ is Floer simple, since then there is an interval of L-space fillings. It also holds if $Y_\Gamma$ is an L-space, $v$ is a leaf, and $\Gamma$ is reduced, in which case we take $r_*$ to be the slope of $M_{\Gamma_*, v_*}$ corresponding to the filling $Y_\Gamma$.  To see that $r_*$ is not in $\Z\cup\{\infty\}$, let $a_n, a_{n-1}, \ldots, a_1$ be the weights on the vertices removed when constructing $\Gamma_*$ from $\Gamma$ (note that at least one vertex is removed since $v$ is a leaf) and let $a_0$ be the weight on $v_*$ in $\Gamma$, so that $\Gamma$ is obtained from $\Gamma_*$ by applying the operations
$$\twist^{a_n} \circ \extend \circ \twist^{a_{n-1}} \circ \extend \circ \cdots \circ \twist^{a_1}\circ \extend \circ \twist^{a_0}.$$
The filling slope that gives $Y_\Gamma$ with respect to the parametrization for $\partial M_{\Gamma, v}$ is 0, and for any rooted plumbing tree the boundary reparametrization given by the operation $\twist^{a_i} \circ \extend$ takes the slope  $-\frac{1}{a_i + r}$ to $r$, 
so it can be checked that $r_*$ is given by the continued fraction
$$a_0 -\frac{1}{a_1 - \frac{1}{a_2 - \frac{1}{ \ddots - \frac{1}{ a_n}}}}.$$
Since $\Gamma$ is reduced the $a_i$ are not $0$ or $\pm 1$ for $i>0$, and it follows that $r_*$ is not in $\Z\cup\{\infty\}$.

\begin{proposition}\label{prop:main-thm-rooted-trees}
If $(\Gamma, v)$ is a rooted plumbing for which $M_{\Gamma, v}$ is a $\Z_2$-homology solid torus that has some L-space filling along a slope $r$ for which $r_*$ is not in $\Z\cup\{\infty\}$, then $\Deltadb(\Gamma, v) = -\tfrac 1 4 \Delta\bar\mu(\Gamma, v)$.
\end{proposition}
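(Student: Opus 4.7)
The plan is to argue by strong induction on the number of elementary operations needed to build $(\Gamma, v)$ from copies of the single-vertex tree $(\{v_0\}, v_0)$ (with weight $0$), using the complete parallelism between the transformation laws of $\Deltadb$ (Propositions \ref{prop:delta-d-extend}, \ref{prop:delta-d-twist}, \ref{prop:delta-d-merge}) and of $\Delta\bar\mu$ (Lemmas \ref{lem:delta-mu-extend}, \ref{lem:delta-mu-twist}, \ref{lem:delta-mu-merge}).

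For the base case, $(\Gamma, v) = (\{v_0\}, v_0)$ with weight $0$, the manifold $M_{\Gamma,v}$ is a solid torus of type $\alpha$, whose bordered invariant is the cyclic word $(\beta)$. A direct check shows that the two fixed points $x_0 = (0,\tfrac12)$ and $x_1 = (\tfrac12, \tfrac12)$ lie on a straight horizontal line, so that $\Deltadb(\Gamma,v) = 0$. On the arithmetic side, the two relative Wu sets are $S_0 = \emptyset$ and $S_1 = \{v_0\}$, both with vertex-weight sum $0$, so $\Delta\bar\mu(\Gamma, v) = 0$, verifying the relation.

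For the inductive step, suppose $(\Gamma, v)$ is obtained as $\twist^{\pm 1}(\Gamma', v')$, $\extend(\Gamma', v')$, or $\merge((\Gamma', v'), (\Gamma'', v''))$. The extend and twist moves do not change the underlying manifold, only the parametrization, so the hypothesis ``$M_{\Gamma',v'}$ is a $\Z_2$-homology solid torus with an L-space filling along a slope whose image under the reduction to $(\Gamma'_*, v'_*)$ avoids $\Z \cup \{\infty\}$'' is automatic. For merge, the existence of an L-space slope $r$ on $M_{\Gamma,v}$ with $r_* \notin \Z \cup \{\infty\}$ implies via Lemma \ref{lem:Lspace-merge-condition} that both $M_{\Gamma', v'}$ and $M_{\Gamma'', v''}$ are Floer simple; Floer simplicity provides an interval of L-space slopes on each input, inside which one may select the required slopes making their $*$-reductions avoid $\Z \cup \{\infty\}$ as well. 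Thus the inductive hypothesis applies to the smaller tree(s).

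We then match the three cases directly. For $\extend$, Proposition \ref{prop:delta-d-extend} gives $\Deltadb(\Gamma,v) = -\Deltadb(\Gamma', v')$ and Lemma \ref{lem:delta-mu-extend} gives $\Delta\bar\mu(\Gamma,v) = -\Delta\bar\mu(\Gamma', v')$, so the factor $-\tfrac14$ is preserved. For $\twist^{\pm 1}$, if $M_{\Gamma',v'}$ has type $\beta$ both quantities negate, and if $M_{\Gamma',v'}$ has type $\alpha$ or $\alpha\beta$ Proposition \ref{prop:delta-d-twist} adds $\mp\tfrac14$ to $\Deltadb$ while Lemma \ref{lem:delta-mu-twist} adds $\pm 1$ to $\Delta\bar\mu$; multiplying the latter by $-\tfrac14$ again matches. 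For $\merge$, the five cases in Proposition \ref{prop:delta-d-merge} are in bijection with the five cases in Lemma \ref{lem:delta-mu-merge}, with identical sign/addition rules in each instance, so the relation propagates term by term. Combining the base case with the inductive step establishes $\Deltadb(\Gamma, v) = -\tfrac14 \Delta\bar\mu(\Gamma, v)$ for every $(\Gamma, v)$ satisfying the hypotheses.

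The main technical obstacle is bookkeeping across the different $\alpha$, $\beta$, $\alpha\beta$ types that arise as the induction proceeds, and verifying that the L-space/Floer-simplicity hypothesis on slope $r$ correctly restricts to each summand in a merge; once that is handled, the inductive step is a direct case-by-case comparison of the transformation laws already tabulated in Sections \ref{sec:delta-sym} and \ref{mubarsection}.
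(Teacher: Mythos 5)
Your proof is correct and takes essentially the same approach as the paper's: both proceed by induction on the complexity of the rooted tree (the paper uses the lexicographic measure of number of vertices and then absolute terminal weight, while you use the operation count, which is an equivalent bookkeeping device), with the single $0$-weighted vertex as base case, and in each of the twist, extend, and merge cases you match the transformation laws for $\Deltadb$ against those for $\Delta\bar\mu$ exactly as the paper does. Your explicit remark about using Floer simplicity of the merge summands to refurnish the $r_* \notin \Z\cup\{\infty\}$ hypothesis for the inductive call also mirrors the remark the paper makes before stating the proposition.
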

\begin{proof}
We proceed by induction on the number of vertices in $\Gamma$ and, for trees with the same number of vertices, on the absolute value of the weight of $v$. For the base case we consider the plumbing tree $\Gamma$ with a single $0$-weighted vertex $v$; in this case it is straightforward to check that $\Deltadb(\Gamma, v) = \Delta\bar\mu(\Gamma, v) = 0$. We now proceed to the inductive step, in which we consider three cases:

\emph{Case 1: The weight on $v$ is nonzero}. Let $\Gamma'$ be the weighted tree obtained from $\Gamma$ by decreasing the weight on $v$ by one if the weight is positive or increasing it by one if the weight is negative, so that $\Gamma'$ has a terminal weight that is lower than $\Gamma$'s in absolute value and $(\Gamma, v) = \twist^{\pm 1}(\Gamma', v)$. By the inductive hypothesis, the result holds for $(\Gamma', v)$. By Proposition \ref{prop:delta-d-twist} and Lemma \ref{lem:delta-mu-twist}, we have that
$$\Deltadb(\Gamma, v) = -\Deltadb(\Gamma', v) = \tfrac 1 4 \Delta\bar\mu(\Gamma', v) = -\tfrac 1 4 \Delta\bar\mu(\Gamma, v)$$
if $M_\Gamma$ is of type $\beta$, and 
$$\Deltadb(\Gamma, v) = \Deltadb(\Gamma', v) \mp \tfrac 1 4 = -\tfrac 1 4 \left( \Delta\bar\mu(\Gamma', v) \pm 1 \right) = -\tfrac 1 4 \Delta\bar\mu(\Gamma, v)$$
if $M_\Gamma$ is of type $\alpha$ or $\alpha\beta$.

\emph{Case 2: The weight on $v$ is zero and $v$ is a leaf.} Let $(\Gamma', v')$ be the rooted tree obtained from $(\Gamma, v)$ by removing the vertex $v$, letting $v'$ be the vertex that was adjacent to $v$ in $\Gamma$, so that  $(\Gamma, v) = \extend (\Gamma', v')$. By the inductive hypothesis, the result holds for $(\Gamma', v')$. The result for $(\Gamma, v)$ then follows from Proposition \ref{prop:delta-d-extend} and Lemma \ref{lem:delta-mu-extend}.

\emph{Case 3: The weight on $v$ is zero and $v$ has valence at least two.} We can choose two rooted weighted trees $(\Gamma', v')$ and $(\Gamma'', v'')$, each with strictly fewer vertices than $\Gamma$, so that $(\Gamma, v) = \merge\left( (\Gamma', v') , (\Gamma'', v'') \right)$. By Proposition \ref{prop:delta-d-merge}, both $M_{\Gamma', v'}$ and $M_{\Gamma'', v''}$ are Floer simple $\Z_2$-homology solid tori,
so the result holds for $(\Gamma', v')$ and $(\Gamma'', v'')$ by the inductive hypothesis. The result for $(\Gamma, v)$ then follows from Proposition \ref{prop:delta-d-merge} and Lemma \ref{lem:delta-mu-merge}.
\end{proof}

We next observe that for any graph manifold $Y$ with two spin structures arising from a plumbing tree $\Gamma$, we can choose a leaf $v$ of $\Gamma$ so that the rooted tree $(\Gamma, v)$ defines a $\Z_2$-homology solid torus of type $\alpha$.

\begin{lemma}\label{lem:leaf-in-one-Wu-set}
If $\Gamma$ is a plumbing tree such that the associated 3-manifold $Y_\Gamma$ has two spin structures, then there is a leaf $v$ such that $M_{\Gamma, v}$ is a $\Z_2$-homology solid torus of type $\alpha$.
\end{lemma}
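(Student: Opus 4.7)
The strategy is to reduce the statement to a purely combinatorial claim about characteristic vectors of the mod~$2$ intersection form, and then to feed the result into the Wu-type trichotomy of Lemma~\ref{lem:TwoNon-emptyWuSet}.

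Since $Y_\Gamma$ has two spin structures, $\Gamma$ has exactly two Wu sets $S_1$ and $S_2$. Set $T = S_1 \triangle S_2$, viewed as an $\F_2$-valued characteristic function on $V(\Gamma)$. Bilinearity of $Q_\Gamma$ together with the Wu condition for both $S_1$ and $S_2$ gives
\[
n_w T(w) + \sum_{v \sim w} T(v) \equiv 0 \pmod 2 \qquad \text{for every } w \in V(\Gamma).
\]
Granting for the moment the combinatorial claim that any such non-empty $T$ contains a leaf, pick a leaf $v \in T$ and assume without loss of generality that $v \in S_1 \setminus S_2$. Then $S_1 \in \Wu_b^1(\Gamma,v)$ and $S_2 \in \Wu_b^0(\Gamma,v)$, so both balanced entries of the Wu type $\vec n(\Gamma,v)$ are non-zero. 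Lemma~\ref{lem:TwoNon-emptyWuSet} then forces $\vec n(\Gamma,v) = (k,k,0,0)$, and since the balanced relative Wu sets for $(\Gamma,v)$ are in bijection with the Wu sets of $\Gamma$, one gets $2k = 2$ and hence $k = 1$. By the discussion following Lemma~\ref{lem:TwoNon-emptyWuSet}, $M_{\Gamma,v}$ is therefore a $\Z_2$-homology solid torus of type $\alpha$.

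It remains to establish the combinatorial claim, which I would prove in contrapositive form: any $T \subset V(\Gamma)$ satisfying the displayed congruence at every vertex and avoiding all leaves of $\Gamma$ must be empty. The proof is an induction on $|V(\Gamma)|$; the base case $|V(\Gamma)| = 2$ is trivial because both vertices are leaves. For the inductive step, pick any leaf $\ell$ of $\Gamma$ with unique neighbor $w$. From $T(\ell)=0$, the congruence at $\ell$ collapses to $T(w) \equiv 0 \pmod 2$, forcing $T(w) = 0$. Then $T|_{\Gamma - \ell}$ still satisfies the congruence at every vertex of $\Gamma - \ell$ (the only congruence that could change is the one at $w$, and there the missing $T(\ell) = 0$ contribution is harmless), and still avoids every leaf of $\Gamma - \ell$, since each such leaf is either a leaf of $\Gamma$ other than $\ell$ (avoided by hypothesis) or is $w$ itself in the case $\deg_\Gamma(w) = 2$ (handled since $T(w) = 0$). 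By induction $T|_{\Gamma - \ell} = \emptyset$, and combined with $T(\ell) = 0$ this gives $T = \emptyset$.

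The single step with any content is the propagation $T(\ell) = 0 \Rightarrow T(w) = 0$, which is immediate from the congruence at $\ell$; this is what keeps the induction alive. Everything else --- the reduction from Wu sets to the mod~$2$ congruence, and the reduction from Wu type $(1,1,0,0)$ to type $\alpha$ --- is bookkeeping that has already been carried out in the preceding sections.
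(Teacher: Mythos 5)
Your proof is correct and follows the same architecture as the paper's: locate a leaf on which the two Wu sets disagree, observe that both balanced entries of the Wu type at that leaf are positive, and invoke Lemma~\ref{lem:TwoNon-emptyWuSet} to pin down the Wu type as $(1,1,0,0)$, hence type $\alpha$. The one place you diverge is that where the paper cites \cite{Stipsicz} for the fact that a Wu set on a tree is determined by its values at the leaves, you reformulate this as a statement about the homogeneous solution $T = S_1 \triangle S_2$ of the mod-$2$ Wu system and prove it directly by induction (leaf-deletion, propagating $T(\ell)=0 \Rightarrow T(w)=0$); this is a correct and self-contained proof of the cited fact. A small corner case that neither your argument nor the paper's cited fact covers as literally stated: a single-vertex $\Gamma$ with even weight has two Wu sets but no degree-one vertex, so $T=\{v\}$ is a nonzero solution avoiding all degree-one vertices. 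This is presumably resolved by the convention that the lone vertex of a one-vertex tree counts as a leaf (otherwise the lemma's conclusion is vacuous there), but it is worth making explicit since your base case begins at $|V(\Gamma)|=2$.
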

\begin{proof}
A Wu set on a tree $\Gamma$ is determined by its restriction to the leaves of $\Gamma$ (see \cite{Stipsicz}), so if there are two distinct Wu sets we can chose some leaf $v$ that is in exactly one of the two Wu sets. The Wu sets of $\Gamma$ are precisely the balanced Wu sets of $(\Gamma, v)$, and since there is one that contains $v$ and one that does not contain $v$ Lemma \ref{lem:TwoNon-emptyWuSet}
implies that $(\Gamma, v)$ has Wu type $(1,1,0,0)$. It follows that $M_{\Gamma, v}$ is a $\Z_2$-homology solid torus of type $\alpha$.
\end{proof}

We are now ready to prove the main theorem.

\begin{proof}[Proof of Theorem \ref{thm:main}]
Let $L$ be a 2-component arborescent link whose branched double cover $\Sigma_2(L)$ is an L-space. Since $L$ is arborescent, it can be represented by a (connected) plumbing tree $\Gamma$ and the graph manifold represented by this plumbing tree is $\Sigma_2(L)$. We note that $\Gamma$ need not be reduced; that is, it may contain valence one or two vertices with weight in $\{-1, 0, 1\}$. We can remove these to obtain a reduced plumbing tree $\Gamma'$ using the relevant moves from Neumann's calculus for plumbed manifolds (specifically moves R1, R3 and R6) \cite{Neumann}, which do not change the corresponding graph manifold but may change the link associated with the plumbing tree. It is straightforward to check that blowing down a $\pm 1$-weighted valence one or two vertex or collapsing a 0-weighted valence two vertex does not affect the corresponding link. Removing a 0-weighted leaf may result in a disconnected graph; in this case the link associated with the plumbing tree before removing the 0-weighted leaf is a connected sum of the links associated with the components of the resulting graph. Thus if $\Gamma'$ is disconnected, the link $L$ is a connected sum of the links corresponding to the components of $\Gamma'$, and $\Sigma_2(L)$ is the connected sum of the 3-manifolds associated with the components of $\Gamma'$. Exactly one of the components of $\Gamma'$, call it $\Gamma_1$, must give rise to a 2-component link and a 3-manifold with two spin structures, while all other components define knots and $\Z_2$-homology spheres. Since both signatures and $d$-invariants are additive under connected sums, it is clear that the result holds for $L$ if it holds for the link $L_1$ corresponding to $\Gamma_1$. 

It follows that it is sufficient to prove the result in the case that the plumbing tree $\Gamma$ representing $L$ is both connected and reduced, as we will assume for the remainder of the proof.

By Lemma \ref{lem:leaf-in-one-Wu-set}, we can choose a leaf $v$ of $\Gamma$ so that $M_{\Gamma, v}$ is a $\Z_2$-homology solid torus of type $\alpha$. Let $(\Gamma_*, v_*)$ be obtained from $(\Gamma, v)$ by removing $v$ if it is a leaf and repeating until the root is not a leaf. Since we assume $\Gamma$ is reduced, $Y_\Gamma$ is the filling of $M_{\Gamma_*, v_*}$ along some slope $r_*$ not in $\Z \cup \{\infty\}$.
Since $Y_\Gamma$ is an L-space, Proposition \ref{prop:main-thm-rooted-trees} applies and says that $\Deltadb(\Gamma, v) = -\tfrac 1 4 \Delta\bar\mu(\Gamma, v)$.

From the proof of Proposition \ref{prop:main-thm-rooted-trees}, note that $(\Gamma_*, v_*)$ is the merge of two trees for which the corresponding manifolds are Floer simple and for which at least one has $\infty$ as a strict L-space slope. Since the manifolds being merged are Floer simple, their immersed curves are embedded when lifted to $\R^2 \setminus \Z^2$. Since one curve has $\infty$ as a strict L-space the geometric interpretation of the merge operation is valid and we see that the immersed curves for $(\Gamma_*, v_*)$ (and therefore also those for $(\Gamma, v)$) are also embedded in the covering space $\R^2 \setminus \Z^2$. Lemma \ref{lem:Delta-sym-is-Delta-d} now implies that $d(Y_\Gamma; \s_1) - d(Y_\Gamma; \s_0) =  \Deltadb(\Gamma, v)$ up to sign. 

Because $M_{\Gamma, v}$ is a $\Z_2$-homology solid torus of type $\alpha$, it has Wu type $(1,1,0,0)$. By Lemma \ref{lem:relative-to-absolute-mu-bar}, $\Delta\bar\mu(\Gamma, v)$ agrees with the difference between the $\bar\mu$-invariants for the two spin structures on $Y_\Gamma$. These in turn agree with the signatures of $L$ with the two quasi-orientations by \cite[Theorem 5]{Saveliev}.
 And so the difference between the two $d$-invariants is $-\frac 1 4$ times the difference between the signatures. But the sum of the two $d$-invariants is $-\frac 1 4$ times the sum of the two signatures by \cite[Theorem A]{LRS}, so the result follows.
\end{proof}

\section{Links with involutions}
\label{sec:involutions}
In this section we prove Theorem \ref{thm:d-invariantinvol} and use it to give a new example of a 2-component link $L$ for which $d(\Sigma_2(L), \mathfrak{s}_i) = - \frac{1}{4} \sigma(L, \omega_{\mathfrak{s}_i})$ for $i \in \{1, 2\}$.

\subsection{Quasi-orientations and spin structures}



The key to the proof of Theorem \ref{thm:d-invariantinvol} is keeping track of how the involution acts on both quasi-orientations and spin structures. Before giving the proof, we recall quasi-orientations on links $L \subset S^3$ and their relationship to spin structures on the double branched covers $\Sigma_2(L)$, following \cite[Section 2.2]{Turaev}.

Given any link $L$ in $S^3$, a \textit{quasi-orientation} of $L$ is a pair $\{o, -o\}$ consisting of an orientation $o$ of $L$ together with its opposite orientation $-o$. We denote the set of quasi-orientations of $L$ by $QO(L)$; its cardinality is $2^{n-1}$, where $n$ is the number of components of $L$. In \cite[Section 2.2]{Turaev}, Turaev constructed a bijection between $QO(L)$ and the set $\Spin(\Sigma_2(L))$ of spin structures on $\Sigma_2(L)$:
\begin{align*}
\s_{\bullet} \colon QO(L) &\to \Spin(\Sigma_2(L))\\
\omega &\mapsto \s_{\omega}
\end{align*}
We briefly review this below.

Let $p: \Sigma_2(L) \rightarrow S^3$ be the 2-fold branched covering map.
Set $Y := \Sigma_2(L) \setminus p^{-1}(L)$. Let $\mathfrak{s}$ be the canonical spin structure on $Y$, obtained by taking the unique spin structure on $S^3$, restricting it to the complement $S^3 \setminus L$, and then lifting it to $Y$.
Given any orientation $o$ of $L$, we can define an element
\[
h(o) \in H^1(Y; \Z/2\Z) \cong \mathrm{Hom}(H_1(Y, \mathbb{Z}), \Z/2\Z)
\]
by requiring that for any oriented loop $\ell \subset Y$
\[
\langle h(o), [\ell] \rangle \equiv \frac{1}{2} \lk(p(\ell), L_o) \pmod2.
\]
Since $\textrm{Spin}(Y)$ is an affine space over $H^1(Y; \Z/2\Z)$, this gives us a map from the set $O(L)$ of orientations of $L$ to the set $\mathrm{Spin}(Y)$ of spin structures on $Y$: $o \mapsto \mathfrak{s} + h(o)$. 

For every orientation $o$, we have the following from \cite[Section 2.2]{Turaev}:
\begin{itemize}
\item $\mathfrak{s} + h(o)$ extends to a unique spin structure on $\Sigma_2(L)$; 
\item $\mathfrak{s} + h(o)=\mathfrak{s} + h(\widetilde{o})$ if and only if $\widetilde{o} = -o$, i.e., $\widetilde{o}$ is the opposite orientation of $o$;
\item every spin structure on $\Sigma_2(L)$ is $\mathfrak{s} + h(o)$ for some orientation $o$.
\end{itemize}
This tells us that the map $O(L) \rightarrow \mathrm{Spin}(Y)$ descends to a bijection $\s_\bullet: QO(L) \rightarrow \mathrm{Spin}(\Sigma_2(L))$ given by $\omega \mapsto \s_\omega := \mathfrak{s} + h(o)$, where $\omega=\{\pm o\}$.

Next we show that $\s_\bullet$ is natural with respect to diffeomorphisms. Let $L$ and $L'$ be links in $S^3$ with a diffeomorphism $\varphi \colon (S^3, L) \to (S^3, L')$. There is an induced diffeomorphism $\Phi: \Sigma_2(L) \rightarrow \Sigma_2(L')$ that makes the following diagram commute:
\begin{center}
\begin{tikzcd}
\Sigma_2(L) \arrow{d}{p} \arrow{r}{\Phi} & \Sigma_2(L') \arrow{d}{p'} \\
S^3 \arrow{r}{\varphi} & S^3
\end{tikzcd}
\end{center}
Here $p: \Sigma_2(L) \rightarrow S^3$ and $p': \Sigma_2(L') \rightarrow S^3$ are the double branched covering maps from above.

\begin{lemma}
\label{lem:invol1}
Let $L$, $L'$, $p$, $p'$, $\varphi$ and $\Phi$ be as above.
Let $Y =  \Sigma_2(L) \setminus p^{-1}(L)$ with canonical spin structure $\s$, and let $Y' =  \Sigma_2(L') \setminus p^{-1}(L')$ with canonical spin structure $\s'$. Then $\Phi^*(\s') = \s$.
\end{lemma}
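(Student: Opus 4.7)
The plan is to use the commuting square and functoriality of pullbacks under diffeomorphisms/covering maps, combined with the uniqueness of the spin structure on $S^3$.

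First, I would note that $\varphi$ restricts to a diffeomorphism $\varphi_0 \colon S^3 \setminus L \to S^3 \setminus L'$ and $\Phi$ restricts to a diffeomorphism $\Phi_0 \colon Y \to Y'$, with $p'|_{Y'} \circ \Phi_0 = \varphi_0 \circ p|_Y$. Let $\mathfrak{t}_0$ denote the unique spin structure on $S^3$. Since any diffeomorphism of $S^3$ preserves the unique spin structure, $\varphi^*(\mathfrak{t}_0) = \mathfrak{t}_0$. Restricting to the link complements gives $\varphi_0^*(\mathfrak{t}_0|_{S^3 \setminus L'}) = \mathfrak{t}_0|_{S^3 \setminus L}$.

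Next I would invoke the definition of the canonical spin structures. By construction, $\mathfrak{s} = (p|_Y)^*(\mathfrak{t}_0|_{S^3 \setminus L})$ and $\mathfrak{s}' = (p'|_{Y'})^*(\mathfrak{t}_0|_{S^3 \setminus L'})$, where the pullbacks are well-defined because $p|_Y$ and $p'|_{Y'}$ are (unbranched) covering maps, hence local diffeomorphisms. Then applying functoriality of pullback along the commuting square
\[
\Phi_0^* \circ (p'|_{Y'})^* \;=\; (p'|_{Y'} \circ \Phi_0)^* \;=\; (\varphi_0 \circ p|_Y)^* \;=\; (p|_Y)^* \circ \varphi_0^*,
\]
we obtain
\[
\Phi^*(\mathfrak{s}') \;=\; \Phi_0^*\bigl((p'|_{Y'})^*(\mathfrak{t}_0|_{S^3 \setminus L'})\bigr) \;=\; (p|_Y)^*\bigl(\varphi_0^*(\mathfrak{t}_0|_{S^3 \setminus L'})\bigr) \;=\; (p|_Y)^*(\mathfrak{t}_0|_{S^3 \setminus L}) \;=\; \mathfrak{s},
\]
which is the desired equality.

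There is no real obstacle here; the statement is essentially formal once one unwinds the definition of the canonical spin structure as a pullback via the covering map of the (unique) restricted spin structure on the link complement and uses the fact that $S^3$ admits a unique spin structure. The only subtlety to flag is that pullback of spin structures along covering maps requires only a local diffeomorphism, which $p|_Y$ and $p'|_{Y'}$ provide after deleting the branch loci $p^{-1}(L)$ and $(p')^{-1}(L')$.
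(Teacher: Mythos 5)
Your proof is correct and follows essentially the same route as the paper's: both reduce the claim to the commuting square $p' \circ \Phi = \varphi \circ p$, use the uniqueness of the spin structure on $S^3$ to see that $\varphi$ matches the restricted spin structures on the link complements, and then apply functoriality of pullback through the two covering maps. The paper phrases the final step via two commutative diagrams of $\Spin(\cdot)$ sets rather than as a one-line chain of equalities, but the content is identical.
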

\begin{proof}
Let $\s_0$ (resp.\ $\s_0'$) denote the unique spin structure on $S^3$ restricted to $S^3 \setminus L$ (resp.\ $S^3 \setminus L'$).
From the commutative diagram
\begin{center}
\begin{tikzcd}
\mathrm{Spin}(S^3 \setminus L)
& \mathrm{Spin}(S^3 \setminus L')   \arrow[swap]{l}{\varphi^*} \\
\mathrm{Spin}(S^3) \arrow{u}{}
& |[]| \mathrm{Spin}(S^3) \arrow{u}{} \arrow{l}{\varphi^*}
\end{tikzcd}
\end{center}
we get that $\s_0 = \varphi^*(\s_0')$.

Now consider the 2-fold covering maps $p: Y \rightarrow S^3 \setminus L$ and $p': Y' \rightarrow S^3 \setminus L'$. By definition, $\s = p^*(\s_0)$ and $\s'=(p')^*(\s_0')$. 
The commutativity of the diagram
\begin{center}
\begin{tikzcd}
\Spin(Y) & \Spin(Y') \arrow[swap]{l}{\Phi^*} \\
\Spin(S^3 \setminus L) \arrow{u}{p^*} & \Spin(S^3 \setminus L') \arrow{u}{(p')^*} \arrow[swap]{l}{\varphi^*}
\end{tikzcd}
\end{center}
implies that $\Phi^*(\s') = (\Phi^* \circ (p')^*)(\s_0') = (p^* \circ \varphi^*)(\s_0') = p^*(\s_0) = \s$, as required.
\end{proof}

With this in mind, we can prove the following result.

\begin{proposition}
\label{prop:naturality_of_s_omega}
Let $\varphi \colon (S^3, L) \to (S^3, L')$ be a diffeomorphism that preserves the orientation of $S^3$.
Then the following diagram commutes:
\begin{center}
\begin{tikzcd}
QO(L) \arrow{d}{\s_{\bullet}} & QO(L') \arrow{d}{\s_{\bullet}} \arrow[swap]{l}{\varphi^*} \\
\Spin(\Sigma_2(L)) & \Spin(\Sigma_2(L')) \arrow[swap]{l}{\Phi^*}
\end{tikzcd}
\end{center}
where $\varphi^*$ denotes the pullback of quasi-orientations via the map $\varphi$, and $\Phi^*$ denotes the pullback of spin structures through the map $\Phi$.

More explicitly, for every $\omega' \in QO(L')$, we have
\begin{equation}
\label{eq:s_omega_commutes}
\s_{\varphi^*(\omega')} = \Phi^*(\s_{\omega'}).
\end{equation}
\end{proposition}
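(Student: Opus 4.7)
The plan is to unpack the definition of $\s_\omega = \s + h(o)$ and verify the naturality by a direct computation, leveraging Lemma \ref{lem:invol1} (which already handles the canonical spin structure part) and the fact that $\varphi$ preserves linking numbers. Since both sides of \eqref{eq:s_omega_commutes} are extensions of spin structures on $Y$ and $Y'$ to the branched covers, it suffices to check equality after restricting to $Y$, where $\Spin(Y)$ is an affine space over $H^1(Y;\Z/2\Z)$.

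First, I would pick an orientation $o'$ representing $\omega'$ and write $\s_{\omega'} = \s' + h(o')$ on $Y' = \Sigma_2(L')\setminus (p')^{-1}(L')$. Pulling back via $\Phi$ and using Lemma \ref{lem:invol1} to identify $\Phi^*(\s') = \s$, the claim reduces to the identity
\begin{equation*}
\Phi^*(h(o')) = h(\varphi^*(o'))
\end{equation*}
in $H^1(Y;\Z/2\Z) \cong \mathrm{Hom}(H_1(Y;\Z),\Z/2\Z)$, where $\varphi^*(o') = \varphi^{-1}(o')$ is the induced orientation on $L$.

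To verify this identity, I would evaluate both sides on an arbitrary oriented loop $\ell \subset Y$. By definition,
\begin{equation*}
\langle \Phi^*(h(o')),[\ell]\rangle = \langle h(o'), \Phi_*[\ell]\rangle \equiv \tfrac{1}{2}\lk\bigl(p'(\Phi(\ell)),\, L'_{o'}\bigr) \pmod 2,
\end{equation*}
while $\langle h(\varphi^*(o')),[\ell]\rangle \equiv \tfrac{1}{2}\lk(p(\ell), L_{\varphi^*(o')}) \pmod 2$. Using the commutativity $p' \circ \Phi = \varphi \circ p$, the first linking number becomes $\lk(\varphi(p(\ell)), L'_{o'})$, and since $\varphi$ preserves the orientation of $S^3$, linking numbers in $S^3$ are invariant: $\lk(\varphi(p(\ell)), L'_{o'}) = \lk(p(\ell), \varphi^{-1}(L'_{o'})) = \lk(p(\ell), L_{\varphi^*(o')})$. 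Thus the two pairings agree, yielding the desired identity.

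I do not anticipate any real obstacle here: the only subtlety is keeping track of the distinction between $o'$ and $\varphi^*(o')$ in the linking number formula and confirming that the orientation of $S^3$ (not just of the link) enters only through the preservation of linking numbers. Once $\Phi^*(h(o')) = h(\varphi^*(o'))$ is established on $Y$, the equality $\Phi^*(\s_{\omega'}) = \s_{\varphi^*(\omega')}$ follows on $Y$ and then automatically on all of $\Sigma_2(L)$, because each side extends uniquely across the branch locus by the properties of $\s_\bullet$ recalled from \cite{Turaev}.
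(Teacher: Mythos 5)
Your proposal is correct and follows essentially the same route as the paper's own proof: both reduce \eqref{eq:s_omega_commutes} to the identity $\Phi^*(h(o')) = h(\varphi^*(o'))$ via Lemma \ref{lem:invol1} and affineness of $\Spin$ over $H^1(\cdot;\Z/2\Z)$, then verify that identity by pairing both sides against an arbitrary loop $\ell$, invoking $p'\circ\Phi = \varphi\circ p$ and the invariance of linking numbers under the orientation-preserving diffeomorphism $\varphi$. The only cosmetic difference is the direction in which you apply linking-number invariance (simplifying the left-hand pairing rather than the right-hand one), and your closing remark about unique extension across the branch locus makes explicit a point the paper leaves implicit.
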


\begin{proof}
We show that the two sides of Equation \eqref{eq:s_omega_commutes} coincide.
Let
\[
p \colon \Sigma^2(L) \to S^3 \qquad \mbox{and} \qquad p' \colon \Sigma^2(L') \to S^3
\]
denote the projection maps, and let $Y := \Sigma^2(L) \setminus p^{-1}(L)$ and $Y' := \Sigma^2(L') \setminus (p')^{-1}(L')$.

Fix a quasi-orientation $\omega' = \{\pm o'\}$ on $L'$.
We claim that it is enough to prove that the induced map on cohomology
\[
\Phi^* \colon H^1(Y'; \Z/2\Z) \to H^1(Y; \Z/2\Z)
\]
sends $h(o')$ to $h(\varphi^*(o'))$, i.e.\
\begin{equation}
\label{eq:h_o_commutes}
\Phi^*(h(o')) = h(\varphi^*(o')).
\end{equation}
Assuming Equation \eqref{eq:h_o_commutes}, Equation \eqref{eq:s_omega_commutes} (and hence Proposition \ref{prop:naturality_of_s_omega}) follows from
\begin{align*}
\Phi^*(\s_{\omega'}) &= \Phi^*(\s' + h(o'))\\
&= \Phi^*(\s') + \Phi^*(h(o')) \\
&= \s + \Phi^*(h(o')) \\
&= \s + h(\varphi^*(o')) \\
&= \s_{\varphi^*(\omega')}
\end{align*}
where we used the naturality of the $H^1(\cdot;\Z/2\Z)$-action on spin structures in the second equality, Lemma \ref{lem:invol1} in the third equality, and Equation \eqref{eq:h_o_commutes} in the fourth one.

To prove Equation \eqref{eq:h_o_commutes}, we simply evaluate both sides, which sit in
\[
H^1(Y; \Z/2\Z) \cong \mathrm{Hom}(H_1(Y, \Z), \Z/2\Z),
\]
on a generic element of $H_1(Y;\Z)$. Let $\ell$ be a link in $Y$ representing some homology class $[\ell] \in H_1(Y;\Z)$. We compute
\begin{align*}
\langle \Phi^*(h(o')), [\ell] \rangle &= \langle h(o'), [\Phi(\ell)] \rangle \\
&\equiv \frac12 \cdot \lk( (p' \circ \Phi) (\ell), L'_{o'}) \pmod2
\end{align*}
and 
\begin{align*}
\langle h(\varphi^*(o')), [\ell] \rangle &\equiv \frac12 \cdot \lk( p(\ell), L_{\varphi^*(o')}) \pmod2 \\
&\equiv \frac12 \cdot \lk( (\varphi \circ p) (\ell), L'_{o'}) \pmod2,
\end{align*}
where in the second computation we used the fact that the diffeomorphism $\varphi$ of $S^3$ preserves the linking number.
Since $p' \circ \Phi = \varphi \circ p$, the two computations yield the same result, thus showing that Equation \eqref{eq:h_o_commutes} holds.
\end{proof}

\subsection{Proof of Theorem \ref{thm:d-invariantinvol}}

With these observations in place we are ready to prove Theorem \ref{thm:d-invariantinvol}.

\begin{proof}[Proof of Theorem \ref{thm:d-invariantinvol}]
By Equation \ref{eq:LRS},
\begin{equation*}
d(\Sigma_2(L), \s_1) + d(\Sigma_2(L), \s_2)= - \frac{1}{4} (\sigma(L, \omega_1) +  \sigma(L, \omega_2)).
\end{equation*}
Properties (1) and (2) of the involution $\iota$ imply that $\iota$ reverses the orientation of the component of $L$ that contains the two fixed points, but preserves the orientation of the other component. Thus, $\iota$ swaps the two quasi-orientations of $L$, and
\[
\sigma(L, \omega_1) =  \sigma(L, \omega_2)
\]
since $(L, \omega_1)$ and $(L, \omega_2)$ are ambiently diffeomorphic.

From Proposition \ref{prop:naturality_of_s_omega}, we know that $\iota^*(\s_1) = \s_2$, so by invariance of $d$-invariants under diffeomorphism
\[
d(\Sigma_2(L), \s_1) = d(\Sigma_2(L), \s_2).
\]

Hence, $d(\Sigma_2(L), \mathfrak{s}_i) = - \frac{1}{4} \sigma(L, \omega_{\mathfrak{s}_i})$ for $i \in \{1, 2\}$. 
\end{proof}

\subsection{An example}
In this section we use Theorem \ref{thm:d-invariantinvol} to give an example of a 2-component link $L$ for which $d(\Sigma_2(L), \mathfrak{s}_i) = - \frac{1}{4} \sigma(L, \omega_{\mathfrak{s}_i})$ for $i \in \{1, 2\}$. The link $L$ is shown in Figure  \ref{fig:linkwithinvo}, as well as the involution $\iota$ that satisfies the conditions of Theorem \ref{thm:d-invariantinvol}. We remark that, since this example happens to be arborescent, the equality $d(\Sigma_2(L), \mathfrak{s}_i) = - \frac{1}{4} \sigma(L, \omega_{\mathfrak{s}_i})$ also follows from Theorem \ref{thm:main} along with the observation that $\Delta\bar\mu = 0$. This example is new in the sense that all previous examples (known to the authors) are quasi-alternating  \cite{LiscaOwensQuasi} or can be represented by plumbing trees with definite intersection forms \cite{Saveliev, Stipsicz}, but $L$ has neither property. We expect that we can produce infinitely many new examples by varying some of the coefficients in Figure \ref{fig:linkwithinvo}. We note that finding examples that satisfy properties (1) and (2) below is easy; the challenge lies in verifying properties (3) and (4).

\begin{figure}
\centering
  \includegraphics[scale=.25]{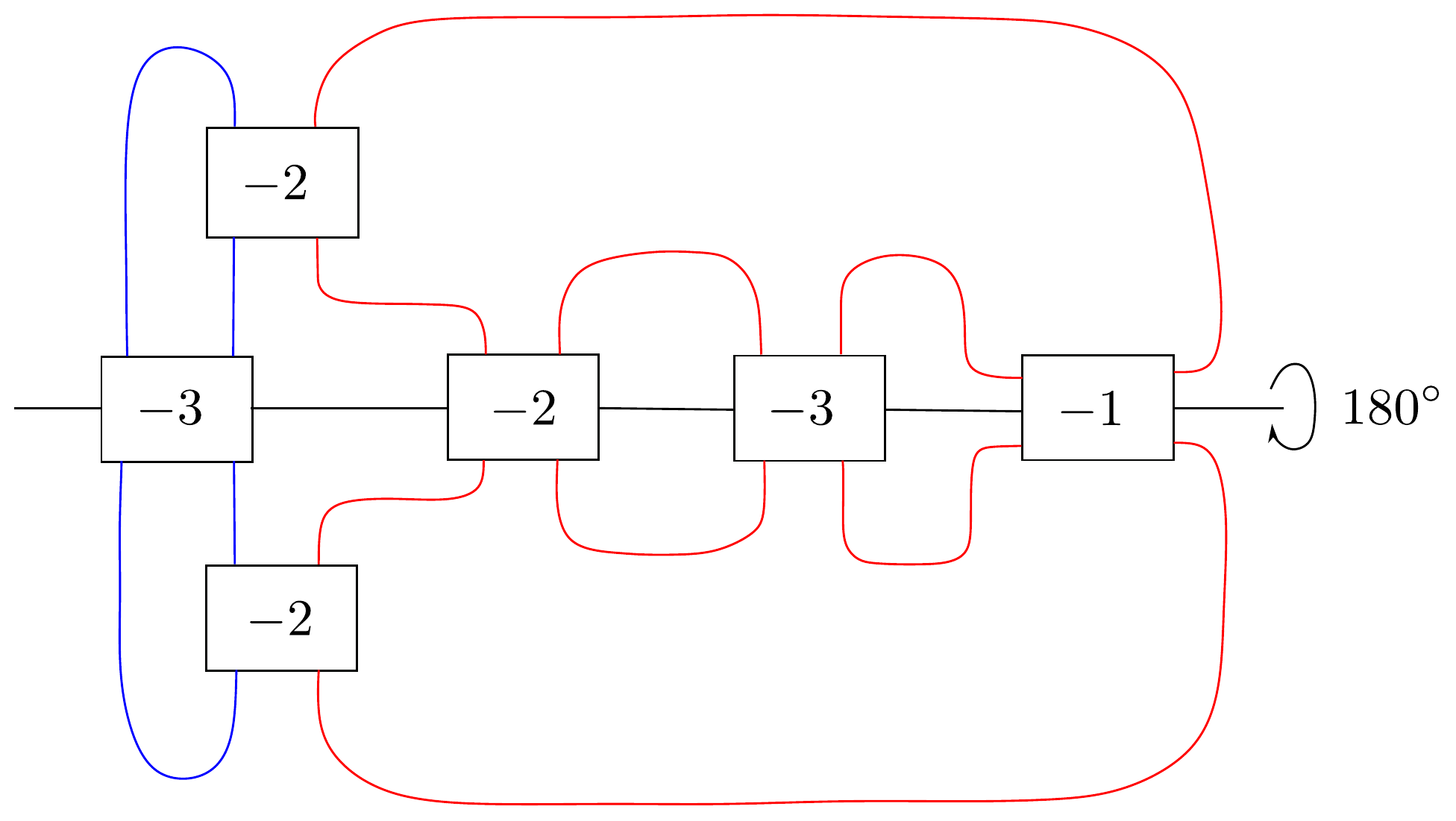}%
  \caption{A two-component link $L \subset S^3$ with an involution $\iota: S^3 \rightarrow S^3$ that is induced by $180^{\circ}$ rotation about the horizontal axis. The integers in the boxes represent the number of positive half twists. The fixed point set of $\iota$ is the circle that comes from identifying the ends of the axis; it intersects $L$ in two points on the red component.}
 \label{fig:linkwithinvo}
\end{figure}

\begin{figure}
\centering
  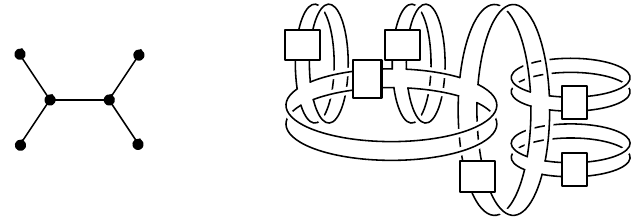%
  \caption{A weighted tree $\Gamma(a,b,c,d,e,f)$ and the corresponding plumbing type link $L(a,b,c,d,e,f)$. The integers in the boxes indicate the number of positive half twists.}
 \label{fig:tree+link}
\end{figure}

\begin{proposition}
The 2-component link $L$ in Figure \ref{fig:linkwithinvo} has the following properties:
\begin{enumerate}
\item\label{it:L_admits_involution} $L$ admits an involution $\iota$ that satisfies the conditions of Theorem  \ref{thm:d-invariantinvol};
\item\label{it:BDC_L-space} $\Sigma_2(L)$ is an $L$-space;
\item\label{it:Kh_thick} $L$ is $\Kh$-thick, hence it is not quasi-alternating;
\item\label{it:no_definite_plumbing} $\Sigma_2(L)$ is not the boundary of a definite plumbing.
\end{enumerate}
\end{proposition}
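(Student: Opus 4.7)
The plan is to verify the four properties in sequence, with (4) being the main obstacle.

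Property (1) follows by direct inspection of Figure \ref{fig:linkwithinvo}. The rotation $\iota$ by $180°$ about the horizontal axis preserves each twist box, since the half-twist patterns are arranged symmetrically about that axis, so $\iota$ takes $L$ to itself setwise. Once the axis is compactified by the point at infinity, it becomes an unknotted circle in $S^3$, and it meets $L$ transversally in exactly the two points where the red component crosses the axis. Both hypotheses of Theorem \ref{thm:d-invariantinvol} are thus immediate.

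Property (2) is verified by direct computation. Since $L$ is arborescent with plumbing tree $\Gamma$ as in Figure \ref{fig:tree+link}, we have $\Sigma_2(L) = Y_\Gamma$, and we apply the bordered algorithm of Section \ref{sec:graph-mfds}. Choose a leaf $v$ of $\Gamma$ and, following the example computations of Section \ref{sec:example-computations}, inductively build the cyclic word representing $\HFhat(M_{\Gamma,v})$ from $(\beta)$ using $\twist$, $\extend$, and $\merge$. Pairing the resulting multicurve with $\beta_{sym}$ (equivalently, counting occurrences of $\alpha\beta^k\alpha$ and $\alpha^{-1}\beta^k\alpha^{-1}$ in the final word, as in Section \ref{sec:preferred-forms}) gives $\dim\HFhat(Y_\Gamma)$. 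Comparison with $|H_1(Y_\Gamma;\Z)|=|\det L|$, which is readily read off from $\Gamma$, establishes the L-space property. Property (3) is also a finite calculation: we compute the reduced Khovanov homology $\widetilde{\Kh}(L)$ with an available software implementation and exhibit nontrivial support on three or more distinct $\delta=j-2i$ diagonals. Since every quasi-alternating link is Kh-thin, thickness of $L$ then rules out quasi-alternating.

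Property (4) is the main obstacle, since a priori there are infinitely many candidate definite plumbings to exclude. The plan is to use a correction-term obstruction. Suppose $\Sigma_2(L) = \partial W$ for some definite plumbing $W$ on a weighted tree $\Gamma'$; after reversing orientations if necessary, assume $W$ is negative definite. For each \spinc structure $\s$ on $\Sigma_2(L)$ one has the Ozsv\'ath--Szab\'o inequality
\[
\max_{c}\,\frac{c^2+b_2(W)}{4}\;\leq\;d(\Sigma_2(L),\s),
\]
where $c$ ranges over characteristic covectors of the intersection form of $W$ restricting to $c_1(\s)$. We have already pinned down the two spin $d$-invariants of $\Sigma_2(L)$ via Theorem \ref{thm:d-invariantinvol} as $-\tfrac14\sigma(L,\omega_i)$, and these values are small enough to force strong constraints on the intersection lattice of any candidate $W$. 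Combining Donaldson's diagonalization theorem (applied after capping $W$ with an appropriate auxiliary plumbing) with Neumann's calculus, which reduces $\Gamma'$ to a finite list of normal forms with bounded rank, one can enumerate the possible lattices that could embed the intersection form of $W$ and check that none of them admits a characteristic covector realizing the required $d$-invariants. The main difficulty is carrying out this lattice enumeration cleanly; the payoff is a contradiction ruling out every definite plumbing filling.
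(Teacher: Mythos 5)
Properties (1)--(3) of your proposal match the paper's argument: inspection of the figure for the involution, the bordered Floer computation (this is exactly Example~\ref{ex:plumbing-tree-example2}, which finds $\dim\HFhat(Y_\Gamma)=16=|H_1(Y_\Gamma)|$), and software computation of $\Kh$ for thickness.

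Property (4) is where you diverge, and your plan has a genuine gap. You propose to rule out definite plumbings by a correction-term obstruction $(c^2+b_2(W))/4 \le d(\Sigma_2(L),\s)$, claiming that Neumann's calculus ``reduces $\Gamma'$ to a finite list of normal forms with bounded rank'' so the candidate lattices can be enumerated. That is not how Neumann's calculus works: there is no a priori bound on $b_2(W)$ for a definite plumbing $W$ with $\partial W = \Sigma_2(L)$, and the normal form is a single (possibly indefinite) graph, not a list of bounded-rank definite candidates. You would instead need a genuine lattice-embedding argument \`a la Donaldson/Elkies across all ranks, and you give no indication of how the numerical values of the spin $d$-invariants (which you do not compute) would force the enumeration to terminate, let alone fail for every lattice. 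In short, the reduction step is unjustified and the obstruction is never actually checked.

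The paper's proof of (4) uses a different, and much cleaner, mechanism that avoids $d$-invariants entirely. The observation is that blow-downs preserve definiteness, so if $\Sigma_2(L)$ bounded a negative definite plumbing, its Neumann normal form would be negative definite; and symmetrically for positive definite. The subtlety (which your proposal does not notice) is that the H-shaped graph $\Gamma$ is \emph{not} in Neumann normal form, because a further $\mathbb{RP}^2$-absorption is possible, so one cannot conclude directly. The workaround is to pass to a regular double cover $Y_2$ of $\Sigma_2(L)$: by a theorem of Stein, if $\Sigma_2(L)$ bounded a definite plumbing then so would $Y_2$, and the Neumann normal forms of $Y_2$ and $-Y_2$ (computed explicitly) are honest trees with nonnegative genera and indefinite intersection forms, giving the contradiction. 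If you want to salvage a $d$-invariant approach, you would need to supply both the actual numerical values and a complete lattice argument with no bound on rank, which is a substantial undertaking compared to the paper's route.
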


\begin{proof}

It's clear from Figure \ref{fig:linkwithinvo} that $L$ admits an involution $\iota$ of $S^3$ that satisfies the conditions of Theorem  \ref{thm:d-invariantinvol}.

For Property \eqref{it:BDC_L-space}, we note that  $L$ admits an alternate description as the plumbing (or arborescent) link $L(-3,-1,-2,-2,-2,-3)$ in Figure \ref{fig:tree+link}. To see this we just cancel the unnecessary crossings in $L(-3,-1,-2,-2,-2,-3)$; this process is explained in greater detail in \cite[Figure 1.8]{G:arborescent}. Then the double branched cover $\Sigma_2(L)$ of $L$ can be thought of as the graph manifold associated with the plumbing tree $\Gamma(-3,-1,-2,-2,-2,-3)$. The fact that $\Sigma_2(L)$ is an L-space follows from Example \ref{ex:plumbing-tree-example2} in Section \ref{sec:example-computations}.


Regarding Property \eqref{it:Kh_thick}, from a PD code for $L$ obtained using SnapPy \cite{SnapPy}, we computed the Khovanov homology of $L$ (over $\Z$) using KnotJob \cite{KnotJob}. The results of the computation are shown in Table \ref{tab:Kh-thick}. The cell colored in pink shows that $L$ is $\Kh$-thick.

Lastly, we show Property \eqref{it:no_definite_plumbing}. Let $\Gamma$ denote the weighted tree $\Gamma(-3,-1,-2,-2,-2,-3)$ in Figure \ref{fig:tree+link}. If $\Gamma$ were in Neumann normal form \cite{Neumann}, then we could argue as follows: if we suppose that $\Sigma_2(L)$ is the boundary of some negative definite plumbing $\Gamma'$, then, by repeatedly blowing down $\Gamma'$ we would reach%
\footnote{
\label{fn:Neumann}
The fact that blowing down is the only operation needed in our case follows from closely examining the proof of \cite[Theorem 4.1]{Neumann}.
If we apply the procedure explained there to our case, then we do not need to use Steps 5 and 6, because the shape of our plumbing tree $\Gamma$ (the H-shaped graph in Figure \ref{fig:tree+link}) is different from the possible outcomes in Steps 5-6. Thus, in our case Neumann's algorithm must stop at Step 4 at the latest.
We can then show that Step 4 is not used either, otherwise we would need a $0$-framed vertex of genus $-1$, which, in our case, can be created only in Step 3 (because the algorithm starts with $\Gamma'$, a negative definite tree); however, the $0$-framed vertices of genus $-1$ created in Step 3 are leaves with an adjacent vertex of valence $2$, which is not the case of the input of Step 4.
Thus, in our case the algorithm stops after Steps 1-3: the two operations from these steps that can be performed on trees are $(-1)$-framed blow-downs and $\mathbb{RP}^2$-absorptions, but the latter ones create vertices of genus $-1$ that cannot be cancelled and would also be present in the Neumann normal form $\Gamma$. It follows that the procedure consists of blow-downs only.
}
the Neumann normal form $\Gamma$ for $\Sigma(L)$.
Then, the Neumann normal form $\Gamma$ would still be negative definite, because blowing down preserves negative definiteness, but this would be a contradiction because $\Gamma$ is indefinite, as one can explicitly check.

Unfortunately, this simple line of argument does not work on the nose: $\Gamma$ is not in Neumann normal form, at least according to Neumann's original definition, because part of $\Gamma$ can be absorbed into a plumbing with negative, i.e.\ non-orientable, genus.
A workaround suggested to us by Andr\'as N\'emethi is to take a regular 2-fold cover $Y_2$ of $\Sigma_2(L)$, hoping that its Neumann normal form is indefinite, and then apply the above argument to show that $Y_2$ is not the boundary of a negative definite plumbing; then by a theorem of Stein neither is $\Sigma_2(L)$.
Another possible workaround would be to use a different notion of Neumann normal form, such as in \cite{Pedersen}, but since the literature is not as extensive there we prefer to stick with the double cover reasoning.

We start by constructing a regular double cover of $\Sigma_2(L)$. Recall that a regular double cover of $\Sigma_2(L)$ is determined by an index-2 subgroup of $\pi_1(\Sigma_2(L))$, or equivalently a surjective map $\pi_1(\Sigma_2(L)) \to \Z_2$, which necessarily factors through $H_1(\Sigma_2(L))$. Since the weighted tree $\Gamma = \Gamma(-3,-1,-2,-2,-2,-3)$ in Figure \ref{fig:tree+link} gives a surgery diagram of $\Sigma_2(L)$, if we let $X$ denote the 2-handlebody induced by $\Gamma$ then the intersection matrix $Q_X$ for $X$ presents $H_1(\Sigma_2(L))$. More explicitly, $H_1(\Sigma_2(L))$ is the free abelian group generated by the meridians of the link components in the surgery diagram, modulo the relations given by the columns of $Q_X$.

Let $\mu_a, \mu_b, \ldots, \mu_f$ denote the meridians of the link components in the surgery diagram for $\Sigma_2(L)$. Then with the choice of values for $a, b, \ldots, f$, the map
\[
\begin{aligned}
\tilde\chi \colon \Z^6 &\to \Z_2\\
\mu_a &\mapsto 0\\
\mu_b &\mapsto 0\\
\mu_c &\mapsto 1\\
\mu_d &\mapsto 1\\
\mu_e &\mapsto 0\\
\mu_f &\mapsto 0\\
\end{aligned}
\]
sends all the relations given by the columns of $Q_X$ to $0$. Hence $\tilde\chi$ descends to a map on the quotient $\chi \colon H_1(\Sigma_2(L)) \cong \Z_{16} \to \Z_2$. We define $Y_2$ to be the regular double cover of $\Sigma_2(L)$ that is associated with the map $\chi$.

To find a surgery presentation for $Y_2$, we first make a handleslide so that the character $\chi$ evaluates non-trivially on only one meridian, as shown in Figure \ref{fig:character-slide}. After an isotopy, the new surgery diagram of $\Sigma_2(L)$ looks as in Figure \ref{fig:surgeryS2L}: we denote the red unknot, on whose meridian $\chi$ evaluates non-trivially, by $\gamma$, and the $(-2)$-framed circle it links by $\delta$. 
We note that the computation of $\chi$ on the meridians $\mu_{\gamma}$ and $\mu_\delta$ is done by tracing the change-of-basis induced by the handleslide: $\mu_{c} = \mu_{\gamma}$ and $\mu_{d} = \mu_\delta+\mu_\gamma$, from which we deduce $\mu_{\gamma} = \mu_c$ and $\mu_\delta = \mu_d - \mu_c$.

\begin{figure}
\centering
  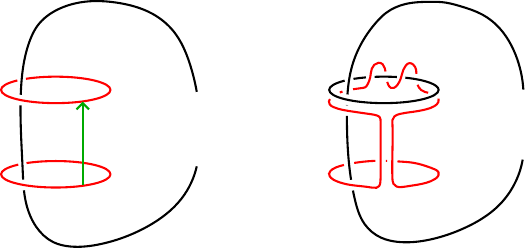%
  \caption{The figure on the left shows a part of the surgery diagram of $\Sigma_2(L) $ determined by the plumbing tree in Figure \ref{fig:tree+link}. The three link components are for the weights $a=-3$, $c=-2$ (bottom), and $d=-2$ (top). We perform a handleslide to get to the diagram on the right.
For both diagrams, the character $\chi$ sends the meridians of the red curves to $1$ and the other meridians to $0$.}
 \label{fig:character-slide}
\end{figure}

\begin{figure}
\centering
\begingroup%
  \makeatletter%
  \providecommand\color[2][]{%
    \errmessage{(Inkscape) Color is used for the text in Inkscape, but the package 'color.sty' is not loaded}%
    \renewcommand\color[2][]{}%
  }%
  \providecommand\transparent[1]{%
    \errmessage{(Inkscape) Transparency is used (non-zero) for the text in Inkscape, but the package 'transparent.sty' is not loaded}%
    \renewcommand\transparent[1]{}%
  }%
  \providecommand\rotatebox[2]{#2}%
  \newcommand*\fsize{\dimexpr\f@size pt\relax}%
  \newcommand*\lineheight[1]{\fontsize{\fsize}{#1\fsize}\selectfont}%
  \ifx\svgwidth\undefined%
    \setlength{\unitlength}{80.75256648bp}%
    \ifx\svgscale\undefined%
      \relax%
    \else%
      \setlength{\unitlength}{\unitlength * \real{\svgscale}}%
    \fi%
  \else%
    \setlength{\unitlength}{\svgwidth}%
  \fi%
  \global\let\svgwidth\undefined%
  \global\let\svgscale\undefined%
  \makeatother%
  \begin{picture}(1,0.93319626)%
    \lineheight{1}%
    \setlength\tabcolsep{0pt}%
    \put(0,0){\includegraphics[width=\unitlength,page=1]{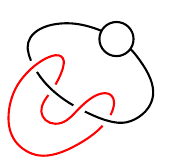}}%
    \put(0.51414934,0.02166774){\color[rgb]{0,0,0}\makebox(0,0)[lt]{\lineheight{1.25}\smash{\begin{tabular}[t]{l}\textcolor{red}{$-4$}\end{tabular}}}}%
    \put(-0.00109536,0.45245079){\color[rgb]{0,0,0}\makebox(0,0)[lt]{\lineheight{1.25}\smash{\begin{tabular}[t]{l}\textcolor{red}{$\gamma$}\end{tabular}}}}%
    \put(0.3284009,0.82493344){\color[rgb]{0,0,0}\makebox(0,0)[lt]{\lineheight{1.25}\smash{\begin{tabular}[t]{l}$-2$\end{tabular}}}}%
    \put(0.64713759,0.65911985){\color[rgb]{0,0,0}\makebox(0,0)[lt]{\lineheight{1.25}\smash{\begin{tabular}[t]{l}\Large$*$\end{tabular}}}}%
    \put(0.87482731,0.57114194){\color[rgb]{0,0,0}\makebox(0,0)[lt]{\lineheight{1.25}\smash{\begin{tabular}[t]{l}$\delta$\end{tabular}}}}%
  \end{picture}%
\endgroup%
  \caption{A portion of the surgery diagram for $\Sigma_2(L)$ after handleslide and isotopy.}
 \label{fig:surgeryS2L}
\end{figure}

Next view $\Sigma_2(L)$ as the union of a solid torus $T$ whose core curve corresponds to $\gamma$ and its complement $E$. Then the double cover $Y_2$ of $\Sigma_2(L) $ is the double covers of these two pieces glued along their common torus boundary. The double cover $\widetilde T$ of $T$ is a solid torus (since its boundary is connected), so we can describe $\widetilde T$ as surgery on a framed curve. The curve is given as follows: lift the $(-4)$-framed curve $\gamma$ to $\widetilde T$; we get a $(-2)$-framed curve, because twice the meridian of $\gamma$ lifts to a single meridian upstairs, and this is the curve we want.

\begin{figure}
\centering
  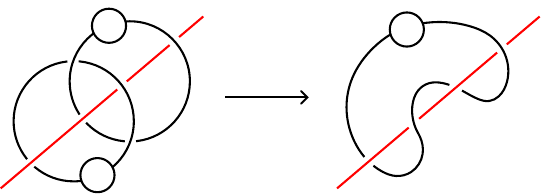%
  \caption{The figure on the left shows a portion of a surgery diagram for the double cover $Y_2$ of $\Sigma_2(L)$, while the figure on the right shows a portion of a surgery diagram for $\Sigma_2(L)$. Note that the curve $\delta$ lifts to two curves $\delta_1$ and $\delta_2$ which are linked.}
 \label{fig:surgeryDC1}
\end{figure}

As for the complement $E$, we construct its double cover $\widetilde{E}$ by unrolling $E$ around the circle $\gamma$: each Dehn surgery performed on a link component that is split from $\gamma$ lifts to two copies of the same Dehn surgery. The curve $\delta$ also lifts to two disjoint curves $\delta_1$ and $\delta_2$, since the linking number of $\delta$ with $\gamma$ is always even (for any choice of orientation), but the curves $\delta_1$ and $\delta_2$ are linked (as in Figure \ref{fig:surgeryDC1}). To find the framings $f_1$ and $f_2$ on $\delta_1$ and $\delta_2$, observe that $\delta$ is an unknot in the surgery diagram in Figure \ref{fig:surgeryDC1}, so it bounds a disc $D$, which intersects $\gamma$ in two points. Note that $D$ defines the $0$-framing for $\delta$, i.e.\ $\lk(\partial D, \partial D^+) = 0$, where the $+$ denotes a pushoff in the positive direction normal to $D$ (for a choice of orientation on $\delta$). The double cover of $D$ branched over the two intersection points with $\gamma$ is an annulus $A$ with boundary $\delta_1 \cup \delta_2$. We'll orient $A$ so that its boundary orientation agrees with the induced orientation from $\delta$.
By taking the pushoff in the positive direction normal to $A$, we obtain framings $f_1$ and $f_2$ for the boundary components $\delta_1$ and $\delta_2$ respectively. (We remark that $A$ exists only in $S^3$ where the surgery diagram is drawn, not in the manifold $Y_2$.) Since $\partial A$ is null-homologous, we have that $\lk(\partial A, \partial A^+) = 0$, which can be expanded as:
\[
\lk(\delta_1, \delta_1^+) + \lk(\delta_2, \delta_2^+) + 2\cdot\lk(\delta_1,\delta_2) = 0.
\]

\begin{figure}
\centering
  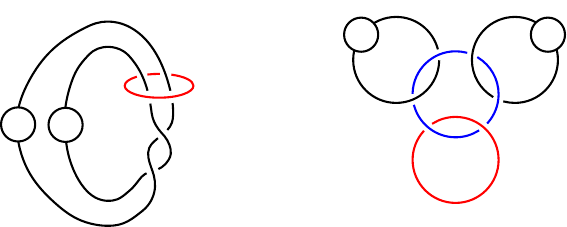%
  \caption{The left figure shows the surgery diagram from Figure \ref{fig:surgeryDC1}, now complete with framings. The right figure shows a diagram obtained by doing a blow-up of the diagram on the left.}
 \label{fig:surgeryDC2}
\end{figure}

\noindent From Figure \ref{fig:surgeryDC1} we get that $\lk(\delta_1, \delta_2)=-1$, and by symmetry we have that $\lk(\delta_1, \delta_1^+) = \lk(\delta_2, \delta_2^+)$. Thus $f_1 = f_2 = +1$, which implies that a framing $k$ on $\delta$ lifts to the framing $k+1$ on $\delta_1$ and on $\delta_2$. It follows that the surgery diagram for $Y_2$ is as in Figure \ref{fig:surgeryDC2} on the left. After blowing up, we can redraw it as in Figure \ref{fig:surgeryDC2} on the right, which is the surgery diagram associated to the weighted graph $\Gamma_2$ in Figure \ref{fig:graphDC} on the left. A sequence of plumbing calculus moves bring it to the weighted graph $\Gamma_2'$ in Figure \ref{fig:graphDC} on the right, which is in Neumann normal form.

\begin{figure}
\centering
  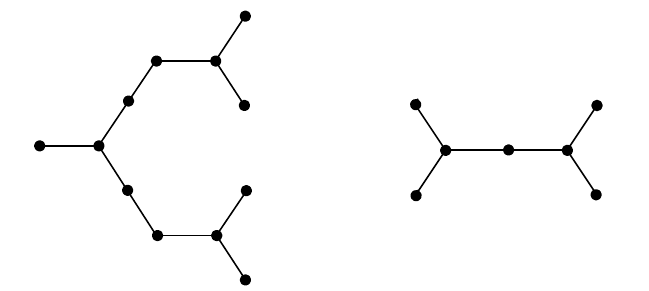%
  \caption{The weighted tree $\Gamma_2$ associated to the surgery diagram in Figure \ref{fig:surgeryDC2} on the right, and a weighted tree $\Gamma_2'$ obtained from it by plumbing calculus.}
 \label{fig:graphDC}
\end{figure}

We are now in a position to show that $\Sigma_2(L)$ does not bound a negative definite plumbing.
If it were, then by a theorem of Stein \cite{Stein} its double cover $Y_2$ would also bound a negative definite plumbing (see also \cite[12.3.3]{Nemethi}). We could then blow it down until we reach its Neumann normal form, which is $\Gamma_2'$, and this must be negative definite because we can follow the same argument as in the footnote on page \pageref{fn:Neumann}. However, $\Gamma_2'$ is indefinite, for example because the determinant of its intersection form is $+8$, so there must be an odd (hence positive) number of positive eigenvalues.

\begin{figure}
\centering
  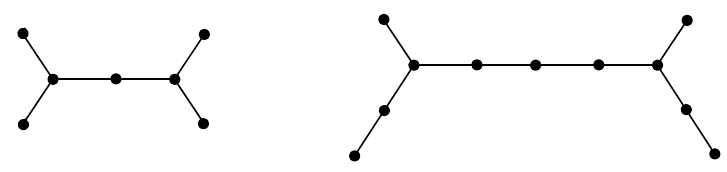%
  \caption{The left hand side shows the weighted tree $-\Gamma_2'$ obtained by changing all the signs in the graph $\Gamma_2'$ from Figure \ref{fig:graphDC}. After performing blow-ups and blow-downs we reach the Neumann normal form on the right hand side.}
 \label{fig:graphDC2}
\end{figure}

Finally, we show that $\Sigma_2(L)$ does not bound a \emph{positive} definite plumbing.
Using Stein's theorem as above, it suffices to show that $-Y_2$ (the double cover with opposite orientation) does not bound a negative definite plumbing. We start from the plumbing graph $-\Gamma_2'$ for $-Y_2$ obtained by changing the signs of all weights of $\Gamma_2'$, and we do blow-ups and blow-downs until we reach the Neumann normal form, which is shown in Figure \ref{fig:graphDC2} on the right. This is indefinite, since the only negative definite, connected weighted trees with all weights equal to $-2$ are of type ADE.
Since the Neumann normal form is indefinite, $-Y_2$ does not bound a negative definite plumbing, by the same argument as before.
\end{proof}

{\small
\newcommand{\Rone}{\mathbb{Z}}
\newcommand{\Rmor}[1]{\mathbb{Z}^{#1}}
\newcommand{\Tone}[1]{\mathbb{Z}_{#1}}
\newcommand{\Tmor}[2]{(\mathbb{Z}_{#1})^{#2}}
\newcommand{\Zero}{$0$}
\begin{center}
\begin{table}
\setlength\extrarowheight{2pt}
\begin{tabular}{|c||c|c|c|c|c|c|c|c|}
\hline
\backslashbox{\!$q$\!}{\!$h$\!} & $-2$ & $-1$ & $0$ & $1$ & $2$ & $3$ & $4$ & $5$ \\
\hline
\hline
$10$  &   &   &   &   &   &   &   & $ \Rone $ \\
\hline
$8$  &   &   &   &   &   &   & $ \Rone $ & $ \Tone{2} $ \\
\hline
$6$  &   &   &   &   &   & $ \Rone $ & $ \Rone \oplus \Tone{2} $ &   \\
\hline
$4$  &   &   &   &   & $ \Rmor{2} $ & $ \Rone \oplus \Tone{2} $ &   &   \\
\hline
$2$  &   &   & $ \Rone $ \cellcolor{pink} & $ \Rone $ & $ \Rone \oplus \Tmor{2}{2} $ &   &   &   \\
\hline
$0$  &   &   & $ \Rmor{3} $ & $ \Rmor{2} \oplus \Tone{2} $ &   &   &   &   \\
\hline
$-2$  &   & $ \Rone $ & $ \Rmor{2} \oplus \Tone{2} $ &   &   &   &   &   \\
\hline
$-4$  &   & $ \Rone \oplus \Tone{2} $ &   &   &   &   &   &   \\
\hline
$-6$  & $ \Rone $ &   &   &   &   &   &   &   \\
\hline
\end{tabular}
	\vspace{10pt}
\caption{The Khovanov homology with $\Z$ coefficients of the link $L$ from Figure \ref{fig:linkwithinvo}.}
\label{tab:Kh-thick}
\end{table}
\end{center}
}


\bibliographystyle{alpha}
\bibliography{HFsignaturespaper}


\end{document}